\theoremstyle{plain}
\newtheorem{Theorem}{Theorem}[section] 
\newtheorem{Proposition}[Theorem]{Proposition}
\newtheorem{Corollary}[Theorem]{Corollary}
\newtheorem{Definition}[Theorem]{Definition}
\newtheorem{Lemma}[Theorem]{Lemma}
\newtheorem{Remark}[Theorem]{Remark}
\newtheorem{Definition/Proposition}[Theorem]{Definition/Proposition}
\newtheorem{Theorem*}{Theorem}
\newtheorem{Corollary*}[Theorem*]{Corollary}
\date{}
\renewcommand\theequation%
\theoremstyle{Definition}
\newtheorem{proposition*}[thmbis]{Proposition}
\theoremstyle{Definition}
\newtheorem{proposition**}[thmter]{Proposition}
\makeatletter \@addtoreset{figure}{section}\makeatother
\newcommand{\R}{\mathbb{R}}
\newcommand{\A}{\mathbb{A}}
\newcommand{\N}{\mathbb{N}}
\newcommand{\Z}{\mathbb{Z}}
\newcommand{\I}{\mathcal{I}}
\newcommand{\G}{\mathbb{G}}
\newcommand{\TT}{\mathbb{T}}
\newcommand{\T}{\mathcal{T}}
\newcommand{\Id}{\mathrm{Id}}
\newcommand{\s}{\mathfrak{s}}
\newcommand{\VC}{\mathcal{V}}
\newcommand{\BC}{\mathcal{B}}
\newcommand{\qp}{\varpi}
\newcommand{\efface}[1]{}
\newcommand{\sph}{\mathrm{sph}}
\newcommand{\pr}{\mathrm{proj}}
\newcommand{\ve}{\mathrm{vert}}
\newcommand{\cC}{\mathcal{C}}
\newcommand{\cE}{\mathcal{E}}
\newcommand{\cH}{\mathcal{H}}
\newcommand{\cK}{\mathcal{K}}
\newcommand{\cM}{\mathcal{M}}
\newcommand{\cO}{\mathcal{O}}
\newcommand{\cP}{\mathcal{P}}
\newcommand{\cT}{\mathcal{T}}
\newcommand{\sT}{\mathscr{T}}
\newcommand{\cU}{\mathcal{U}}
\newcommand{\fc}{\mathfrak{c}}
\newcommand{\fd}{\mathfrak{d}}
\newcommand{\bk}{\mathds{k}}
\newcommand{\fs}{\mathfrak{s}}
\newcommand{\bx}{\mathbf{x}}
\newcommand{\by}{\mathbf{y}}
\newcommand{\bz}{\mathbf{z}}
\renewcommand{\phi}{\varphi}
\renewcommand{\emptyset}{\varnothing}
\renewcommand{\tilde}[1]{\widetilde{#1}}
\def\Ddots{\mathinner{\mkern1mu\raise\p@
\vbox{\kern7\p@\hbox{.}}\mkern2mu
\raise4\p@\hbox{.}\mkern2mu\raise7\p@\hbox{.}\mkern1mu}}
\DeclareMathOperator{\dom}{Dom}
\DeclareMathOperator{\im}{Im}
\renewcommand{\hom}{\operatorname{Hom}}
\DeclareMathOperator{\sgn}{sgn}
\newcommand{\Inv}{\mathrm{Inv}}
\title{On affine Kazhdan-Lusztig R-polynomials for Kac-Moody groups}
\author{Auguste \textsc{Hébert} \\Université de Lorraine, Institut Élie Cartan de Lorraine, F-54000 Nancy, France\\ UMR 7502,
auguste.hebert@univ-lorraine.fr \\  \and Paul \textsc{Philippe} \\ Université Jean Monnet, Institut Camille Jordan, F-42023 Saint-Étienne, France\\ UMR 5208, paul.philippe@univ-st-etienne.fr}
\begin{document}
\maketitle

\begin{abstract}

In \cite{muthiah2019double}, D. Muthiah proposed a strategy to define affine Kazhdan-Lusztig $R$-polynomials for Kac-Moody groups. Since then, Bardy-Panse, the first author and Rousseau  have introduced in \cite{twinmasures} the formalism of twin masures   and we have extended combinatorial results from affine root systems to general Kac-Moody root systems in \cite{hebert2024quantum}. In this paper, we use these results to explicitly define affine $R$-Kazhdan-Lusztig polynomials for Kac-Moody groups. The construction is based on a path model lifting to twin masures.  Conjecturally, these polynomials count the cardinality of  intersections of opposite affine Schubert cells, as in the case of reductive groups.
\end{abstract}

\section{Introduction}

\subsection{The affine reductive setup}

\paragraph{Reductive groups over Laurent polynomial rings}
Let $\G$ be a split reductive group scheme with the data of a Borel subgroup $\mathbb{B}$ containing a maximal torus $\mathbb{T}$. Let $W^v=N_{\G}(\mathbb{T})/\mathbb{T}$ be its vectorial Weyl group and $Y$ be its coweight lattice: $Y=\hom(\mathbb G_m,\mathbb{T})$.
The action of $W^v$ on $\mathbb{T}$ induces an action of $W^v$ on $Y$ and allows to form the semidirect product $\tilde{W}=Y\rtimes W^v$. This group called the extended affine Weyl group of $\G$, appears naturally in the geometry and the representation theory of $\G$ over discretely valued fields  or Laurent polynomial rings. A foundational work in this regard was done by N. Iwahori and H. Matsumoto in 1965 (\cite{iwahori1965bruhat}), when they exhibited a Bruhat decomposition of $\G(\mathbb Q_p)$ indexed by $\tilde{W}$. 

Let $\mathds k$ be a finite field and $\qp$ be a transcendental element over $\mathds k$. Let $G=\G(\mathds k(\!(\qp)\!))$, its integral points form the maximal compact subgroup $\G(\mathds k[\![\qp]\!])$  which contains the Iwahori subgroup, defined as $I_0=\{g\in K\mid g \mod \qp \in \mathbb B(\mathds k)\}$. The affine Weyl group is identified with the quotient $N_{G}(\mathbb T(\mathds k(\!(\qp)\!)))/\mathbb T(\mathds k[\![\qp]\!])$, and admits lift in $G$. Then, $G$ admits a decomposition in $I_0$ double cosets indexed by $\tilde{W}$, the Iwahori-Matsumoto Bruhat decomposition: $$G=\bigsqcup\limits_{\bx \in \tilde{W}}I_0 \bx I_0.$$
This decomposition also holds for $\G(\mathcal K)$ when $\mathcal K$ is any discretely valued field with residue field $\mathds k$. Actually it was first proven by Iwahori and Matsumoto for $\G(\mathbb Q_p)$. The group $\tilde{W}$ is a finite extension of a Coxeter group, it is  equipped with a Bruhat order which enables to describe the homogeneous space $G/I_0$, an infinite-dimensional variety over $\mathds k$ referred as the affine flag variety. We are interested by $R$-Kazhdan-Lusztig polynomials, which is an instance of the relation between the structure of $G/I_0$ and the combinatorics of $\tilde W$.

\paragraph{Reductive affine $R$-Kazhdan-Lusztig polynomials}
In $G=\mathbb G(\mathds k(\!(\qp)\!))$, the Iwahori group $I_0$ admits an opposite Iwahori subgroup $I_\infty=\{g\in \G(\mathds k[\qp^{-1}])\mid  g \mod \qp^{-1} \in \mathbb{B}^-(\mathds k) \}$ where $\mathbb{B}^-$ is the opposite Borel subgroup to $\mathbb{B}$. Then we have an Iwahori-Birkhoff decomposition: $$G=\bigsqcup\limits_{\by \in \tilde{W}}I_\infty \by I_0.$$ The affine $R$-Kazhdan-Lusztig polynomials $(R_{\bx,\by})_{\bx,\by\in \tilde{W}}$ are combinatorially defined polynomials (see \cite{kazhdan1979representations} or \cite[\S 5.3]{bjorner2005combinatorics} for a more recent expository) which verify , for all finite field $\mathds k$:
\begin{equation}\label{eq: RPolreductive_intro}|I_\infty \by I_0 \cap I_0 \bx I_0 /I_0|=R_{\bx,\by}(|\mathds k|).\end{equation}

This formula, in its affine form, is implicitly used by D. Kazhdan and G. Lusztig in \cite[\S 5]{kazhdan1980schubert}, and was proven by Z. Haddad (\cite{haddad1985coxeter}). A method to describe the combinatorics of these polynomials, and their appearance in Formula \eqref{eq: RPolreductive_intro}, is through a gallery model in the standard apartment of the Bruhat-Tits building associated to $G$. It is a particular instance of the techniques used by E. Milićević, Y. Naqvi, P. Schwer and A. Thomas (see \cite{milicevic2022gallery}, \cite{schwer2022shadows} where the authors describe similar subsets of $G/I_0$).

The affine $R$-polynomials are auxiliary polynomials in order to define the more famous $P$-Kazhdan-Lusztig polynomials, which are the cornerstone of Kazhdan-Lusztig theory, but they also seem to have a representation theoretic interpretation on their own (see \cite{eberhardt2023standard}).

\paragraph{Extension to Kac-Moody groups}
Replace $(\G,\TT)$ by a general split Kac-Moody group $\G$ with maximal split torus $\TT$, as defined by Tits in \cite{tits1987uniqueness}; its vectorial Weyl group may now be infinite. Let $Y=\hom(\G_m,\TT)$ be the cocharacter lattice of $(\G,\TT)$ and let $Y^+$ is the set of $W^v$-translates of the dominant elements of $Y$. When $\G$ is  reductive $Y^+=Y$ but otherwise $Y^+$ is a proper subcone of $Y$.  Set $W^+=Y^+\rtimes W^v$.  The Iwahori-Matsumoto decomposition no longer holds on $G=\G(\mathds k(\!(\qp)\!))$ (and neither on $\G(\mathcal K)$ for more general discretely valued fields $\mathcal K$) but it can be replaced by:
$$G^+=\bigsqcup\limits_{\qp^\lambda w\in W^+}I_0 \qp^\lambda wI_0, $$ where $G^+$ is some sub-semi-group of $G$ (see \cite[\S 3.4.2]{braverman2016iwahori} or \cite[\S 1.9]{bardy2016iwahori}). This weakened Iwahori-Matsumoto decomposition appears in related contexts, for example in the definition of the Iwahori-Hecke algebra of $\mathbb{G}(\cK)$, when $\cK$ is a non-Archimedean local field (see \cite{braverman2014affine} and \cite{bardy2016iwahori}). In this paper, we construct affine $R$-polynomials in the Kac-Moody setting, using the theory of masures.

\subsection{Affine $R$-Kazhdan-Lusztig polynomials in the Kac-Moody case}

\paragraph{Masures and twin masures}

Masures are generalizations of Bruhat-Tits buildings adapted to the study of Kac-Moody groups, introduced by S. Gaussent and G. Rousseau originally to compute MV-cycles through a path model \cite{gaussent2008kac}. Let $\omega$ be a non-trivial discrete  valuation on a field $\cK$. In order to study $\mathbb{G}(\cK)$, Rousseau  (generalizing the construction of \cite{gaussent2008kac}) defined a masure $\I_\omega=\I(\G,\cK,\omega)$ on which $G$ acts (see \cite{rousseau2016groupes}).  When $\G$ is reductive, $\I_\omega$ is the usual Bruhat-Tits building. Let $\A_\omega=Y\otimes \R$ be the standard apartment. When $\G$ is a reductive group then $W^+$ is a Coxeter group and $\A_\omega$ is a realization of the Coxeter complex of $W^+$. Therefore it has a simplicial structure such that $W^+$ acts transitively on the simplices of maximal dimension, which are called alcoves. When $\G$ is general Kac-Moody, there is no such simplicial structure but one can still define analogs of alcoves; no longer as simplices but as filters (sets of sets), and $W^+$ acts freely transitively on the set of (special) alcoves. We have $\I_\omega=\bigsqcup_{g\in G} g.\A_\omega$ (the dependency on $\omega$ is hidden in the fact that the action of $G$ on $\I_\omega$ depends on $\omega$). 

Let us now specify to the case of global function fields: we suppose that $\mathcal K$ is of the form $\mathds k(\qp)$, where $\qp$ is transcendental over $\mathds k$. Let $\omega_{\oplus}$ (resp. $\omega_{\ominus}$) be the valuation on $\cK$ such that $\omega_{\oplus}(\bk^* \qp)=\{1\}$ (resp. $\omega_{\ominus}(\bk^* \qp^{-1})=\{1\}$). Let $\I_{\oplus}=\I_{\omega_{\oplus}}$ and $\I_{\ominus}=\I_{\omega_{\ominus}}$.   Then one defines the Iwahori subgroups in terms of masures: $I_0$ (resp. $I_\infty$\index{i@$I_\infty$}) is the fixator of a particular alcove, named the fundamental alcove $C_0$ (resp. $C_\infty$) of $\A:=\A_{\omega_{\oplus}}$ (resp. $\A_\ominus=\A_{\omega_{\ominus}}$) in $G$. We then can see $G/I_0$ as a set of alcoves of $\I_{\oplus}$. When $\G$ is reductive,  the pair $(\I_{\oplus},\I_{\ominus})$ forms a twin building: there is a  codistance $d^*$ relating $\I_{\oplus}$ and $\I_{\ominus}$ (see \cite[5.8]{abramenko2008buldings}). One can also define a retraction $\rho_{C_\infty}:\I_{\oplus}\rightarrow \A$ centered at $C_\infty$: this is the unique $I_\infty$-invariant map whose restriction to $\A$ is the identity. We can then recover $d^*$ from $\rho_{C_\infty}$.  In the Kac-Moody case, no equivalent of the codistance is known for the moment. However, one can define $\rho_{C_\infty}$ on a subset of $\I$. A difficulty is that the set of definition of $\rho_{C_\infty}$ is hard to determine and very few is currently known on this set. 

Conjecturally (see \cite[4.4.1]{twinmasures}), this set   is large enough and we can use $\rho_{C_\infty}$ to  compute $|I_\infty \by I_{0}\cap I_{0} \bx I_{0}/I_{0}|$.  Elements of $I_\infty \by I_{0}\cap I_{0} \bx I_{0}/I_{0}$ are in bijection with the alcoves in $\rho_{C_0}^{-1}(C_\bx)\cap \rho_{C_\infty}^{-1}(C_\by)$. Each of these alcoves defines an open segment (the data of a segment and an alcove based at its end point) in $\I_\oplus$ and, when $\bx$ has finite stabiliser in $W^v$ (we say that it is spherical),  we can count them using the path model in $\A$ described below.

\paragraph{Affine Bruhat order on $W^+$}
When $G$ is reductive, the extended affine Weyl group $\tilde{W}$, which is a finite extension of the affine Weyl group $W^a$, naturally inherits a Bruhat order $\leq$ from the Coxeter group structure of $W^a$. This order plays a crucial roles in the study of  Kazhdan-Lusztig polynomials. For example, if $\bx,\by\in W^a$, then  $R_{\bx,\by}\neq 0$ if and only if $\by \leq \bx$ and it is conjectured that $R_{\bx,\by}$ only depends on the structure of the interval $[\by,\bx]=\{\bz\in \tilde{W}\mid \by\leq \bz\leq \bx\}$.   In the Kac-Moody setting, A. Braverman, D. Kazhdan and M. Patnaik propose in \cite[Appendix B2]{braverman2016iwahori} a definition of a preorder on $W^+$ which would replace the Bruhat order of $\tilde{W}$ and they conjecture that it is a partial order. In \cite{muthiah2018iwahori}, D. Muthiah extends the definition of this preorder to any Kac-Moody group $G$ and proves that it is actually an order. A. Welch proved in \cite[Theorem 3]{welch2022classification} that the intervals for this order are finite, when $G$ if non-twisted affine of ADE type and we generalized this result to any Kac-Moody groups in \cite[Corollary 1.2]{hebert2024quantum}.

\paragraph{$C_\infty$-Hecke open paths and $R$-Kazhdan-Lusztig polynomials}

In this paper we introduce $C_\infty$-Hecke open paths, they are the piece-wise affine paths which appear as images of open segments by $\rho_{C_\infty}$ in the masure $\I_\oplus$, but they can be defined combinatorially in $\A$ without reference to the masure. Non-open $C_\infty$-Hecke paths have already been introduced by Muthiah (\cite{muthiah2019double} where they are called $I_\infty$-Hecke paths) and developed by N. Bardy-Panse, the first-named author and Rousseau in \cite{twinmasures}. An open path is a non-open path with the additional data of a decoration, and an ending alcove (see Definition~\ref{Def: open paths}). These open paths permit to study $G^+/I_0$, whereas non-open paths are suited to the study of $G^+/K$, where $K$ is fixator of the vertex $0_\A$ of the fundamental alcove. In particular any $C_\infty$-Hecke open path has a type $\bx\in W^+$ (which we suppose spherical, that is to say that $\bx$ has finite stabilizer in $W^v$) and an end alcove $C_\by$ with $\by\in W^+$; we denote by $\cC^\infty_\bx(\by)$ the set of $C_\infty$-Hecke open paths of type $\bx$ and with end alcove $C_\by$. We start by looking at the relation between $C_\infty$-Hecke open paths and the affine Bruhat order. In particular, using the finiteness of the intervals, we obtain:

\begin{Proposition}(see Corollary~\ref{Corollary: finiteness open hecke paths})
Let $\bx,\by\in W^+$ with $\bx$ spherical. Then $\mathcal C^{\infty}_{\bx}(\by)$ is finite, and it is empty if $\by \not\leq \bx$.
\end{Proposition}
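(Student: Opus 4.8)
The plan is to reduce the finiteness of $\mathcal C^\infty_\bx(\by)$ to the finiteness of the interval $[\by,\bx]$ for the affine Bruhat order on $W^+$, which is available by \cite[Corollary 1.2]{hebert2024quantum} as quoted in the excerpt. First I would recall the combinatorial definition of a $C_\infty$-Hecke open path of type $\bx$ with end alcove $C_\by$: it is a piecewise affine path in $\A$ from (a point determined by) $\bx$ to (a point determined by) $\by$, subject to folding conditions at the break points coming from the $C_\infty$-retraction, together with the extra decoration and ending-alcove data. The key point to extract is that such a path is determined by a finite sequence of "Hecke data" (folding positions, the walls used, the decorations), each lying in a set whose size is controlled by the vectorial Weyl group acting locally and by the positions of the break points along the path.

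The central step is to show that every intermediate vertex $\bz\in W^+$ visited by a $C_\infty$-Hecke open path of type $\bx$ with end alcove $C_\by$ satisfies $\by\leq\bz\leq\bx$. This is exactly the link with the Bruhat order that the excerpt advertises ("We start by looking at the relation between $C_\infty$-Hecke open paths and the affine Bruhat order"), and I expect it to be proved by an induction on the number of folds: each elementary fold of a Hecke path, by the very definition of the Braverman–Kazhdan–Patnaik/Muthiah preorder (which is built to mimic the effect of such foldings), moves $\bz$ down in the order, and the decoration/end-alcove data is compatible with this. Granting this, all vertices of the path lie in the finite set $[\by,\bx]$; since a $C_\infty$-Hecke open path is a piecewise affine path whose linear pieces are segments between consecutive such vertices (or are otherwise pinned down by the finitely many possibilities at each step), there are only finitely many combinatorial types of such paths, and each combinatorial type together with the finite decoration data gives finitely many actual paths. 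Hence $\mathcal C^\infty_\bx(\by)$ is finite. The emptiness statement when $\by\not\leq\bx$ is then immediate: a nonempty $\mathcal C^\infty_\bx(\by)$ would contain a path whose endpoints force $\by\leq\bx$.

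Concretely I would organize it as: (1) state the folding lemma, that a single elementary fold in a $C_\infty$-Hecke path sends a vertex $\bz$ to $\bz'$ with $\bz'\leq\bz$, and that the starting vertex is $\bx$ while the ending alcove data forces the final vertex to dominate $\by$; (2) conclude by iteration that $\by\leq\bz\leq\bx$ for every vertex $\bz$ on the path; (3) observe that the number of folds is bounded (e.g. by the length difference, or by $|[\by,\bx]|$), so the path has boundedly many linear pieces; (4) observe that at each break point the set of allowed continuations (choice of wall to fold across, choice of decoration) is finite — here one uses that $\bx$ is spherical, so that the relevant local vectorial data involves only a finite subgroup of $W^v$; (5) assemble these finiteness statements to bound $|\mathcal C^\infty_\bx(\by)|$.

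The main obstacle will be step (1): making precise, in the masure-free combinatorial language of $\A$ with filters-as-alcoves (no simplicial structure), exactly how an "elementary fold" of an open path is defined and why it interacts correctly with Muthiah's order — in particular handling the extra decoration and the ending alcove $C_\by$, which are precisely the new features of \emph{open} paths compared to the $I_\infty$-Hecke paths of \cite{muthiah2019double,twinmasures}. Once the folding/order compatibility is set up cleanly, the remaining finiteness bookkeeping (steps (3)–(5)) is routine, and the sphericity hypothesis on $\bx$ enters only to keep the local branching finite.
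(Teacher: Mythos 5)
Your plan matches the paper's approach: the key lemma you identify as step (1) is exactly Proposition~\ref{Proposition: open paths give chains}, which shows that the folding data of a $C_\infty$-Hecke open path of type $\bx$ with end alcove $C_\by$ produces a strictly decreasing chain $\by = s_{\beta_r[n_r]}\cdots s_{\beta_0[n_0]}\bx < \cdots < \bx$ in the affine Bruhat order, and the paper then deduces the corollary from the finiteness of Bruhat intervals (\cite[Corollary 3.4]{hebert2024quantum}), exactly as in your steps (2)--(5). One small imprecision: the elements of the chain are the successive transforms $s_{\beta_i[n_i]}\cdots s_{\beta_0[n_0]}\bx$ of the type $\bx$, not ``vertices visited by the path'' — these are alcove data at the end of the path, not the break points $\fc(t_i)$ — but this does not affect the argument and the rest of your bookkeeping (strictly decreasing chains in a finite interval bound the number of folds; sphericity bounds the local branching at each fold; the decoration and end alcove then contribute only finitely many further choices) is what the paper implicitly defers to \cite[Theorem 5.54]{muthiah2019double}.
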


\paragraph{$R$-Kazhdan-Lusztig polynomials}
A given $C_\infty$-Hecke open path $\underline{\fc}$ of spherical type $\bx\in W^+$ lifts to open segments in $\I_\oplus$, and we show that the number of these lifts are obtained by the evaluation at $|\mathds k|$ of an integral polynomial $R_{\bx,\underline{\fc}}$, independent of the masure, for which we give an explicit formula in Theorem~\ref{Theorem : number of lifts spherical}. Summing up $R_{\bx,\underline{\fc}}$ upon all the paths of $C^\infty_\bx(\by)$ we are able to define an $R$-Kazhdan-Lusztig polynomial $R_{\bx,\by}$ associated to any pair $(\bx,\by)$ of $W^+$ such that $\bx$ is spherical. They therefore appear in the following result:
\begin{Theorem}
Let $\bx,\by\in W^+$ be such that $\bx$ is spherical. Let $A_{\bx,\by}(\I_\oplus)$ denote the set of alcoves $C\in \rho_{C_0}^{-1}(C_\bx)\cap \rho_{C_\infty}^{-1}(C_\by)$ such that the retraction $\rho_{C_\infty}$ is defined on the whole open segment $[0_\A,C]\subset \I_\oplus$. This is a finite set and:
\begin{equation}
    |A_{\bx,\by}(\I_\oplus)|=R_{\bx,\by}(|\mathds k|).
\end{equation}  
\end{Theorem}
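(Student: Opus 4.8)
The plan is to organize $A_{\bx,\by}(\I_\oplus)$ according to the $C_\infty$-Hecke open path that each of its alcoves produces under the retraction $\rho_{C_\infty}$, and then to count each class by Theorem~\ref{Theorem : number of lifts spherical}. Finiteness will come for free from Corollary~\ref{Corollary: finiteness open hecke paths}.

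First I would build the bookkeeping map. Let $C \in A_{\bx,\by}(\I_\oplus)$. Since $C$ lies over $C_\bx$ it shares an apartment with the vertex $0_\A$ of $C_0$, so the open segment $[0_\A,C] \subset \I_\oplus$ is defined, and by hypothesis $\rho_{C_\infty}$ is defined on all of it. Thus $\underline{\fc}_C := \rho_{C_\infty}([0_\A,C])$, equipped with the decoration and ending alcove that $\rho_{C_\infty}$ induces along it, is a $C_\infty$-Hecke open path — this is exactly the way such paths were characterized after Definition~\ref{Def: open paths}. Its ending alcove is $\rho_{C_\infty}(C) = C_\by$, and its type is $\bx$ because $\rho_{C_0}(C) = C_\bx$ forces $\rho_{C_0}$ to carry $[0_\A,C]$ onto the standard segment with end alcove $C_\bx$. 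Hence $\underline{\fc}_C \in \cC^\infty_\bx(\by)$, and $\Phi \colon C \mapsto \underline{\fc}_C$ is a well-defined map $A_{\bx,\by}(\I_\oplus) \to \cC^\infty_\bx(\by)$.

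Next I would identify the fibers of $\Phi$ with sets of lifts. Fix $\underline{\fc} \in \cC^\infty_\bx(\by)$. An alcove $C$ lies in $\Phi^{-1}(\underline{\fc})$ precisely when $[0_\A,C]$ is a lift of $\underline{\fc}$ in the sense of Theorem~\ref{Theorem : number of lifts spherical}: the requirements that $\rho_{C_\infty}$ be defined on $[0_\A,C]$ with image $\underline{\fc}$ encode simultaneously that $C \in \rho_{C_\infty}^{-1}(C_\by)$ (the ending alcove of $\underline{\fc}$) and, through the type of $\underline{\fc}$, that $C \in \rho_{C_0}^{-1}(C_\bx)$. Since $\bx$ is spherical, Theorem~\ref{Theorem : number of lifts spherical} applies and gives $|\Phi^{-1}(\underline{\fc})| = R_{\bx,\underline{\fc}}(|\mathds k|) < \infty$, a quantity moreover independent of the masure. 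The fibers of $\Phi$ partition $A_{\bx,\by}(\I_\oplus)$, the index set $\cC^\infty_\bx(\by)$ is finite by Corollary~\ref{Corollary: finiteness open hecke paths}, and each fiber is finite; hence $A_{\bx,\by}(\I_\oplus)$ is finite and
\[
|A_{\bx,\by}(\I_\oplus)| = \sum_{\underline{\fc} \in \cC^\infty_\bx(\by)} |\Phi^{-1}(\underline{\fc})| = \sum_{\underline{\fc} \in \cC^\infty_\bx(\by)} R_{\bx,\underline{\fc}}(|\mathds k|) = R_{\bx,\by}(|\mathds k|),
\]
the last equality being the definition of $R_{\bx,\by}$.

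The genuinely delicate point is the well-definedness of $\Phi$: one must know that the $\rho_{C_\infty}$-image of an open segment on which $\rho_{C_\infty}$ is everywhere defined satisfies all the folding and decoration axioms making it a $C_\infty$-Hecke open path, and that its type and ending alcove are read off from $\rho_{C_0}(C)$ and $\rho_{C_\infty}(C)$. This rests on the structural results about retractions and Hecke paths recalled in the earlier sections, and is in effect what the condition defining $A_{\bx,\by}(\I_\oplus)$ is designed to guarantee. By contrast, the multiplicity bookkeeping — that distinct alcoves may retract onto the same open path, with the exact count $R_{\bx,\underline{\fc}}(|\mathds k|)$ — is exactly the content of Theorem~\ref{Theorem : number of lifts spherical}, and should require no further argument here.
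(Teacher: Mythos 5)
Your proposal is correct and follows essentially the same route as the paper: the map $\Phi$ you construct is the composite of the paper's bijection $C\mapsto[0_\A,C]$ (Lemma~\ref{l_lift_open_segment} and Lemma~\ref{l_bijection_paths_alcoves}) with $\rho_{C_\infty}$, its well-definedness is exactly Proposition~\ref{Proposition: retraction is C infty}, and the fiber count plus the sum over the finite set $\cC^\infty_\bx(\by)$ is how the paper assembles Theorem~\ref{Theorem : number of lifts spherical}, Corollary~\ref{Corollary: finiteness open hecke paths} and the definition of $R_{\bx,\by}$.
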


By construction $|A_{\bx,\by}(\I_\oplus)|\leq|I_\infty \by I_0 \cap I_0\bx I_0/I_0|$ and upon Conjecture \cite[4.4.1]{twinmasures} this is an equality. Therefore Formula \eqref{eq: RPolreductive_intro} remains conjectural in this context, however these polynomials already have interesting combinatorial properties related to the affine Bruhat order on $W^+$. For example, we show that, if $\bx$ covers $\by$, then $R_{\bx,\by}(X)=X-1$ which is a standard property in the reductive setting.

Note that in the recent preprint \cite{patnaik2024local},  M. Patnaik obtains results which might be related to ours. He proves that when $\mathbb{G}$ is affine, in some completion $\hat{G}$ of $G$, a variant of the left hand side of \eqref{eq: RPolreductive_intro} is finite (see \cite[5.2]{patnaik2024local}). He proves an analogue (in his frameworks, which is slighlty different from ours) of a conjecture in \cite[4.4.1]{twinmasures}. However, he does not prove polynomiality results.  His techniques are very different from ours and do not use masures. It would be interesting to relate these two works.

\paragraph{Going further}
In classical Kazhdan-Lusztig theory, $R$-polynomials are auxiliary tools to compute the more famous $P$-Kazhdan-Lusztig polynomials, which play a significant role in geometric representation theory and representation theory of Lie algebras (see \cite{kazhdan1980schubert}). We would like to define the affine version of these $P$-polynomials in the general Kac-Moody setting, this requires that the $R$-polynomials satisfy an appropriate involutive formula in which the affine Bruhat length appears (see \cite[\S 1.2.4]{muthiah2019double}).

\paragraph{Organization of the paper}
In section~\ref{Section: recollection affine Bruhat order} we introduce the affine Weyl semi-group associated to a Kac-Moody root datum, and the affine Bruhat order. We then give a geometric interpretation of these objects.
In section~\ref{s_Cinfty_open_paths}, we introduce the notion of $C_\infty$-Hecke open path in the standard apartment. We establish properties of these paths with respect to the Bruhat order (Proposition~\ref{Proposition: open paths give chains}) and use it and the finiteness results on the affine Bruhat order to prove finiteness results on the set of $C_\infty$-open paths (see Corollary~\ref{Corollary: finiteness open hecke paths}).

We then turn, in section~\ref{s_lifts_paths}, to liftings of $C_\infty$-Hecke open paths in the masure, in order to construct affine $R$-polynomials. We start by introducing twin masures, and in subsection~\ref{subsection: lifts in masure} we explain our strategy to construct affine $R$-polynomials. We prove in subsection~\ref{ss_retraction_open_segments} that $C_\infty$-friendly open segments retract to $C_\infty$-Hecke open paths, and we give combinatorial formulas to count, for each $C_\infty$-Hecke open path of a given type, its number of lifts in subsection~\ref{ss_counting_lifts}. We conclude, in subsection~\ref{subsection: Definition Kazhdan-Lusztig Polynomials}, by defining the affine Kazhdan-Lusztig $R$-polynomials, and we compute $R$-polynomials of covers.

There is an index of notations at the end of the paper.

\paragraph{Acknowledgement} We thank Nicole Bardy-Panse, Stéphane Gaussent, Dinakar Muthiah and Guy Rousseau for insightful discussions and comments on the subject. We also thank Manish Patnaik for sharing his interesting paper \cite{patnaik2024local} and for  exchanges on the topic. 

\tableofcontents

\section{The standard apartment and the affine Bruhat order}\label{Section: recollection affine Bruhat order}

\subsection{Definitions and notations}

\paragraph{Kac-Moody root systems}
Let $\mathcal D = (A,X,Y,(\alpha_i)_{i \in I},(\alpha_i^\vee)_{i \in I})$ be a Kac-Moody root datum in the sense of \cite[\S 8]{remy2002groupes}. It is a quintuplet such that:
\begin{itemize}
    \item $I$ is a finite indexing set and $A=(a_{ij})_{(i,j)\in I\times I}$ is a generalized Cartan matrix.
    \item $X$ and $Y$ are two dual free $\mathbb Z$-modules of finite rank, we write $\langle , \rangle$ the duality bracket.  
    \item $(\alpha_i)_{i\in I}$ (resp. $(\alpha_i^\vee)_{i \in I}$) is a family of linearly independent elements of $X$ (resp. $Y$), the simple roots (resp. simple coroots).
    
    \item For all $(i,j) \in I^2$ we have $\langle \alpha_i^\vee,\alpha_j \rangle = a_{ij}$.
\end{itemize}

\paragraph{Vectorial Weyl group} For every $i \in I$ set $s_i \in \operatorname{Aut}_\mathbb Z(X):  x \mapsto x- \langle \alpha_i^\vee, x \rangle \alpha_i$. The generated group \index{w@$W^v$} $W^v=\langle s_i \mid i \in I \rangle$ is the \textbf{vectorial Weyl group} of the Kac-Moody root datum. The duality bracket $\langle Y,X\rangle$ induces a contragredient action of $W^v$ on $Y$, explicitly $s_i(y)=y-\langle y,\alpha_i\rangle \alpha_i^\vee$. By construction the duality bracket is then $W^v$-invariant.

The pair $(W^v,\{s_i\mid i \in I\})$ is a Coxeter system, in particular $W^v$ has a Bruhat order $<$ and a length function $\ell$ compatible with the Bruhat order.  The Weyl group $W^v$ is infinite if and only if the matrix $A$ is not of finite type, we will usually work in this setting.

\paragraph{Real roots} Let $\Phi=W^v.\{\alpha_i \mid i \in I \}$ be the set of real roots of $\mathcal D$, it is a, possibly infinite, root system (see \cite[1.2.2 Definition]{kumar2002kac}). 
In particular let $\Phi_+=\Phi\cap \oplus_{i\in I} \mathbb N \alpha_i$ be the set of positive real roots, then $\Phi=\Phi_+ \sqcup -\Phi_+$, we write $\Phi_-=-\Phi_+$ the set of negative roots.

The set $\Phi^\vee=W^v.\{\alpha_i^\vee \mid i \in I\}$ is the set of \textbf{coroots}, and its subset $\Phi^\vee_+=\Phi^\vee \cap \oplus_{i\in I} \mathbb N \alpha_i^\vee$ is the set of \textbf{positive coroots}.

To each root $\beta$ corresponds a unique coroot $\beta^\vee$: if $\beta=w(\alpha_i)$ then $\beta^\vee=w(\alpha_i^\vee)$. This map $\beta\mapsto\beta^\vee$ is well defined, bijective between $\Phi$ and $\Phi^\vee$ and sends positive roots to positive coroots. Note that $\langle \beta^\vee,\beta\rangle = 2$ for all $\beta \in \Phi$.

Moreover to each root $\beta$ one associates a reflection $s_\beta \in W^v$: if $\beta=w(\pm\alpha_i)$ then $s_\beta:= ws_iw^{-1}$\index{s@$s_\beta$}. It is well-defined, independently of  the choices of $w$ and $i$. Explicitly it is the map $x \mapsto x-\langle \beta^\vee , x \rangle \beta$. We have $s_\beta=s_{-\beta}$ and the map $\beta\mapsto s_\beta$ forms a bijection between  the set of positive roots and the set $\{ws_iw^{-1}\mid (w,i)\in W^v\times I\}$ of reflections of $W^v$.

Let $w\in W^v$. The \textbf{inversion set} of $w$ is the set \index{i@$\Inv(w)$} $$\Inv(w):=\{\alpha \in \Phi_+ \mid w(\alpha)\in \Phi_-\}=\{\alpha \in \Phi_+ \mid ws_\alpha <w\}.$$ An element of the vectorial Weyl group is completely determined by its inversion set.

\paragraph{Dominant coweights and Tits cone}

Elements of $Y$ are called coweights. We say that a coweight $\lambda\in Y$ is \textbf{dominant} if $\langle \lambda,\alpha_i\rangle\geq 0$ for all $i\in I$, or equivalently if $\langle \lambda,\beta\rangle \geq 0$ for all $\beta\in\Phi_+$. The set of dominant coweights is denoted as \index{y@$Y^{++}$, $Y^+$} $Y^{++}=\{\lambda \in Y\mid \langle \lambda,\alpha_i\rangle \geq 0\; \forall i\in I\}$. 

The orbit of $Y^{++}$ by $W^v$ is the \textbf{integral Tits cone}, $Y^+=\bigcup\limits_{w\in W^v} w.Y^{++}$; it is a sub-cone of $Y$ and it is a proper sub-cone if and only if $W^v$ is infinite, if and only if $A$ is not of finite type (see \cite[1.4.2]{kumar2002kac}).

The set $Y^{++}$ is a fundamental domain for the action of $W^v$ on $Y^+$, and for any coweight $\lambda \in Y^+$ (resp. $\lambda\in -Y^{+}$), we denote $\lambda^{++}$ to be the unique element of $Y^{++}$ (resp. $-Y^{++}$) in its $W^v$-orbit.

\paragraph{Parabolic subgroups and minimal coset representatives}
Let $\lambda\in Y^{++}$ be a dominant coweight. Then the fixator of $\lambda$ in $W^v$ is a standard parabolic subgroup, which we denote by \index{w@$W_\lambda$, $W_J$, $W^\lambda$} $W_\lambda$. Explicitly, if $J=\{i\in I \mid \langle \lambda,\alpha_i\rangle=0\}$, then $W_\lambda =W_J:=\langle r_j\mid j\in J\rangle$.

Any class $[w]\in W^v/W_\lambda$ admits a unique element of minimal length, and we denote by $W^\lambda$ the set of coset representatives of minimal length. Therefore $w\in W^\lambda \iff \ell(w)\leq \ell(\tilde w)\,\forall \tilde w \in wW_\lambda$.

Suppose now that $\lambda\in Y^+$ (not necessarily dominant), we denote by \index{v@$v^\lambda$}$v^\lambda$ the unique element of minimal length such that $\lambda=v^\lambda \lambda^{++}$, it is an element of $W^{\lambda^{++}}$. The element $v^\lambda$ is determined by: $\{\alpha\in \Phi_+ \mid \langle \lambda,\alpha\rangle <0\}=\Inv((v^\lambda)^{-1})$.

The coweight $\lambda$ also admits a parabolic fixator in $W^v$, which is $W_\lambda = v^\lambda W_{\lambda^{++}} (v^\lambda)^{-1}$.

A coweight $\lambda$ is \textbf{spherical} if its fixator $W_\lambda$ is finite, and it is \textbf{regular} if $W_\lambda$ is trivial. These notions only depend on the dominant part $\lambda^{++}$ of $\lambda$. We denote by \index{Y@$Y^+_{sph}$} $Y^+_{sph}$ the set of spherical coweights. Note that a coweight $\lambda$ is regular and dominant if and only if $\langle \lambda,\alpha_i\rangle > 0$ for all  $i\in I$.

For any $\lambda\in Y^+_{sph}$, we denote by $w_{\lambda,0}$\index{w@$w_{\lambda,0}$} the maximal element of the standard parabolic subgroup $W_{\lambda^{++}}$.

\paragraph{Affine Weyl semi-group}
Through the left action $W^v \curvearrowright Y$ we can form the semidirect product $Y\rtimes W^v$, for $(\lambda,w)\in Y\rtimes W^v$ we denote by $\qp^\lambda w$ the corresponding element of $Y\rtimes W^v$. If $W^v$ is finite, this semi-direct product is (a finite extension of) a Coxeter group, hence admits a Bruhat order, a Bruhat length and a set of simple reflections. However when $W^v$ is infinite, it does not have a Coxeter structure, and there is no known analog of the Bruhat order on it.

By definition, $Y^+\subset Y$ is stable by the action of $W^v$, therefore we can form \index{w@$W^+$}$W^+=Y^+ \rtimes W^v$ which is a sub-semi-group of $Y\rtimes W^v$. This semi-group is called \textbf{the affine Weyl semi-group}, it is sometimes referred as "the affine Weyl group" in the literature, even though it is not a group.  

Braverman, Kazhdan and Patnaik have defined a preorder on $W^+$ in \cite[B.2]{braverman2016iwahori} using an affine root system on which $Y\rtimes W^v$ acts. We now introduce this preorder (which is, in fact, an order), and in Section~\ref{subsection: affine bruhat order apartment} we give a geometric interpretation, already detailed in~\cite{philippe2023grading}.

We denote by \index{p@$\pr^{Y^+}$, $\pr^{Y^{++}}$} $\pr^{Y^+}$ the projection function $W^+\rightarrow Y^+$ and $\pr^{Y^{++}}$ the projection on the dominant coweight, so if $\bx=\qp^\lambda w \in W^+$, then $\pr^{Y^+}(\bx)=\lambda$ and $\pr^{Y^{++}}(\bx)=\lambda^{++}$. We say that an element $\bx\in W^+$ is \textbf{spherical} if its coweight $\pr^{Y^+}(\bx)$ is spherical, we then denote $W^+_{sph}=Y^+_{sph}\rtimes W^v$ the set of spherical elements in $W^+$.

\paragraph{Affine roots}
Let \index{p@$\Phi^a$}$\Phi^a=\Phi \times \mathbb Z$, it is \textbf{the set of affine roots} of $\mathcal D$. Let $(\beta,n)\in \Phi^a$, we say that $(\beta,n)$ is positive if $n>0$ or ($n=0$ and $\beta \in \Phi_+$); we write $\Phi^a_+$ for the set of positive affine roots. We also have $\Phi^a = \Phi^a_+ \sqcup -\Phi^a_+$. We can see $\underline\beta=(\beta,n)\in \Phi^a$ as the affine form on $Y$ defined by $\underline{\beta}(\lambda)=\langle \lambda,\beta\rangle +n$. Since $Y\rtimes W^v$ acts by affine transformations on $Y$, it also acts on its space of affine forms (by $(\underline{w}\cdot \varphi)(\lambda)=\varphi(\underline{w}^{-1}\lambda)$ for $\underline{w}\in Y\rtimes W^v$ and $\varphi\in \operatorname{Hom_{Aff}}(Y,\mathbb Z)$), and this action preserves $\Phi^a$. We therefore have a natural action of $Y\rtimes W^v$ on $\Phi^a$, explicitely given by:

\begin{equation}\label{eq : W^+ action}
    \qp^\lambda w.(\beta,n)=(w\beta,n+\langle \lambda,w\beta\rangle).
\end{equation}

For any $n \in \mathbb Z$, its sign is denoted $\sgn(n)\in \{-1,+1\}$, with the convention that $\sgn(0)=+1$.

For $n\in \Z$ and $\beta\in \Phi_+$, we set:\begin{align}\label{eq : affine_roots} \index{b@$\beta[n]$}
&\beta[n]=(\sgn(n) \beta,|n|\qp)\in \Phi_+^a \\
\index{s@$s_{\beta[n]}$}&s_{\beta[n]}=\qp^{n\beta^\vee} s_\beta \in  Y\rtimes W^v.\end{align}
If $n\neq0$ we also define $\beta[n]\in \Phi^a_+$ for $\beta\in\Phi_-$, by $\beta[n]=(-\beta)[-n]$.
Note that the element $s_{\beta[n]}$ does not belong to $W^+$ in general, but it is an element of order $2$ in $Y\rtimes W^v$ which fixes $\beta[n]^{-1}(\{0\})$, the \textbf{affine reflection associated to $\beta[n]$}.
\paragraph{Affine Bruhat order} The \textbf{affine Bruhat order} on $W^+$ is the transitive closure of the relation $<$ defined by, for $\bx\in W^+$ and $\beta[n]\in \Phi^a_+$: $s_{\beta[n]}\bx<\bx \iff \bx^{-1}(\beta[n]) \notin \Phi^a_+$. In some sense this means that $\beta[n]$ belongs to the inversion set of $\bx^{-1}$. Note in particular that if $\bx^{-1}(\beta[n]) \notin \Phi_+^a$ and $\bx$ lies in $W^+$ then $s_{\beta[n]}\bx\in W^+$. 

Explicitly, if $\bx=\qp^\lambda w \in W^+$, then $\bx^{-1}(\beta[n])\notin \Phi_+^a$ if and only if $|n|<\sgn(n)\langle \lambda,\beta\rangle$, or $|n|=\sgn(n)\langle \lambda,\beta\rangle$ and $\sgn(n)w^{-1}\beta \in\Phi_-$.

D. Muthiah (see \cite{muthiah2018iwahori}) has proved that the transitive closure of $<$ is anti-symmetric, hence it is an order. Moreover, together with D. Orr, they define in \cite{muthiah2019bruhat} a $\mathbb Z$-valued length function on $W^+$ strictly compatible with the affine Bruhat order.

\subsection{The affine Bruhat order in the standard apartment}\label{subsection: affine bruhat order apartment}

There is a strong geometric intuition behind root systems, vectorial Weyl groups and the vectorial Bruhat order, developed for instance in the context of buildings in \cite{ronan1989lectures}. There is also a geometrical interpretation of the affine Bruhat order on $W^+$ which we develop in this section, it takes place in the standard apartment $\A$,  which is the affine space underlying $Y\otimes \R$. 
More precisely, elements of $W^+$ can be identified with certain local germs in $\A$, called alcoves, and each positive affine root corresponds to a hyperplane $M_{\beta[n]}$ separating the set of alcoves in two. For $\bx\in W^+,\beta[n]\in \Phi_+^a$, we have that $\bx<s_{\beta[n]}\bx$ if and only if the alcove corresponding to $\bx$ is on the same side of the wall $M_{\beta[n]}$ as the fundamental alcove (the alcove corresponding to $1_{W^+}$). In the reductive setting, we recover the geometric interpretation of the affine Bruhat order in term of tiling of the standard apartment.
We start by recalling the geometrical interpretation of the standard Bruhat order of a (vectorial) Weyl group.
\vspace{0.5 cm}

\subsubsection{The vectorial apartment}

We fix a Kac-Moody root datum $\mathcal D=(A,X,Y,(\alpha_i^\vee)_{i\in I},(\alpha_i)_{i\in I})$.
Let $V=Y\otimes_\mathbb Z \mathbb R$, $X$ embeds in its dual $V^\vee$ and the vectorial Weyl group $W^v$ acts naturally on it.  For each root $\beta \in \Phi_+$ let $M_\beta=\{x \in V \mid \langle x,\beta \rangle = 0 \}$, it is a hyperplane of $V$ called \textbf{the vectorial wall associated to $\beta$}. We denote $\mathcal M_0=\{M_\beta\mid \beta\in \Phi_+\}$. The pair $(V,\mathcal M_0)$ is the \textbf{vectorial apartment} of $\mathcal D$.

Inside $V$ we have \textbf{the fundamental chamber} $C^v_f=\{v \in V \mid \langle v,\alpha_i\rangle>0, \forall i\in I\}$ and the \textbf{Tits cone} $\sT=W^v.\overline{C^v_f}$\index{t@$\sT$, $\mathring{\sT}$}, it is a convex cone. We denote its interior by $\mathring{\sT}$, it is the set of elements with finite stabilizer in $W^v$. Note that $Y^+=\sT\cap Y$, $Y^+_{sph}=\mathring{\sT}\cap Y$ and $Y^{++}=\overline{C^v_f}\cap Y$. We extend the definition of $^{++}$\index{z@$^{++}$} to $\sT\cup -\sT$: for $x\in \sT$ (resp. $x\in -\sT$), we denote by $x^{++}$ the unique element of $\overline{C^v_f}\cap W^v.x$ (resp. $-\overline{C^v_f}\cap W^v.x$).

A (positive) \textbf{vectorial chamber} is a set of the form $w.C^v_f$ for $w \in W^v$. Since $C^v_f$ has trivial stabiliser in $W^v$, the set of chambers is in natural bijection with $W^v$ by \index{c@$C^v_f$, $C^v_w$} $w\mapsto C^v_w:=wC^v_f$. Note that the vectorial chambers are the connected components of $\sT\setminus (\bigcup\limits_{\beta \in \Phi_+}M_\beta)$. Moreover, if $\beta=w(\alpha_i)$ with $\alpha_i$ a simple root, then $M_\beta \cap \sT$ contains $\overline{C^v_w}\cap \overline{C^v_{ws_i}}$, which we call the \textbf{panel} of type $i$ of the chamber $C_w$. We define negative chambers as the opposite of positive chambers. If $\Phi$ is infinite, then positive and negative chambers form two distinct $W^v$-orbit.
\vspace{0.5 cm}

We can put a structure of simplicial complex on $\sT$, for which the chambers are the cells of maximal rank and the panels are the cells of maximal rank within non-chambers. The walls split the Tits cone in two parts, and separate the set of vectorial chambers in two: say that $C^v_w$ is on the positive side of $M_\beta$ if $w^{-1}(\beta)>0$. In particular since $\beta$ is a positive root, the positive side is always the one which contains the fundamental chamber.
\begin{Remark}
    The condition is on $w^{-1}(\alpha)$ and not $w(\alpha)$ because $W^v$ acts on the left on $V$ and the contragredient action on its dual, which contain the roots, is more naturally a right action: $\langle w.y,\alpha\rangle=\langle y,w^{-1}(\alpha)\rangle$
\end{Remark}
Then the vectorial Bruhat order can be interpreted by:
    $s_\beta w>w$ if and only if, when we split $\sT$ along $M_\beta$ the chambers $C^v_w$ and $C^v_f$ are in the same connected component of $\sT$, that is to say $C^v_w$ is on the positive side of $M_\beta$. 
    
Moreover $\Inv(w^{-1})$ the inversion set of $w^{-1}$ can be interpreted as the set of walls separating the chamber $C^v_w=w.C^v_f$ from the fundamental chamber $C^v_f$.

\begin{figure}[h]

    \centering
    \includegraphics[width=\textwidth]{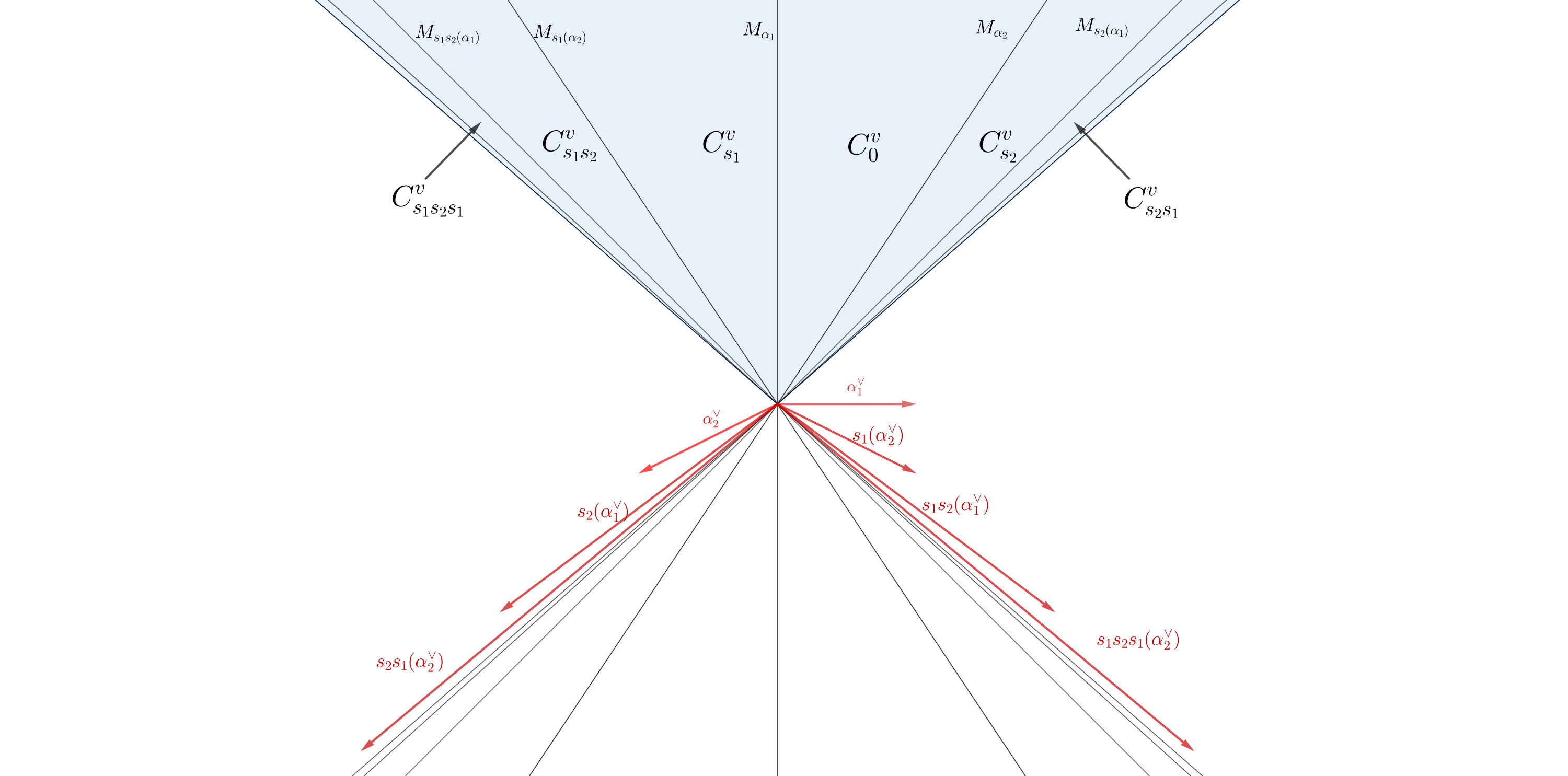}
    \caption{The vectorial apartment for a root datum of with Kac-Moody Matrix $\begin{pmatrix}
        2 & -3 \\ -2 & 2
    \end{pmatrix}$.}
    \label{fig: figure2}
\end{figure}


\subsubsection{The affine apartment}\label{subsection: affine apartment}
We now define the affine apartment associated to a Kac-Moody root datum, as it is defined in \cite{gaussent2014spherical}. Let $\mathcal D=(A,X,Y,(\alpha_i^\vee)_{i\in I},(\alpha_i)_{i\in I})$ be a Kac-Moody root datum and let $\A$ be the real affine space associated to $V=Y\otimes\mathbb R$.

To any positive affine root $\beta[n] \in \Phi_+^a$ corresponds an affine hyperplane $M_{\beta[n]}$, the \textbf{affine wall} associated to the affine root $\beta[n]$. This hyperplane is equal to $(\beta[n])^{-1}(\{0\})$ if we consider $\beta[n]$ as an affine form on $\A$. Explicitly, we therefore have \index{m@$M_{\beta[n]}$}\begin{equation}
    M_{\beta[n]}=\{x \in \A \mid \langle x,\beta\rangle +n=0 \}.\end{equation} 
    Let $\mathcal M^a=\{M_{\beta[n]}\mid \beta[n]\in\Phi_+^a\}$\index{m@$\cM^a$} be the set of affine walls. Then the \textbf{affine apartment} is the pair $(\A,\mathcal M^a)$. If $n\in \mathbb R\setminus\mathbb Z$ and $\beta\in \Phi$, the affine hyperplane $\{x\in \A \mid \langle x,\beta\rangle+n=0\}$ is referred as a \textbf{ghost wall}, and is still denoted $M_{\beta[n]}$. A \textbf{half-apartment} is any region of $\A$ delimited by an affine wall, hence any region of the form $\{x \in \A \mid \langle x,\beta\rangle +n \mathcal R 0\}$, for $\mathcal R \in \{<,>,\leq,\geq\}$. 

The group $W^v\ltimes Y$ acts by affine transformations on $\A$, by the formula $\qp^\lambda w(x)=-\lambda+w(x)$. This action preserves $\mathcal M^a$.

To give an interpretation of the affine Bruhat order, we need to define the affine analog of vectorial chambers, which are called alcoves. However, when the root system is infinite, the set of affine walls is not locally discrete, and therefore the connected components of $\A \setminus \cup_{M\in \mathcal M^a} M$ have empty interior and are not adapted. There are two ways of dealing with this issue, either by working in the tangent space of $\A$, as it is done in \cite{philippe2023grading}, or by defining affine chambers no longer as sets but as filters of sets, which is better suited for masure theoretic purposes and therefore is more often done in the literature (see \cite{gaussent2014spherical}).
\paragraph{Filters}A \textbf{filter} on a set $E$ is a nonempty set $\VC$ of nonempty subsets of $E$ such that, for all subsets $S$, $S'$ of $E$,  if $S$, $S'\in \VC$ then $S\cap S'\in \VC$ and, if $S'\subset S$, with $S'\in \VC$ then $S\in \VC$.

Let $E,E'$ be  sets, $E'\subset E$ and $\VC$ be a filter on $E'$. One says that a set $\Omega\subset E$ contains $\VC$ if there exists $\Omega'\in \VC$ such that $\Omega'\subset \Omega$ (or equivalently if $\Omega\in \VC$ if $E=E'$). Let $f:E\rightarrow E$. One says that $f$ fixes $\VC$ if there exists $\Omega'\in \VC$ such that $f$ fixes $\Omega'$. 

A \textbf{basis} for a filter $\VC$ is a subset $\BC\subset \VC$ such that $\VC=\{S\subset E \mid \exists B\in \BC,\; B\subset S\}$. If $\BC$ is a set of sets closed by finite intersection and not containing the empty set, then $\{S\subset E \mid \exists B\in \BC,\; B\subset S\}$ is a filter with basis $\BC$.  

\paragraph{Alcoves}
Let $\|.\|$ be an arbitrary norm on $V$, and for any $x \in \A$, $r\in \mathbb R_+$ let $B(x,r)$ denote the open ball of radius $r$ centered at $x$ for $\|.\|$.
Let $x\in \A$, $w\in W^v$ and $\varepsilon\in \{+,-\}$. The (closed) \textbf{alcove based at $x$ and of vectorial direction $\varepsilon C^v_w$} is the filter with basis $\{(x+\varepsilon \overline{C^v_w}) \cap B(x,r)\mid r>0\}$, it is denoted \index{a@$a(x,\varepsilon w)$} $a(x,\varepsilon w)$, $\varepsilon$ is its \textbf{sign}. 
The \textbf{sector} generated by $a(x,\varepsilon w)$ is the set $x+\varepsilon \overline{C^v_w} \subset \A$. When $(\A,\mathcal M^a)$ is discrete (when the Kac-Moody root system is of finite type), we recover the definition of pointed alcoves: alcoves with a choice of base-point.

An alcove is well-defined by its base-point and its vectorial direction: if $a$ is an alcove, then its base-point $x$ is the unique point of $\A$ such that $\forall U \in a,\; x\in U$, and its vectorial direction is the unique vectorial cone of the form $\varepsilon C^v_w$ such that $(x+\varepsilon C^v_w) \cap U$ has non-empty interior for all $U\in a$. We denote by $a^v$ the direction of an alcove $a$.

The semi-direct product $W^v\ltimes Y$ acts freely on the set of alcoves, through the following action (induced by its action on $\A$):
$$\qp^\lambda w.a(x,\varepsilon v)=a(w.x-\lambda,\varepsilon wv).$$ 
We can therefore identify $W^+=Y^+\rtimes W^v$, with a subset of alcoves: to any $\qp^\lambda w \in W^+$, the associated alcove is $a(-\lambda,+w)=\qp^\lambda w.a(0_\A,1_{W^v})$, which we denote \index{c@$C_{\qp^\lambda w}$} $C_{\qp^\lambda w}$.

The map $(x,\varepsilon,w)\mapsto a(x,\varepsilon w)$ induces a bijection between the set of alcoves and $\A\times \{-,+\}\times W^v$. Using this bijection, we equip the set of alcoves with the product topology, where $\A$ is equipped with the usual topology of finite dimensional affine space and $\{-,+\}\times W^v$ is equipped with the discrete topology. Given any point $x\in \A$, the set of alcoves based at $x$ forms the tangent fundamental apartment at $x$, which has a particular combinatorial structure described below.

\paragraph{Tangent apartment at a point} Let $x\in \A$ and $\varepsilon\in \{+,-\}$. Denote by $\cT^\varepsilon_x(\A)$  the set of alcoves of sign $\varepsilon$ based at $x$. We define a $W^v$-valued distance function $d^\varepsilon: \cT^\varepsilon_x(\A_\oplus)\rightarrow W^v$\index{d@$d^\varepsilon,d^+,d^-$}: if $a_1,a_2\in \cT^\varepsilon_x(\A)$, there exist unique elements $w_1,w_2\in W^v$ such that $a_1=a(x,\varepsilon w_1)$ and $a_2=a(x,\varepsilon w_2)$. Then we set: \begin{equation}\label{e_def_d_epsilon}
    d^\varepsilon(a_1,a_2)=w_1^{-1} w_2.
\end{equation}

We also define a codistance function: $d^\ast: \cT^+_x(\A_\oplus)\times \cT^-_x(\A_\oplus) \cup \cT^-_x(\A_\oplus)\times\cT^+_x(\A_\oplus) \rightarrow W^v$\index{d@$d^\ast$}:

If $(a_+,a_-)\in \cT^+_x(\A_\oplus)\times \cT^-_x(\A_\oplus)$, similarly there exist  $w_+,w_-\in W^v$ such that $a_+=a(x,+ w_+)$ and  $a_-=a(x,- w_-)$. Then set $d^\ast(a_+,a_-)=w_+^{-1}w_-$ and \begin{equation}\label{e_def_d_ast}
    d^\ast(a_-,a_+)=w_-^{-1}w_+=d^\ast(a_+,a_-)^{-1}.
\end{equation} We say that $a_-,a_+$ are \textbf{opposite} if $d^\ast(a_-,a_+)=1_{W^v}$.
We call $(\cT^\pm_x(\A),d^\pm,d^\ast)$ the \textbf{standard tangent apartment} at $x$. In building theoretic terms, it is a twin apartment (or twin thin building) of type $(W^v,S)$.

\paragraph{Geometric interpretation of the affine Bruhat order}

Mirroring the classical situation, \index{c@$C_0$} $C_0=a(0_\A,+1_{W^v})$ is called the \textbf{fundamental alcove} and $W^+$ acts on $\{C_{\bx}\mid \bx \in W^+\}$ simply transitively. Affine walls (and ghost walls) separate naturally the set of alcoves in two and we call the side containing $C_0$ the positive side. Note that an affine root $(\beta,n)$ is positive if and only if $\beta(C_0)+n\subset \mathbb R_+$, therefore the positive side of an affine wall corresponds to the associated positive root. Let $\beta\in \Phi$, $n\in \Z$ and $\qp^\lambda w\in W^+$.
\begin{itemize}
\item If $n\neq 0$ and $-\lambda \notin M_{\beta[n]}$, then $C_{\qp^\lambda w}$ is on the positive side of $M_{\beta[n]}$ if and only if $-\lambda$ is in the same connected component of $\A \setminus M_{\beta[n]}$ than $0_\A$, that is to say $|n|-\sgn(n)\langle \lambda,\beta\rangle>0$.
\item If $n>0$ (resp. $n<0$) and $-\lambda \in M_{\beta[n]}$, it is on the positive side if and only if $C^v_w$ is on the positive side (resp. negative side) of $M_\beta$, that is to say $\sgn(n)w^{-1}(\beta)>0$.
\item If $n=0$ and $-\lambda\notin M_{\beta[0]}$ then $C_{\qp^\lambda w}$ is on the positive side of the wall if and only if $\langle \lambda,\beta\rangle <0$ hence on the side of $C^v_f$.
\item If $n=0$ and $-\lambda \in M_{\beta[0]}$, then $C_{\qp^\lambda w}$ is on the positive side of the wall if and only if $C^v_w$ is on the positive side of $M_\beta$, that is to say $w^{-1}(\beta)>0$, hence on the side of $C_0$. 
\end{itemize}

Then the $W^+$-Bruhat order can be interpreted by: $s_{\beta[n]}\qp^\lambda w >\qp^\lambda w$ if and only if $C_{\qp^{\lambda}w}$ is on the positive side of $M_{\beta[n]}$.
\begin{Remark}
    Another possible choice of embedding would be to identify $\qp^\lambda w$ to the alcove $a(\lambda,-w)$ and define $M_{\beta[n]}$ as $\{x\mid \langle x,\beta\rangle =n\}$ but either way, the vectorial direction of the alcove needs to be of opposite sign of its base-point. Our choice is motivated by the choices made in \cite{twinmasures}.
\end{Remark}
\begin{figure}[h]

    \centering
    \includegraphics[width=\textwidth]{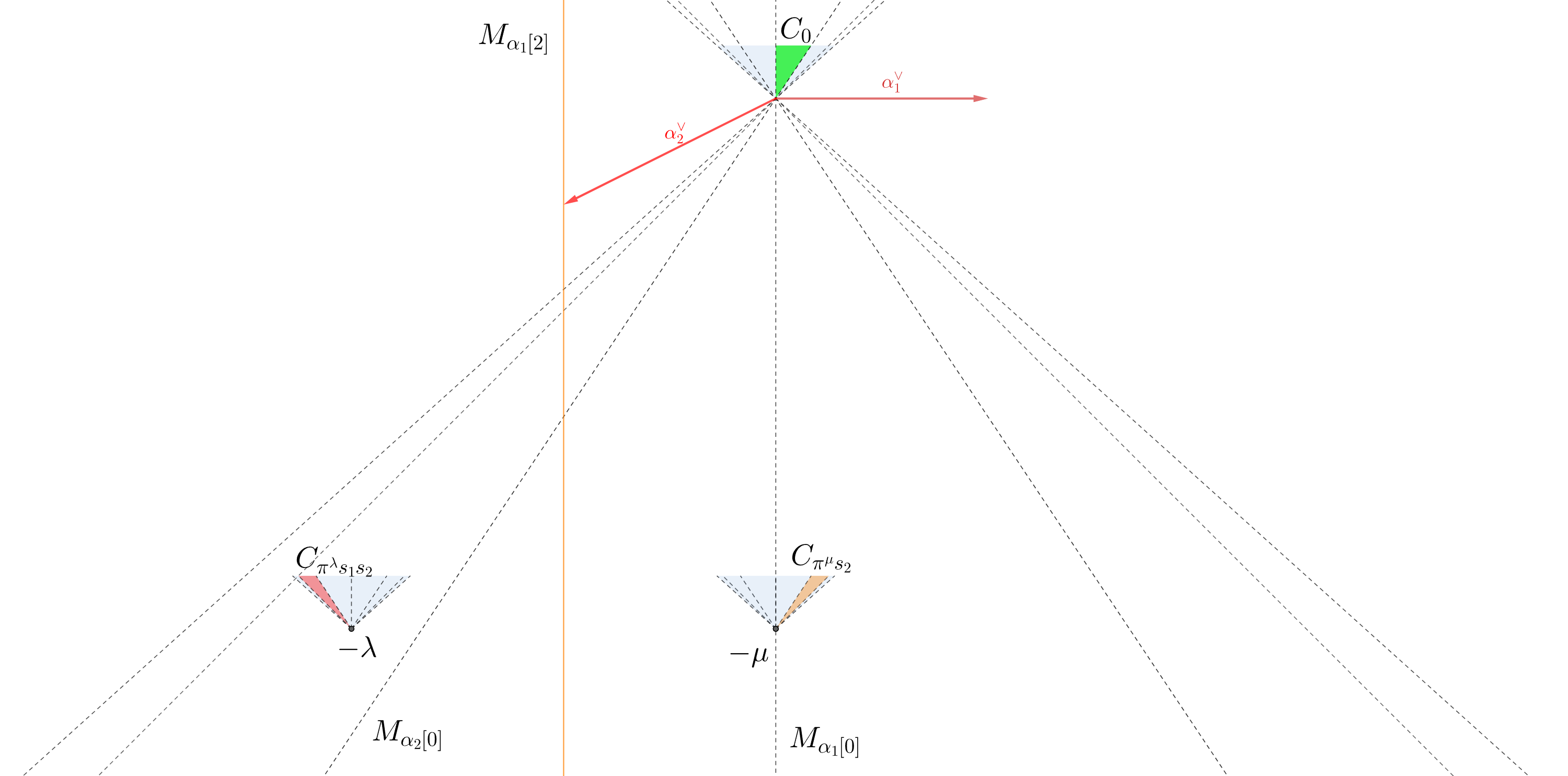}
    \caption{The affine apartment for a root datum with Kac-Moody Matrix $\begin{pmatrix}
        2 & -3 \\ -2 & 2
    \end{pmatrix}$.}
    \label{fig: figure3}
\end{figure}

\subsubsection{Segments, paths, segment germs}

We end this preliminary section by formally introducing segments, paths and segment germs in the standard apartment. We also define projections of alcoves on points and on segment germs, the projections will play an important computational role.

By \textbf{segment} in $\A$, we mean an affine map $\fs: [t_{in},t_f]\mapsto \A$ where $t_{in}<t_f$ are real numbers. 

A \textbf{path} in $\A$ is a piece-wise affine map $[t_{in},t_f]\mapsto \A$, with $t_{in}<t_f\in \R$ . That is to say, it is a continuous map $\fc:[t_{in},t_f]\mapsto\A$ such that there exists a finite number of times $t_0=t_{in}<t_1<\dots<t_n=t_f$, and segments $(\fs_k)_{k\in \llbracket1,n\rrbracket}$ such that, for each $k\in \llbracket1,n\rrbracket$, $\dom(\fs_k)=[t_{k-1},t_k]$ (where $\dom$ denotes the domain of a map) and $\fc(t)=\fs_k(t)$ for all $t\in [t_{k-1},t_k]$. We identify two paths $\fc$, $\fc'$ if $\im(\fc)=\im(\fc')$ and $\fc^{-1}\circ \fc'$ has positive derivative. Therefore we may always assume that the domain of a path (and in particular of a segment) is $[0,1]$.

\begin{Definition}[Segment germs]\label{Definition_segment_germ}
    For $t\in[0,1)$, (resp. $t\in[0,1)$, resp.  $t\in(0,1)$) let $[t,t^+)$ (resp. $(t^-,t]$, resp. $(t^-,t^+)$) be the filter of $[0,1]$ with basis $\{[t,t+\varepsilon)\mid \varepsilon>0\}$ (resp. $\{(t-\varepsilon,t]\mid \varepsilon>0\}$, resp.  $\{(t-\varepsilon,t+\varepsilon)\mid \varepsilon>0\}$). Filters of this form and their images in $\A$ by affine maps from $\R$ to $\A$ are called \textbf{segment germs}.
    
    In particular, for any path $\fc: [0,1]\mapsto \A$, we denote by \index{c@$\fc_+(t)$, $\fc_-(t)$} $\fc_+(t)$ the segment germ $\fc_+(t):=\fc([t,t^+))$ and by $\fc_-(t)$ the segment germ $\fc_-(t):=\fc((t^-,t])$.  So $\fc_+(t)$ is the filter of $\A$ with basis $\{\fc([t,t+\varepsilon))\mid \varepsilon>0\}$. Note that, by definition of the derivative and since $\fc$ is piece-wise affine, $\fc_+(t)=\fc(t)+[0,0^+)\fc'_+(t)$  and  $\fc_-(t)=\fc(t)+(0^-,0]\fc'_-(t)$ where $\fc'_+(t)$ (resp. $\fc_-'(t)$) denotes the right (resp. left) derivative of $\fc$ at $t$. The derivatives $\fc'_+(t)$ and $\fc_-'(t)$ depend on the choice of parametrization, but the filters $\fc_+(t)$, $\fc_-(t)$ do not.
\end{Definition}

\begin{Definition}[$\lambda$-paths]

Let $\lambda\in \A$. A $\lambda$-path is a piecewise affine map $\fc:[0,1]\rightarrow \A$ such that for all $t\in (0,1)$, we have $\fc'_{\pm}(t)\in W^v.\lambda$. In particular, the path $\fs_\lambda:t\mapsto -t\lambda$\index{s@$\fs_\lambda$} is a $-\lambda$-path which we call the \textbf{standard segment of type $\lambda$}. 
    
\end{Definition}

\paragraph{Projection on points and on segment germs}
 For $x,y\in \A$, we write $x\leq y$\index{z@$\leq$} if $y-x\in \sT$. Since $\sT$ is a convex cone, it is  a preorder on $\A$, called the \textbf{Tits preorder}.
 
Let $x,y\in \A$ be such that  $x\leq y$ (resp. $y\leq x$) and $a\in \cT^\pm_y(\A)$ be an alcove based at $y$. Then the \textbf{projection of $a$ at $x$}, denoted $\pr_x(a)\in \cT^+_x(\A)$ (resp. $\pr_x(a)\in \cT^-_x(\A)$), \index{p@$\pr_x(a)$} is the unique alcove based at $x$ which contains $a$ in its sector, it should be thought of as "the closest alcove based at $x$ from $a$" (see \cite[\S 2.1]{bardy2021structure} for a proof of uniqueness). 

Suppose that $\fs$ is a path in $\A$ and $t\in [0,1]$ is such that $\fs(t)=:x$ and $\fs'_-(t)\in \mp\mathring{\sT}$. Then the segment germ $\fs_-(t)=x+(0^-,0]\fs'_-(t)$ is contained in finitely many alcoves of $\cT^\pm_x(\A)$, and the projection of $a$ on $\fs_-(t)$ is the unique alcove $\pr_{\fs_-(t)}(a)$ containing $\fs_-(t)$ which minimizes the distance to the point projection $\pr_x(a)$. Similarly, if $\fs'_+(t)\in \mp\mathring{\sT}$ then $\fs_+(t)$ is contained in a finite number of alcoves of $\cT^\mp_x(\I_\oplus)$, and we define $\pr_{\fs_+(t)}(a)$ as the unique alcove containing $\fs_+(t)$ which maximizes the codistance to $\pr_x(a)$. Again $\pr_{\fs_\varepsilon(t)}(a)$ should be thought of as "the closest alcove from $a$ containing $\fs_\varepsilon(t)$".

\begin{Lemma}\label{Lemma: projection walls}
    Let $\fs:[t_0,t_1]\rightarrow  \A$ be a segment, where $t_0,t_1\in \R$. Let $C$ be an alcove based at $\fs(t_1)$. Let $D$ be a half-apartment containing $\fs(t_0)$ and $C$. Then $D$ contains $\pr_{\fs_{+}(t_0)}(C)$.
\end{Lemma}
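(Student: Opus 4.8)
The plan is to move everything into the tangent twin apartment at $x_0 := \fs(t_0)$ and to recognise $\pr_{\fs_+(t_0)}(C)$ as a combinatorial projection onto a residue, which will not cross the wall $\partial D$; this reduces the statement to its analogue for point projections.

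Write $D = \{z\in\A \mid \langle z,\beta\rangle + n \geq 0\}$ (or with strict inequality) for a real root $\beta$ and $n\in\R$, and set $M := \partial D$. If $x_0$ lies in the open part of $D$, then a ball around $x_0$ is contained in $D$, hence so is a representative of the filter $\pr_{\fs_+(t_0)}(C)$, and we are done. So I may assume $x_0\in M$; then $D = \{\langle z - x_0,\beta\rangle\geq 0\}$ is closed, and since $C\subset D$ forces $x_1 := \fs(t_1)\in\overline{D} = D$, we have $\langle x_1 - x_0,\beta\rangle\geq 0$. If this inequality is strict, the vectorial direction of any alcove containing the germ $\fs_+(t_0)$ is a Weyl chamber whose closure contains $x_1 - x_0$ and therefore meets $\{\langle\cdot,\beta\rangle>0\}$; since an open Weyl chamber lies on a single side of the vectorial wall $M_\beta$, the whole direction cone lies in $\{\langle\cdot,\beta\rangle\geq 0\}$, so the alcove — in particular $\pr_{\fs_+(t_0)}(C)$ — lies in $D$.

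The essential case is $\langle x_1 - x_0,\beta\rangle = 0$, i.e.\ when the segment $\fs$ lies entirely inside the wall $M$. First I would check that the point projection $P := \pr_{x_0}(C)$ lies in $D$: its sector contains a representative of the filter $C$, which lies in $D$, and a short argument with convex cones — the direction cone of $C$ is contained in that of $P$ and is on the $\beta$-nonnegative side because $x_1\in M$ and $C\subset D$ — shows that the direction cone of $P$ meets $\{\langle\cdot,\beta\rangle>0\}$, hence lies in $\{\geq 0\}$. (This is the standard fact that point projections preserve half-apartments; one could instead cite \cite{bardy2021structure} or \cite{twinmasures}.) Next, by definition $\pr_{\fs_+(t_0)}(C)$ is the alcove based at $x_0$ of the appropriate sign whose vectorial direction contains $\fs_+(t_0)$ and which is extremal with respect to $P$ for the distance $d^\pm$, resp.\ the codistance $d^*$; in building-theoretic terms it is the projection of $P$ onto the residue $\mathcal{R}$ of all such alcoves, computed inside the tangent twin apartment $(\cT^\pm_{x_0}(\A),d^\pm,d^*)$, which is a twin thin building of type $(W^v,S)$ — and $\mathcal{R}$ is a \emph{spherical} residue since the germ direction $\pm(x_1-x_0)$ is spherical. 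Because $\fs\subset M$, the reflection attached to $M$ fixes $x_0$ and the germ $\fs_+(t_0)$ pointwise, hence stabilises $\mathcal{R}$; so $M$ is one of the walls of $\mathcal{R}$, and $\mathcal{R}$ meets both sides of $M$ (that reflection exchanges the two sides and $\mathcal{R}\neq\emptyset$). By standard properties of projections onto spherical residues in (twin) Coxeter complexes, the walls crossed in realising this projection are exactly those separating the relevant chamber from all of $\mathcal{R}$; since $M$ is a wall of $\mathcal{R}$ it is not among them, so the projection remains on the side of $M$ determined by $P$, which — after the sign bookkeeping — is the $D$-side. Hence $\pr_{\fs_+(t_0)}(C)\subset D$.

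The main obstacle lies entirely in this last case: one must match the "germ plus extremal $d^\pm$ / $d^*$" description of $\pr_{\fs_+(t_0)}(C)$ with the notion of combinatorial projection onto a residue of the tangent twin building, and then carry out the sign bookkeeping (positive versus negative alcoves; in the $d^*$ case the projection is the \emph{farthest} chamber of $\mathcal{R}$ from the opposite of $P$, which brings in the longest element of the residual Weyl group) needed to turn "same side of $M$ as $P$" into "inside $D$". The point-projection step, while conceptually transparent, requires a somewhat fussy convex-cone computation.
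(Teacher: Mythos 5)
Your case division is exactly the paper's: (i) $\fs(t_0)$ in the open part of $D$, (ii) $\fs(t_0)\in M:=\partial D$ but the segment not contained in $M$, (iii) the segment contained in $M$. In cases (i) and (ii) your arguments coincide with the paper's (a ball around $\fs(t_0)$, respectively the observation that $\beta(\fs(t_1)-\fs(t_0))>0$ forces the direction cone of every alcove dominating $\fs_+(t_0)$ onto the nonnegative side of $M_\beta$). The difference is in case (iii): the paper disposes of it in one line by noting that $D$ contains $\mathrm{prism}_{\fs_+(t_0)}(C)$ and citing \cite[Lemma 2.1]{bardy2021structure}, whereas you reconstruct that lemma's content from scratch — first showing the point projection $\pr_{\fs(t_0)}(C)$ stays in $D$, then identifying $\pr_{\fs_+(t_0)}(C)$ as the (co)projection of it onto the spherical residue of alcoves dominating the germ and observing that $M$, being a wall of that residue, is never crossed. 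That identification and the gate/cogate property you invoke are indeed standard (the codistance case reduces to: if $s_\gamma$ stabilizes a coset $vW_J$ and $y$ is the longest element of $vW_J$, then $y^{-1}$ inverts the positive root in $\pm\gamma$), so your sketch is sound; what you buy is self-containedness, at the cost of the sign bookkeeping you yourself flag, which is precisely what the cited prism lemma packages away. Two cosmetic points: the intermediate claim that the direction cone of $C$ is contained in that of $\pr_{\fs(t_0)}(C)$ is not needed (and not immediate from convexity alone) — the sign argument on $\beta$ of the direction cone suffices; and the sphericity of the residue you use is indeed available, since it is implicit in the hypothesis that $\pr_{\fs_+(t_0)}(C)$ is defined at all (the paper only defines germ projections when $\fs'_+(t_0)\in\pm\mathring{\sT}$).
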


\begin{proof}
Let $\underline{\beta}=(\beta,n)\in \Phi^a$ be such that $D=\{x\in \A\mid \underline{\beta}(x)\geq 0\}$. Set $M=\underline{\beta}^{-1}(\{0\})$. If $M\supset [\fs(t_0),\fs(t_1)]$, then $D\supset \mathrm{prism}_{\fs_+(t_0)}(C)$, where $\mathrm{prism}$ is defined in \cite[2.1.2]{bardy2021structure}. Then by \cite[Lemma 2.1]{bardy2021structure}, $D$ contains $\pr_{\fs_+(t_0)}(C)$.
    
    Assume now that $M\not\supset [\fs(t_0),\fs(t_1)]$. If$\underline{\beta}(\fs(t_0))>0$, then $\underline{\beta}(C')>0$ for any alcove $C'$ based at $\fs(t_0)$ and in particular $\pr_{\fs_+(t_0)}(C)\subset D$. Assume $\underline{\beta}(\fs(t_0))=0$. Then $\beta(\fs(t_1)-\fs(t_0))>0$.  Write $\pr_{\fs_+(t_0)}(C)=a(\fs(t_0),\varepsilon v)$, with $\varepsilon\in \{-,+\}$ and $v\in W^v$. We have $\beta(\varepsilon v.C^v_f)\in \{\R_{>0},\R_{<0}\}$ and as $\overline{\varepsilon v.C^v_f}\supset \fs(t_1)-\fs(t_0)$, we have $\beta(\varepsilon v.C^v_f)=\R_{>0}$. Therefore $\underline{\beta}(\fs(t_0)+\varepsilon v.C^v_f)=\R_{>0}$, hence $\underline{\beta}(\pr_{\fs_+(t_0)}(C))\subset \underline{\beta}(\fs(t_0)+\overline{\varepsilon v.C^v_f})=\R_{\geq 0}$ and the lemma follows.
\end{proof}

\begin{Lemma}\label{Lemma: projection codistance}
Let $\lambda \in \mathring{\sT}$, let $t_1<t_2\in \R$ and let $\fs: [t_1,t_2]\mapsto \A$ be a segment such that $\fs'_+(t_1)=\pm \lambda$. Let $u\in W^v$ and suppose that $a=a(\fs(t_1),\pm u)$ dominates $\fs_+(t_1)$. Then $\pr_{\fs_-(t_2)}(a)=a(\fs(t_2),\mp u w_{\lambda,0})$, where $w_{\lambda,0}$ is the maximal element of the finite standard parabolic subgroup $W_{\lambda^{++}}$.
\end{Lemma}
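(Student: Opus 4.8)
The approach is to compute the two projections in turn: first the \emph{point} projection $\pr_{\fs(t_2)}(a)$, and then the \emph{germ} projection $\pr_{\fs_-(t_2)}(a)$, which by its very definition is the unique alcove dominating $\fs_-(t_2)$ that is nearest (for $d^-$) to $\pr_{\fs(t_2)}(a)$. I treat only the upper-sign case; the lower-sign case follows verbatim after exchanging the two sectorial signs $+/-$ throughout, the cone computation below being symmetric. Write $y=\fs(t_1)$, $x=\fs(t_2)$, $\delta=t_2-t_1>0$; since $\fs$ is affine we have $x=y+\delta\lambda$, so $y\leq x$ for the Tits preorder and $\fs'_-(t_2)=\lambda$. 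That $a=a(y,+u)$ dominates $\fs_+(t_1)=y+[0,0^+)\lambda$ means exactly $\lambda\in u\overline{C^v_f}$; since $\lambda\in\mathring\sT\subseteq\sT$ and $\overline{C^v_f}$ meets each $W^v$-orbit in $\sT$ exactly once, this forces $u^{-1}\lambda=\lambda^{++}$, hence $\lambda=u\lambda^{++}$. Set $J=\{i\in I\mid \langle\lambda^{++},\alpha_i\rangle=0\}$; then $W_{\lambda^{++}}=W_J$ is finite by sphericity of $\lambda$, its longest element is $w_{\lambda,0}$, and $w_{\lambda,0}\lambda^{++}=\lambda^{++}$.

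For the point projection I claim $\pr_x(a)=a(x,-uw_{\lambda,0})$. The point projection is $W^v$-equivariant, $w.\pr_x(a)=\pr_{w.x}(w.a)$, because the $W^v$-action carries alcoves to alcoves compatibly with sectors and with the ($W^v$-stable) Tits preorder; applying $u^{-1}$ thus reduces the claim to $\pr_{x'}\!\big(a(y',+1_{W^v})\big)=a(x',-w_{\lambda,0})$ with $x'=u^{-1}x$ and $y'=u^{-1}y=x'-\delta\lambda^{++}$. By definition $\pr_{x'}(\cdot)$ is the unique alcove at $x'$ whose sector contains the filter $a(y',+1)$, so, after translating by $-x'$, the claim reduces to exhibiting an $r>0$ with
\[
\big(-\delta\lambda^{++}+\overline{C^v_f}\big)\cap B\big(-\delta\lambda^{++},r\big)\ \subseteq\ -\,w_{\lambda,0}\,\overline{C^v_f}.
\]
Here I would use that $\overline{C^v_f}=\{v\mid \langle v,\alpha_i\rangle\geq 0\ \forall i\in I\}$ (the closure of the nonempty finite intersection of open half-spaces defining $C^v_f$), so that for $d\in\overline{C^v_f}$ the membership of $-\delta\lambda^{++}+d$ in $-w_{\lambda,0}\overline{C^v_f}$ is equivalent to $\langle w_{\lambda,0}(\delta\lambda^{++}-d),\alpha_i\rangle\geq 0$ for all $i$, i.e.\ (as $w_{\lambda,0}$ fixes $\lambda^{++}$) to $\langle\delta\lambda^{++}-w_{\lambda,0}d,\alpha_i\rangle\geq 0$. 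For $i\in J$ this holds: $\langle\lambda^{++},\alpha_i\rangle=0$, and $\langle w_{\lambda,0}d,\alpha_i\rangle=\langle d,w_{\lambda,0}^{-1}\alpha_i\rangle\leq 0$ since $w_{\lambda,0}$, the longest element of $W_J$, carries $\alpha_i$ to a negative root while $d\in\overline{C^v_f}$. For $i\notin J$ one has $\langle\delta\lambda^{++},\alpha_i\rangle>0$ strictly, so the inequality is open and persists for small $d$; finiteness of $I$ then gives a uniform $r$. Uniqueness being built into the definition of $\pr_{x'}$, this settles the point projection.

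For the germ projection, note that $\fs_-(t_2)=x+(0^-,0]\lambda$ has direction $-\lambda$, so an alcove $a(x,-v)\in\cT^-_x(\A)$ dominates it iff $\lambda\in v\overline{C^v_f}$, i.e.\ $v^{-1}\lambda=\lambda^{++}$, i.e.\ $v\in uW_{\lambda^{++}}$; hence the alcoves of $\cT^-_x(\A)$ dominating $\fs_-(t_2)$ are precisely the finitely many $a(x,-uw)$, $w\in W_{\lambda^{++}}$. By definition $\pr_{\fs_-(t_2)}(a)$ is the one minimizing $\ell\big(d^-(\,\cdot\,,\pr_x(a))\big)$; since the point projection $\pr_x(a)=a(x,-uw_{\lambda,0})$ is itself one of these alcoves ($w_{\lambda,0}\in W_{\lambda^{++}}$) and realizes the distance $1_{W^v}$ to itself, it is that minimizer, proving $\pr_{\fs_-(t_2)}(a)=a(x,-uw_{\lambda,0})$. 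The real content — and the only delicate point — is the point-projection step: one has to see that the alcove is turned by the longest element $w_{\lambda,0}$ of $W_{\lambda^{++}}$ rather than merely transported along $\fs$ by $u$, a twist which is trivial when $\lambda^{++}$ is regular but genuine in general, and which is exactly where sphericity of $\lambda$ intervenes; the remainder is bookkeeping with the definitions of projection and of segment germ. (Alternatively, applying Lemma~\ref{Lemma: projection walls} to the reversed segment $s\mapsto x+s(y-x)$ shows directly that $\pr_{\fs_-(t_2)}(a)$ lies in every half-apartment through $x$ containing $a$; one could then finish by checking that these half-apartments pin down $a(x,-uw_{\lambda,0})$ among the $a(x,-uw)$.)
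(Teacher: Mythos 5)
Your proof is correct, but it takes a genuinely different route from the paper's. The paper never computes the point projection: it works directly with the germ projection via the characterization "no wall containing $\fs_-(t_2)$ separates $\tilde a$ from $a$", lists the candidate alcoves $a(\fs(t_2),-\varepsilon uw)$ for $w\in W_{\lambda^{++}}$, and shows by a wall-separation argument (the wall of direction $u(\beta)$ through $\fs(t_2)$, for $\beta\in\Phi_+$ with $\langle\lambda^{++},\beta\rangle=0$) that any non-maximal $w$ is excluded, forcing $w=w_{\lambda,0}$. You instead compute $\pr_{\fs(t_2)}(a)=a(\fs(t_2),\mp uw_{\lambda,0})$ explicitly, by reducing via $W^v$-equivariance to the cone containment $(-\delta\lambda^{++}+\overline{C^v_f})\cap B(-\delta\lambda^{++},r)\subset -w_{\lambda,0}\overline{C^v_f}$ and checking it coordinate-by-coordinate on the simple roots (where $w_{\lambda,0}\alpha_i\in\Phi_-$ for $i\in J$ does the work that $s_\beta w<w$ does in the paper); you then observe that this point projection already dominates $\fs_-(t_2)$, so it is trivially the unique minimizer of the distance in the definition of the germ projection. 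Your route buys an explicit identification of the point projection (a fact of independent use) and makes the role of the longest element completely concrete; the paper's route stays entirely at the level of walls and separation and generalizes more directly to situations where the point projection does not dominate the germ. Both arguments are sound; yours is, if anything, the more self-contained of the two.
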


\begin{proof}
     By definition of the projection, $\pr_{\fs_-(t_2)}(a)$ is the unique alcove $\tilde a$ dominating $\fs_-(t_2)$ and such that there is no wall containing $\fs_-(t_2)$ and separating $\tilde a$ from $a$.
     
     With the notation of the lemma, let $\varepsilon \in \{+,-\}$ be such that $\fs'_\eta(t)=\varepsilon \lambda$ for all $(t,\eta)\in[t_1,t_2]^\pm$. Since $a=a(\fs(t_1),\varepsilon u)$ dominates $\fs_+(t)$, $\lambda=u\lambda^{++}$ and $W_\lambda = uW_{\lambda^{++}} u^{-1}$. Therefore alcoves dominating $\fs_-(t_2)$ are the alcoves in the set $$\{a(\fs(t_2),-\varepsilon u w) \mid w\in W_{\lambda^{++}}\}=\{a(\fs(t_2),-\varepsilon  \tilde w u \mid \tilde w \in W_{\lambda}\}.$$ Any wall $M$ separating two such alcoves needs to contain the segment germ $\fs_-(t_2)$ and since $\s$ is affine on $[t_1,t_2]$, it also contains $\fs_+(t_1)$. Therefore the half-apartment delimited by $M$ and containing $a=a(\fs(t_1),\varepsilon u)$ also contains $a(\fs(t_2),\varepsilon u)$. Suppose that $w\in W_{\lambda^{++}}$ is such that there exists a root $\beta\in \Phi_+$ with $\langle\lambda^{++},\beta\rangle =0$ and $s_\beta w>w$, and let $M$ denote the wall of direction $u(\beta)$ going through $\fs(t_2)$. Since $\langle \lambda,u(\beta)\rangle = \langle \lambda^{++},\beta\rangle=0$, $M$ contains $\fs_-(t_2)$. Let $x$ denote any point of the vectorial chamber $C^v_w$. Then $s_\beta w> w \iff \langle x,\beta\rangle >0\iff \langle u(x),u(\beta)\rangle>0$. Moreover, by definition $-C^v_u=-u.C^v_f=\{y\in V \mid \langle y,u(\alpha)\rangle <0 \forall \alpha \in \Phi_+\}$. Therefore the vectorial wall of direction $u(\beta)$ separates $-C^v_u$ from $uC^v_w=C^v_{uw}$ if and only if $s_\beta w > w$. By the definition of the projection, we deduce that, if we write $\pr_{\fs_-(t_2)}(a)=a(\fs(t_2),-\varepsilon u w)$ then $s_\beta w < w$ for any root $\beta\in \Phi_+$ such that $\langle \lambda^{++},\beta\rangle =0$. Hence $w$ needs to be maximal in $W_{\lambda^{++}}$, that is to say $w=w_{\lambda,0}$.
\end{proof}

\begin{Lemma}\label{Lemma: projection segments}
   Let $\lambda \in \mathring{\sT}$, let $t_{in},t_f\in \R$ and let $\fs:[t_{in},t_f]\mapsto \A$ be a segment with direction $\fs'_+(t_{in})=\pm \lambda$. Then for any $t_{in}\leq t_1<t_2<t_3\leq t_f$, and any two alcoves $a_1, a_2$ respectively dominating $\fs_+(t_1)$ and $\fs_+(t_2)$ we have $\pr_{\fs_-(t_3)}(a_1)=\pr_{\fs_-(t_2)}(a_2)$ if and only if $a_1$ and $a_2$ have the same direction.
\end{Lemma}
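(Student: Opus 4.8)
The plan is to deduce the lemma from two applications of Lemma~\ref{Lemma: projection codistance}; once that lemma is granted, almost everything is available and what remains is bookkeeping. First I would fix a sign: since $\fs$ is a single affine segment, its one‑sided derivatives are constant, equal at every interior time to $\eta\lambda$, where $\eta\in\{-,+\}$ is determined by $\fs'_+(t_{in})=\eta\lambda$. Hence for $i\in\{1,2\}$ the restriction $\fs|_{[t_i,t_3]}$ is again a segment (with $t_i<t_3$), its initial direction is $\eta\lambda$, and its forward germ at $t_i$ is precisely $\fs_+(t_i)$. Since $\fs_+(t_i)$ points in the direction $\eta\lambda$ with $\lambda\in\mathring{\sT}$, the alcoves $a_1,a_2$ are forced to have sign $\eta$ (this is exactly the situation of Lemma~\ref{Lemma: projection codistance}), so I may write $a_1=a(\fs(t_1),\eta u_1)$ and $a_2=a(\fs(t_2),\eta u_2)$ with $u_1,u_2\in W^v$ uniquely determined; in this notation the condition ``$a_1$ and $a_2$ have the same direction'' is exactly $u_1=u_2$.

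Next I would apply Lemma~\ref{Lemma: projection codistance} to the segment $\fs|_{[t_1,t_3]}$ with the alcove $a_1$, and to the segment $\fs|_{[t_2,t_3]}$ with the alcove $a_2$; in both cases its hypotheses are exactly what the first paragraph provides. Writing $w_{\lambda,0}$ for the longest element of the finite standard parabolic subgroup $W_{\lambda^{++}}$, this yields
\[
\pr_{\fs_-(t_3)}(a_1)=a\bigl(\fs(t_3),-\eta\,u_1 w_{\lambda,0}\bigr)\qquad\text{and}\qquad\pr_{\fs_-(t_3)}(a_2)=a\bigl(\fs(t_3),-\eta\,u_2 w_{\lambda,0}\bigr).
\]
Both of these alcoves are based at $\fs(t_3)$ — which is why the comparison in the statement has to be read between $\pr_{\fs_-(t_3)}(a_1)$ and $\pr_{\fs_-(t_3)}(a_2)$ — and since $(x,\delta,v)\mapsto a(x,\delta v)$ is a bijection onto the set of alcoves, they coincide if and only if $-\eta u_1 w_{\lambda,0}=-\eta u_2 w_{\lambda,0}$, i.e. (as $w_{\lambda,0}$ is invertible in $W^v$) if and only if $u_1=u_2$, i.e. if and only if $a_1$ and $a_2$ have the same direction. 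Both implications of the lemma follow at once.

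I do not expect a genuine obstacle here, since the substantive computation is contained in Lemma~\ref{Lemma: projection codistance}, which I am allowed to use. The only points deserving attention are: checking that the restricted segments $\fs|_{[t_i,t_3]}$ still satisfy the hypotheses of that lemma (which is immediate because $\fs$ is affine throughout, so its direction is constant), and noting that $\lambda\in\mathring{\sT}$ pins down the sign $\eta$ of any alcove dominating $\fs_+(t_i)$, so that ``having the same direction'' reduces to the single group identity $u_1=u_2$ and not also to a separate sign condition.
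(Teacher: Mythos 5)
Your proof is correct and follows exactly the paper's own argument: write $a_i=a(\fs(t_i),\varepsilon u_i)$, apply Lemma~\ref{Lemma: projection codistance} to get $\pr_{\fs_-(t_3)}(a_i)=a(\fs(t_3),-\varepsilon u_iw_{\lambda,0})$, and conclude from right-cancellation of $w_{\lambda,0}$. You also correctly read the statement's $\pr_{\fs_-(t_2)}(a_2)$ as a typo for $\pr_{\fs_-(t_3)}(a_2)$, which is how the paper's proof treats it as well.
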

\begin{proof}
  Let $\varepsilon\in \{+,-\}$ be such that $\fs'_+(t_{in})=\varepsilon\lambda$. For $i\in \{1,2\}$, let $u_i\in W^v$ be such that $a_i=a(\fs(t_i),\varepsilon u_i)$. The alcoves $a_1$ and $a_2$ have same direction if and only if $u_1=u_2$. Moreover, by Lemma~\ref{Lemma: projection codistance}, $\pr_{\fs_-(t_3)}(a_1)=\pr_{\fs_-(t_3)}(a_2)\iff u_1 w_{\lambda,0}=u_2w_{\lambda,0}\iff u_1=u_2$ which proves the result.
\end{proof}

\paragraph{Local fundamental chamber}\label{subsubsection: Local fundamental chamber}For $x\in -\sT$, let \index{c@$C^{++}_x$, $C^{\infty}_x$}$C^{++}_x:=\pr_x(C_0)$ be the projection of the fundamental alcove $C_0$ at $x$, the unique alcove which contains $C_0$ in its sector. By \cite[\S 5.1]{twinmasures}, it is also the unique alcove which is on the positive side of every wall (including ghost walls) which go through $x$. We also define $C^\infty_{x}$ as the opposite alcove of $C^{++}_{x}$ based at $x$.

\begin{Lemma}\label{l_explicit_C++}
Let $x\in -\sT$, write $x=-v^x x^{++}$ with $v^x\in W^v$ of minimal length and $x^{++}\in \overline{C^v_f}$. Then $C^{++}_x$ is the alcove $a(x,+v^x)$, and $C^\infty_x=a(x,-v^x)$. 
    
\end{Lemma}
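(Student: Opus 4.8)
The plan is to unwind the definitions of $C^{++}_x$ and $C^\infty_x$ and match them against the explicit description of projections of alcoves on points. Recall $C^{++}_x = \pr_x(C_0)$, and since $x \in -\sT$ we have $0_\A \geq x$ (as $0_\A - x = v^x x^{++} \in \sT$), so the projection $\pr_x(C_0)$ is well-defined and lies in $\cT^+_x(\A)$: it is the unique alcove based at $x$ containing $C_0$ in its sector. So I need to show $a(x,+v^x)$ contains $C_0$ in its sector, i.e. that $0_\A \in x + \overline{C^v_{v^x}}$, equivalently $x^{++} = (v^x)^{-1}(0_\A - x) = (v^x)^{-1}(v^x x^{++}) \in \overline{C^v_f}$, which holds by definition of $x^{++}$. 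Since the projection is unique, this forces $C^{++}_x = a(x,+v^x)$.

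Next, for $C^\infty_x$: by definition it is the alcove based at $x$ opposite to $C^{++}_x$, i.e. the unique alcove $a$ with $d^\ast(a, C^{++}_x) = 1_{W^v}$. From the formula for $d^\ast$ in the excerpt, if $a = a(x, -w)$ and $C^{++}_x = a(x,+v^x)$, then $d^\ast(a, C^{++}_x) = w^{-1} v^x$, which equals $1_{W^v}$ precisely when $w = v^x$. Hence $C^\infty_x = a(x, -v^x)$.

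Alternatively, and perhaps cleaner to write, one can invoke the characterization quoted from \cite[\S 5.1]{twinmasures}: $C^{++}_x$ is the unique alcove on the positive side of every (ghost or genuine) wall through $x$. A wall through $x$ has the form $M_{\beta[-\langle x,\beta\rangle]}$ for $\beta \in \Phi$ with the relevant affine root; the alcove $a(x,+v^x)$ is on the positive side of $M_{\beta[n]}$ (with $-x \in M_{\beta[n]}$) iff $C^v_{v^x}$ is on the positive side of $M_\beta$, i.e. iff $(v^x)^{-1}(\beta) \in \Phi_+$ for $\beta \in \Phi_+$, which is the statement that $\beta \notin \Inv(v^x)$. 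But $\Inv((v^x)^{-1}) = \{\alpha \in \Phi_+ \mid \langle x^{++}, \alpha\rangle \text{ such that } \ldots\}$ — more precisely, since $x = -v^x x^{++}$ with $v^x$ of minimal length, $v^x \in W^{x^{++}}$ and $\Inv(v^x) \cap \{\beta \mid \langle x, \beta\rangle = 0\} = \varnothing$; a short check shows every wall through $x$ is of direction $\beta$ with $\langle x^{++}, (v^x)^{-1}\beta\rangle = 0$, and minimality of $v^x$ gives the positivity. This reproves $C^{++}_x = a(x,+v^x)$, and $C^\infty_x = a(x,-v^x)$ as its opposite.

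The main obstacle is purely bookkeeping: being careful about the sign conventions (the $W^v \ltimes Y$ action is $\qp^\lambda w(x) = -\lambda + w(x)$, and $W^+ \ni \qp^\lambda w$ corresponds to the alcove $a(-\lambda, +w)$, so base-points carry a sign flip relative to coweights), and about which orbit representative $v^x$ of minimal length one picks — the identity $x = -v^x x^{++}$ with $x^{++} \in \overline{C^v_f}$ already pins down $v^x$ uniquely up to $W_{x^{++}}$, and minimality selects the representative in $W^{x^{++}}$. No deep input is needed beyond the uniqueness of point-projections (cited from \cite[\S 2.1]{bardy2021structure}) and the characterization of $C^{++}_x$ from \cite[\S 5.1]{twinmasures}; everything else is direct verification.
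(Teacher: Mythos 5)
Your first approach has a genuine gap. You equate ``$a(x,+v^x)$ contains $C_0$ in its sector'' with ``$0_\A \in x + \overline{C^v_{v^x}}$'' via an ``i.e.'', but these are not equivalent: the latter only says the base \emph{point} of $C_0$ lies in the sector, not that a germ of the fundamental chamber at $0_\A$ does. In fact $0_\A = x + v^x x^{++} \in x + \overline{C^v_{v'}}$ holds for \emph{every} $v'$ with $v'(x^{++}) = v^x(x^{++})$, yet only one such alcove can equal $\pr_x(C_0)$. You never invoke the minimality of $v^x$ in this first approach, and it is essential: the upgrade from containing the base point to containing the filter $C_0$ is exactly where minimality enters (it rules out a wall $M_{\beta[0]}$ through both $x$ and $0_\A$ cutting the sector of $a(x,+v^x)$ off from $C_0$).

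Your second approach has the right idea but the details are garbled. The claim that ``every wall through $x$ is of direction $\beta$ with $\langle x^{++},(v^x)^{-1}\beta\rangle = 0$'' is false: there is a wall $M_{\beta[-\langle x,\beta\rangle]}$ through $x$ for every $\beta \in \Phi_+$, and generically $\langle x^{++},(v^x)^{-1}\beta\rangle \neq 0$. Also, the positivity criterion you state is only valid for $n\geq 0$; by the geometric interpretation in Section~\ref{subsection: affine apartment}, when $n<0$ one needs $C^v_{v^x}$ on the \emph{negative} side of $M_\beta$. Running the cases honestly: positivity of $a(x,+v^x)$ relative to $M_{\beta[n]}$, with $n = -\langle x,\beta\rangle = \langle x^{++},(v^x)^{-1}\beta\rangle$, is automatic from dominance of $x^{++}$ whenever $n\neq 0$, and only when $n=0$, i.e.\ $\langle x^{++},(v^x)^{-1}\beta\rangle = 0$, do you need $v^x\in W^{x^{++}}$ to conclude $(v^x)^{-1}\beta\in\Phi_+$. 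The paper reaches this same reduction more cleanly by contradiction: a violating wall would have to pass through $0_\A$ as well (since $0_\A$ lies both in $C_0$ and in the sector of $a(x,+v^x)$), forcing $n=0$ and $s_\beta x = x$, whence $s_\beta v^x < v^x$ together with $s_\beta v^x(x^{++}) = -x$ contradicts minimality.
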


\begin{proof}
Note that $0_\A = x+v^xx^{++}\in x+\overline{C^v_{v^x}}$, so it lies in the sector of $a(x,+v^x)$. Assume by contradiction that there is an affine or ghost wall $M_{\beta[n]}$ containing $x$ and such that $a(x,+v^x)$ lies on the negative side of $M_{\beta[n]}$. Since $0_\A \in C_0\cap (x+\overline{C^v_{v^x}})$, the wall $M_{\beta[n]}$ necessarily goes through $0_\A$, so $n=0$ and $s_\beta x=x$. Then $s_{\beta[0]}.a(x,+v^x)=a(x,+s_\beta v^x)$ lies on the positive side of the wall $M_{\beta[0]}$. By definition of the affine Bruhat order, this implies that $s_\beta.v^x<v^x$, which, since $s_\beta v^x x^{++}=-x$, contradicts minimality of $v^x$. By uniqueness of the projection we conclude that $C^{++}_x=a(x,+v^x)$, and since $C^\infty_x$ is its opposite, $C^\infty_x=a(x,-v^x)$.
\end{proof}

The alcove $C^{++}_x$ can be considered as the local fundamental chamber at $x$: by definition an alcove based at $x$ and of direction $C^v$, is on the positive side of $M_{\beta[n]}$ if and only if $C^v$ is on the same side of the (affine or ghost) wall $M_\beta$ as $C^v_{v^x}$ (which is the direction of $C^{++}_x$).

\section{$C_\infty$-Hecke open paths}\label{s_Cinfty_open_paths}
The goal of this section is to introduce $C_\infty$-Hecke open paths, which are generalized piece-wise affine segments in an affine space satisfying combinatorial properties related to the affine Bruhat order; they are slight variants of the $C_\infty$-Hecke paths introduced by Muthiah (\cite{muthiah2019double}) and further studied by Bardy-Panse, the first-named author and Rousseau   (\cite{twinmasures}).

\subsection{Definition and relation with the affine Bruhat order}\label{ss_Cinfty_hecke_paths}

For any $t,t'\in\R$ such that $t<t'$, we denote by $[t,t']^\pm$ the set $[t,t']^\pm:=\big([t,t')\times\{+\}\big)\sqcup \big((t,t']\times\{-\}\big)$. \index{t@$[t,t']^\pm$}

\begin{Definition}[Open paths]\label{Def: open paths}
    An \textbf{open path} is a tuple $\underline{\fc}:=(\fc,D,\underline{\fc}(1))$ where:
    \begin{enumerate}
        \item $\fc:[0,1]\rightarrow \A$ is a $-\lambda$-path, for some spherical $\lambda\in \mathring{\sT}$, 
        \item $D$ is the data, for each $(t,\varepsilon)\in [0,1]^\pm$, of an alcove $D^\varepsilon_t$ based at $\fc(t)$ and dominating $\fc_\varepsilon(t)$,
        \item $\underline{\fc}(1)$ is an alcove based at the endpoint $\fc(1)$ of $\fc$.
    \end{enumerate}
    We respectively call $\fc$, $D$ and $\underline{\fc}(1)$ the \textbf{underlying path}, the \textbf{decoration} and the \textbf{end alcove} of $\underline{\fc}$.

    Moreover, we say that it is \textbf{admissible} if, for any $t_0<t_1\in [0,1]$ such that $\fc$ is affine on $[t_0,t_1]$, we have that, for all $t\in(t_0,t_1)$, 
    \begin{equation}\label{e_admissibility}
   D^-_t=\pr_{\fc_-(t)}(D^+_{t_0}) \, \text{and} \, D^+_t=\pr_{\fc_+(t)}(D^-_{t_1}).     
    \end{equation} 
    This is equivalent to requiring that for all $(t,\varepsilon),(t',\varepsilon')\in [t_0,t_1]^{\pm}$, we have \begin{equation}\label{e_decoration}
        D_{t}^\varepsilon=\pr_{\fc_{\varepsilon}(t)}(D_{t'}^{\varepsilon'}).
    \end{equation} 

\end{Definition}

\begin{Remark} Suppose that $\underline{\fc}$ is an   open path whose non-open component is a $-\lambda$-path for $\lambda$ regular, that is to say $W_\lambda=1$. Then there is a unique alcove dominating $\fc_\varepsilon(t)$ for $(t,\varepsilon)\in[0,1]^\pm$, so the decoration is uniquely determined from the data of the non-open component $\fc$.

\end{Remark}

\begin{Lemma}\label{Lemma: admissibility condition}Let $\lambda\in \mathring{\sT}$, let $w_{\lambda,0}$ denote the maximal element of the finite standard parabolic subgroup $W_{\lambda^{++}}$. Let $\underline{\fc}=(\fc,D,\underline{\fc}(1))$ be an open path such that $\fc$ is a $-\lambda$-path. Let $0\leq t_1<t_2<t_3\leq1$ and suppose that the admissibility condition is satisfied on $[t_1,t_2]$ and $[t_2,t_3]$. Then the admissibility condition is satisfied on $[t_1,t_3]$ if and only if:
   \begin{equation}\label{eq: admissibility condition}  \fc'_+(t_2)\neq \fc'_-(t_2)
    \text{ or } d^\ast(D^-_{t_2},D^+_{t_2})=w_{\lambda,0}.
\end{equation}
\end{Lemma}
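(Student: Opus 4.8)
The statement is about concatenating admissibility of an open path across a breakpoint $t_2$. The admissibility condition on an interval where $\fc$ is affine is, by the equivalence in Definition~\ref{Def: open paths}, the requirement that all decoration alcoves over that interval are mutually compatible under projection. So the first thing I would do is reduce to the ``pure'' affine case: if $\fc$ is \emph{not} affine on $[t_1,t_3]$, there is nothing new to prove (admissibility on $[t_1,t_3]$ means admissibility on each maximal affine subinterval, and those are already contained in $[t_1,t_2]$ or $[t_2,t_3]$ by hypothesis, and the condition $\fc'_+(t_2)\neq\fc'_-(t_2)$ from~\eqref{eq: admissibility condition} holds automatically). So the real content is: assume $\fc$ is affine on all of $[t_1,t_3]$, with $\fc'_+(t)=\fc'_-(t)=-\lambda'$ for some $\lambda'\in W^v.\lambda$ (note $\fc'_+(t_2)=\fc'_-(t_2)$ is then forced, so~\eqref{eq: admissibility condition} collapses to the single condition $d^\ast(D^-_{t_2},D^+_{t_2})=w_{\lambda,0}$), and show admissibility on $[t_1,t_3]$ is equivalent to that codistance condition.

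For this, I would use the projection lemmas just proved. Since admissibility already holds on $[t_1,t_2]$ and on $[t_2,t_3]$ separately, the only compatibility relations left to check in~\eqref{e_decoration} for $[t_1,t_3]$ are those relating a germ in $[t_1,t_2)$ to a germ in $(t_2,t_3]$ — concretely, it suffices (by transitivity of projections, the ``equivalent to'' clause of Definition~\ref{Def: open paths}) to check that $D^+_{t_2}=\pr_{\fc_+(t_2)}(D^-_{t_3})$, or equivalently that $D^-_{t_2}$ and $D^+_{t_2}$ are linked by projection along $\fc$, i.e. $D^-_{t_2}=\pr_{\fc_-(t_2)}(D^+_{t_1})$ is compatible with $D^+_{t_2}=\pr_{\fc_+(t_2)}(D^-_{t_3})$. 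Now by Lemma~\ref{Lemma: projection codistance}, applied on the affine segment $\fc|_{[t_1,t_2]}$ with direction $-\lambda'$: writing $D^+_{t_1}=a(\fc(t_1),\pm u)$, we get $D^-_{t_2}=\pr_{\fc_-(t_2)}(D^+_{t_1})=a(\fc(t_2),\mp u w_{\lambda,0})$. On the other side, $D^+_{t_2}=\pr_{\fc_+(t_2)}(D^-_{t_3})$ dominates $\fc_+(t_2)$, so it has the form $a(\fc(t_2),\pm v)$ for some $v\in W^v$. The key point is that admissibility of the \emph{whole} interval is equivalent to $D^+_{t_2}$ and $D^-_{t_2}$ being ``projection-linked'' through $\fc(t_2)$, which — since both germs $\fc_-(t_2)$, $\fc_+(t_2)$ point in the $-\lambda'$-direction and its opposite — amounts to $D^+_{t_2}$ being the projection of $D^-_{t_2}$ onto $\fc_+(t_2)$; and by the structure of the tangent twin apartment $(\cT^\pm_{\fc(t_2)}(\A),d^\pm,d^\ast)$, this projection-link is exactly the condition that $D^-_{t_2}$ and $D^+_{t_2}$ realize the maximal codistance within the finite residue determined by $\lambda'$, i.e. $d^\ast(D^-_{t_2},D^+_{t_2})=w_{\lambda,0}$ (using $w_{\lambda,0}=w_{\lambda',0}$ since this only depends on $\lambda^{++}$).

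Concretely I would unwind this as follows: admissibility on $[t_1,t_3]$ $\iff$ $D^+_{t_2}=\pr_{\fc_+(t_2)}(D^-_{t_2})$ (this is the only new instance of~\eqref{e_decoration}, granted the two half-interval hypotheses and transitivity of projections, Lemma~\ref{Lemma: projection segments}). Then one computes $\pr_{\fc_+(t_2)}(D^-_{t_2})$: writing $D^-_{t_2}=a(\fc(t_2),\mp u')$ (the minus-side alcove), the projection onto the plus-side germ $\fc_+(t_2)$, by a computation entirely parallel to Lemma~\ref{Lemma: projection codistance} (or by combining that lemma with a reflection), is the alcove $a(\fc(t_2),\pm u' w_{\lambda,0})$. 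Hence $D^+_{t_2}=\pr_{\fc_+(t_2)}(D^-_{t_2})$ $\iff$ $D^+_{t_2}=a(\fc(t_2),\pm u'w_{\lambda,0})$ $\iff$ $d^\ast(D^-_{t_2},D^+_{t_2})=(u')^{-1}u'w_{\lambda,0}=w_{\lambda,0}$, using the definition~\eqref{e_def_d_ast} of $d^\ast$. That chain of equivalences is the claim.

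\textbf{Main obstacle.} The delicate point is the reduction ``admissibility on $[t_1,t_3]$ $\iff$ $D^+_{t_2}=\pr_{\fc_+(t_2)}(D^-_{t_2})$'': one must argue carefully that, granted admissibility on each of $[t_1,t_2]$ and $[t_2,t_3]$, \emph{all} the cross-interval projection relations in~\eqref{e_decoration} follow from this single one — this needs transitivity/compatibility of projections along the affine segment (the content of Lemma~\ref{Lemma: projection segments}, extended to allow the middle germ to be two-sided at $t_2$), and a small check that projecting ``past'' $t_2$ is the composite of projecting to $\fc_-(t_2)$, crossing to $\fc_+(t_2)$, and projecting onward. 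The second mildly technical point is identifying the projection $\pr_{\fc_+(t_2)}$ of a minus-alcove onto a plus-germ with multiplication by $w_{\lambda,0}$; this is not literally Lemma~\ref{Lemma: projection codistance} as stated (which goes from a plus-germ at $t_1$ to a minus-germ at $t_2$) but follows from it after relabelling signs, or directly from the ``maximizes codistance'' defining property of $\pr_{\fc_+(t)}$ recalled before Lemma~\ref{Lemma: projection walls}.
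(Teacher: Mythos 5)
Your proposal is essentially in the same spirit as the paper's proof (reduce to one affine window around $t_2$, then compute a projection via Lemma~\ref{Lemma: projection codistance} and translate it into the codistance $w_{\lambda,0}$), but there is a genuine error in the reduction step, and a gap you flagged but did not close.

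\textbf{The reduction step is wrong as stated.} You claim that if $\fc$ is not affine on all of $[t_1,t_3]$, then every maximal affine subinterval is contained in $[t_1,t_2]$ or $[t_2,t_3]$ and ``$\fc'_+(t_2)\neq\fc'_-(t_2)$ holds automatically''. This conflates ``$\fc$ not affine on $[t_1,t_3]$'' with ``$\fc$ breaks at $t_2$''. Counterexample: let $\fc$ have breakpoints only at $t_1+\varepsilon$ and $t_3-\varepsilon$ (with $t_1+\varepsilon < t_2 < t_3-\varepsilon$). Then $\fc$ is not affine on $[t_1,t_3]$, yet $\fc'_+(t_2)=\fc'_-(t_2)$ and the maximal affine subinterval $[t_1+\varepsilon,\,t_3-\varepsilon]$ straddles $t_2$ and is contained in neither half. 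The correct reduction — and what the paper does — is to consider the maximal affine subinterval $[\tilde t_1,\tilde t_2]\subset[t_1,t_3]$ containing $t_2$; if $\fc$ bends at $t_2$ that interval is degenerate and nothing needs proving, and otherwise one must check admissibility on $[\tilde t_1,\tilde t_2]$, which need not be affine all the way out to $t_1$ and $t_3$. Your subsequent computation can be salvaged by simply applying it to $[\tilde t_1,\tilde t_2]$ instead of $[t_1,t_3]$.

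\textbf{The equivalence ``admissibility on $[t_1,t_3]\iff D^+_{t_2}=\pr_{\fc_+(t_2)}(D^-_{t_2})$'' is asserted, not proved.} You correctly flag this as the delicate point. The paper closes it differently: it uses Lemma~\ref{Lemma: projection segments} to show that admissibility on the two halves forces $D^+$ to have constant direction separately on $[\tilde t_1,t_2)$ and $[t_2,\tilde t_2)$, and then shows admissibility on $[\tilde t_1,\tilde t_2]$ holds exactly when $D^+_{t_2}$ and $D^+_{\tilde t_1}$ share a direction; after that, Lemma~\ref{Lemma: projection codistance} identifies this with $d^\ast(D^-_{t_2},D^+_{t_2})=w_{\lambda,0}$. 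Your single cross-sign projection statement is an equivalent rephrasing, but establishing it requires precisely the ``constant direction'' argument that Lemma~\ref{Lemma: projection segments} provides; referring to ``transitivity of projections'' is not quite enough, because the new condition $D^+_{t_2}=\pr_{\fc_+(t_2)}(D^-_{t_2})$ involves two alcoves at the \emph{same} point $\fc(t_2)$, whereas \eqref{e_decoration} in the admissibility definition relates germs at different times, and it is Lemma~\ref{Lemma: projection segments} that mediates between the two. With this lemma cited explicitly and the reduction fixed, the rest of your sign-and-codistance computation goes through and agrees with the paper's.
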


\begin{proof}Let $\underline\fc$ be an open path as in the lemma.

    If $\fc'_+(t_2)\neq \fc'_-(t_2)$ then, for any $\tilde t_1<\tilde t_2\in [t_1,t_3]$, if $\fc$ is affine on $[\tilde t_1,\tilde t_2]$ then either $\tilde t_1 \geq t_2$, either $\tilde t_2 \leq t_2$, so it is clear that the admissibility condition is satisfied on $[t_1,t_3]$. Suppose that $\fc'_+(t_2)= \fc'_-(t_2)$, then let $[\tilde t_1,\tilde t_2]\subset [t_1,t_3]$ be the maximal interval containing $t_2$ on which $\fc$ is affine. Since the admissibility condition is satisfied on $[t_1,t_2]$ and $[t_2,t_3]$ it is also satisfied on $[t_1,\tilde t_1]$ and $[\tilde t_2,t_3]$. Therefore as $\fc'$ varies on $\tilde t_1$ and on $\tilde t_2$ it is satisfied on $[t_1,t_3]$ if and only if it is satisfied on $[\tilde t_1,\tilde t_2]$. By Lemma~\ref{Lemma: projection segments}, the admissibility condition for $[\tilde t_1,t_2]$ and $[t_2,\tilde t_2]$ implies that $D^+$ (resp. $D^-$) has constant direction on $[\tilde t_1,t_2)$ and on $[t_2,\tilde t_2)$ (resp. on $(\tilde t_1,t_2]$ and on $(t_2,\tilde t_2]$), therefore the decoration is admissible on $[\tilde t_1,\tilde t_2]$ if and only if $D^+_{t_2}$ and $D^+_{\tilde t_1}$ have same direction (if and only if $D^-_{t_2}=\pr_{\fc_-(t_2)}(D^+_{\tilde t_1})$ and $D^-_{\tilde t_2}=\pr_{\fc_-(\tilde t_2)}(D^+_{t_2})$ have same direction). Let $u\in W^v$ be such that $D^+_{\tilde t_1}=a(\fc(\tilde t_1),- u)$ then $D^+_{t_2}$ and $D^+_{\tilde t_1}$ have same direction if and only if $D^+_{t_2}=a(\fc(t_2),- u)$. Moreover by Lemma~\ref{Lemma: projection codistance} $D^-_{t_2}=a(\fc(t_2), u w_{\lambda,0})$, we deduce that $D^+_{t_2}$ and $D^+_{\tilde t_1}$ have same direction if and only if $d^\ast(D^-_{t_2},D^+_{t_2})=w_{\lambda,0}$, which concludes the proof.

\end{proof}

\begin{Remark}\label{r_characterization_decorations}
Let $\underline{\fc}=(\fc,D,\underline{\fc}(1))$ be an admissible open path, let $\lambda$ be such that $\fc$ is a $-\lambda$-path and recall that $w_{\lambda,0}$ denotes the maximal element of $W_{\lambda^{++}}$. Set $E=\{t\in (0,1)\mid \fc'_+(t)\neq \fc'_-(t)\}$. Write $E=\{t_1,\ldots,t_n\}$, with $0<t_1<\ldots <t_n<t_{n+1}=1$. From Lemma~\ref{Lemma: admissibility condition}, we deduce that the decoration $D$ is completely determined by the choice, for each $i\in \llbracket1,n+1\rrbracket$ of a vectorial chamber $C^v_i$ dominating $\fc'_-(t_i)$, as follows:

For each $i\in \llbracket 1,n+1\rrbracket$ let $u_i\in W^v$ be the element such that $C^v_i=u_i.C^v_f$. The decoration $D$ associated to $(C_1^v,\ldots,C_{n+1}^v)$ is then given by $D_t^-=	a(\fc(t),u_i)=germ_{\fc(t)}(\fc(t)+C^v_i)$ (resp. $D_t^+=a(\fc(t),-u_iw_{\lambda,0})$) for $t\in (t_{i-1},t_i]$ (resp. $t\in [t_{i-1},t_i)$)  and $i\in \llbracket 1,n+1\rrbracket$.

 In particular, if the non-open component $\fc$ is a segment then it suffices to choose $D^-_1$, and there is a canonical choice given by the end alcove $\underline{\fc}(1)$, which motivates the following definition.
\end{Remark}

\begin{Definition}[Open segments]\label{Def: open segments}
    Let $\bx=\qp^\lambda w \in W^+_{sph}$ be any spherical element. The \textbf{open segment} $\underline\fs_\bx=(\fs_\lambda,D_\bx,C_\bx)$ is the unique admissible open path with underlying path $\fs_\lambda: t\mapsto -t\lambda$, with end alcove $\underline\fs_\bx(1)=C_\bx$ and such that $D^-_{\bx,1}=\pr_{\fs_{\lambda,-}(1)}(C_\bx)$.  

Explicitly, the decoration $D_\bx$ is given by $D^\varepsilon_{\bx,t}=\pr_{\fs_{\lambda,\varepsilon}(t)}(C_\bx)$ for any $(t,\varepsilon)\in[0,1]^\pm$.

An open segment is an element of the form $\underline{\fs}_\bx$, for some $\bx\in W^+_{sph}$.
\end{Definition}
Open paths can be folded along affine walls, through the following process. $C_\infty$-Hecke open paths are then obtained from open segments through successive foldings satisfying a positivity condition (see \ref{Definiton: Hecke open path}).
\begin{Definition}[Folding of an open path]
Let $\underline{\fc}=(\fc,D,\underline{\fc}(1))$ be an open path, $t_0\in [0,1]$ be a time and $\beta[n]\in \Phi_+^a$ be such that $\fc(t_0)\in M_{\beta[n]}$. The \textbf{folding} of $\underline{\fc}$ along $M_{\beta[n]}$ is the open path $\varphi_{t_0,\beta[n]}(\underline{\fc})=(\varphi_{t_0,\beta[n]}(\fc),\varphi_{t_0,\beta[n]}(D),\varphi_{t_0,\beta[n]}(\underline{\fc})(1))$ defined by:
\begin{enumerate}
    \item $\varphi_{t_0,\beta[n]}(\fc)(t)=\begin{cases} \fc(t) \ &\text{if}\ t<t_0 \\ s_{\beta[n]}\fc(t) \ &\text{if}\ t\geq t_0   
    \end{cases}$,
    
    \item $\varphi_{t_0,\beta[n]}(D)^\varepsilon_t=\begin{cases} D^\varepsilon_t \ &\text{if}\ t<t_0 \ \text{or} \ (t,\varepsilon)=(t_0,-) \\ s_{\beta[n]}D^\varepsilon_t \ &\text{if}\ t>t_0 \ \text{or} \ (t,\varepsilon)=(t_0,+)
        
    \end{cases}$,
    \item $\varphi_{t_0,\beta[n]}(\underline{\fc})(1)=s_{\beta[n]}\underline{\fc}(1)$.
\end{enumerate}

  Note in particular that a folding at time $t_0=1$ only affects the end alcove. We may also fold decorated paths, in a similar fashion.

For $t_0<1$ (resp. $t_0=1$), we say that the folding is \textbf{positive} (or $C_\infty$-centrifuged) if $D^+_{t_0}$ (resp. $\underline{\fc}(1)$) is on the negative side of the wall $M_{\beta[n]}$. That is to say, if it is on the side not containing $C_0$ or equivalently if it is on the side containing $C^\infty_{\fc(t_0)}$.

\end{Definition}

\begin{Remark}
    Note that the folding of an admissible open path need not be admissible (if the folding is done at a time $t_0$ such that $\fc_+(t_0)\subset M_{\beta[n]}$).
\end{Remark}

We may now define $C_\infty$-Hecke open paths, which are particular open paths obtained trough successive positive foldings.
\begin{Definition}[$C_\infty$-Hecke open paths]\label{Definiton: Hecke open path}
    Let $\bx=\qp^\lambda w\in W^+$ be any spherical element, a \textbf{$C_\infty$-Hecke open path of type $\bx$} is an admissible open path $\underline{\fc}$ for which there exists a sequence $((t_i,\beta_i[n_i]))_{i\in \llbracket 0,r\rrbracket}$ such that:
    \begin{enumerate}
        \item The sequence $(t_i)_{i\in \llbracket 0, r\rrbracket}$ is a non-decreasing sequence in $[0,1]$.
        \item For any $i\in \llbracket 0,r\rrbracket$, we have $s_{\beta_{i-1}[n_{i-1}]}\dots s_{\beta_0[n_0]}(-t_i\lambda)\in M_{\beta_i[n_i]}$. 
        
        \item The open path $\underline{\fc}$ is obtained through successive positive foldings from the open segment $\underline\fs_\bx$:
        \begin{equation}
            \underline{\fc}=\varphi_{t_r,\beta_r[n_r]}\circ \dots \circ \varphi_{t_0,\beta_0[n_0]}(\underline\fs_\bx).
        \end{equation}
       
    \end{enumerate}
    If $\underline{\fc}$ is a $C_\infty$-Hecke open path, any such sequence $((t_i,\beta_i[n_i]))_{i\in \llbracket0,r\rrbracket}$ is called a \textbf{folding data} for $\underline{\fc}$. Note that, by condition $2$, the integers $(n_i)_{i\in \llbracket0,r\rrbracket}$ are completely determined from $(t_i,\beta_i)_{i\in \llbracket 0,r\rrbracket}$, hence we might drop them from our notation. 
    
    Given any spherical element $\bx\in W^+$ and any element $\by\in  W^+$, we let $\cC^\infty_\bx(\by)$\index{c@$\cC^\infty_\bx(\by)$} denote the set of $C_\infty$-Hecke open paths of type $\bx$ and with end alcove $C_\by$.

\end{Definition}

\begin{figure}[hhhh]

    \centering
    \includegraphics[width=\textwidth]{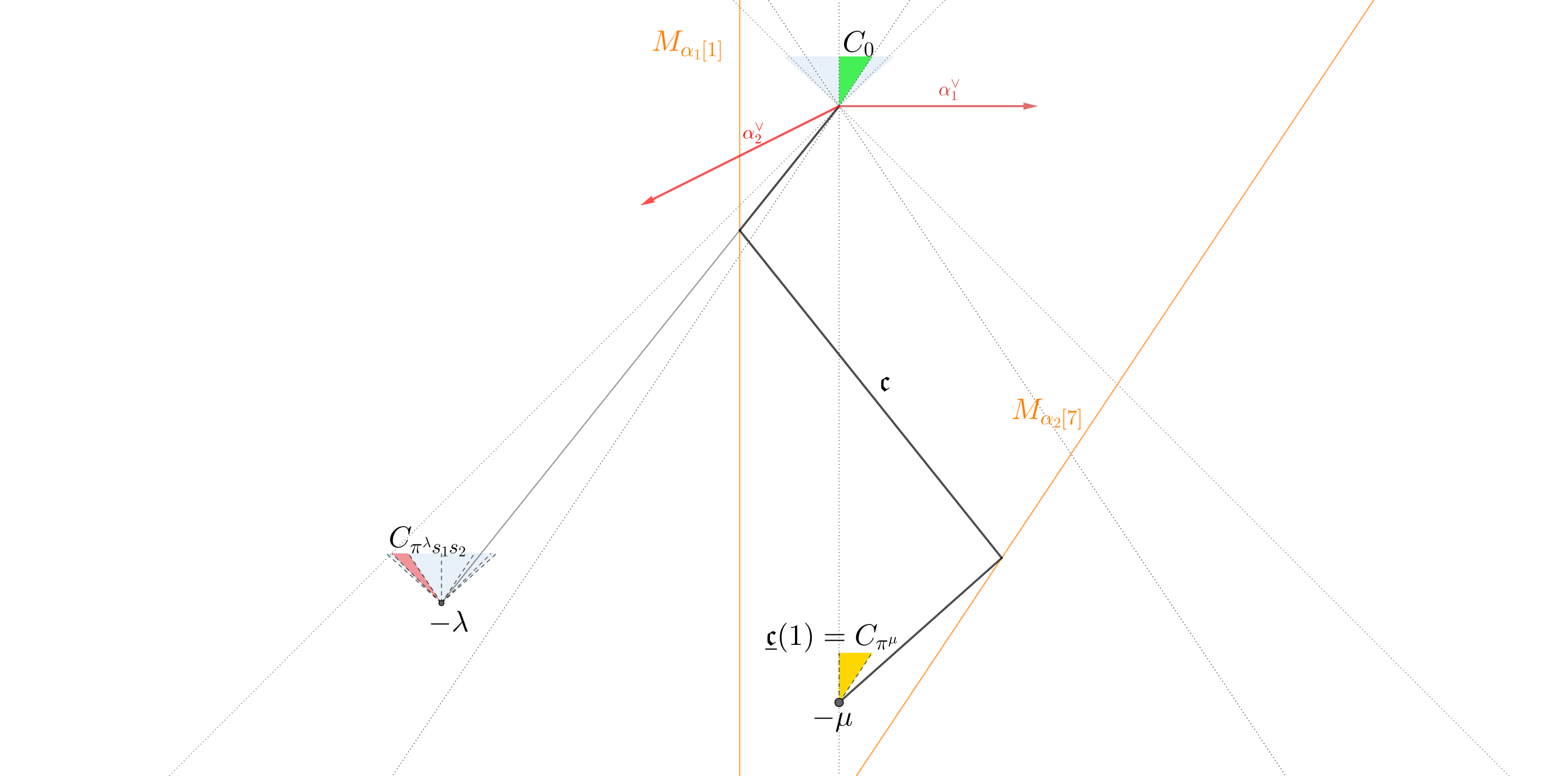}
    \caption{A $C_\infty$-Hecke open path of type $\qp^\lambda s_1 s_2$ and end alcove $C_{\qp^\nu}$.}
    \label{fig: chemindehecke}
\end{figure}

On figure \ref{fig: chemindehecke} we give an example of a $C_\infty$-Hecke open path $\underline{\fc}$ of type $\qp^\lambda s_1 s_2$, indicating its non-open component $\fc$ and its end alcove $\underline{\fc}(1)$. Note that $\lambda$ is regular, so the decoration needs not be precised. We have highlighted in orange two walls which correspond to the folding data $(\alpha_1[1],\alpha_2[7])$ for $\underline{\fc}$.

\begin{Remark}
    To a given type $\bx \in W^+$ and a given folding data $((t_i,\beta_i[n_i]))_{i \in \llbracket0,r\rrbracket}$, there is at most one associated $C_\infty$-Hecke open path. However, the converse is not true: a given admissible open path $\underline{\fc}$ may simultaneously be a $C_\infty$-Hecke open path of different types. For instance, if $\lambda \in Y^{++}$ and $\beta\in \Phi_+$ are such that $\langle \lambda,\beta\rangle \neq 0$, then $\underline\fs_{s_\beta \qp^\lambda}$ is a $C_\infty$-Hecke open path of type $s_\beta \qp^\lambda=\qp^{s_\beta\lambda}s_\beta$ (with trivial folding data) and of type $\qp^\lambda$  (with folding data $((0,\beta))$).
\end{Remark}

The positivity condition satisfied by $C_\infty$-Hecke open paths relates them with the affine Bruhat order, as detailed in Proposition \ref{Proposition: open paths give chains} below.
\begin{Proposition}\label{Proposition: open paths give chains}
    Let $\underline{\fc}=(\fc,D,\underline{\fc}(1))$ be a $C_\infty$-Hecke open path of type $\bx=\qp^\lambda w$ with folding data $((t_i,\beta_i[n_i]))_{i\in \llbracket0,r\rrbracket}$. Then, the following properties hold:
    \begin{enumerate}
        \item The end alcove $\underline{\fc}(1)$ is a special alcove: there exists $\by\in W^+$ such that $\underline{\fc}(1)=C_\by$.
        \item We have a chain for the affine Bruhat order:
        \begin{equation}\label{eq: openchain1}\by=s_{\beta_r[n_r]}\dots s_{\beta_0[n_0]}\bx  <\dots<s_{\beta_0[n_0]}\bx <\bx. \end{equation}
        \item The restriction $\fc|_{[t_r,1]}$ is a segment, and for all $(t,\varepsilon)\in [t_r,1]^{\pm}$, we have $D^\varepsilon_t=\pr_{\fc_\varepsilon(t)}(\underline{\fc}(1))$.
    \end{enumerate}
\end{Proposition}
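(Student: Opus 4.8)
The plan is to prove the three assertions by induction on the number $r+1$ of foldings in a folding data, following the folding process step by step and tracking how each positive folding interacts with the affine Bruhat order and with the projection formulas of Section~\ref{subsection: affine bruhat order apartment}. The key observation is that a positive folding at $(t_i,\beta_i[n_i])$ applied to an already-constructed open path $\underline{\fc}^{(i-1)}$ of type $s_{\beta_{i-1}[n_{i-1}]}\cdots s_{\beta_0[n_0]}\bx$ replaces its end alcove $C_{\bz}$ (with $\bz=s_{\beta_{i-1}[n_{i-1}]}\cdots s_{\beta_0[n_0]}\bx$) by $s_{\beta_i[n_i]}C_\bz$, so the whole argument reduces to two things: that $s_{\beta_i[n_i]}C_\bz$ is again a special alcove, and that $s_{\beta_i[n_i]}\bz<\bz$ for the affine Bruhat order.

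First I would settle assertion $(1)$ and assertion $(2)$ together. For the base case $r=-1$ (no folding) the open path is $\underline{\fs}_\bx$, whose end alcove is $C_\bx$ by definition, and the chain is trivial. For the inductive step, suppose $\underline{\fc}^{(i-1)}$ has end alcove $C_\bz$ with $\bz=s_{\beta_{i-1}[n_{i-1}]}\cdots s_{\beta_0[n_0]}\bx\in W^+$; apply $\varphi_{t_i,\beta_i[n_i]}$. By definition of the folding its new end alcove is $s_{\beta_i[n_i]}C_\bz$. Here I would use: (a) the positivity condition says $C_\bz=\underline{\fc}^{(i-1)}(1)$ lies on the negative side of $M_{\beta_i[n_i]}$, i.e.\ the side not containing $C_0$, equivalently (by the geometric interpretation of the affine Bruhat order recalled before the second figure) $s_{\beta_i[n_i]}\bz<\bz$; since $\bz\in W^+$ and $\bz^{-1}(\beta_i[n_i])\notin\Phi^a_+$, the remark following the definition of the affine Bruhat order gives $s_{\beta_i[n_i]}\bz\in W^+$; (b) therefore $s_{\beta_i[n_i]}C_\bz=C_{s_{\beta_i[n_i]}\bz}$ is a special alcove. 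This simultaneously extends the chain by one step, $s_{\beta_i[n_i]}\bz<\bz$, and proves the new end alcove is special. One small point to check carefully: the condition in $(2)$ of Definition~\ref{Definiton: Hecke open path} says $s_{\beta_{i-1}[n_{i-1}]}\cdots s_{\beta_0[n_0]}(-t_i\lambda)\in M_{\beta_i[n_i]}$, i.e.\ the folding is performed at a point of the (already folded) path lying on the wall — I must confirm this is exactly $\fc^{(i-1)}(t_i)$, which is immediate from item~$(1)$ of the definition of folding and induction, so the folding is legitimate and its effect on the end alcove is as stated.

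Assertion $(3)$ is the part I expect to be the real work. The claim is that on the final interval $[t_r,1]$ the folded path is a genuine segment and the decoration there is the projection of the end alcove. The point is that the last folding time is $t_r$, so on $(t_r,1]$ no further folding occurs and $\fc$ is (an affine image under $s_{\beta_r[n_r]}\cdots$ of) the straight segment $\fs_\lambda$ restricted to $[t_r,1]$ — hence a segment; the subtlety is only that $t_r$ could equal $1$, in which case $[t_r,1]$ is degenerate and there is nothing to prove, or $\fc$ could have a genuine corner at $t_r$, which is harmless since we only claim affineness on $[t_r,1]$. For the decoration statement, I would argue that $\underline{\fc}$ is admissible (part of Definition~\ref{Definiton: Hecke open path}) and that on the interval $[t_r,1]$ the underlying path is a segment, so Remark~\ref{r_characterization_decorations} (the ``canonical choice'' discussion, essentially the content of Definition~\ref{Def: open segments}) applies: the admissible decoration on a segment is forced to be $D^\varepsilon_t=\pr_{\fc_\varepsilon(t)}(D^-_1)$ with $D^-_1=\pr_{\fc_-(1)}(\underline{\fc}(1))$, and then a projection-composition identity (of the type in Lemma~\ref{Lemma: projection codistance}/Lemma~\ref{Lemma: projection segments}) collapses $\pr_{\fc_\varepsilon(t)}\circ\pr_{\fc_-(1)}(\underline{\fc}(1))$ to $\pr_{\fc_\varepsilon(t)}(\underline{\fc}(1))$. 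The main obstacle is checking this last collapse cleanly at $t=1$ and at the sign $\varepsilon=+$ versus $\varepsilon=-$ (the two tangent directions at an interior point of the segment), and making sure the admissibility condition was not broken by the folding at $t_r$ itself — this is exactly the caveat in the Remark after the definition of folding, so I would note that even if $\fc_+(t_r)\subset M_{\beta_r[n_r]}$ the admissibility is required on $[t_r,1]$ by hypothesis (it is part of ``$\underline{\fc}$ is a $C_\infty$-Hecke open path'', which is by definition admissible), so no contradiction arises and Remark~\ref{r_characterization_decorations} applies on $[t_r,1]$ as stated.
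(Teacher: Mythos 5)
There is a genuine gap in your treatment of assertion~(2), and it is the central subtlety of this proposition. You write in step~(a) that ``the positivity condition says $C_\bz=\underline{\fc}^{(i-1)}(1)$ lies on the negative side of $M_{\beta_i[n_i]}$.'' Look again at the definition of a positive folding: for $t_0<1$ it is $D^+_{t_0}$ (the decorating alcove at the folding time) that is required to be on the negative side of the wall, \emph{not} the end alcove; only for $t_0=1$ does the condition concern $\underline{\fc}(1)$ directly. When $t_i<1$ you therefore know only that $D^+_{\fc^{(i-1)},t_i}$ is on the negative side, and you still have to transport this to the end alcove $\underline{\fc}^{(i-1)}(1)$. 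The paper does this by invoking point~(3) of the inductive hypothesis (which says $D^+_{\fc^{(i-1)},t_i}=\pr_{\fc^{(i-1)}_+(t_i)}(\underline{\fc}^{(i-1)}(1))$, since $t_i\geq t_{i-1}$) together with Lemma~\ref{Lemma: projection walls}, which ensures that a half-apartment containing $\fc^{(i-1)}(t_i)$ and $\underline{\fc}^{(i-1)}(1)$ also contains the projection $D^+_{\fc^{(i-1)},t_i}$; contrapositively, if $D^+_{t_i}$ is on the negative side of $M_{\beta_i[n_i]}$, so is $\underline{\fc}^{(i-1)}(1)$. Your plan of settling (1)--(2) first and (3) ``separately afterward'' is therefore structurally untenable: (2) depends on (3) at the previous inductive step, so all three must be carried through the induction together.

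There is also a smaller gap in your argument for~(3). You assert that $D^-_1=\pr_{\fc_-(1)}(\underline{\fc}(1))$ so that the projection identity will collapse things, but this is not automatic from admissibility alone: admissibility pins down the decoration on a segment once $D^-_1$ is given, but does not say what $D^-_1$ is relative to the end alcove. The identity $D^-_1=\pr_{\fc_-(1)}(\underline{\fc}(1))$ holds for the open segment $\underline{\fs}_\bx$ by Definition~\ref{Def: open segments}, and it is preserved by foldings because $s_{\beta_i[n_i]}$ acts on $D^-_1$ and on $\underline{\fc}(1)$ simultaneously — but that is precisely the induction you were trying to avoid for~(3). In effect you are using the conclusion of~(3) (its special case at $(1,-)$) inside your proof of~(3). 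One more remark: the paper explicitly notes that the intermediate paths in the folding sequence need not be admissible (it introduces a temporary weaker notion for the duration of the proof). Since your inductive hypothesis quietly treats $\underline{\fc}^{(i-1)}$ as a genuine $C_\infty$-Hecke open path, you should either check admissibility of intermediates or, as the paper does, observe that admissibility is not actually used in the induction.
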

\begin{proof}
For this proof only, we call $C_\infty$-Hecke open paths the open paths which are not necessarily admissible, but which are obtained through positive folding from open segments as in Definition~\ref{Definiton: Hecke open path}; the difference is that the direction of the decoration can vary at a point where the path does not fold (but it still only varies at finitely many points).
    We prove these three points by induction on $r$. If $r=-1$ then $\underline{\fc}$ is the open segment $\underline\fs_\bx$, and $\underline{\fc}(1)=C_\bx$ by definition, in particular it is a special alcove.
    
     Suppose that points 1-2-3 hold for $C_\infty$-Hecke open paths with folding data of length $r-1\geq-1$ and let $\underline{\fc}$ be a $C_\infty$-Hecke open path of type $\bx$ with a folding data $\big((t_k,\beta_k[n_k])\big)_{k\in\llbracket 0,r\rrbracket}$. 
    
    Then $\underline{\fc}_0:=\phi_{t_{r-1},\beta_{r-1}[n_{r-1}]}\circ \dots\circ \phi_{t_0,\beta_{0}[n_0]} (\underline\fs_\bx)$  is a $C_\infty$-Hecke open path of type $\bx$ and with a folding data of length $r-1$, so for which the result stands. Moreover $\underline{\fc}=\varphi_{t_r,\beta_r[n_r]}(\underline{\fc}_0)$ and this folding needs to be positive. Note that $\underline{\fc}(1)=s_{\beta_r[n_r]}\underline{\fc}_0(1)$, therefore since $\underline{\fc}_0(1)$ is a special alcove, so is $\underline{\fc}(1)$, which proves $1$.
    
    Moreover setting $\underline{\fc}_0(1)=C_{\by_0}$ and $\underline{\fc}(1)=C_\by$, we have $\by=s_{\beta_r[n_r]}\by_0$ and by iteration, to prove $2.$ it suffices to check that $s_{\beta_r[n_r]}\by_0<\by_0$. By the geometric interpretation of the affine Bruhat order, this is verified if and only if $\by_0$ is on the negative side of $M_{\beta_r[n_r]}$. Suppose first that $t_r=1$. Then since the folding is positive this is verified, so $\by< \by_0$, and point $3.$ is trivially verified in this case. Suppose now that $t_r<1$. Then since the folding is positive, we know that $D^+_{\fc_0,t_r}$ is on the negative side of $M_{\beta_r[n_r]}$. By point $3.$ and Lemma~\ref{Lemma: projection walls}, $D^+_{\fc_0,t_r}$ and $\underline{\fc}_0(1)$ are on the same side of this wall, hence $\by<\by_0$. Finally, point $3.$ for $\underline{\fc}$ is a consequence of point $3.$ for $\underline{\fc}_0$: for $(t,\varepsilon)\in [t_r,1]^{\pm}$, $D^\varepsilon_t=s_{\beta_r[n_r]}D^\varepsilon_{\fc_0,t}=s_{\beta_r[n_r]} \pr_{\fc_{0,\varepsilon(t)}}(\underline{\fc}_0(1))=\pr_{\fc_\varepsilon(t)}(\underline{\fc}(1))$.

\end{proof}

Using \cite[Corollary 3.4]{hebert2024quantum}, we deduce:

\begin{Corollary}\label{Corollary: finiteness open hecke paths}
Let $\bx$ and $\by$ be two elements of $W^+$, with $\bx$ spherical. The set $\cC^\infty_{\bx}(\by)$ is finite, and if it is non-empty, then $\by\leq\bx$. 
 \end{Corollary}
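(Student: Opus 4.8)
The plan is to read off both assertions from Proposition~\ref{Proposition: open paths give chains} together with the finiteness of intervals for the affine Bruhat order, as provided (using sphericity of $\bx$) by \cite[Corollary 3.4]{hebert2024quantum}.

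\medskip
\noindent\emph{Nonemptiness forces $\by\leq\bx$.} If $\cC^\infty_\bx(\by)\neq\emptyset$, pick $\underline{\fc}\in\cC^\infty_\bx(\by)$ and a folding data $((t_i,\beta_i[n_i]))_{i\in\llbracket0,r\rrbracket}$ for it. By Proposition~\ref{Proposition: open paths give chains}(1) the end alcove of $\underline{\fc}$ has the form $C_{\by'}$, and by part (2) one has $\by'=s_{\beta_r[n_r]}\cdots s_{\beta_0[n_0]}\bx\leq\bx$, being the smallest term of the chain in that proposition. Since the end alcove equals $C_\by$ by definition of $\cC^\infty_\bx(\by)$, and $\bz\mapsto C_\bz$ is injective on $W^+$ (the action of $W^+$ on alcoves being simply transitive), we get $\by=\by'\leq\bx$.

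\medskip
\noindent\emph{Finiteness.} I would build a finite-to-one map from $\cC^\infty_\bx(\by)$ into the set of strictly decreasing chains inside $[\by,\bx]:=\{\bz\in W^+\mid\by\leq\bz\leq\bx\}$. For each $\underline{\fc}\in\cC^\infty_\bx(\by)$ choose a folding data $((t_i,\beta_i[n_i]))_{i\in\llbracket0,r\rrbracket}$, set $\bx_k:=s_{\beta_{k-1}[n_{k-1}]}\cdots s_{\beta_0[n_0]}\bx$, and record the chain $\bx=\bx_0>\bx_1>\cdots>\bx_{r+1}=\by$ furnished by Proposition~\ref{Proposition: open paths give chains}(2). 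By \cite[Corollary 3.4]{hebert2024quantum}, $[\by,\bx]$ is finite, hence there are finitely many such chains (and in particular $r\leq|[\by,\bx]|$). It remains to bound the fibres. Given a chain $(\bx_0,\dots,\bx_{r+1})$ in the image, the reflections $\bx_{k+1}\bx_k^{-1}=s_{\beta_k[n_k]}\in Y\rtimes W^v$ are determined, and each such reflection equals $s_{\gamma[m]}$ for a \emph{unique} positive affine root $\gamma[m]$ (it determines the wall $M_{\gamma[m]}$, which carries exactly one positive affine root); so the sequence $(\beta_k[n_k])_k$ is determined. Finally, condition~2 of Definition~\ref{Definiton: Hecke open path}, namely $s_{\beta_{k-1}[n_{k-1}]}\cdots s_{\beta_0[n_0]}(-t_k\lambda)\in M_{\beta_k[n_k]}$, is an affine equation in the single unknown $t_k$; it has a unique solution except when the relevant segment germ of the partially folded path lies in a wall parallel to $M_{\beta_k[n_k]}$, in which case every admissible value of $t_k$ lies on a subsegment of $M_{\beta_k[n_k]}$, and since $s_{\beta_k[n_k]}$ fixes that wall pointwise the resulting folded open path is independent of the choice. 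Hence $\underline{\fc}=\varphi_{t_r,\beta_r[n_r]}\circ\cdots\circ\varphi_{t_0,\beta_0[n_0]}(\underline{\fs}_\bx)$ is pinned down by the chain up to finitely many choices, and $\cC^\infty_\bx(\by)$ is finite.

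\medskip
\noindent\emph{Expected main obstacle.} The delicate point is exactly the treatment of those ``degenerate'' folds: intermediate paths in a folding process need not be admissible (as already exploited in the proof of Proposition~\ref{Proposition: open paths give chains}), so one genuinely has to verify that sliding such a fold-time along the wall affects neither the final underlying path, nor its decoration, nor its end alcove. If instead \cite[Corollary 3.4]{hebert2024quantum} is formulated directly as the finiteness of the set of ``reflection chains'' (equivalently, of folding data) issued from a spherical element and ending at a prescribed one, this bookkeeping is absorbed into the cited statement and the argument reduces to the first two paragraphs.
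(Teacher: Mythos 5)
Your overall strategy is the one the paper intends: the paper itself offers no written proof beyond "using \cite[Corollary 3.4]{hebert2024quantum}, we deduce", so the content is exactly your two paragraphs — read $\by\leq\bx$ off Proposition~\ref{Proposition: open paths give chains}, and get finiteness by mapping each path to a strictly decreasing chain in the finite interval $[\by,\bx]$ and bounding the fibres. The first paragraph and the reconstruction of the sequence $(\beta_k[n_k])_k$ from the chain are correct.

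The one step that is wrong as stated is your treatment of the degenerate fold times. When the relevant segment of the partially folded path lies inside $M_{\beta_k[n_k]}$, the reflection $s_{\beta_k[n_k]}$ does fix the \emph{underlying path}, but it does \emph{not} fix the \emph{decoration}: an alcove $a(x,\varepsilon v)$ based at a point $x$ of the wall is sent to $a(x,\varepsilon s_{\beta_k}v)\neq a(x,\varepsilon v)$, so $\varphi_{t_k,\beta_k[n_k]}(D)^+_t$ genuinely depends on where $t_k$ is placed. Hence "the resulting folded open path is independent of the choice" is false, and without a further argument your fibres could a priori be infinite. The gap is fixable with tools already in the paper: the chain still determines the underlying path $\fc$ and the end alcove $s_{\beta_r[n_r]}\cdots s_{\beta_0[n_0]}C_\bx$ of every element of the fibre, and by Remark~\ref{r_characterization_decorations} an \emph{admissible} open path with a fixed underlying $-\lambda$-path and fixed end alcove is determined by the choice, at each of the finitely many fold points, of one of the finitely many vectorial chambers dominating $\fc'_-(t_i)$ (finitely many precisely because $\bx$ is spherical, so $W_{\lambda^{++}}$ is finite). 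This bounds each fibre and closes the argument; alternatively one can check directly, via Lemma~\ref{Lemma: admissibility condition}, that a degenerate fold placed at an interior time where the underlying path does not fold destroys the condition $d^\ast(D^-_{t},D^+_{t})=w_{\lambda,0}$ and hence admissibility, so such free parameters never survive into $\cC^\infty_\bx(\by)$.
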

This is a variant of \cite[Theorem 5.54]{muthiah2019double} which extends, by \cite[Corollary 3.4]{hebert2024quantum}, from the untwisted affine ADE case to the general Kac-Moody case.

\subsection{A local characterization of $C_\infty$-Hecke open paths}
In this subsection, we give a local characterization of $C_\infty$-Hecke open paths. That is to say, in order to check whether an open path is a $C_\infty$-Hecke open path, we may only look locally at the points on which the path folds, see Lemma \ref{l_characterization_decorated_Hecke_paths}.
\begin{Definition}
    Let $a\in  \A$ and $C,\tilde{C}$ be two alcoves based at $a$. A  $(C_a^\infty,W^v_{a})$-chain from $C$ to $\tilde{C}$ is a  finite sequence $C^{(0)}=C,\ldots, C^{(r)}=\tilde{C}$ of alcoves of $\A$ based at $a$ such that there exist $\beta_1,\ldots, \beta_r\in \Phi$ such that for all $i\in \llbracket 1,r\rrbracket$, if $n_i=-\langle a,\beta_i\rangle$, we have: \begin{enumerate}
    \item[(i)]  $s_{\beta_i[n_i]}(C^{(i-1)})=C^{(i)}$,

    \item[(ii)]  $\beta_i[n_i](C^{(i-1)})$ and $\beta_i[n_i](C_{a})$  have opposite signs,

    \item [(iii)] $n_i\in \Z$.
\end{enumerate}

Then $(\beta_1[n_1],\ldots,\beta_r[n_r])$ (or simply $(\beta_1,\ldots,\beta_r)$) is called a \textbf{chain data} from $C$ to $\tilde{C}$.

\begin{Remark}
In \cite[5]{twinmasures}, $(C^\infty_a,W^v_a)$-chains are  sequences of vectors instead of sequences of alcoves. It is easy to pass from chains of alcoves to chains of vectors. Indeed, let  $C^{(0)},\ldots, C^{(r)}$ be a $(C^\infty_a,W^v_a)$-chain. Let $C_0^v,\ldots,C^v_r$ be the directions of these alcoves, let $\xi\in  \overline{C^v_f}$ and for $i\in \llbracket 1,r\rrbracket$, denote by $\xi_i$ the element of $W^v.\xi\cap \overline{C^v_i}$. 
Then if $(\xi_i')_{i\in \llbracket 0,r'\rrbracket}$ is obtained from $(\xi_i)_{i\in \llbracket 0,r\rrbracket}$ by keeping only one vector from each sequence $\xi_i,\xi_{i+1},\ldots$ of constant vectors, $(\xi_i')$ is a $(C^\infty_a,W^v_a)$-chain. It is also easy to pass from a sequence of vectors to a sequence of alcoves.
\end{Remark}

\end{Definition}

\begin{Lemma}\label{l_characterization_decorated_Hecke_paths}
Let $\underline{\fc}=(\fc,D,\underline{\fc}(1))$ be an admissible open path. Let $\bx\in W^+_{sph}$, we assume that $\fc$ is a $(-\lambda)$-path for $\lambda=\pr^{Y^+}(\bx)$. 

Let $\tilde{\underline{\fc}}(1)$ denote the unique alcove in $\cT^+_{\fc(1)}(\A)$ such that $d^+(D^-_1,\tilde{\underline{\fc}}(1))=d^+(D^-_{\bx,1},C_\bx)$ (recall that $D_\bx$ denotes the decoration on the open segment $\underline\fs_\bx$). Let $t_0,\ldots,t_{n+1}\in [0,1]$ be such that  $0=t_0<t_1<\ldots<t_{n+1}=1$ and such that $\{t_i\mid i\in \llbracket 1,n\rrbracket\}=\{t\in (0,1)\mid \fc'_-(t)\neq \fc'_+(t)\}$. Then $\underline{\fc}$ is a $C_\infty$-Hecke open path if and only if: \begin{enumerate}
    \item for every $i\in \llbracket 1,n\rrbracket$, there exists a $(C^\infty_{\fc(t_i)},W^v_{\fc(t_i)})$-chain from $D_{t_i}^+$ to ${\lim}_{t\to t_i^-} D_{t}^+$. 

    \item there exists a $(C^\infty_{\fc(1)},W^v_{\fc(1)})$-chain from $\underline{\fc}(1)$ to $\tilde{\underline{\fc}}(1)$. 
\end{enumerate} 
\end{Lemma}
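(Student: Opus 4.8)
The plan is to prove both implications by matching, time by time, the blocks of foldings in a folding data of $\underline{\fc}$ with the $(C^\infty_a,W^v_a)$-chains of conditions 1 and 2 (the chain formalism being that of \cite{twinmasures}), using throughout that admissibility rigidifies the decoration away from the break times. Precisely: since $\underline{\fc}$ is admissible, Lemmas~\ref{Lemma: projection codistance},~\ref{Lemma: projection segments} and~\ref{Lemma: admissibility condition} (packaged in Remark~\ref{r_characterization_decorations}) show that on each maximal interval where $\fc$ is affine the alcoves $D^-_t$, resp.\ $D^+_t$, have constant vectorial direction, $D^+_t$ being obtained from $D^-_t$ by the involution $w_{\lambda,0}$ placed on the opposite side; hence $\underline{\fc}$ is completely determined by its break times $0<t_1<\dots<t_n<1$, the vectorial directions of $D^-_{t_1},\dots,D^-_{t_n},D^-_1$, and the end alcove $\underline{\fc}(1)$. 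I also record the elementary effect of a single fold $\varphi_{t_0,\beta[n]}$: it leaves everything at times $<t_0$ untouched, turns $\fc'_+(t_0)$ into its reflection, and applies the single reflection $s_{\beta[n]}$ to $D^+_{t_0}$, to every decoration alcove at times $>t_0$, and, if $t_0=1$, to the end alcove; in particular it fixes $\lim_{t\to t_0^-}D^+_t$.

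For the direct implication, suppose $\underline{\fc}$ has a folding data. First I would peel off the foldings performed at time $1$: these only modify the end alcove, so removing them gives an open path $\underline{\fc}''$ with the same underlying path and decoration as $\underline{\fc}$, hence still admissible and still a $C_\infty$-Hecke open path; since every fold at a time $<1$ acts by the \emph{same} reflection on $D^-_1$ and on the end alcove, the relative position $d^+(D^-_1,\underline{\fc}''(1))$ is the standard one, i.e.\ $\underline{\fc}''(1)=\tilde{\underline{\fc}}(1)$, so the peeled foldings describe a $(C^\infty_{\fc(1)},W^v_{\fc(1)})$-chain between $\underline{\fc}(1)$ and $\tilde{\underline{\fc}}(1)$, which is condition 2. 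We may then assume every fold occurs at a time $<1$. After reorganising the folding data -- grouping foldings by time, discarding a possible block at time $0$ (which merely replaces $\underline\fs_\bx$ by the open segment of a rotated element and is immaterial for conditions 1-2), and normalising so that the remaining times are exactly $t_1,\dots,t_n$ -- I look at the block of foldings at a fixed $t_i$: just before that block all earlier foldings act near $t_i$ by a common isometry, so the path is then straight at $t_i$ and $D^+_{t_i}=\lim_{t\to t_i^-}D^+_t$; each fold of the block along some wall $M_{\beta[n]}$ then replaces the current $D^+_{t_i}$ by $s_{\beta[n]}$ of it and fixes $\lim_{t\to t_i^-}D^+_t$, so the successive values of $D^+_{t_i}$ form a sequence of alcoves at $\fc(t_i)$ in which each step is a reflection (condition (i)), the wall passes through $\fc(t_i)$ by condition 2 of the folding data, forcing $n=-\langle\fc(t_i),\beta\rangle$ and condition (iii), and positivity of the fold is exactly condition (ii). Since later foldings do not touch $D^+$ at $t_i$ or earlier, this sequence survives in $\underline{\fc}$, and read from $D^+_{t_i}$ back to $\lim_{t\to t_i^-}D^+_t$ it is the chain of condition 1.

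For the converse, assume conditions 1 and 2. I would build a folding data for $\underline{\fc}$ by sweeping from left to right: at each break time $t_i$ insert the foldings read off (in reverse order) from the chain of condition 1 at $t_i$, and finally at time $1$ insert those from the chain of condition 2. Condition (ii) of the chains makes each of these foldings positive, and the identity $n=-\langle\fc(t_i),\beta\rangle$ together with the fact that $\fc(t_i)$ is the image of $-t_i\lambda$ under the foldings already performed gives condition 2 of Definition~\ref{Definiton: Hecke open path}. Let $\underline{\fc}'$ be the resulting $C_\infty$-Hecke open path. A chain step reflects an alcove to an adjacent one, hence also reflects its vectorial direction, so the block at $t_i$ carries $\fc'_-(t_i)$ to $\fc'_+(t_i)$; therefore $\underline{\fc}'$ has the same underlying path and break times as $\underline{\fc}$, the same end alcove by condition 2, and -- by the rigidity above, since $\underline{\fc}'$ and $\underline{\fc}$ are admissible open paths with the same underlying path and the same directions of $D^-_{t_1},\dots,D^-_{t_n},D^-_1$ (read from the chains) -- the same decoration. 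Hence $\underline{\fc}=\underline{\fc}'$ is a $C_\infty$-Hecke open path of type $\bx$.

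The architecture above is straightforward; the work is in the bookkeeping it rests on, and that is where I expect the main obstacle. First, one must justify the normalisation of an arbitrary folding data so that its foldings sit exactly at the break times of $\underline{\fc}$ and form genuine chains there; the difficulty is that the intermediate open paths in a folding sequence are in general not admissible (as noted after Definition~\ref{Definiton: Hecke open path}), so one cannot simply strip off one fold and invoke the statement for a smaller admissible object -- one must handle the possibly non-admissible foldings directly and invoke admissibility only for $\underline{\fc}$ itself. Second, and more bureaucratic but genuinely delicate, all orientation conventions must be pinned down consistently -- which side of an affine wall counts as positive, whether the positivity of a fold constrains the decoration before or after folding, in which direction a $(C^\infty_a,W^v_a)$-chain is traversed and on which of $C^{(i-1)},C^{(i)}$ condition (ii) bears -- so that ``$C_\infty$-centrifuged fold'' matches exactly ``step of a chain from $D^+_{t_i}$ to $\lim_{t\to t_i^-}D^+_t$'', and likewise at the endpoint, where the auxiliary alcove $\tilde{\underline{\fc}}(1)$ is precisely the device that makes the relative position $d^+(D^-_1,\cdot)$ agree with the standard one and isolates the foldings performed at time $1$.
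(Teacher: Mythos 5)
Your proposal is correct and follows essentially the same route as the paper: both directions rest on matching the block of foldings performed at a fixed time with a $(C^\infty_a,W^v_a)$-chain there, using admissibility (via Lemmas~\ref{Lemma: projection codistance}--\ref{Lemma: admissibility condition} and Remark~\ref{r_characterization_decorations}) to rigidify the decoration between break times. The only structural difference is in the ``if'' direction, where the paper unfolds $\underline{\fc}$ from the largest break time downwards by induction on the number of breaks until it reaches $\underline{\fs}_\bx$, whereas you fold $\underline{\fs}_\bx$ left-to-right and then identify the result with $\underline{\fc}$ by rigidity --- a mirror image of the same argument, at the cost of the extra identification step (for which one should note that admissibility of the constructed path at the break times is automatic from Lemma~\ref{Lemma: admissibility condition}, since $\fc'_+(t_i)\neq\fc'_-(t_i)$ there).
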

\begin{proof} 
Let $\underline{\fc}$ be an admissible open path satisfying 1. and 2.  We prove that $\underline{\fc}$ is a $C_\infty$-Hecke open path by induction on $n(\underline{\fc})=n:=|\{t\in (0,1)\mid \fc'_-(t)\neq \fc'_+(t)\}|$. Let $(\gamma_1[p_1],\ldots,\gamma_u[p_u])$ be a chain data from $\underline{\fc}(1)$ to $\tilde{\underline{\fc}(1)}$. Let $\underline{\tilde{\fc}}=(\fc,D,\tilde{\underline{\fc}}(1))$. Then we have \begin{equation}\label{e_description_ufc}
    \underline{\fc}=\phi_{1,\gamma_1[p_1]}\circ \ldots \circ \phi_{1,\gamma_u[p_u]}(\underline{\tilde{\fc}}),
    \end{equation} and $\underline{\tilde{\fc}}$ satisfies 1. and \begin{equation}\label{e_continuity_condition}
        d^+(D^-_1,\tilde{\underline{\fc}}(1))=d^+(D^-_{\bx,1},C_\bx)
    \end{equation}
    Note that equality \eqref{e_continuity_condition} is invariant under applying foldings at times $t<1$ to $\tilde{\underline{\fc}}$.
    Moreover the foldings involved in \eqref{e_description_ufc} are positive, by definition of a chain data.

 Assume that $n>0$. Let $t_*=\max\{t\in (0,1)\mid \fc'_+(t)\neq \fc'_-(t)\}$. Let $(\beta_1[m_1],\ldots,\beta_r[m_r])$ be a chain data from  $D_{t_*}^+$ to $\lim_{t\to t_*^-} D_t^+$. Then we have $\lim_{t\rightarrow t_*^-}D_t^+=s_{\beta_r[m_r]}\circ \ldots \circ s_{\beta_1[m_1]}(D_{t_*}^+)$. 
 Let $\underline{\hat{\fc}}=\phi_{t_*,\beta_r[m_r]}\circ \ldots \circ \phi_{t_*,\beta_1[m_1]}(\underline{\tilde{\fc}})$. Write $\underline{\hat{\fc}}=(\hat{\fc},\hat{D},\underline{\hat{\fc}}(1))$. Then we have: \begin{align*}\hat{D}_{t_*}^+&=s_{\beta_r[m_r]}\circ \ldots \circ s_{\beta_1[m_1]}( D_{t_*}^+)\\
 &=s_{\beta_r[m_r]}\circ \ldots \circ s_{\beta_1[m_1]}\circ s_{\beta_1[m_1]}\circ \ldots \circ s_{\beta_r[m_r]}(\lim_{t\to t_*^-} D_t^+)=\lim_{t\to t_*^-} D_t^+=\lim_{t\to t_*^-} \hat{D}_t^+. \end{align*} 
 
  Let $\varepsilon>0$ be such that $\hat{\fc}$ is affine on $[t_*-\varepsilon,t_*]$.  By the admissibility condition, the direction of $D_t^+$ does not depend on $t\in [t_*-\varepsilon,t_*)$. Denote by $C^v$ this direction. Then for $t\in [t_*-\varepsilon,t_*)$, we have $(\hat{\fc})'_+(t)=\mu$, where $\mu$ is the unique element of $W^v.(-\lambda)\cap \overline{C^v}$. But $(\hat{c})'_+(t_*)$ is the unique element of $\overline{C^v}\cap W^v.(-\lambda)$ (since $C^v$ is also the direction of $\hat{D}_{t_*}^+$). Therefore $\hat{\fc}$ does not fold at $t_*$ and $n(\underline{\hat{\fc}})=n(\underline{\fc})-1$. As $\max\{t\in (0,1)\mid (\hat{\fc})'_+(t)\neq (\hat{\fc})'_-(t)\}<t_*$ (if the set of foldings of $\hat{\fc}$ is non-empty) and as $\underline{\hat{\fc}}$ satisfies \eqref{e_continuity_condition} (with $\tilde{\ }$ replaced by $\hat{\ }$), we obtain $\underline{\fc}$ inductively by successive positive foldings of a path   $\underline{\mathfrak{d}}$ satisfying $n(\underline{\mathfrak{d}})=0$  and \eqref{e_continuity_condition} with $\underline{\tilde{\fc}}$ replaced by $\underline{\fd}$. Since $\underline\fd$ is admissible, has underlying non-open path $\fs_\lambda$ and satisfies \eqref{e_continuity_condition}, we have $\underline\fd=\underline\fs_\bx$ and therefore $\underline{\fc}$ is a $C_\infty$-Hecke open path of type $\bx$.

 Conversely, let $\underline{\fc}$ be a $C_\infty$-Hecke open path of type $\bx\in W^+_{sph}$, and let $\tilde{\underline{\fc}}(1)$ be as defined in the statement.  Write $\underline{\fc}=\phi_{t_r,\beta_r[m_r]}\circ \ldots \circ \phi_{t_0,\beta_0[m_0]}(\underline{\fs}_{\bx}),$ where $((t_i,\beta_i[m_i]))_{i\in \llbracket 0,r\rrbracket}$ is a folding data for $\underline{\fc}$ and assume that $r>0$. Let $E=\{t\in [0,1]\mid \exists i\in \llbracket 0,r\rrbracket, t_i=t\}$. Let $t_*\in E$.  Set $J=\{i\in \llbracket 0,r\rrbracket\mid t_i=t_*\}$. Then $J$ is an integers interval. Let $r'=|J|-1$ and $r_1=\min(J)$. If $t_*<1$ (resp. $t_*=1$), set $C^{(0)}=D_{t_*}^+$ (resp. $C^{(0)}=\tilde{\underline{\fc}}(1)$) and define inductively $(C^{(i)})_{i\in \llbracket 1,r'\rrbracket}$ by  $C^{(i)}=s_{\beta_{r'+r_1-1-i}[m_{r'+r_1-1-i}]}(C^{(i-1)})$, for $i\in \llbracket 1,r'\rrbracket$. Then $(C^{(i)})_{i \in \llbracket 1,r'\rrbracket}$ is a $(C^\infty_{\fc(t_*)},W^v_{\fc(t_*)})$-chain from $D_{t_*}^+$ to $\lim_{t\to t_*^-}D_t^+$  (resp. $\underline{\fc}(1)$ to $\tilde{\underline{\fc}}(1)$), which proves the lemma. 
\end{proof}

\section{Lifts of paths in the masure and application to Kazhdan-Lusztig polynomials}\label{s_lifts_paths}

The motivation for the introduction of $C_\infty$-Hecke open paths is that they appear as images by certain retractions of open segments in a masure, an analog of the Bruhat-Tits building for Kac-Moody groups. We introduce Kac-Moody groups, masures and twin masures in the first subsection. Each $C_\infty$-Hecke open path can thus be lifted, in any masure built upon $\A$, to numerous open segments, and we are able to compute these lifts, it is the content of subsection~\ref{ss_counting_lifts}. From this computation and finiteness of the number of $C_\infty$-Hecke open paths, we can define Kazhdan-Lusztig $R$-polynomials on $W^+$, which is the content of subsection \ref{subsection: Definition Kazhdan-Lusztig Polynomials}. Upon certain masure-theoretic conjectures, this gives an expression for the cardinal of the equivalent of Formula \eqref{eq: RPolreductive_intro} for Kac-Moody groups.

\subsection{Kac-Moody groups, masures and twin masures}

\subsubsection{Split Kac-Moody groups over valued fields and masures}

 Let  $\G=\G_{\mathcal{D}}$\index{g@$G$} be the group functor associated in \cite{tits1987uniqueness} with  the  generating root datum $\mathcal{D}$, see also \cite[Chapter 7]{marquis2018introduction} or \cite[8]{remy2002groupes}. Let $\cK$\index{k@$\cK$}\index{o@$\omega$} be a   field equipped with a valuation $\omega:\cK\twoheadrightarrow \Z\cup\{+\infty\}$. Let $G= \G(\cK)$  be the \textbf{split Kac-Moody group over $\cK$ associated with $\mathcal{D}$}.  The group $G$ is generated by the following subgroups:\begin{itemize}
\item the fundamental torus $T=\TT(\cK)$, where $\TT=\mathrm{Spec}(\Z[X])$,

\item the root subgroups $U_\alpha=\mathbb U_\alpha(\cK)$\index{u@$U_\alpha$}, isomorphic to $(\cK,+)$ by an isomorphism $x_\alpha$\index{x@$x_\alpha$}.
\end{itemize}

In \cite{gaussent2008kac} and \cite{rousseau2016groupes} (see also \cite{rousseau2017almost}) the authors associate a masure $\I=\I(\G,\TT,\cK,\omega)$\index{i@$\I$} on which the group $G$ acts. We recall briefly the construction of this masure. Let $N$\index{n@$N$} be the normalizer of $T$ in $G$.

A first step is to  define an action $\nu=\nu_\omega$ of $N$ on $\A$, see \cite[3.1]{gaussent2008kac}.
For $t\in T$, denote by $a_t$ the unique element of $\A$ satisfying $\chi(a_t)=-\omega(\chi(t))$, for every $\chi\in X$. If $t\in T$, $\nu(t)$ is the translation on $\A$ by the vector $a_t$. For $n\in N$, $\nu(n)$ is a translation if and only if $n\in N$ and we have $\nu(N)=W^v\ltimes Y$.

The masure  $\I=\I(\G,\cK,\omega) $ is defined to be the set $G\times \A/\sim$, for some equivalence relation $\sim$ (see \cite[Definition 3.15]{gaussent2008kac}). Then $G$ acts on $\I$ by $g.[h,x]=[gh,x]$ for $g,h\in G$ and $x\in \A$, where $[h,x]$ denotes the class of $(h,x)$ for $\sim$. The map $x\mapsto [1,x]$ is an embedding of $\A$ in $\I$ and we identify $\A$ with its image. Then $N$ is the stabilizer of $\A$ in $G$  and it acts on $\A$ by $\nu$. If $\alpha\in \Phi$ and $a\in \cK$, then $x_\alpha(a)\in U_\alpha$ fixes the half-apartment $D_{\alpha,\omega(a)}=\{y\in \A\mid \alpha(y)+\omega(a)\geq 0\}$ and for all $y\in \A\setminus D_{\alpha,\omega(a)}$, $x_\alpha(a).y\notin \A$.

 An \textbf{apartment} is a set of the form $g.\A$, for $g\in G$. We have $\I=\bigcup_{g\in G} g.\A$.  Then $\I$ satisfies axioms (MA I), (MA II) and (MA III) of \cite[Corollary 3.7]{hebert2022new}, which are equivalent to the axioms (MA1) to (MA5) defined by Rousseau in \cite{rousseau2011masures}. These axioms describe the following properties. \begin{itemize}
 \item[(MA I)] Let $A$ be an apartment of $\I$. Then $A=g.\A$, for some $g\in G$. We can then transport every notion which is preserved by $\nu(N)=W^v\ltimes Y$ to $A$ (in particular, we can define a segment, a wall, an alcove, ... in  $A$).

 \item[(MA II)] This axiom asserts that if $A$ and $A'$ are two apartments, then $A\cap A'$ is a finite intersection of affine half-apartments (i.e of sets of the form $h.D_{\alpha,k}$, for $\alpha\in \Phi$, $k\in \Z$, if $A=h.\A$) and there exists $g\in G$ such that $A'=g.A$ and $g$ fixes $A\cap A'$.

  \item[(MA III)] This axiom asserts that for some pairs of filters on $\I$, there exists an apartment containing them. This axiom is the building theoretic translation of some decompositions of $G$ (e.g Iwasawa decomposition).
 \end{itemize}

 The following proposition extends \cite[Proposition 7.4.8]{bruhat1972groupes} to masures.

\begin{Proposition}\label{p_BT_7.4.8}
    Let $g\in G$. Then $\A\cap g^{-1}.\A$ is enclosed and there exists $n\in N$ such that $g.x=n.x$ for every $x\in \A\cap g^{-1}.\A$.
\end{Proposition}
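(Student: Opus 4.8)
The plan is to reduce the statement to the classical Bruhat–Tits result by exhibiting an apartment on which $g$ acts, then using axiom (MA II) to replace $g$ by an element of $N$ on the overlap. First I would apply axiom (MA II) to the pair of apartments $\A$ and $g.\A$: this gives that $\A\cap g.\A$ is a finite intersection of half-apartments — in particular it is enclosed — and there exists $h\in G$ with $g.\A=h.\A$ and $h$ fixing $\A\cap g.\A$ pointwise. Applying $g^{-1}$, the set $\A\cap g^{-1}.\A$ is the image under $g^{-1}$ of the enclosed set $\A\cap g.\A$; since $g^{-1}.\A$ is an apartment and enclosedness is preserved by the apartment identifications (being a $\nu(N)$-invariant notion transported via (MA I)), $\A\cap g^{-1}.\A$ is enclosed. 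So the first assertion is essentially immediate from (MA II).

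For the second assertion, set $\Omega=\A\cap g^{-1}.\A$. I want an element $n\in N$ with $g.x=n.x$ for all $x\in\Omega$. The idea is: $g.\Omega=g.\A\cap\A$ lies in $\A$, so $g$ restricted to $\Omega$ is a map $\Omega\to\A$ whose image lies in a single apartment; I then need to see this restriction comes from the action of $N=\mathrm{Stab}_G(\A)$. Concretely, apply (MA II) to the apartments $\A$ and $g.\A$ to get $h\in G$ with $h.\A=g.\A$ and $h$ fixing $\A\cap g.\A$ pointwise. Then $h^{-1}g$ maps $\A$ to $\A$, hence $h^{-1}g\in N$ (using that $N$ is exactly the stabilizer of $\A$ in $G$). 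For $x\in\Omega$ we have $g.x\in\A\cap g.\A$, so $h$ fixes $g.x$, giving $h^{-1}g.x=g.x$; thus $n:=h^{-1}g\in N$ works. One subtlety to double-check: (MA II) is stated with $g$ fixing $A\cap A'$ for the element realizing $A'=g.A$, and I want it in the form with $h$ fixing the intersection and $h.\A=g.\A$ — this is exactly the shape of (MA II) applied to $A=\A$, $A'=g.\A$, so there is no real gap, just a matter of naming.

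**The main obstacle** I anticipate is purely a bookkeeping one: making sure the "enclosed" terminology and the transport-of-structure via (MA I) are invoked correctly, since $\Omega$ lives naturally in both $\A$ and $g^{-1}.\A$ and one must be careful that "$\Omega$ is enclosed" is unambiguous (it is, because the two apartment structures agree on the overlap by (MA II), the identifying element fixing the intersection). A secondary point is confirming that $N$ is precisely $\mathrm{Stab}_G(\A)$ — but this is recorded in the construction recalled just before the proposition ("$N$ is the stabilizer of $\A$ in $G$"), so it can be cited directly. No genuine calculation is needed; the whole proof is a two-line deduction from (MA II) plus the identification of $N$ as the apartment stabilizer, mirroring \cite[Proposition 7.4.8]{bruhat1972groupes}.
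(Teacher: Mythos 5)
Your proposal is correct and follows essentially the same argument as the paper: apply (MA II) to $\A$ and $g.\A$ to obtain the element $h$, observe that $h^{-1}g$ (equivalently the paper's $hg$, the naming conventions differ by inversion) stabilizes $\A$ and hence lies in $N$, and check directly that it agrees with $g$ on the overlap since $h$ fixes $\A\cap g.\A$. The only minor inefficiency is the detour through $g^{-1}.(\A\cap g.\A)$ for enclosedness — one can instead apply (MA II) directly to the pair $\A$, $g^{-1}.\A$ — but this changes nothing of substance.
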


\begin{proof}
    We assume that $\Omega:=\A\cap g^{-1}.\A$ is non-empty. Then it is enclosed by (MA II) and  there exists $h\in G$ such that $hg.\A=\A$ and $h$ fixes $\A\cap g.\A$. Then $hg$ stabilizes $\A$ and thus it belongs to $N$, by \cite[5.7 5)]{rousseau2016groupes}. We get the lemma by setting $n=hg$.
\end{proof}

\subsubsection{Masures associated with $\mathbb{G}(\bk[\qp,\qp^{-1}])$, $\mathbb{G}(\bk(\qp))$ and $\mathbb{G}\left(\bk(\!(\qp^\varepsilon)\!)\right)$}\label{sss_different_masures} 
For a given Kac-Moody group functor $\mathbb G$ and a given finite field $\mathds k$, the loop group of $\mathbb G$ over $\mathds k$ admits several incarnations in the litterature, which give rise to different masures. In this paragraph we introduce some of these choices and compare the different masures associated.

Let $\bk$\index{k@$\bk$} be a finite field and $\qp$ be an indeterminate. Let $\varepsilon\in \{\ominus,\oplus\}$. Let $\omega_\varepsilon$  be the usual valuation on $\bk(\!(\qp^\varepsilon)\!)$:  $\omega_\oplus(\sum_{k=-\infty}^{+\infty} x_k \qp^{\varepsilon k})=k_0$ if $\sum_{k=-\infty}^\infty x_k  \qp^{\varepsilon k}\in \bk(\!(\qp^{\varepsilon})\!)\setminus\{0\}$ is such that $x_{k_0}\neq 0$ and $x_k=0$ for $k\in \Z_{<k_0}$, where $\qp^\oplus=\qp$ and $\qp^{\ominus}=\qp^{-1}$. Let $\cK=\bk(\qp)$\index{k@$\cK$}. Let $\widehat{\I}_{\varepsilon}=\I(\mathbb{G},\bk(\!(\qp^\varepsilon)\!),\omega_{\varepsilon})$ and $\I_{\varepsilon}=\I(\mathbb{G},\cK,\omega_{\varepsilon})$. 

Let $\cO=k[\qp,\qp^{-1}]$, one defines $G_{twin}=\langle x_\alpha(\cO), \TT(\cO),\alpha\in \Phi \rangle\subset G$\index{g@$G_{twin}$}, which is the group $\G^{min}_{\mathcal{D}}(\cO)$ defined by Marquis in \cite[Definition 8.126]{marquis2018introduction}.

We denote by $\A_{\varepsilon}$\index{a@$\A_{\oplus},\A_{\ominus}$} the standard apartment of $\I_{\varepsilon}$. As affine spaces equipped with a hyperplane arrangement, $\A_{\oplus}$ and $\A_{\ominus}$ are copies of $(\A,\mathcal M^a)$, where $\A=Y\otimes \R$. However the actions of $T$ on $\A_{\oplus}$ and $\A_{\ominus}$ differ, since they take into account $\omega_{\oplus}$ and $\omega_{\ominus}$, and for the same reason $x_\alpha(u).\A_{\oplus}\cap \A_{\oplus}$ is in general different from $x_{\alpha}(u). \A_{\ominus}\cap \A_{\ominus}$, for $u\in \cK$ and $\alpha\in \Phi$.

Let $\varepsilon\in \{\ominus,\oplus\}$. The masure $\I^{twin}_{\varepsilon}$ of $(G_{twin},\omega_{\varepsilon})$ is $G_{twin}.\A_{\varepsilon}$ equipped with the set of apartments $\{g.\A_{\varepsilon}\mid g\in G_{twin}\}$. 
As $(\cO,\omega_\varepsilon)$ is dense in $\bk(\!(\qp^\varepsilon)\!)$,  the masures $\widehat{\I}_\varepsilon$, $\I_{\varepsilon}$ and  $\I^{twin}_\varepsilon$ are the same when we regard them as sets, by \cite[Corollary 3.7]{twinmasures} (in particular, $\I_\varepsilon=G_{twin}.\A_\varepsilon$). More precisely, if we regard $\A_\varepsilon$ as a subset of $\I_\varepsilon$, then by \cite[Theorem 3.6]{twinmasures}, if $A_\varepsilon$ is an apartment of $\I_\varepsilon$, and $P$ is a  bounded subset  of $A_\varepsilon$, there exists $g_{twin}\in G_{twin}$ such that $P\subset g_{twin}.\A_\varepsilon$, and the same holds with $\mathbb{G}(\bk(\!(\qp^\varepsilon)\!))$ instead of $\mathbb{G}(\cK)$ if we regard  $\A_\varepsilon\subset\widehat{\I}_\varepsilon$. Therefore when we are interested in local questions on $\I_\varepsilon$ (for example when we consider alcoves or tangent buildings), we can work with $\mathbb{G}(\bk(\!(\qp^\varepsilon)\!))$, $G$ or $G_{twin}$ indifferently.  However, the set of apartments of $\widehat{\I}_\varepsilon$  strictly contains the set of apartments of  $\widehat{\I}_\varepsilon$, which strictly contains the set of apartments of $\I^{twin}_{\varepsilon}$.

\subsubsection{Twin masure associated with Kac-Moody groups over Laurent polynomials}

In \cite{twinmasures}, the authors study the interplay between $\I_\oplus$ and $\I_\ominus$, in order to obtain an analogue of the twinnings of buildings, adapted to the frameworks of masures. We recall some of the key properties obtained there.

A \textbf{twin apartment} is a set of form $g.(\A_{\oplus}\sqcup \A_{\ominus})$, for some $g\in G_{twin}$. Let $\varepsilon\in \{\ominus,\oplus\}$. An apartment of the form $g.\A_{\varepsilon}$, for some $g\in G_{twin}$ is called \textbf{twinnable}. Then the map from the set of twinnable apartments of $\I_\oplus$ to the set of twinnable apartments of $\I_{\ominus}$ defined by $A_{\oplus}=g.\A_{\oplus}\mapsto A_{\ominus}= g.\A_{\ominus}$, for $g\in G_{twin}$ is a well-defined bijection by \cite[4.1.10]{twinmasures}). If $A_\oplus$ is a twinnable apartment of $\I_{\oplus}$, the corresponding apartment of $\A_{\ominus}$ is called its \textbf{twin}.

If $P$ is a bounded subset of an apartment of $\I_\oplus$, then there exists a twinnable apartment containing $P$, by \cite[Theorem 3.6]{twinmasures}. By \cite[Proposition 3.8 and Theorem 4.7]{twinmasures}, if $\varepsilon\in \{\ominus,\oplus\}$ and if $A_{\varepsilon}$ and $B_\varepsilon$ are two twinnable apartments, then there exists $g\in G_{twin}$ fixing $(A_{\ominus}\cap B_{\ominus})\cup (A_{\oplus}\cap B_\oplus)$.

A priori, the group $\mathbb{G}\left(\bk(\!(\qp)\!)\right)$ has no natural action on $\I_\ominus$. The groups $G=\mathbb{G}(\cK)$ and $G_{twin}$ both act on $\I_\oplus$ and $\I_\ominus$. However, the group $G_{twin}$ is more adapted in order to define the retraction $\rho_{C_\infty}$.

From now on, we write $\A$ instead of $\A_{\oplus}$. We add a subscript $\ominus$ to the objects related to $\I_{\ominus}$.

\paragraph{Iwahori subgroups} Let $C_{0}=\mathrm{germ}_{0}(C^v_f)\subset \A$\index{c@$C_{0}$}
(resp. $C_\infty =\mathrm{germ}_{0_{\ominus}}(-C^v_{f,\ominus})\subset \A_{\ominus}$\index{c@$C_\infty$}) be the \textbf{fundamental alcove} of $\A$ (resp. $\A_{\ominus}$).

We denote by $I$ (resp. $I_0$, $I_\infty$)\index{i@$I$, $I_0$, $I_\infty$} the fixator of $C_0$ (resp. $C_0$, $C_\infty$) in $G$ (resp. in $G_{twin}$, in $G_{twin}$). Descriptions of $I$ are given in \cite[5.7 and Définition 5.3]{rousseau2016groupes} and \cite[2.4.1 1)]{twinmasures}. The map $g\mapsto g.C_0$ induces a bijection between $G/I$ and the set of positive alcoves of $\I_\oplus$ based at an element of $g.0_\A$. By \ref{sss_different_masures}, this set is also the set of positive alcoves based at an element of $G_{twin}.0_\A$ and we have a natural bijection between $G/I$ and $G_{twin}/I_0$.

\subsubsection{Prerequisites on masures}\label{section : prerequisites_masures}

\paragraph{Tits preorder on $\I_\varepsilon$} Let $\varepsilon\in \{\ominus,\oplus\}$. For $x,y\in \I_\varepsilon$, we write $x\leq_\varepsilon y$ if there exists $g\in G$ such that $g.x,g.y\in \A_\varepsilon$ and $g.y-g.x\in \sT_\varepsilon$. This does not depend on the choice of $g$. This defines a $G$-invariant preorder on $\I_\varepsilon$ by \cite[Théorème 5.9]{rousseau2011masures}. When $\G$ is reductive this preorder is trivial since $\sT_\varepsilon=\A_\varepsilon$. However in the non-reductive case, $\leq_\varepsilon$ is a crucial tool  which is a partial remedy to the existence of pairs of points which are not contained in a common apartment.

\paragraph{Retractions centered at alcoves}

Let $\cE$ (resp. $\cE_{C_\infty,\oplus}$) \index{E@$\cE$, $\cE_{C_\infty,\oplus}$} be the set of points of $\I_\oplus$ such that there exists an apartment $A_x$ (resp. a twinnable apartment $A_{x,\oplus}$) of $\I_\oplus$ containing $C_0$ (resp. whose twin $A_{x,\ominus}$ contains $C_\infty$).  One defines  a \textbf{retraction $\rho_{C_0}$ on 
$\A$ centered at $C_0$} and a \textbf{retraction $\rho_{C_\infty}$ onto $\A$  centered at $C_\infty$} as follows. Let $x\in \cE$ (resp. $x\in \cE_{C_\infty,\oplus}$).
Then by (MA II) (resp. by \cite[Theorem 4.7]{twinmasures}), there exists $g\in G$ (resp. $g\in G_{twin}$) such that $g.A_x=\A$ (resp. $g.(A_{x,\oplus}\sqcup A_{x,\ominus})=\A\sqcup \A_{\ominus}$) and $g$ fixes $A_x\cap \A$ (resp. $(\A\cap A_{x,\oplus})\sqcup (\A_{\ominus}\cap A_{x,\ominus})$ (with the same notation as above). One then sets $\rho_{C_0}(x)=g.x$ (resp. $\rho_{C_\infty}(x)=g.x$). This is well-defined, independently of the choices made and thus  it defines an $I$-invariant retraction $\rho_{C_0}:\cE\rightarrow \A$ and an $I_\infty$-invariant retraction $\rho_{C_\infty}:\cE_{C_\infty,\oplus}\rightarrow \A$. 
When $\G$ is reductive, $\cE=\cE_{C_\infty,\oplus}=\A$, however in general,   these sets can be proper subsets of $\A$ (by \cite[Exemple 4.12.3 c)]{rousseau2016groupes} and \cite[6.5]{twinmasures}) and determining these sets is a difficult task. By \cite[Proposition 5.4]{rousseau2011masures} or \cite[Proposition 5.17]{hebert2020new}, $\cE\supset \I_{\oplus,\geq 0}\cup \I_{\oplus,\leq 0}$, where  $\I_{\oplus,R0}=\{x\in \I_{\oplus}\mid xR 0\}$ for $R\in \{\leq_{\oplus},\geq_{\oplus}\}$. However, very little is known on $\cE_{C_\infty,\oplus}$. It is conjectured in \cite[4.4.1]{twinmasures} that $\cE_{C_\infty,\oplus}\supset \I_{\oplus,\geq 0}\cup \I_{\oplus,\leq 0}$, but no case of the conjecture is known. In a  recent preprint,  Patnaik \cite{patnaik2024local} proves a  local Birkhoff decomposition of a completion of $G$ when $\mathbb{G}$ is affine. This decomposition seems to be related to this conjecture in the affine case, but we do not know to what extent, since the groups and subgroups considered are slightly different. 

Note that there exist elements of $\mathbb{T}(\bk(\qp))$ which fix $\A_{\ominus}$ but translate $\A_{\oplus}$ non-trivially. Therefore $\rho_{C_\infty}$ is not $\mathrm{Fix}_{G}(C_\infty)$-invariant and this is the main reason why we consider $G_{twin}$.

\paragraph{Tangent buildings}
For any $x \in \I_\oplus$ and $\varepsilon \in \{+,-\}$, let $\cT^\varepsilon_x(\I_\oplus)$\index{t@$\cT^\varepsilon_x(\I_\oplus)$} denote the set of alcoves of sign $\varepsilon$ based at $x$. Explicitly, an element of $\cT^\varepsilon_x(\I_\oplus)$ is the image $g.a$ of an alcove $a\in \cT^\varepsilon_y(\A)$ for some $y\in \A$ and $g\in G$ such that $g.y=x$.

We extend the $W^v$-distance $d^\varepsilon$\index{d@$d^\varepsilon,d^+,d^-$}  (resp. the codistance $d^\ast$\index{d@$d^\ast$}) defined in \eqref{e_def_d_epsilon} (resp. \eqref{e_def_d_ast}) to a $W^v$-valued distance $d^\varepsilon:\cT_x^\varepsilon(\I_\oplus)\times \cT_x^\varepsilon(\I_{\oplus})\rightarrow W^v$ (resp. a codistance $d^\ast:\cT^+_x(\I_\oplus)\times \cT^-_x(\I_\oplus) \cup \cT^-_x(\I_\oplus)\times\cT^+_x(\I_\oplus) \rightarrow W^v$). Let   $a_1,a_2\in \cT^\varepsilon_x(\I_\oplus)$ (resp. $(a_1,a_2)\in \cT^+_x(\I_\oplus)\times \cT^-_x(\I_\oplus)$). By \cite[Proposition 5.17]{hebert2020new} there exists an apartment containing both of them, so by (MA I) a $g\in G$ such that $g.a_1,g.a_2$ lie in $\cT^\varepsilon_y(\A)$, for some $g\in G$ such that $g.x=y\in \A$. One sets $d^\varepsilon(a_1,a_2)=d^\varepsilon(g.a_1,g.a_2)$ (resp. $d^\ast(a_1,a_2)=d^\ast(g.a_1,g.a_2)=d^\ast(a_2,a_1)^{-1}$).  By (MA II) these functions are well-defined (do not depend on the choices we made).

We say that $a_-,a_+$ are \textbf{opposite} if $d^\ast(a_-,a_+)=1_{W^v}$. Then for any $x$ and $\varepsilon\in \{+,-\}$, $(\cT^\varepsilon_x(\I_\oplus),d^\varepsilon)$ defines a building of type $(W^v,S)$ (where $S$ is the set of simple reflections of $W^v$: $S=\{s_i\mid i\in I\}$) \index{S@$S$}, with system of apartments induced by the one of $\I_\oplus$. The codistance $d^\ast$ defines a twinning, we say that $(\cT^\pm_x(\I_\oplus),d^\pm,d^\ast)$ is the \textbf{local tangent building} at $x$, it is thus a twin building of type $(W^v,S)$ and of model twin apartment $\cT^\pm_x(\A)$ (see \cite[\S 5.1, \S 5.8]{abramenko2008buldings} for the axioms of buildings and twin buildings, and \cite[Propositions 4.7, 4.8]{gaussent2008kac} for proofs). Any element $g\in G$ induces an isomorphism of twin buildings: $\cT^\pm_x(\I_\oplus)\cong \cT^\pm_{g.x}(\I_\oplus)$.

\begin{Remark}
    Note that, in this paper, $\cT^{\pm}_x(\I_\oplus)$ is a set of alcoves and not a set of segment germs (unlike for instance in \cite[\S 4.5]{gaussent2008kac}), because we consider the tangent buildings as abstract buildings, of which the set of segment germ is a geometric realization. In \cite[\S 4.5]{gaussent2008kac}, there are two distinct tangent building structures, the one exposed and used here corresponds to the unrestricted structure.
\end{Remark}
\paragraph{Thickness of the tangent buildings}
Let $(\cT,d)$ be an abstract building of type $(W,S)$, and let $s\in S$.  A \textbf{panel} of type $s$ is a subset of the form \index{p@$p(C,s)$} $p(C,s):=\{C'\in \cT\mid d(C,C')\in \{1,s\}\}$ for some alcove $C$. We say that $C'$ dominates $p(C,s)$ if $C'\in p(C,s)$; each alcove dominates exactly one panel of each type. Note that, for $C,C'$ two distinct alcoves  $p(C,s)=p(C',s)$ if and only if $d(C,C')=s$ and a panel is entirely defined by two of its dominating alcoves. The \textbf{thickness} of $p(C,s)$ is $|p(C,s)|-1$, a panel is called \textbf{thin} if its thickness is equal to $1$, and \textbf{thick} otherwise. Note that an isomorphism of buildings sends a given panel to another panel of same type and same thickness.

In general, the local tangent buildings in a masure admit  thin panels. More precisely, suppose that $x\in \A$ and $a=a(x,\varepsilon w)\in \T^\varepsilon_x(\A)$ is an alcove based at $x$ (with $\varepsilon \in \{+,-\}, w\in W^v$). Then, for $s_i\in S$, the panel $p(a,s_i)$ is identified with $a(x,\varepsilon w)\cap a(x,\varepsilon ws_i)$: an alcove $\tilde a \in T_x(\I_\oplus)$ dominates $p(a,s_i)$ if and only if it contains the filter $a(x,\varepsilon w)\cap a(x,\varepsilon ws_i)$. Let $\beta=w(\alpha_i)$ the root associated to $(w,s_i)$, then the \textbf{support} of the panel $p(a(x,\varepsilon w),s_i)$ is the (affine or ghost) wall $M_{\beta[-\langle x,\beta\rangle]}=\{y\in \A \mid \langle y,\beta\rangle =\langle x,\beta\rangle\}$, it is the only wall containing the filter-theoretic intersection $a(x,\varepsilon w)\cap a(x,\varepsilon ws_i)$. Note that the support does not depend of the sign $\varepsilon$. The panel is thick if and only if its support is an affine wall (an element of $\mathcal M^a$), if and only if $\langle x,\beta\rangle \in \mathbb Z$. Moreover, for split Kac-Moody groups, the thickness of thick panels is constant equal to $|\mathds k|$.

This extends to any $x\in \I_\oplus$ and any panel $p$ of $\cT_x^\pm(\I_\oplus)$, as there always exists an element $g$ of $G$ which sends $x$ to an element of $\A$, and $p$ to a panel lying in $\cT_{gx}(\A)$.

We deduce that, for $x\in \A$, $\cT_x^\pm(\I_\oplus)$ is thick (in the sense that all its panels are thick) if and only if $\langle x,\alpha_i\rangle \in \mathbb Z$ for all simple root $\alpha_i$, in particular this is verified if $x\in Y$. Since elements of $G$ induce isomorphism of tangent buildings, this extends to all $x\in G.Y=G.0_\A\subset \I_\oplus$ (which are called the "special vertices" of $\I_\oplus$). Moreover for any special point $x$, the tangent twin building $\cT^\pm_x(\I_\oplus)$ is $|\mathds k|$-homogeneous: all its panels have the same thickness $|\mathds k|$. In general, $\cT^\pm_x(\I_\oplus)$ is $(|\mathds k|,1)$-semi-homogeneous and the set of thick panels depends on the $G$-orbit of $x$. For $x\in \I_\oplus$ we denote by $\cP_{x}$ \index{P@$\cP_{x}$} the set of thick panels of the tangent building $\cT^\pm_x(\I_\oplus)$ in $\A$.

\begin{Remark}
    If we forget about thin panels, we obtain the restricted building structure (see \cite[\S 4.5]{gaussent2008kac}) $^r\cT^\pm_x(\I_\oplus)$ which is a quotient of $\cT^\pm_x(\I_\oplus)$ and a $|\mathds k|$-homogeneous twin building. However its type depends on $x$ and may be a Coxeter group of infinite rank, we work with the unrestricted structure in order to avoid this difficulty.
\end{Remark}

In order to work in local tangent buildings, we introduce galleries and building-theoretic retractions, with the formalism of abstract twin buildings.
\begin{Definition}[Galleries and retractions in a building]\label{Def galleries}

  Let $W$ be a Coxeter group with set of simple reflections $S$, and let $(\cT^\pm,d^\pm,d^\ast)$ be a twin building of type $(W,S)$ (in the sense of \cite[5.8]{abramenko2008buldings}).
  \begin{itemize}  
        \item Let $w\in W$ and let $\mathbf i_w=(s_1,\dots,s_n)\in S^n$ be a reduced $S$-expression of $w$. A sequence of chambers $(C_0,\dots,C_n)\in (\cT^+)^n$ forms a gallery of type $\mathbf i_w$ if $C_i$ lies in the panel $p(C_{i-1},s_i)\}$ for all $i\in \llbracket1,n\rrbracket$ (that is to say; $d(C_{i-1},C_i)\in \{1_W,s_i\}$. We say that it is a \textbf{minimal gallery} if there is no shorter gallery from $C_0$ to $C_n$, in particular this implies that $d(C_{i-1},C_i)\neq 1_W$ for all $i$. Note that, in this case, we have $d(C_0,C_n)=s_1\dots s_n=w$. 
        If $i$ is such that $C_{i-1}=C_i$ we say that the gallery is folded along the panel $p(C_i,s_i)$. 

    \item Let $(A^+,A^-)$ be a pair of twin apartments. Let $\mathbf g=(C_0,..,C_n)$ be a gallery in $A^+$ of type $\mathbf i_w=(s_1,..,s_n)$ and let $C_-$ be a chamber lying in $A^-$. We say that $\mathbf g$ is \textbf{$C_-$-centrifugally folded} if, for any $i\in \llbracket1,n\rrbracket$ such that $C_{i-1}=C_i$, we have that $d^\ast(C_-,C_i)<d^\ast(C_-,C_i)s_i$. Geometrically, this means that the wall supporting $p(C_i,s_i)$ (along which the gallery is folded) separates $C_-$ from $C_{i-1}$. More generally, we say that $\mathbf g$ crosses $M$, or folds along $M$ away from $C_-\in A^\pm$ (resp. towards $C_-$) if and only if, for every panel $p$ supported by $M$ and $i \in \llbracket1,n\rrbracket$ such that $C_{i-1},C_i\in p$, $C_{i-1}$ and $C_-$ lie on the same side of $M$ (resp. on opposite sides of $M$).

    \item Let $\mathbf g=(C_0,\dots,C_n)$ be a gallery in $A^+$ and $\rho_{A^\pm}$ be a retraction $\cT^\pm\rightarrow A^\pm$. Then a \textbf{$\rho_{A^\pm}$-lift} of $\mathbf g$ is a gallery $(\tilde C_0,\dots,\tilde C_n)$ in $\cT^+$ such that $\rho_{A^\pm}(\tilde C_i)=C_i$ for all $i\in \llbracket0,n\rrbracket$. A \textbf{$\rho_{A^\pm}$-minimal lift} is a $\rho_{A^\pm,x}$-lift which is also a minimal gallery. 
    
    \item If $C^-\in A^-$, we denote by $\rho_{A^\pm,C^-}$ the retraction centered at $C^-$ and with image $A^\pm$. It sends any alcove $C\in \cT^+$ (resp. $C\in \cT^-)$ to the unique alcove $C'\in A^+$ (resp. $C'\in A^-$) such that $d^\ast(C',C^-)=d^\ast(C,C^-)$  (resp. $d^-(C',C^-)=d^-(C,C^-)$).
    
\end{itemize}
\end{Definition}

\paragraph{Projections in the masure}
Like other notions, we can extend the definition of projections from the fundamental apartment to more general points and segment germs in the masure as follows.

Suppose that $\tilde x \leq_\oplus \tilde y\in \I_\oplus$ (resp. $\tilde x \geq_\oplus \tilde y\in \I_\oplus$) and let $a$ be an alcove based at $\tilde y$. Then by \cite[Proposition 5.17 (ii)]{hebert2020new} there is an apartment $A_\oplus$ containing $a$ and $\tilde x$. Therefore there is $g\in G$ such that $\A=g.A_\oplus$ and $g.\tilde y-g.\tilde x \in \sT$ (resp. $g.\tilde y-g.\tilde x \in -\sT$). We then define $\pr_{\tilde x}(a)$ as $g^{-1}.\pr_{g.\tilde x}(a)\in \cT^+_{\tilde x}(\I_\oplus)$ (resp. $g^{-1}.\pr_{g.\tilde x}(a)\in \cT^-_{\tilde x}(\I_\oplus)$). If $\fs_\varepsilon(t)$ is a segment germ based at $\bx$, then (by \cite[Proposition 5.17 (ii)]{hebert2020new} again) we can suppose that $A_\oplus$ also contains $\fs_\varepsilon(t)$, and we can therefore define $\pr_{\fs_\varepsilon(t)}(\by)$ similarly. Note that, for $\varepsilon\in \{+,-\}$, any apartment containing $a$ and $\fs_\varepsilon(t)$ (resp $a$ and $x$) contains $\pr_{\fs_\varepsilon(t)}(a)$ (resp. $\pr_x(a)$) which ensures, using the second axiom (MAII), that these projections are well-defined. 
In particular, $C^{++}_{\tilde x}$ is well-defined for any $\tilde x \leq_\oplus 0_\A$, as $C^{++}_{\tilde x}=\pr_{\tilde x}(C_0)$. Since it is well-defined, $C^{++}_{\tilde x}$ is the only alcove based at $\tilde x$ which retracts to $C^{++}_x$ by $\rho_{C_0}$.

We also extend the definition of $C^\infty_{\tilde x}$ to any $\tilde x\in \rho_{C_\infty}^{-1}(-\sT)$: recall that, for $x \in -\sT$, $C^\infty_x$ is the unique alcove of $\cT^-_x(\A)$ opposite to $C^{++}_x$. Now if $\tilde x\in \rho_{C_\infty}^{-1}(x)$, there is $g\in I_\infty$ such that $\tilde x=g.x$, and set $C^\infty_{\tilde x}:=g.C^\infty_x$, it does not depend on the choice of $g$ (see \cite[\S 5.1]{twinmasures}). This alcove should be thought of as the projection of $C_\infty$ at $x$ (except that $C_\infty$ does not live in $\A$ but in the opposite twin apartment $\A_\ominus$). Note that, for $x\in \rho_{C_\infty}^{-1}(-\sT)\cap \rho_{C_0}^{-1}(-\sT)$, the alcoves $C^\infty_x$ and $C^{++}_x$ need not be opposite in $\cT^\pm_x(\I_\oplus)$, unless $x\in -\sT$.

In the following proposition, we rephrase results of \cite{twinmasures}.

\begin{Proposition}\label{p_def_retraction_C_infty}
    \begin{enumerate}
        \item  Let $x\in \I_\oplus$ be $C_\infty$-friendly (i.e such that there exists a twin apartment $(A_\oplus,A_{\ominus})$ such that $A_\oplus\ni x$ and $C_\infty\subset A_\ominus$). Then if $(B_\oplus,B_\ominus)$ is a twin apartment  containing $x$ and $C_\infty$, $B_\oplus$  contains $C_x^\infty$.

        \item 
Let $\varepsilon\in \{\ominus,\oplus\}$. Let $A=(A_\oplus,A_{\ominus})$, $B=(B_\oplus,B_{\ominus})$ be two twin apartments such that $A_\varepsilon\cap B_\varepsilon$ has non-empty interior.  Let $g\in G_{twin}$ be such that $g.A=B$ and such that $g$ fixes a non-empty open subset of $A_\varepsilon\cap B_\varepsilon$. Then $g$ fixes $A\cap B:=(A_\oplus\cap B_\oplus)\sqcup (A_\ominus\sqcup B_\ominus)$. 

 \end{enumerate}
\end{Proposition}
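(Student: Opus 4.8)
Both assertions are reformulations of results of \cite{twinmasures}, and the plan is to reduce each of them to the transitivity of $G_{twin}$ on twin apartments sharing a bounded set (via \cite[Proposition 3.8 and Theorem 4.7]{twinmasures}) together with one rigidity input. For \emph{(1)}, I would first recall from \cite[\S 5.1]{twinmasures} that, for a $C_\infty$-friendly $x$, the alcove $C^\infty_x$ is constructed inside any twin apartment $(A_\oplus,A_\ominus)$ with $x\in A_\oplus$ and $C_\infty\subset A_\ominus$, and that the well-definedness proved there entails that every element of $I_\infty\cap \mathrm{Fix}_{G_{twin}}(x)$ fixes $C^\infty_x$ (recall that $I_\infty$ is by definition the fixator of $C_\infty$ in $G_{twin}$). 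Now given a second twin apartment $(B_\oplus,B_\ominus)$ with $x\in B_\oplus$ and $C_\infty\subset B_\ominus$, I would note that $A_\ominus\cap B_\ominus$ has non-empty interior since it contains the alcove $C_\infty$; by \cite[Proposition 3.8 and Theorem 4.7]{twinmasures} there is $g\in G_{twin}$ with $g.(A_\oplus\sqcup A_\ominus)=B_\oplus\sqcup B_\ominus$ which fixes $(A_\oplus\cap B_\oplus)\sqcup(A_\ominus\cap B_\ominus)$ pointwise. Such a $g$ fixes $C_\infty$, hence $g\in I_\infty$, and it fixes $x$; by the property just recalled it fixes $C^\infty_x$, whence $C^\infty_x=g.C^\infty_x\in g.A_\oplus=B_\oplus$, which is the assertion.

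For \emph{(2)}, I would again use \cite[Proposition 3.8 and Theorem 4.7]{twinmasures} to produce some $g_0\in G_{twin}$ with $g_0.A=B$ fixing $(A_\oplus\cap B_\oplus)\sqcup(A_\ominus\cap B_\ominus)$ pointwise, and then compare it with the given $g$. Setting $h=g_0^{-1}g$ one has $h.A=A$, and $h$ fixes pointwise the non-empty open subset $U\subseteq A_\varepsilon\cap B_\varepsilon$ on which $g$ is assumed to be the identity (since $g_0$ fixes all of $A_\varepsilon\cap B_\varepsilon$). As $h$ stabilizes the apartment $A_\varepsilon$, it acts on it by an affine automorphism, which is trivial because it fixes the open set $U$; hence $h$ fixes $A_\varepsilon$ pointwise. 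The rigidity of the twinning for $G_{twin}$ then forces $h$ to fix $A_{-\varepsilon}$ pointwise as well: after conjugating $A$ to the standard twin apartment, $h$ becomes an element of $N\cap G_{twin}$ acting trivially on $\A_\varepsilon$, hence lying in $\TT(\bk)$, the kernel of the action of $N\cap G_{twin}$ on either standard apartment, which acts trivially on both $\A_\oplus$ and $\A_\ominus$ (cf. \cite[4.1.10]{twinmasures}). Thus $h$ fixes $A=A_\oplus\sqcup A_\ominus$ pointwise, $g=g_0h$ agrees with $g_0$ on $A$, and in particular $g$ fixes $A\cap B=(A_\oplus\cap B_\oplus)\sqcup(A_\ominus\cap B_\ominus)$.

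The step I expect to be the main obstacle is the twin-rigidity input used in \emph{(2)} — and, through the cited well-definedness, implicitly in \emph{(1)}: one must know that replacing the full group $G$ by $G_{twin}$ removes the pathology recalled in the text, where an element of $\TT(\bk(\qp))$ can fix one standard apartment while translating the other. Concretely one needs $\ker\!\big(N\cap G_{twin}\to\Aut(\A_\oplus)\big)=\TT(\bk)=\ker\!\big(N\cap G_{twin}\to\Aut(\A_\ominus)\big)$, which is the content underlying \cite[4.1.10]{twinmasures}; with that in hand, the remaining steps are routine applications of the enclosure and transitivity axioms already recorded in the excerpt and in \cite{twinmasures}.
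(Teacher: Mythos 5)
Your proof is correct and follows essentially the same approach as the paper. For (1) the paper simply cites \cite[5.1 2)]{twinmasures} while you unpack the well-definedness argument; for (2) your argument — produce a pointwise-fixing $g_0$, observe that $h=g_0^{-1}g$ stabilizes the twin apartment and acts trivially on $\A_\varepsilon$ since it fixes an open subset, then invoke the rigidity fact that such an $h$ lies in $\TT(\bk)$ and hence fixes both halves — is exactly the paper's (the paper cites \cite[4.1.7]{twinmasures} rather than your \cite[4.1.10]{twinmasures} for the torus-kernel step).
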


\begin{proof}
1) This is a consequence of \cite[5.1 2)]{twinmasures}.

2) Using isomorphism apartments, we may assume $A=\A_{twin}:=(\A,\A_\ominus)$. By \cite[Theorem 4.7]{twinmasures}, there exists $h\in G_{twin}$ such that $h.\A_{twin}=B$ and such that  $h$ fixes $\A_{twin}\cap B$. Then $g^{-1}h\A_{twin}=\A_{twin}$ and $g^{-1}h$ induces an affine automorphism $\phi$ of $\A_{\eta}$ for both $\eta\in \{\ominus,\oplus\}$. As $\phi$ fixes a non-empty open subset of $\A_\varepsilon$, it fixes $\A_\varepsilon$. Using \cite[4.1.7]{twinmasures}, we deduce $g^{-1}h\in \mathbb{T}(\mathds{k})$ and thus $g^{-1}h$ fixes $\A_{twin}$. Lemma follows.
\end{proof}

Using projections, the masure-theoretic retractions $\rho_{C_0}$, $\rho_{C_\infty}$can be computed at the level of tangent buildings, as we develop now.

\paragraph{Local behaviour of masure retractions}

Let $x\in -\sT$, then $C^\infty_x$ is an alcove in $\cT^-_x(\A)$. In the tangent building $\cT^\pm_x(\I_\oplus)$ we have the building-theoretic retraction centered at $C^\infty_x$: $\rho_{C^\infty_x}: \cT^\pm_x(\I_\oplus)\rightarrow\cT^\pm_x(\A)$. By definition, for any $a\in\cT^-_x(\I_\oplus)$ (resp. $a\in\cT^-_x(\I_\oplus)$), $\rho_{C^\infty_x}(a)$ is the unique alcove in $\cT^\pm_x(\A)$ such that $d^-(C^\infty_x,a)=d^-(C^\infty_x,\rho_{C^\infty_x}(a))$ (resp. $d^\ast(C^\infty_x,a)=d^\ast(C^\infty_x,\rho_{C^\infty_x}(a))$).
Let $\tilde{x}\in \rho_{C_\infty}^{-1}(\{x\})$ and let $g\in I_\infty$ be any element such that $g.\tilde x = x$. Then, $\rho_{C_\infty}$ defines a map $\cT^\pm_{\tilde x}(\I_\oplus)\rightarrow\cT^\pm_x(\A)$ and by \cite[Lemma 5.1.4]{twinmasures} we have the following factorization:

\begin{equation}\label{eq: factorizationinfinite}\begin{tikzcd}
 & \cT^\pm_x(\I_\oplus) \arrow[dr,"\rho_{C^\infty_x}"] \\
\cT^\pm_{\tilde x}(\I_\oplus) \arrow[ur,"g"] \arrow[rr,"\rho_{C_\infty}"] && \cT^\pm_x(\A)
\end{tikzcd}\end{equation}

Similarly for $\tilde x \in \rho_{C_0}^{-1}(x)$ and any $g\in I_0$ such that $g.\tilde x = x$, we have the factorization:
\begin{equation}\label{eq: factorizationzero}\begin{tikzcd}
 & \cT^\pm_x(\I_\oplus) \arrow[dr,"\rho_{C^{++}_x}"] \\
\cT^\pm_{\tilde x}(\I_\oplus) \arrow[ur,"g"] \arrow[rr,"\rho_{C_0}"] && \cT^\pm_x(\A)
\end{tikzcd}
\end{equation}

\begin{Proposition}\label{p_Cinfty_chains}
Let $x\in \A$ and $C_\A$ be an alcove of $\A$ based at $x$. Let   $\rho:\cT^{\pm}_x(\I_\oplus)\rightarrow \cT^{\pm}_x(\A)$ be the retraction onto $\cT^{\pm}_x(\A)$, centered at $C_\A$. Let $C,C'$ be two alcoves based at $x$ and of the same sign $\varepsilon$. Let $\tilde{C}$ be the alcove of $\A$ such that $d^{\varepsilon}(\rho(C),\tilde{C})=d^{\varepsilon}(C,C')$ Then there exists a $(C_\A,W^v_x)$-chain from    $\rho(C')$ to $\tilde{C}$. 
\end{Proposition}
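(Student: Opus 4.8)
The plan is to work in the local twin building $(\cT^{\pm}_x(\I_\oplus),d^{\pm},d^{\ast})$, retract a minimal gallery, and unfold it. First I would set $w:=d^{\varepsilon}(C,C')$, fix a reduced word $\mathbf i_w=(s_1,\dots,s_\ell)$ and a minimal gallery $\gamma=(C=D_0,\dots,D_\ell=C')$ of type $\mathbf i_w$ in $\cT^{\varepsilon}_x(\I_\oplus)$ (it exists, this being a building of type $(W^v,S)$), and apply $\rho$. Then $\bar\gamma:=(\rho(D_0),\dots,\rho(D_\ell))$ is a gallery of type $\mathbf i_w$ in $\cT^{\varepsilon}_x(\A)$ from $\rho(C)$ to $\rho(C')$, possibly folded, and $\tilde C$ is exactly the endpoint of the minimal gallery of type $\mathbf i_w$ out of $\rho(C)$, i.e. the chamber obtained by fully unfolding $\bar\gamma$. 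Thus it is enough to prove: (i) each fold of $\bar\gamma$ is along an affine wall through $x$ and is $C_\A$-centrifugal; (ii) any gallery of reduced type $\mathbf i_w$ in $\cT^{\varepsilon}_x(\A)$ with these two properties unfolds to a $(C_\A,W^v_x)$-chain from its endpoint to $\tilde C$.

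For (i): suppose $\bar\gamma$ folds at position $k$, so $\rho(D_{k-1})=\rho(D_k)$; put $P_k:=p(D_{k-1},s_k)$ and $\bar P_k:=\rho(P_k)=p(\rho(D_{k-1}),s_k)$. The retraction $\rho$ sends the gate $\mathrm{proj}_{P_k}(C_\A)$ to $\mathrm{proj}_{\bar P_k}(C_\A)$ and all other chambers of $P_k$ onto the remaining chamber of $\bar P_k$; since $D_{k-1}\neq D_k$ (minimality of $\gamma$), both of them are non-gate, so $P_k$ has at least three chambers and is a thick panel. By the analysis of thickness recalled above the support of a thick panel is an affine wall, and since $\rho$ coincides near $D_{k-1}$ with an element of $G$ fixing $x$ (by axiom (MA II), as for $\rho_{C_0},\rho_{C_\infty}$), which preserves the affine/ghost dichotomy of walls, the support $M_k=M_{\beta_k[n_k]}$ of $\bar P_k$ is again an affine wall, with $n_k=-\langle x,\beta_k\rangle\in\Z$. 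Finally $\rho(D_{k-1})$ is the non-gate chamber of $\bar P_k$, so by the gate property $M_k$ separates $\rho(D_{k-1})$ from $C_\A$, i.e. $\beta_k[n_k](\rho(D_{k-1}))$ and $\beta_k[n_k](C_\A)$ have opposite signs — this is precisely the $C_\A$-centrifugality, and it holds whether $C_\A$ has sign $\varepsilon$ (so $\rho$ uses $d^{\varepsilon}$) or $-\varepsilon$ (so $\rho$ uses $d^{\ast}$), via the standard distance/codistance dictionary.

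For (ii): I would induct on the number $f$ of folds of such a gallery $\mathbf g=(G_0,\dots,G_\ell)$, denoting by $\tilde G$ the endpoint of its full unfolding (the chamber at $d^{\varepsilon}$-distance $w$ from $G_0$). If $f=0$ then $\mathbf g$ is minimal, $G_\ell=\tilde G$ and the empty chain works. If $f\geq1$, let $j$ be the last fold position and $M_j=M_{\beta_j[n_j]}$ its (affine) fold wall, that is, the support of $p(G_{j-1},s_j)$; then $G_{j-1}=G_j$, the wall $M_j$ separates $C_\A$ from $G_{j-1}$, and $(G_j,\dots,G_\ell)$ is a minimal gallery of type $(s_{j+1},\dots,s_\ell)$ out of $G_{j-1}$. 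As $(s_j,s_{j+1},\dots,s_\ell)$ is a reduced word, $\ell(s_j\,s_{j+1}\cdots s_\ell)=\ell(s_{j+1}\cdots s_\ell)+1$, which means $M_j$ does not separate $G_{j-1}$ from $G_\ell$; hence $G_\ell$ is on the same side of $M_j$ as $G_{j-1}$, so $M_j$ separates $C_\A$ from $G_\ell$ and $s_{\beta_j[n_j]}G_\ell$ is obtained from $G_\ell$ by a valid chain step. Replacing $(G_{j-1},G_j,\dots,G_\ell)$ in $\mathbf g$ by $(G_{j-1},s_{\beta_j[n_j]}G_{j-1},s_{\beta_j[n_j]}G_{j+1},\dots,s_{\beta_j[n_j]}G_\ell)$ produces a gallery $\mathbf g'$ of type $\mathbf i_w$ with $f-1$ folds (the folds of $\mathbf g$ at positions $<j$, still $C_\A$-centrifugal along affine walls), with endpoint $s_{\beta_j[n_j]}G_\ell$ and the same full unfolding $\tilde G$. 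By induction there is a $(C_\A,W^v_x)$-chain from $s_{\beta_j[n_j]}G_\ell$ to $\tilde G$; prepending the step above gives a chain from $G_\ell$ to $\tilde G$. Applying this to $\mathbf g=\bar\gamma$ yields the sought $(C_\A,W^v_x)$-chain from $\rho(C')$ to $\tilde C$.

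The hard part is the affineness assertion in (i): one must know $\rho$ never folds along a ghost wall, for otherwise the reflections $s_{\beta_j[n_j]}$ used in the unfolding would not be affine reflections (so condition $n_j\in\Z$ in the definition of a chain would fail). This is exactly why one invokes the equivalence ``thick panel $\Leftrightarrow$ affine support'' in the tangent buildings together with the local realization of $\rho$ by elements of $G$. A minor further point is verifying that ``$C_\A$-centrifugal'' carries the claimed geometric meaning when $C_\A$ has the sign opposite to that of $C$ and $C'$, where $\rho$ is the codistance-centered retraction.
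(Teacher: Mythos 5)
Your proof is correct, and it takes a recognisably different route from the paper's. The paper argues by induction on $\ell(d^\varepsilon(C,C'))$, peeling off the first letter of a reduced word at each step and case-splitting on whether there is a single apartment containing $C$, its first neighbour $C^{(2)}$, and $C_\A$: when there is, $\rho$ does not fold and the inductive chain carries through unchanged; when there is not, the paper invokes \cite[Proposition 2.9]{rousseau2011masures} to build an apartment containing $C$ and the half-apartment of $C_\A$, deduces $\rho(C)=\rho(C^{(2)})$, and appends one reflection to the chain produced by induction. Your version instead retracts the whole minimal gallery at once, establishes in a single step that the retracted gallery is $C_\A$-centrifugally folded along thick (hence affine-supported) panels, and then runs a clean unfolding induction on the number of folds from the last one backward. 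Conceptually what you gain is a sharper separation of the two phenomena — (a) the geometric input that retraction of a minimal gallery in the tangent twin building only folds at thick panels, centrifugally, and (b) the purely Coxeter-combinatorial statement that any such folded gallery unfolds into a chain — whereas the paper interleaves them in a single induction. The cost is that your step (i) leans on the gate/cogate folding description of the building-theoretic retraction and on the ``thick panel $\Leftrightarrow$ affine support'' dictionary in $\cT^\pm_x(\I_\oplus)$ (together with the fact that the local realisation of $\rho$ by a $G$-element preserves thickness), which you should cite explicitly (e.g.\ \cite[5.3]{abramenko2008buldings} for the folding description and the discussion of thickness earlier in Section~\ref{section : prerequisites_masures} for the dichotomy). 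Your treatment of the opposite-sign case via $d^\ast$ is correct as well, provided one notes that the cogate of a panel (maximising codistance) plays the role of the gate, which you flag but do not spell out; a one-line remark referring to Definition~\ref{Def galleries} would close that minor gap.
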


\begin{proof}
    We prove it by induction on $n:=\ell(d^{\varepsilon}(C,C'))$. If $n=0$, this is clear. Assume that $n>0$. Let $w=d^{\varepsilon}(C,C')$. Write $w=s_{i_1}\ldots s_{i_n}$, with $i_1,\ldots,i_n\in I$. Let $C^{(1)},\ldots,C^{(n+1)}$ be the gallery from $C$ to $C'$ such that $d^{\varepsilon}(C^{(j)},C^{(j+1)})=s_j$ for all $j\in \llbracket 1,n\rrbracket$. We have $d^{\varepsilon}(C^{(2)},C')=w_2:=s_{i_1}w$ and $\ell(w_2)=n-1$. Let $\tilde{C}_2$ be the alcove of $\A$ such that $d^\varepsilon(\rho(C^{(2)}),\tilde{C}_2)=w_2$. 
    
    Assume the existence of  a $(C_\A,W^v_x)$-chain from $\rho(C')$ to  $\tilde{C}_2$. If there exists an apartment $A$ containing $C$, $C^{(2)}$ and $C_\A$, let $\phi:A\rightarrow \A$ be the apartment isomorphism fixing $A\cap \A$.
    Then $d^{\varepsilon}(\rho(C),\rho(C^{(2)}))=d^{\varepsilon}(\phi(C),\phi(C^{(2)}))=d^{\varepsilon}(C,C^{(2)})=s_{i_1}$. Therefore $d^\varepsilon(\rho(C),\tilde{C})=s_{i_1}w_2=w$, thus $\tilde{C}=\tilde{C}_2$, which proves the existence of a chain, by assumption.
    
    Assume now that  there exists no apartment containing $C, C^{(2)}$ and $C_\A$. Let $A$ be an apartment containing $C^{(2)}$ and $C_\A$, which exists by \cite[Proposition 5.17]{hebert2020new}. Let $M$ be the wall delimiting $C$ and $C^{(2)}$. Let $D_1$ and $D_2$  be the half-apartments of $A$ delimited by $M$. We can assume that $D_1$ contains $C_\A$. 
    
    Then by \cite[Proposition 2.9]{rousseau2011masures}, there exists an apartment $B$ containing $D_1$ and $C$. If $D_1$ contained $C^{(2)}$, then $B$ would contain $C,C^{(2)}$ and $C_\A$. Therefore $D_1$ does not contain $C^{(2)}$. Let $\psi:B\rightarrow A$ be the apartment isomorphism fixing $A\cap B$. In particular since $\psi$ fixes $C_\A\subset A\cap B$, we have $\rho|_B=\rho|_\A\circ \psi$. Then $\psi(C)$ is the alcove of $B$ dominating the closed panel $C\cap C^{(2)}$ and not contained in $D_1$. Therefore $\psi(C)=C^{(2)}$ and thus $ \rho(C)=\rho(C^{(2)})$. Moreover  if $C''$ is the alcove of $\A$ such that $d^{\varepsilon}(\rho(C),C'')=s_{i_1}$ and $M''$ is the wall of $\A$ separating $\rho(C)$ and $C''$, then $C''$ and $C_\A$ are on the same side of $M''$. Let $\gamma[m]\in \Phi^a$ be such that $s_{\gamma[m]}$ is the reflection of $W^v\ltimes Y$ fixing $M''$. Then we have $\tilde{C}=s_{\gamma[m]}(\tilde{C}_2)$. The minimal gallery of type $(s_{i_1},\ldots,s_{i_n})$ (see Definition~\ref{Def galleries}) from $\rho(C)$ to $\tilde{C}$ has $C''$ as a second term and  $M''$ separates $C''$ and $\rho(C)$. Consequently  $C''$ and $\tilde{C}$ are on the same side of $M$ and $\tilde{C}_2$ is on the opposite side.  Let $(\beta_1[n_1],\ldots,\beta_r[n_r])$ be a chain data from $\rho(C')$ to $\tilde{C}_2$. Then $(\beta_1[n_1],\ldots,\beta_r[n_r],\gamma[m])$ is a chain data from $\rho(C')$ to $\tilde{C}$. 
\end{proof}
\subsection{Lifts of $C_\infty$-Hecke open paths in a twin masure}\label{subsection: lifts in masure}
In this section we explain the main results of the paper through Theorem~\ref{Prop : lifts are finite} and Corollary~\ref{Corollary: RPols count doubleorbit}. These results will be detailed and proved in the subsequent sections. 
 
\paragraph{Segments in a masure} 
As it has already been mentioned, the notion of segment in $\A$ naturally extends to the notion of segment in the masure $\I_\oplus$: $\fs$ is a segment in $\I_\oplus$ if and only if it can be written $\fs=g_\fs.\fs_{0}$ for some element $g_\fs\in G$ and some segment $\fs_0$ in $\A$.  Similarly an open segment in $\I_\oplus$ is any triplet $\underline\fs=(\fs,D_\fs,\underline\fs(1))$ of the form $g_{\underline\fs}.\underline\fs_0=(g_{\underline\fs} \fs_0,g_{\underline\fs} D_{\fs_0},g_{\underline\fs} \underline\fs_0(1))$ for some open segment $\underline\fs_0=(\fs_0,D_{\fs_0},\underline\fs_0(1))$ in $\A$.
 
\begin{Definition}For any $\lambda \in Y^+$, we say that a segment is \textbf{a segment of type $\lambda$} if it is a segment in $\I_\oplus$ such that $\rho_{C_0}\circ \fs = \fs_\lambda$. Note that this slightly differs from the definition given in \cite{twinmasures}. Similarly, an \textbf{open segment of type $\bx\in W^+_{sph}$} in $\I_\oplus$ is any open segment $\underline\fs$ such that $\rho_{C_0}(\underline\fs)=\underline\fs_\bx$. 
\end{Definition}

\begin{Lemma}\label{l_lift_open_segment} Let $\bx\in W^+_{sph}$, then the following properties hold:
\begin{enumerate}
    \item For any alcove $C$ in $\rho_{C_0}^{-1}(C_\bx)$, there is a unique open segment $\underline\fs$ starting at $0_\A$ and with end alcove $C$. Moreover $\underline\fs$ is an open segment of type $\bx$.
    \item An open segment $\underline\fs$ is of type $\bx$ if and only if there is $g\in I=\operatorname{Fix}_G(C_0)$ such that $\underline\fs=g.\underline\fs_\bx$
    
\end{enumerate}
\end{Lemma}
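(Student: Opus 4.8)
The plan is to deduce both statements from two facts about the retraction $\rho_{C_0}$, each of which follows directly from axiom (MA~II) and the very definition of $\rho_{C_0}$. The first is soft: $\rho_{C_0}$ is $I$-invariant and restricts to the identity on $\A$ (hence on the alcoves and segment germs of $\A$); since $\A\subset\cE$ this applies to $\underline{\fs}_\bx$, and as every point met by $\underline{\fs}_\bx$ is $\leq_\oplus 0_\A$ (because $\lambda\in\mathring{\sT}$), $\underline{\fs}_\bx$ lies in $\I_{\oplus,\leq 0}\subset\cE$, so $\rho_{C_0}(\underline{\fs}_\bx)=\underline{\fs}_\bx$. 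This already gives the ``if'' direction of point~2: for $g\in I$ one has $g.0_\A=0_\A$, $g.\underline{\fs}_\bx$ again lies in $\I_{\oplus,\leq 0}$, and $\rho_{C_0}(g.\underline{\fs}_\bx)=\rho_{C_0}(\underline{\fs}_\bx)=\underline{\fs}_\bx$, so $g.\underline{\fs}_\bx$ has type $\bx$. The second fact is the one I would isolate as a lemma: each fibre of $\rho_{C_0}$ on alcoves is a single $I$-orbit, i.e.\ if $\rho_{C_0}(C)$ is defined then there is $g\in I$ with $g.C=\rho_{C_0}(C)$; likewise $\rho_{C_0}^{-1}(0_\A)=\{0_\A\}$. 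Both come from writing $\rho_{C_0}$ near $C$ as an apartment isomorphism $A\to\A$ (with $A\ni C_0,C$) fixing $A\cap\A\ni C_0$, hence realized by an element of $\operatorname{Fix}_G(C_0)=I$, which is then injective.

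Granting the second fact, the existence in point~1 is quick. Given $C\in\rho_{C_0}^{-1}(C_\bx)$, pick $g\in I$ with $g.C=C_\bx$ and set $\underline{\fs}:=g^{-1}.\underline{\fs}_\bx$. Then $\underline{\fs}$ is an open segment of $\I_\oplus$; it starts at $g^{-1}.0_\A=0_\A$; its end alcove is $g^{-1}.C_\bx=C$; and $\rho_{C_0}(\underline{\fs})=\rho_{C_0}(g^{-1}.\underline{\fs}_\bx)=\underline{\fs}_\bx$ by the first fact, so $\underline{\fs}$ has type $\bx$. This also settles the ``only if'' direction of point~2: if $\underline{\fs}$ has type $\bx$ then $\rho_{C_0}(\underline{\fs}(1))=C_\bx$ puts $\underline{\fs}(1)$ in $\rho_{C_0}^{-1}(C_\bx)$, while $\rho_{C_0}(\fs(0))=0_\A$ forces $\fs(0)=0_\A$; so $\underline{\fs}$ is an open segment starting at $0_\A$ with end alcove $C:=\underline{\fs}(1)\in\rho_{C_0}^{-1}(C_\bx)$, and the uniqueness in point~1 (below) identifies it with $g^{-1}.\underline{\fs}_\bx$.

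For the uniqueness in point~1, let $\underline{\fs}'=(\fs',D',C)$ be another open segment of $\I_\oplus$ starting at $0_\A$ with end alcove $C$, and let $\tilde y$ be the base point of $C$. I would first show the underlying paths coincide: each of $\fs,\fs'$ is an affine segment from $0_\A$ to $\tilde y$ contained in an apartment ($h.\A$, resp.\ $h'.\A$), and by (MA~II) the intersection $h.\A\cap h'.\A$ is enclosed, so convex, contains $0_\A$ and $\tilde y$, and is fixed pointwise by a $G$-element carrying one apartment onto the other; hence the two apartment affine structures agree on it and the affine segment $[0_\A,\tilde y]$ is the same either way, so $\im(\fs)=\im(\fs')$ with the same orientation. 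Then the decoration is forced: an open segment of $\A$, $\underline{\fs}_\bz=(\fs_\mu,D_\bz,C_\bz)$, has $D^\varepsilon_{\bz,t}=\pr_{\fs_{\mu,\varepsilon}(t)}(C_\bz)$ by definition, and since projections are $G$-equivariant this property passes to every open segment of $\I_\oplus$; as $\underline{\fs}$ and $\underline{\fs}'$ share the underlying path and the end alcove $C$, their decorations agree, so $\underline{\fs}=\underline{\fs}'$. Together with the existence paragraph (where the produced segment has type $\bx$), this proves point~1.

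The real work is in the second fact above, i.e.\ that $\rho_{C_0}^{-1}(C_\bx)$ is a single $I$-orbit: this is where one must go through the tangent-building description of $\rho_{C_0}$ in \cite[\S 5.1]{twinmasures} and the apartment-existence results for masures (\cite[Proposition 5.17]{hebert2020new}, \cite[Proposition 2.9]{rousseau2011masures}) to produce an apartment containing $C_0$ and $C$, and then (MA~II) to extract the conjugating element inside $I=\operatorname{Fix}_G(C_0)$. Everything else is book-keeping on the domain $\cE$ --- using $\cE\supset\I_{\oplus,\geq 0}\cup\I_{\oplus,\leq 0}$ and $-\lambda\in-\sT$, so that every point, germ and alcove being retracted lies below $0_\A$ for the Tits preorder --- and on the convention that paths are taken up to orientation-preserving reparametrization.
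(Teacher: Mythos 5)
Your proposal is correct and follows essentially the same route as the paper: existence via an apartment containing $C_0$ and $C$ together with (MA~II) to produce an element of $I=\operatorname{Fix}_G(C_0)$ carrying $\underline{\fs}_\bx$ onto the desired lift; uniqueness by first identifying the underlying segments through the enclosed (hence convex) intersection of two apartments containing $0_\A$ and the base point of $C$, and then observing that the decoration is forced by $D^-_1=\pr_{\fs_-(1)}(C)$ and the admissibility condition. The only cosmetic difference is that you isolate the "fibres of $\rho_{C_0}$ are single $I$-orbits" statement as a separate fact, whereas the paper inlines it.
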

\begin{proof}

    By definition of $\rho_{C_0}$, there exists an apartment $A$ of $\I_{\oplus}$ containing $C_0$ and $C$. Then $A$ contains $0_\A$ and $C$. By (MAII) there exists $g\in G$ fixing $A\cap \A$ and such that $A=g.\A$. Then $\underline{\fs}:=g\underline\fs_\bx$ is an open segment starting at $0_\A$ and with end alcove $C$. As $C_0\subset A\cap \A$, $g^{-1}$ fixes $C_0$ and therefore $\rho_{C_0}(\underline\fs)=\rho_{C_0}(g^{-1}\underline\fs)=\underline\fs_\bx$, hence $\underline\fs$ is of type $\bx$. 
    
    Let now $\underline{\tilde{\fs}}=(\tilde{\fs},\tilde{D},C)$ be an other segment starting at $0_\A$ and   with end alcove $C$. Let $\tilde{A}$ be an apartment containing $\underline{\tilde{\fs}}$. Then as $\tilde{A}$ contains $0_\A$ and the base point $a$ of $C$, we have $\tilde{\fs}=\fs$ by (MA II). We have $\tilde{D}_1^-=\pr_{\tilde{\fs}_-(1)}(C)=\pr_{\fs_-(1)}(C)=D_1^-$ and then $D_t^\varepsilon=\tilde{D}_t^\varepsilon$ for $(t,\varepsilon)\in [0,1]^{\pm}$, by the compatibility condition. Therefore $\underline{\tilde{\fs}}=\underline{\fs}$, which concludes the proof of point 1.
    
    Moreover any open segment of type $\bx$ necessarily starts at $0_\A$ (because $\rho_{C_0}^{-1}(0_\A)=\{0_\A\}$), and ends at an alcove in $\rho_{C_0}^{-1}(\bx)$. Hence by uniqueness it is, as constructed above, of the form $\underline\fs=g\underline\fs_\bx$ for some $g\in \operatorname{Fix}_G(C_0)$, which proves point 2.
    
\end{proof}

 A segment $\fs$ is said to be $C_\infty$-friendly if, for any $t\in[0,1]$, $\fs(t)\cup C_\infty$ is contained in a twin-apartment. In this case,  $\rho_{C_\infty}$ is well-defined on $\fs$. Muthiah has proved in \cite{muthiah2019double} that the image $\gamma$ of a $C_\infty$-friendly segment by $\rho_{C_\infty}$ is a $C_\infty$-Hecke path (see~\ref{ss_Cinfty_hecke_paths} for the definition). Reciprocally, he proves that if $\gamma$ is a $C_\infty$-Hecke path, then  the number of lifts of $\gamma$ by $\rho_{C_\infty}$ is finite and non-zero. More than that, he proves the existence of a polynomial $R_\gamma$ such that for every finite field $\bk'$, the number of lifts of $\gamma$ in $\I(\G,\bk'(\qp),\omega_\oplus)$ is $R_\gamma(|\bk'|)$. Muthiah's proof was detailed   in \cite{twinmasures}.   We aim to produce the same results for open segments and $C_\infty$-Hecke open paths. We say that an open segment is $C_\infty$-friendly if its non-open component is. Note that, by \cite[Proposition 4.2]{twinmasures}, the retraction $\rho_{C_\infty}$ is then well defined on its decoration and its end alcove aswell. The next sections are dedicated to the proof of the following result.
\begin{Theorem}\label{Prop : lifts are finite}
The image of a $C_\infty$-friendly open segment of type $\bx \in W^+_{sph}$ is a $C_\infty$-Hecke open path of type $\bx$. Moreover there exists a polynomial $R_{\bx,\underline{\fc}}(X)\in \mathbb Z[X]$, which only depends on $(\A,\bx,\underline{\fc})$, such that the number of open segments of type $\bx$ lifting $\underline{\fc}$ is given by $R_{\bx,\underline{\fc}}(|\mathds k|)$. 
\end{Theorem}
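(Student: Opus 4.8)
The statement has two parts: first, that $\rho_{C_\infty}$ sends a $C_\infty$-friendly open segment of type $\bx$ to a $C_\infty$-Hecke open path of type $\bx$; second, a polynomiality statement counting the lifts. I would split the proof accordingly, treating the first assertion by combining Muthiah's result on non-open segments with a tangent-building analysis of the decoration and end alcove, and the second by a step-by-step decomposition of the retraction into building-theoretic retractions at the finitely many points where the open segment bends or the decoration jumps.

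\textbf{Step 1: the retraction is a $C_\infty$-Hecke open path.} Write $\underline{\fs}=(\fs,D_{\fs},\underline{\fs}(1))$ for the $C_\infty$-friendly open segment of type $\bx=\qp^\lambda w$, so $\rho_{C_0}(\underline{\fs})=\underline{\fs}_\bx$ and in particular $\fs$ is a segment of type $\lambda$ in $\I_\oplus$. First I would invoke Muthiah's theorem (as detailed in \cite{twinmasures}) to know that $\fc:=\rho_{C_\infty}(\fs)$ is a $C_\infty$-Hecke path, that is, it is obtained from $\fs_\lambda$ by positive foldings along affine walls at times $t_0\leq\cdots\leq t_r$; the decomposition of $\rho_{C_\infty}$ restricted to $\fs$ into a succession of elementary folding retractions is exactly the content of the local analysis in \cite{twinmasures}, and at each such time $t_i$ the relevant reflection $s_{\beta_i[n_i]}$ fixes $\fc(t_i)$. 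Then the decoration $D:=\rho_{C_\infty}(D_{\fs})$ and the end alcove $\underline{\fc}(1):=\rho_{C_\infty}(\underline{\fs}(1))$ are well-defined by \cite[Proposition 4.2]{twinmasures}. To check $\underline{\fc}=(\fc,D,\underline{\fc}(1))$ is admissible, I would use that on each affine piece $[t_{k-1},t_k]$ of $\fc$ the retraction $\rho_{C_\infty}$ agrees with an apartment isomorphism (composed with the local retraction centered at $C^\infty$), which preserves projections, hence preserves the relations \eqref{e_decoration} satisfied by the lifted decoration; the lifted decoration is admissible because $D_{\fs}$ is (being the pushforward under $\rho_{C_0}$-inverse of the admissible $D_\bx$). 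To see that $\underline{\fc}$ is a $C_\infty$-Hecke open path I would appeal to the local characterization of Lemma~\ref{l_characterization_decorated_Hecke_paths}: at each bending time $t_i<1$ I must exhibit a $(C^\infty_{\fc(t_i)},W^v_{\fc(t_i)})$-chain from $D^+_{t_i}$ to $\lim_{t\to t_i^-}D^+_t$, and similarly at $t=1$ a chain from $\underline{\fc}(1)$ to $\tilde{\underline{\fc}}(1)$. These chains come directly from Proposition~\ref{p_Cinfty_chains}: the lifted decoration alcoves at $\fc(t_i)$ all retract, via the factorization \eqref{eq: factorizationinfinite} through $\rho_{C^\infty_{\fc(t_i)}}$, to alcoves whose $W^v$-distances are controlled, and Proposition~\ref{p_Cinfty_chains} turns this distance equality into the required chain. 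The positivity of the foldings translates into condition (ii) in the definition of a chain, so the characterization lemma applies and $\underline{\fc}\in\cC^\infty_\bx(\by)$ for the appropriate $\by$.

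\textbf{Step 2: polynomiality of the number of lifts.} Fix $\underline{\fc}=(\fc,D,\underline{\fc}(1))$ a $C_\infty$-Hecke open path of type $\bx$. By Lemma~\ref{l_lift_open_segment}(1), open segments of type $\bx$ starting at $0_\A$ are in bijection with alcoves $C\in\rho_{C_0}^{-1}(C_\bx)$, and the lift $\underline{\fs}$ associated to such $C$ is entirely determined by $C$ and $\fc$ via the projection formulas. So I must count the alcoves $C\in\cT^+_{\fc(1)\text{-lift}}(\I_\oplus)$ (more precisely: open segments from $0_\A$ whose $\rho_{C_\infty}$-image is exactly $\underline{\fc}$). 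Following the strategy of Muthiah and \cite{twinmasures}, I would build the lift inductively along the finitely many distinguished times $0=u_0<u_1<\cdots<u_m=1$ comprising all bending times of $\fc$ and all jump times of the decoration $D$. At each stage one has a partial lift over $[0,u_j]$ realized in some apartment; extending it over $[u_j,u_{j+1}]$ requires choosing, in the tangent building $\cT^\pm_{x_j}(\I_\oplus)$ at the already-constructed lift $x_j$ of $\fc(u_j)$, a lift of the elementary fold/chain step — this amounts to choosing, panel by panel along a minimal gallery realizing the chain data, an alcove in a thick panel on the prescribed side. Each thick panel contributes either $|\mathds k|-1$ or $|\mathds k|$ choices (depending on whether the fold is proper or the gallery passes straight through a wall it is allowed to cross), while thin panels and panels on the "wrong'' side contribute a single choice, and the projection compatibility of the decoration forces the rest. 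Multiplying over all the finitely many panels involved in the finitely many chain/fold steps gives the number of lifts as a product of terms of the form $|\mathds k|^a(|\mathds k|-1)^b$, hence the evaluation at $|\mathds k|$ of an explicit polynomial $R_{\bx,\underline{\fc}}(X)\in\Z[X]$; since the combinatorics of panels, chains, walls and sides takes place entirely in $\A$ (the masure enters only through the homogeneity constant $|\mathds k|$ of its tangent buildings, by the thickness discussion preceding Definition~\ref{Def galleries}), the polynomial depends only on $(\A,\bx,\underline{\fc})$.

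\textbf{Main obstacle.} The delicate point is the inductive lifting construction over one elementary step and, especially, the claim that the count at each step is \emph{independent of the previously chosen partial lift} — i.e.\ that the decomposition of $\rho_{C_\infty}$ into elementary retractions behaves multiplicatively on cardinalities. This requires the "friendliness'' hypothesis to propagate: one needs that each partial lift again lies in a twin apartment containing $C_\infty$, so that $\rho_{C_\infty}$ genuinely factors through $\rho_{C^\infty_{x_j}}$ at every stage (the factorization \eqref{eq: factorizationinfinite}), and one needs to control when a panel in the tangent building is thick — which depends only on whether the relevant support is an affine wall, a condition read off in $\A$. Handling the decoration jumps that are \emph{not} bending times of $\fc$ (the situation flagged in Remark~\ref{r_characterization_decorations}) and the end-alcove step at $t=1$ in the same uniform way is the part I expect to require the most care; the rest is an organized but routine induction using Proposition~\ref{p_Cinfty_chains}, Lemma~\ref{Lemma: projection codistance}, and the local description of projections in $\cT^\pm_x(\I_\oplus)$.
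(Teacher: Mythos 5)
Your overall architecture coincides with the paper's: Step 1 is Proposition~\ref{Proposition: retraction is C infty} (Muthiah's result for the underlying non-open path, the local factorization of $\rho_{C_\infty}$ through $\rho_{C^\infty_{\fc(t)}}$, Proposition~\ref{p_Cinfty_chains}, and the characterization Lemma~\ref{l_characterization_decorated_Hecke_paths}), and Step 2 is the combination of Proposition~\ref{Prop: local description bijection} with Lemmas~\ref{Lemma: twinbuildings1} and~\ref{Lemma: twinbuildings2}, yielding the product formula of Theorem~\ref{Theorem : number of lifts spherical}. Your ``main obstacle'' paragraph correctly isolates the independence of the local counts as the technical heart (the paper resolves it by exhibiting an explicit bijection between the set of lifts and a product $\prod_{t}\cC(\underline{\fc},\bx,t)$ of local alcove sets).

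There is, however, one concrete gap in Step 2: your set of distinguished times ($\,$bending times of $\fc$ together with jump times of the decoration$\,$) is too small. The lift of $\underline{\fc}$ branches at every $t$ for which some \emph{affine} wall through $\fc(t)$ separates $C^\infty_{\fc(t)}$ from the germ $\fc_+(t)$ --- this is the set $T(\underline{\fc})$ of Definition~\ref{Notations: localsets}, which strictly contains the folding times in general. At a time $t\in T(\underline{\fc})$ where $\fc$ does not bend, the open segment in $\I_\oplus$ can still change apartment (the local set $\cC(\underline{\fc},\bx,t)$ has cardinality a positive power of $|\mathds k|$ even though no folding occurs); this is already visible for the unfolded path $\underline{\fs}_{\bx}$, whose number of lifts is typically $|\mathds k|^{\ell}>1$. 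With your choice of $u_j$'s, the step ``extend the partial lift over $[u_j,u_{j+1}]$ by a choice in $\cT^\pm_{x_j}(\I_\oplus)$'' is false: the extension over that interval is not determined by data at $x_j$ alone. The fix is exactly the paper's: take the distinguished times to be all of $T(\underline{\fc})$ (finite by \cite[Lemma 5.4]{twinmasures}), and at each such $t$ count the alcoves at prescribed distance from $C^\infty_{\fc(t)}$ and codistance from $D^-_t$ as a \emph{sum} over $C^\infty_{\fc(t)}$-centrifugally folded galleries of terms $X^{l(\mathbf g)}(X-1)^{r(\mathbf g)}$ --- your description suggests a single monomial per step, whereas several folded galleries may contribute. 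A smaller gloss in Step 1: admissibility of the retracted decoration is not inherited formally from that of $D_{\fs}$; the intervals on which $\rho_{C_\infty}$ agrees with a single apartment isomorphism (produced by the compactness argument of Lemma~\ref{l_compactness_argument}) can be strictly finer than the affine pieces of $\fc$, so one must verify condition \eqref{eq: admissibility condition} at the non-bending junctions, which is where Lemma~\ref{l_segment_germ_retraction} and the codistance computation enter.
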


This result is reformulated and proved in the next Section: In Proposition~\ref{Proposition: retraction is C infty} we prove that a $C_\infty$-friendly open segment of type $\bx$ retracts to a $C_\infty$-Hecke open path of the same type, and in Theorem~\ref{Theorem : number of lifts spherical} we give an explicit formula for the polynomials $R_{\underline{\fc},\by}(X)\in \mathbb Z[X]$.

Recall that the principal motivation is to produce an analogous of Formula~\eqref{eq: RPolreductive_intro} for general Kac-Moody groups. In the language of masures, we want to count $\rho_{C_0}^{-1}(C_\bx)\cap \rho_{C_\infty}^{-1}(C_\by)$. Conjecturally, any element of this set is the ending alcove of a $C_\infty$-friendly segment of type $\bx$, and from Theorem~\ref{Prop : lifts are finite} we deduce the following.

\begin{Corollary}\label{Corollary: RPols count doubleorbit}
Suppose that Conjecture \cite[4.4.1]{twinmasures} holds. Then, for $\bx,\by\in W^+$ such that $\bx$ is spherical, the cardinality $|(I_\infty \by I_0\cap I_0 \bx I_0 )/I_0|$ is finite and given by the evaluation of a polynomial $R_{\bx,\by}(X)\in \mathbb Z[X]$, which only depends on the combinatorics of $W^+$, at $|\mathds k|$.
\end{Corollary}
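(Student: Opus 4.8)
\textbf{Proof plan for Corollary~\ref{Corollary: RPols count doubleorbit}.}

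The plan is to reduce the counting of $(I_\infty \by I_0\cap I_0 \bx I_0)/I_0$ to the alcove-theoretic set $A_{\bx,\by}(\I_\oplus)=\rho_{C_0}^{-1}(C_\bx)\cap \rho_{C_\infty}^{-1}(C_\by)$ (with the extra condition that $\rho_{C_\infty}$ be defined on the whole open segment), and then to express the latter via the path model. First I would recall the dictionary between double cosets and alcoves: by the bijection $G/I\cong G_{twin}/I_0$ from \S\ref{sss_different_masures}, an element of $(I_0\bx I_0)/I_0$ corresponds to an alcove in $\rho_{C_0}^{-1}(C_\bx)$, and imposing that it also lie in $I_\infty \by I_0$ amounts, by $I_\infty$-invariance of $\rho_{C_\infty}$, to the condition $\rho_{C_\infty}(C)=C_\by$ — \emph{provided} $\rho_{C_\infty}$ is defined at $C$. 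This is exactly where Conjecture \cite[4.4.1]{twinmasures} enters: it guarantees $\cE_{C_\infty,\oplus}\supset \I_{\oplus,\geq 0}\cup\I_{\oplus,\leq 0}$, so that since every alcove $C\in\rho_{C_0}^{-1}(C_\bx)$ is the end alcove of the unique open segment starting at $0_\A$ (Lemma~\ref{l_lift_open_segment}), and that open segment lies in $\I_{\oplus,\leq 0}$ (it goes from $0_\A$ to a point $\leq_\oplus 0_\A$ because $\bx$ is spherical, $\lambda\in\mathring\sT$), the whole segment $[0_\A,C]$ lies in $\cE_{C_\infty,\oplus}$. Hence under the conjecture $A_{\bx,\by}(\I_\oplus)$ is precisely in bijection with $(I_\infty \by I_0\cap I_0 \bx I_0)/I_0$.

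Next I would invoke the main theorem of the paper: by Theorem~\ref{Prop : lifts are finite} (equivalently Theorem~\ref{Theorem : number of lifts spherical}), each $C_\infty$-friendly open segment of type $\bx$ retracts under $\rho_{C_\infty}$ to a $C_\infty$-Hecke open path of type $\bx$, and for each $C_\infty$-Hecke open path $\underline{\fc}\in\cC^\infty_\bx(\by)$ the number of open segments of type $\bx$ lifting $\underline{\fc}$ equals $R_{\bx,\underline{\fc}}(|\mathds k|)$ for an explicit polynomial $R_{\bx,\underline{\fc}}\in\ZZ[X]$ depending only on $(\A,\bx,\underline{\fc})$. An open segment of type $\bx$ with end alcove in $\rho_{C_\infty}^{-1}(C_\by)$ retracts to a $C_\infty$-Hecke open path whose end alcove is $\rho_{C_\infty}(C_\by)=C_\by$, i.e.\ to an element of $\cC^\infty_\bx(\by)$; and conversely every such lift arises this way. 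Therefore
\begin{equation}
|A_{\bx,\by}(\I_\oplus)|=\sum_{\underline{\fc}\in\cC^\infty_\bx(\by)}R_{\bx,\underline{\fc}}(|\mathds k|),
\end{equation}
so one \emph{defines} $R_{\bx,\by}:=\sum_{\underline{\fc}\in\cC^\infty_\bx(\by)}R_{\bx,\underline{\fc}}$. Finiteness of the sum, hence that $R_{\bx,\by}$ is a genuine polynomial, follows from Corollary~\ref{Corollary: finiteness open hecke paths}: $\cC^\infty_\bx(\by)$ is finite (and empty unless $\by\le\bx$). Combined with the bijection above, $|(I_\infty \by I_0\cap I_0 \bx I_0)/I_0|=R_{\bx,\by}(|\mathds k|)<\infty$.

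Finally, for the assertion that $R_{\bx,\by}$ depends only on the combinatorics of $W^+$: the set $\cC^\infty_\bx(\by)$ is defined purely combinatorially in the standard apartment $\A$ (Definition~\ref{Definiton: Hecke open path}), and the polynomials $R_{\bx,\underline{\fc}}$ are, by Theorem~\ref{Theorem : number of lifts spherical}, given by an explicit formula in terms of $(\A,\bx,\underline{\fc})$ — in particular independent of the ground field $\mathds k$ and of the chosen masure. Since $\A$, its wall arrangement, the alcoves, and the affine Bruhat order are all determined by the Kac-Moody root datum $\mathcal D$, so is $R_{\bx,\by}$. I expect the only genuinely delicate point to be the careful bookkeeping in the first paragraph: verifying that the conjecture really does place the \emph{entire} open segment $[0_\A,C]$ (segment, decoration and end alcove) inside the domain of definition $\cE_{C_\infty,\oplus}$ of $\rho_{C_\infty}$, using that the segment lies in $\I_{\oplus,\leq 0}$ and invoking \cite[Proposition 4.2]{twinmasures} for the decoration and end alcove; everything else is an assembly of results already proved in the paper.
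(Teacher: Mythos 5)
Your proposal is correct and follows essentially the same route as the paper's proof: translate the double coset count into alcoves of $\rho_{C_0}^{-1}(C_\bx)\cap\rho_{C_\infty}^{-1}(C_\by)$, use Lemma~\ref{l_lift_open_segment} to attach to each such alcove the unique open segment from $0_\A$, invoke Conjecture \cite[4.4.1]{twinmasures} for $C_\infty$-friendliness, and then apply Theorem~\ref{Prop : lifts are finite} together with Corollary~\ref{Corollary: finiteness open hecke paths} to get the finite sum $\sum_{\underline{\fc}\in\cC^\infty_\bx(\by)}R_{\bx,\underline{\fc}}$. Your extra remark that the segment lies in $\I_{\oplus,\leq 0}$, so that the conjecture applies to all of it, is a useful explicit justification of the friendliness step that the paper states more tersely.
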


\begin{proof}
    Since $I_0$ is the stabiliser of $C_0$ and $I_\infty$ the stabiliser of $C_\infty$, alcoves in $\rho_{C_0}^{-1}(C_{\bx})\cap \rho_{C_\infty}^{-1}(C_{\by})$ are in bijection with $(I_\infty \by I_0\cap I_0 \bx I_0 )/I_0$ (note that $C_{\bx}=\bx.C_0$ and $C_{\by}=\by.C_0$). Moreover, for any alcove $C\in\rho_{C_0}^{-1}(C_{\bx})\cap \rho_{C_\infty}^{-1}(C_{\by})$, the open segment starting at $0_\A$ and with end alcove $C$ is sent to $\underline \fs_{\bx}$ by $\rho_{C_0}$. By Conjecture \cite[4.4.1]{twinmasures}, it is $C_\infty$-friendly and its image by $\rho_{C_\infty}$ is a $C_\infty$-Hecke open path with end alcove $C_{\by}$. Therefore $|(I_\infty \by I_0\cap I_0 \bx I_0 )/I_0|$ is equal to the number of $\rho_{C_\infty}$-lifts of $C_\infty$-Hecke open paths of type $\bx$ with ending alcove $C_{\by}$. Hence, by Theorem~\ref{Prop : lifts are finite} and Corollary~\ref{Corollary: finiteness open hecke paths}, it is given by the evaluation of the polynomial $\sum_{\underline{\fc}}R_{\bx,\underline{\fc}}$ at $|\mathds k|$, where the summand runs over all $C_\infty$-Hecke open paths of type $\bx$ with end alcove $C_{\by}$.
\end{proof}

\subsection{Retractions of open segments}\label{ss_retraction_open_segments} 
In this section, we prove the first part of Theorem~\ref{Prop : lifts are finite}: the image of a $C_\infty$-friendly segment of type $\bx$ by the retraction $\rho_{C_\infty}$ is a $C_\infty$-Hecke open path (see Proposition~\ref{Proposition: retraction is C infty}).

\begin{Lemma}\label{l_alcoves_retraction_opposite}
Let $C, C_-, C_+$ be three alcoves of $\I_\oplus$ based at the same point $x$, with $C_+$ positive and $C_-$ negative. Let $A_\oplus$ be an apartment containing $C$. Let $\rho:\cT_x^\pm(\I_{\oplus})\twoheadrightarrow \cT_x^{\pm}(A_\oplus)$ be the retraction centred at $C$.  Assume that $\rho(C_-)$ and $\rho(C_+)$ are opposite. Then $C_-$ and $C_+$ are opposite.
\end{Lemma}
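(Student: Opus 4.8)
The plan is to forget the masure entirely and argue inside the tangent twin building $\cT^\pm_x(\I_\oplus)$ of type $(W^v,S)$, using only the axioms of twin buildings (see \cite[\S 5.8]{abramenko2008buldings}) together with the fact that $\cT^\pm_x(A_\oplus)$ is a twin apartment of $\cT^\pm_x(\I_\oplus)$. First I would record what $\rho$ does: since $C\in A_\oplus$, the retraction centred at $C$ fixes $C$, preserves $d^+(C,\cdot)$ on positive alcoves and preserves $d^\ast(C,\cdot)$ on negative alcoves (this is the content of the last item of Definition~\ref{Def galleries}). By the symmetry of the twin building axioms in the two signs, I would treat only the case where $C$ is positive; the case where $C$ is negative follows by the same argument after exchanging the roles of $+$ and $-$.

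Next I would translate the hypothesis into a single identity in $W^v$. Set $u:=d^+(C,C_+)$ and $v:=d^\ast(C,C_-)$, so that $d^+(C,\rho(C_+))=u$ and $d^\ast(C,\rho(C_-))=v$. Working in the thin twin apartment $\cT^\pm_x(A_\oplus)$ — in which the codistance between $a(x,+w_+)$ and $a(x,-w_-)$ is $w_+^{-1}w_-$, as in the model \eqref{e_def_d_epsilon}–\eqref{e_def_d_ast} — a positive chamber and a negative chamber are opposite precisely when they are ``equidistant'' from $C$ in this sense. Hence the assumption that $\rho(C_+)$ and $\rho(C_-)$ are opposite gives $u^{-1}v=1_{W^v}$, i.e. $u=v$.

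The core of the argument is then a gallery computation. I would fix a reduced word $u=s_1\cdots s_k$ (so $k=\ell(u)$) and a minimal gallery $C=E_0,E_1,\dots,E_k=C_+$ of type $(s_1,\dots,s_k)$ in the building $\cT^+_x(\I_\oplus)$, which exists since $d^+(C,C_+)=u$. Starting from $d^\ast(C_-,C)=d^\ast(C,C_-)^{-1}=u^{-1}=s_k\cdots s_1$ and applying the codistance axiom of a twin building at each step $E_i\rightsquigarrow E_{i+1}$ (the moving chamber being on the side opposite to $C_-$), one checks by induction that $d^\ast(C_-,E_i)=s_k\cdots s_{i+1}$: indeed $\ell\big((s_k\cdots s_{i+1})s_{i+1}\big)=k-i-1<k-i=\ell(s_k\cdots s_{i+1})$ because $s_k\cdots s_1$ is reduced, so the ``contracting'' case of the axiom applies at every step. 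For $i=k$ this yields $d^\ast(C_-,C_+)=1_{W^v}$, that is, $C_-$ and $C_+$ are opposite.

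The only genuinely delicate point is the bookkeeping in the last step: one must keep track of which half of the twin building the moving chamber lies in and verify that the length of the running codistance to $C_-$ strictly drops at each of the $k$ steps, so that the contracting branch of the codistance axiom is indeed the one that fires throughout. There is no geometric obstacle once the problem has been phrased inside the abstract twin building $\cT^\pm_x(\I_\oplus)$: everything is then a formal consequence of the twin building axioms and of the thinness of $\cT^\pm_x(A_\oplus)$.
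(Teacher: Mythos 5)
Your proof is correct and follows essentially the same route as the paper's: both reduce the statement to the identity $d^+(C,C_+)=d^\ast(C,C_-)$ by using that the retraction centred at $C$ preserves distance and codistance to $C$, and then conclude oppositeness. The only difference is that the paper invokes \cite[Corollary 5.141]{abramenko2008buldings} for this last step, whereas you reprove that corollary directly by the gallery induction with axiom (TW2); your induction is carried out correctly.
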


\begin{proof}
    Let $B_\oplus$ be an apartment containing $C_+$ and $C$, and $\phi:B_\oplus\rightarrow A_\oplus$ be the apartment isomorphism fixing $A_\oplus\cap B_\oplus$. By symmetry, we can assume that $C$ is positive. Then   we have $d^+(C,C_+)=d^+(\phi(C),\phi(C_+))=d^+(C,\rho(C_+))$ and similarly,   $d^*(C,C_-)=d^*(C,\rho(C_-))$. Since $\rho(C_-)$ and $\rho(C_+)$ are opposite and since $\rho(C_-), \rho(C_+)$ and $C$ are contained in a common apartment, we have $d^+(C,\rho(C_+))=d^*(C,\rho(C_-))$. Therefore $d^+(C,C_+)=d^*(C,C_-)$ and thus $C_-$ and $C_+$ are opposite by \cite[Corollary 5.141]{abramenko2008buldings}.
\end{proof}

\begin{Lemma}\label{l_segment_germ_retraction}
Let $A_\oplus$ be an apartment of $\I_\oplus$ and $\tau:(0^-,0^+)\rightarrow A_\oplus$ be a  segment germ. We assume that $\tau$  is increasing for $\leq_{\oplus}$.
Let $C$ be an alcove based at $\tau(0)$, $C_+$ be an alcove containing $\tau_+(0)$ and $A^C_\oplus$ be an apartment of $\I_\oplus$ containing $C$. Let $\rho:\cT_{\tau(0)}^{\pm}(\I_\oplus)\twoheadrightarrow \cT_{\tau(0)}^{\pm}(A_\oplus^C)$ be the retraction centred at $C$.  Assume that $\rho\circ \tau$ is a segment germ. Then there exists an apartment containing $\tau((0^-,0^+))$, $C$ and $C_+$.
\end{Lemma}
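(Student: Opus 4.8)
The plan is to pass to the local twin building at $x:=\tau(0)$ and to assemble the required apartment there. Recall that the apartments of $\cT^{\pm}_x(\I_\oplus)$ are precisely the $\cT^{\pm}_x(A)$ for $A$ an apartment of $\I_\oplus$ through $x$, and that such an $A$ contains a representative of any segment germ based at $x$ dominated by one of its alcoves. Since $\tau$ is a segment germ, $\tau((0^-,0^+))$ is recovered from $x$ together with $\tau_+(0)$ and $\tau_-(0)$. So it is enough to find an apartment $A$ of $\I_\oplus$ for which $\cT^{\pm}_x(A)$ contains $C$, $C_+$ and some alcove dominating $\tau_-(0)$. Because $\tau$ is increasing for $\leq_\oplus$, $\tau'_+(0)\in\sT$, so $\tau_+(0)$ points into $\sT$ (hence $C_+$ is a positive alcove) while $\tau_-(0)$ points into $-\sT$ (so the alcove we are after is negative).

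First I would produce the negative alcove from the hypothesis. By \cite[Proposition~5.17]{hebert2020new} (and the relation $\tau(0^-)\leq_\oplus x$) choose apartments $A^+\ni C,C_+$ and $A^-\ni C,\tau_-(0)$ of $\I_\oplus$. As $C$ lies in both, the restrictions $\phi^{\pm}:=\rho|_{A^{\pm}}$ are apartment isomorphisms onto $A^C_\oplus$ fixing $C$. Then $\bar C_+:=\rho(C_+)=\phi^+(C_+)$ dominates $\rho(\tau_+(0))$, while the hypothesis that $\rho\circ\tau$ is a segment germ means precisely that $\rho(\tau_-(0))$ is the half-germ of $A^C_\oplus$ opposite $\rho(\tau_+(0))$; consequently the alcove $\bar C_-$ of $\cT^{\pm}_x(A^C_\oplus)$ opposite $\bar C_+$ dominates $\rho(\tau_-(0))$. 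Set $C_-:=(\phi^-)^{-1}(\bar C_-)$: a negative alcove of $A^-$ dominating $\tau_-(0)$, with $\rho(C_-)=\bar C_-$ opposite $\rho(C_+)=\bar C_+$ in $\cT^{\pm}_x(A^C_\oplus)$.

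Next I would apply Lemma~\ref{l_alcoves_retraction_opposite} to the three alcoves $C,C_-,C_+$ and the retraction $\rho$ centred at $C$ on $A^C_\oplus\ni C$: it gives that $C_-$ and $C_+$ are opposite in $\cT^{\pm}_x(\I_\oplus)$, so they span a unique twin apartment $\Sigma$, say $\Sigma=\cT^{\pm}_x(A)$. Then $A$ contains $C_+$ and $C_-$, hence representatives of $\tau_+(0)$ and $\tau_-(0)$, hence $\tau((0^-,0^+))$; the only thing left is $C\subseteq A$, i.e. $C\in\Sigma$. For this, observe that $\rho|_{\Sigma}$ cannot fold: a fold would occur along some wall $M$ of $\Sigma$ (necessarily through $x$), and since two opposite chambers of a thin twin apartment lie on opposite sides of every wall, $C_+$ and $C_-$ lie on opposite sides of $M$, so that folding $\Sigma$ across $M$ towards $A^C_\oplus$ would carry $\rho(C_+)=\bar C_+$ and $\rho(C_-)=\bar C_-$ to the same side of $\rho(M)$ — contradicting the fact that $\bar C_-$ is opposite $\bar C_+$. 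Hence $\rho|_{\Sigma}$ is an isomorphism onto $\cT^{\pm}_x(A^C_\oplus)$; letting $D:=(\rho|_{\Sigma})^{-1}(C)$ and using that $\rho$, being centred at $C$, preserves the $W^v$-distance from $C$, we get $d(C,D)=d(C,\rho(D))=d(C,C)=1_{W^v}$, so $D=C$ and $C\in\Sigma$. Thus $A$ is an apartment of $\I_\oplus$ containing $\tau((0^-,0^+))$, $C$ and $C_+$.

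The heart of the argument is the translation of the analytic hypothesis ``$\rho\circ\tau$ is a segment germ'' into the building-theoretic statement that $\rho(\tau_-(0))$ is opposite $\rho(\tau_+(0))$ — which feeds the construction of $C_-$ and the use of Lemma~\ref{l_alcoves_retraction_opposite} — together with the last step, where the opposition of $C_+$ and $C_-$ is upgraded to ``$C$ lies in the twin apartment they span'' by playing the combinatorics of opposition against the impossibility of a fold of $\rho$. Everything else — the reduction to the local twin building and the bookkeeping with $\phi^{\pm}$ — is routine.
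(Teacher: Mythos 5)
Your construction is, up to notation, exactly the paper's: you produce the alcove $C_-$ by taking the opposite of $\rho(C_+)$ in $A^C_\oplus$ (using that ``$\rho\circ\tau$ is a segment germ'' forces $\rho(\tau_-(0))$ to be the germ opposite $\rho(\tau_+(0))$), pull it back through the isomorphism $\phi^-$ to get a negative alcove dominating $\tau_-(0)$, and invoke Lemma~\ref{l_alcoves_retraction_opposite} to conclude that $C_+$ and $C_-$ are opposite, hence span a twin apartment $\Sigma$. Up to that point everything is correct and coincides with the paper's proof.

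The weak point is your final step, where you argue that $C\in\Sigma$ because ``$\rho|_\Sigma$ cannot fold.'' The claim that a non-injective restriction of $\rho$ to $\Sigma$ would ``carry $\rho(C_+)$ and $\rho(C_-)$ to the same side of $\rho(M)$'' tacitly identifies $\rho|_\Sigma$ with the literal folding of $\Sigma$ across a single wall $M$. That identification is not valid in general: the restriction of a retraction to an apartment not containing its center can involve several folds, it need not send walls of $\Sigma$ to walls of $A^C_\oplus$, and the relative position of the images of two non-adjacent chambers is not controlled by a single fold. So as written, the implication ``$\bar C_+$ opposite $\bar C_-$ $\Rightarrow$ no fold $\Rightarrow$ $\rho|_\Sigma$ is an isomorphism'' is asserted rather than proved. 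The conclusion is nonetheless correct, and you already have all the ingredients for the direct argument the paper uses: since $\rho$ is centred at $C$ it preserves both $d^+(C,\cdot)$ and $d^\ast(C,\cdot)$, so $d^\ast(C,C_-)=d^\ast(C,\rho(C_-))=d^+(C,\rho(C_+))=d^+(C,C_+)$, the middle equality holding because $C$, $\rho(C_+)$ and $\rho(C_-)$ lie in the common twin apartment $\cT^{\pm}_{\tau(0)}(A^C_\oplus)$ with $\rho(C_\pm)$ opposite; then \cite[Proposition 5.179]{abramenko2008buldings} says precisely that $d^\ast(C,C_-)=d^+(C,C_+)$ characterizes membership of $C$ in the unique twin apartment containing the opposite pair $C_+$, $C_-$. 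Replace your folding argument by this two-line codistance computation and the proof is complete.
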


\begin{proof}
The alcove $\rho(C_+)$ contains $\rho\circ\tau_+(0)$. Let $C'_-$ be the alcove of $A_\oplus^C$ opposite to $\rho(C_+)$. Let $B_\oplus$ be an apartment containing $\tau_-(0)$ and $C$. Let $\phi:B_\oplus\rightarrow A_\oplus^C$ be the apartment isomorphism fixing $A_\oplus^C\cap B_\oplus$. Then $\phi$ is the restriction of $\rho$ to $B_\oplus$. Let $C_-=\phi^{-1}(C'_-)$. Then $\rho(C_-)$ and $\rho(C_+)$ are opposite and thus by Lemma~\ref{l_alcoves_retraction_opposite}, $C_-$ and $C_+$ are opposite. Moreover, $\rho(C_-)=\phi(C_-)=C'_-\supset \rho(\tau_-(0))=\phi(\tau_-(0))$ and thus $C_-\supset \tau_-(0)$. 

By symmetry, we can assume that $C$ is positive.  We have $d^*(C,C_-)=d^*(C,\phi(C_-))=d^*(C,C'_-)$ and similarly,  $d^+(C,C_+)=d^+(C,\rho(C_+))$. As $C,\rho(C_-)$ and $\rho(C_+)$ are contained in a common apartment and as $\rho(C_-)$ and $\rho(C_+)$ are opposite, we have $d^*(C,C_-)=d^*(C,\rho(C_-))=d^+(C,\rho(C_+))=d^+(C,C_+)$. By \cite[Proposition 5.179]{abramenko2008buldings}, $C$ belongs to the unique apartment of the tangent twin building $\cT^{\pm}_{\tau(0)}(\I_\oplus)$ containing $C_-$ and $C_+$ and this apartment also contains $\tau$, since $C_-\supset \tau_-(0)$ and $C_+\supset \tau_+(0)$. 
\end{proof}

\begin{Lemma}\label{l_compactness_argument}
 Let $\bx\in W^+_{\sph}$ and   $\underline{\fs}$ be an open segment of type $\bx$. We assume that $\underline{\fs}$ is $C_\infty$-friendly.  Write $\underline{\fs}=(\fs,D,C)$.   Then there exist $n\in \Z_{\geq 0}$ and $t_1,\ldots,t_n\in (0,1)$ such that $t_0:=0<t_1<\ldots< t_n<t_{n+1}:= 1$ and such that  for every $i\in \llbracket 0,n\rrbracket$, there exists an apartment $A_i$ containing $C_\infty$, $\fs([t_i,t_{i+1}])$ and $D_t^\varepsilon$, for $(t,\varepsilon)\in [t_i,t_{i+1}]^{\pm}$. In particular, if $i\in \llbracket 0,n\rrbracket$,  then $\rho_{C_\infty}\circ \fs|_{[t_i,t_{i+1}]}$ is a segment in $\A$ and $\rho_{C_\infty}(\underline{\fs})=(\rho_{C_\infty}(\fs),\rho_{C_\infty}(D),\rho_{C_\infty}(C))$ is an open path. \end{Lemma}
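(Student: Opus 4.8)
The plan is to prove the statement by a compactness argument on the interval $[0,1]$, using the fact that $C_\infty$-friendliness of $\underline{\fs}$ gives, for each $t$, a twin apartment containing $C_\infty$ and $\fs(t)$, together with Proposition~\ref{p_def_retraction_C_infty} (1) to control the alcove $C^\infty_{\fs(t)}$ and Proposition~\ref{l_alcoves_retraction_opposite}/\ref{l_segment_germ_retraction} to upgrade "contains the point" to "contains a whole subsegment and the decoration".

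First I would fix, for each $t\in[0,1]$, a twin apartment $(A_{t,\oplus},A_{t,\ominus})$ with $\fs(t)\in A_{t,\oplus}$ and $C_\infty\subset A_{t,\ominus}$, which exists by $C_\infty$-friendliness (applied to the point $\fs(t)$; the decoration and end alcove are then automatically in it too on a neighbourhood, by \cite[Proposition 4.2]{twinmasures}). The key local claim is that for each $t$ there is an $\varepsilon_t>0$ such that a single twin apartment contains $C_\infty$, $\fs((t-\varepsilon_t,t+\varepsilon_t)\cap[0,1])$, and all the decoration alcoves $D^\varepsilon_{t'}$ for $t'$ in that neighbourhood. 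To get this I would work in the tangent twin building at $x:=\fs(t)$. Since $\underline{\fs}$ is an open segment of type $\bx$, its underlying path is (locally around $t$) a segment with direction $\pm\lambda$ for $\lambda$ spherical, so $\fs_\pm(t)$ is dominated by finitely many alcoves; the decoration condition pins down $D^\pm_t$ as projections. I would then apply Lemma~\ref{l_segment_germ_retraction} with $\tau$ the segment germ $\fs_\pm(t)$ (which is $\leq_\oplus$-increasing or $\geq_\oplus$-increasing since $\lambda\in\mathring{\sT}$, after possibly reversing orientation), $C=C^\infty_x$, and $\rho=\rho_{C^\infty_x}$: the factorization \eqref{eq: factorizationinfinite} shows $\rho_{C_\infty}$ restricted to the tangent building at $\tilde x$ composed with the chosen chart equals $\rho_{C^\infty_x}$, and one checks $\rho_{C^\infty_x}\circ\fs_\pm(t)$ is genuinely a segment germ (it is the image of a segment germ under a building retraction, hence a gallery-germ that is either non-degenerate or constant; the $C_\infty$-Hecke path results of Muthiah, recalled in~\ref{subsection: lifts in masure}, guarantee it does not collapse). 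Lemma~\ref{l_segment_germ_retraction} then produces an apartment of $\I_\oplus$ containing $C^\infty_x$, the germ $\fs_\pm(t)$, and the relevant decoration alcove; pulling back along $I_\infty$ (Proposition~\ref{p_def_retraction_C_infty} (1)) upgrades this to a twin apartment containing $C_\infty$.

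Next I would run the standard compactness/gluing step: the open neighbourhoods $(t-\varepsilon_t,t+\varepsilon_t)$ cover $[0,1]$, so by compactness finitely many suffice; ordering their centres and taking overlaps produces a finite subdivision $0=t_0<t_1<\dots<t_n<t_{n+1}=1$ such that each $[t_i,t_{i+1}]$ lies inside one of the chosen neighbourhoods, hence inside a single twin apartment $A_i$ containing $C_\infty$, $\fs([t_i,t_{i+1}])$, and $D^\varepsilon_{t'}$ for all $(t',\varepsilon)\in[t_i,t_{i+1}]^\pm$. Inside such an apartment $\rho_{C_\infty}$ agrees with the apartment isomorphism onto $\A$ fixing the intersection with $\A$, so $\rho_{C_\infty}\circ\fs|_{[t_i,t_{i+1}]}$ is an affine map, i.e. a segment in $\A$; concatenating over $i$ shows $\rho_{C_\infty}\circ\fs$ is piecewise affine, i.e. a path, and applying $\rho_{C_\infty}$ to the decoration and end alcove (well-defined by \cite[Proposition 4.2]{twinmasures}) gives that $\rho_{C_\infty}(\underline{\fs})=(\rho_{C_\infty}(\fs),\rho_{C_\infty}(D),\rho_{C_\infty}(C))$ is an open path in the sense of Definition~\ref{Def: open paths} — one must also check that $\rho_{C_\infty}(D)^\varepsilon_t$ is still an alcove based at $\rho_{C_\infty}(\fs)(t)$ dominating $\rho_{C_\infty}(\fs)_\varepsilon(t)$, which is immediate since retractions preserve incidence of based alcoves within the apartment $A_i$.

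The main obstacle I anticipate is the verification that $\rho_{C^\infty_x}\circ\fs_\pm(t)$ is a genuine (non-degenerate) segment germ, so that Lemma~\ref{l_segment_germ_retraction} applies and so that the resulting $\rho_{C_\infty}\circ\fs$ is honestly piecewise affine with the right breakpoints rather than collapsing or developing infinitely many direction changes. This should follow from the building-theoretic structure — $\rho_{C^\infty_x}$ is a retraction of the tangent twin building onto an apartment, and the image of a segment germ (equivalently, a minimal gallery germ) is again a gallery germ, which is either minimal or folds a positive number of times but never degenerates to the zero germ since the retraction is injective on each apartment through $C^\infty_x$ and $\fs_\pm(t)$ lies in such an apartment by the very construction — together with the already-established results of Muthiah (stated in subsection~\ref{subsection: lifts in masure}) that $C_\infty$-friendly \emph{non-open} segments retract to $C_\infty$-Hecke paths. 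Everything else is a routine compactness packaging of these local statements.
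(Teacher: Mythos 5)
Your key local claim---that for each $t\in[0,1]$ there is $\varepsilon_t>0$ and a \emph{single} twin apartment containing $C_\infty$, $\fs\big((t-\varepsilon_t,t+\varepsilon_t)\cap[0,1]\big)$, and the decoration on that neighbourhood---fails at every folding time of $\fc=\rho_{C_\infty}\circ\fs$. If such a twin apartment $(B_\oplus,B_\ominus)$ existed, $\rho_{C_\infty}|_{B_\oplus}$ would be the apartment isomorphism onto $\A$ fixing $B_\oplus\cap\A$, hence affine, so $\rho_{C_\infty}\circ\fs$ would be affine near $t$; thus $t$ could not be a fold. Your intended route via Lemma~\ref{l_segment_germ_retraction} runs into exactly this: that lemma hypothesises that $\rho\circ\tau$ is a segment germ, i.e.\ that there is no fold at the base point. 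The obstacle you flag (possible degeneracy of the retracted germ) is a red herring; the real issue is that the hypothesis of the lemma genuinely fails at folding points, and the folding points are exactly the $t_i$'s the conclusion is about.

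The paper's proof avoids this by allowing, for each $t$, a \emph{pair} of twin apartments $B^+_t,B^-_t$ (one per decoration sign), each containing $C_\infty$, whose union covers $D^\pm_{t^*}$ on an open interval $U_t$ around $t$. After extracting a finite subcover, the crucial extra step---absent from your proposal---is a monotonicity argument: on each subinterval the decoration $D^+$ can leave $B^\varepsilon_{u_i}$ at most once. If it left and came back, say $D^+_t\subset B^\varepsilon_{u_i}$ for some $t>m_i:=\inf\{t'\mid D^+_{t'}\not\subset B^\varepsilon_{u_i}\}$, then by (MA II) $\fs([s_i,t])\subset B^\varepsilon_{u_i}$, whence by admissibility $D^+_{m_i}=\pr_{\fs_+(m_i)}(D^+_t)\subset B^\varepsilon_{u_i}$, contradicting the definition of $m_i$. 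The threshold times $m_i$ together with the $s_i$ give the finite subdivision with one twin apartment per piece. To repair your argument you would need either to reproduce this two-apartment switching analysis, or to weaken your local claim to a one-sided version (separate twin apartments for $[t-\varepsilon_t,t]$ and $[t,t+\varepsilon_t]$) and glue with the same convexity device.
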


\begin{proof}

Let $(t,\varepsilon)\in [0,1]^{\pm}$.  By \cite[Proposition 4.2]{twinmasures}, there is a twin apartment $B^\varepsilon_t$  containing $D^\varepsilon_{t}\cup C_\infty$ and thus $\rho_{C_\infty}(D^\varepsilon_{t})$ is well-defined. Since $D^+_{t}\cup D^-_{t}\subset A^+_t\cup A^-_t$ there is an open interval $U_t$ of $[0,1]$,  containing $t$ and  such that $D^\varepsilon_{t^*}\subset A^+_t\cup A^-_t$ for all $t^*\in U_t$. By compactness of $[0,1]$, there exist $k\in \N$ and $u_1,\ldots,u_k\in [0,1]$ such that $[0,1]=\bigcup_{i=1}^k U_{u_i}$.  

Therefore, up to reducing $k$ and to renumbering  there exist $s_1,\ldots,s_k\in [0,1]$ such that $0=s_0<s_1<\ldots < s_k=1$ and such that for every $i\in \llbracket 0,k-1\rrbracket$, for every $t\in [s_i,s_{i+1})$, $D_t^+\subset B_{u_i}^+\cup B_{u_i}^-$. Let $\varepsilon\in \{-,+\}$ be such that $B_{u_i}^\varepsilon\supset D_{s_i}^+$. If for all $t\in (s_i,s_{i+1})$, we have $D_t^+\subset B^\varepsilon_{u_i}$, one sets $m_i=s_i$. Otherwise, one sets $m_i=\inf\{t'\in (s_i,s_{i+1})\mid D_{t'}^+\not\subset B_{u_i}^\varepsilon\}$. If $D_{m_i}^+\subset B_{u_i}^\varepsilon$, then $D_t^+\subset B_{u_i}^\varepsilon$ for $t>m_i$ close enough to $m_i$: a contradiction. 
Therefore $D_{m_i}^+\not\subset B_{u_i}^\varepsilon$. Let $t\in (m_i,s_{i+1})$ and assume that $D_t^+\subset B_{u_i}^\varepsilon$. Then $\fs(t)\in B_{u_i}^\varepsilon$ and thus $\fs([s_i,t])\subset B_{u_i}^\varepsilon$ by (MA II). Therefore 
$D_{m_i}^+=\pr_{\fs_+(m_i)}(D_t^+)\subset B_{u_i}^\varepsilon$: a contradiction. 
Consequently $D_{t}^+\not\subset B_{u_i}^\varepsilon$ and thus $D_t^+\subset B_{u_i}^{-\varepsilon}$ for all $t\in [m_i,s_{i+1})$ . By induction we deduce the existence of $\{t_0,t_1,\ldots,t_n,t_{n+1}\}$ among  $\{s_j,m_j\mid j\in \llbracket 0,k-1\rrbracket\}$ and of apartments $A_0,\ldots,A_n$ among the $\{B_{u_j}^+,B_{u_j}^-\mid  j\in \llbracket 0,k-1\rrbracket\}$ such that $0=t_0<\ldots <t_{n+1}=1$ and such that $A_i$ contains $C_\infty$, $\fs([t_i,t_{i+1}])$ and $D_{t_i}^+$ for $i\in \llbracket 0,n\rrbracket$. But then by \eqref{e_decoration}, $A_i$ contains $D_t^\varepsilon$ for all $(t,\varepsilon)\in [t_i,t_{i+1}]^{\pm}$. 

Since   $\rho_{C_\infty}$ restricts to an isomorphism of twin apartments $A_i\rightarrow \mathbb A \sqcup \A_\ominus$, we deduce that $\rho_{C_\infty}\circ\fs|_{[t_i,t_{i+1}]}$ is a segment for all $i\in \llbracket 1 ,n\rrbracket$ and that $\rho_{C\infty}(\underline{\fs})$ is  an open path.
\end{proof}

\begin{Lemma}\label{l_local_folding_retraction_segment}
  Let $\bx\in W^+_{\sph}$ and $\underline{\fs}=(\fs,D,C)$, be an open segment of type $\bx$.  Assume that $\underline{\fs}$ is $C_\infty$-friendly  and set $\fc=\rho_{C_\infty}\circ \fs$.    Let   $t_*\in (0,1)$. We assume that $\fc'_-(t_*)\neq \fc'_+(t_*)$. Then there exists a $(C^\infty_{\fc(t_*)},W^v_{\fc(t_*)})$-chain from $\rho_{C_\infty}(D_{t_*}^+)$ to $\lim_{t\rightarrow t_*^-}\rho_{C_\infty}(D_t^+)$.
\end{Lemma}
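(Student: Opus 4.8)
The plan is to localise the statement at $\bar x:=\fc(t_*)$, transport it to the tangent twin building $\cT^\pm_{\bar x}(\I_\oplus)$ via the factorisation diagram~\eqref{eq: factorizationinfinite}, and conclude with Proposition~\ref{p_Cinfty_chains}.

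First I would apply Lemma~\ref{l_compactness_argument} to $\underline{\fs}$, obtaining $0=t_0<\dots<t_{n+1}=1$ and apartments $A_0,\dots,A_n$, each $A_i$ containing $C_\infty$, $\fs([t_i,t_{i+1}])$ and $D^\varepsilon_t$ for $(t,\varepsilon)\in[t_i,t_{i+1}]^\pm$. Since $\rho_{C_\infty}\circ\fs$ is a segment on each $[t_i,t_{i+1}]$, the hypothesis $\fc'_-(t_*)\neq\fc'_+(t_*)$ forces $t_*=t_{i+1}$ for some $i\in\llbracket0,n-1\rrbracket$. Put $A:=A_i$, write $\psi:=\rho_{C_\infty}|_{A}$ for the induced apartment isomorphism onto $\A$ (the one matching $C_\infty$), set $x:=\fs(t_*)$ and $\bar x:=\fc(t_*)=\psi(x)$, and fix $g\in I_\infty$ with $g.x=\bar x$.

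Next I would identify the two alcoves occurring in the statement as images of alcoves of $\cT^\pm_{\bar x}(\I_\oplus)$ under $\rho_{C^\infty_{\bar x}}$. For $t\in[t_i,t_*)$ we have $D^+_t\subset A$ and $\rho_{C_\infty}(D^+_t)=\psi(D^+_t)$; by admissibility of $\underline{\fs}$ and Lemma~\ref{Lemma: projection segments} the direction of $D^+_t$ is constant on $[t_i,t_*)$, so $\lim_{t\to t_*^-}\rho_{C_\infty}(D^+_t)$ exists in $\cT^\pm_{\bar x}(\A)$ and equals $\psi(\hat D)$, where $\hat D$ is the limit of the $D^+_t$ computed inside $A$ (an alcove of $A$ based at $x$; equivalently, by admissibility of the retracted open path on $[t_i,t_*]$, the alcove $\pr_{\tau}(\rho_{C_\infty}(D^-_{t_*}))$ with $\tau$ the germ at $\bar x$ in the direction $\fc'_-(t_*)$). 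As $\hat D$ is an alcove of $A$ based at $x$, diagram~\eqref{eq: factorizationinfinite} yields
\[
\lim_{t\to t_*^-}\rho_{C_\infty}(D^+_t)=\psi(\hat D)=\rho_{C_\infty}(\hat D)=\rho_{C^\infty_{\bar x}}(g.\hat D),\qquad \rho_{C_\infty}(D^+_{t_*})=\rho_{C^\infty_{\bar x}}(g.D^+_{t_*}),
\]
two alcoves of $\cT^\pm_{\bar x}(\A)$.

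Finally, it then suffices to produce a $(C^\infty_{\bar x},W^v_{\bar x})$-chain from $\rho_{C^\infty_{\bar x}}(g.D^+_{t_*})$ to $\rho_{C^\infty_{\bar x}}(g.\hat D)$, and this is exactly what Proposition~\ref{p_Cinfty_chains} produces in the tangent twin building $\cT^\pm_{\bar x}(\I_\oplus)$ with retraction centred at $C^\infty_{\bar x}$, \emph{once the alcoves $g.\hat D$ and $g.D^+_{t_*}$ are inserted into it in the correct configuration}. This is the main obstacle I anticipate: $\hat D$ and $D^+_{t_*}$ are genuinely distinct alcoves based at $x$ — precisely because $\fc$ folds at $t_*$, for if $D^+_{t_*}$ lay in $A$ then $\fs_+(t_*)\subset A$ and $\fc$ would be affine through $t_*$ — so one has to reconcile the relative position of $g.\hat D$ and $g.D^+_{t_*}$ in $\cT^\pm_{\bar x}(\I_\oplus)$ with that of their $\rho_{C^\infty_{\bar x}}$-images, so that the auxiliary alcove $\tilde C$ of Proposition~\ref{p_Cinfty_chains} comes out equal to $\rho_{C^\infty_{\bar x}}(g.\hat D)$. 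To carry this out I would compare $A$ with an apartment carrying $\fs|_{[t_*,\,\cdot]}$ and $C_\infty$ along the enclosed set where the two agree, and use that $C^\infty_{\bar x}$ is the image of $C_\infty$ and therefore lies on the side opposite to $C_0$ of every wall through $\bar x$; this is what forces the walls recorded by the chain to be crossed away from $C^\infty_{\bar x}$, i.e. what gives condition (ii) in the definition of a $(C^\infty_{\bar x},W^v_{\bar x})$-chain. The remaining verifications — that the resulting chain indeed joins $\rho_{C_\infty}(D^+_{t_*})$ to $\lim_{t\to t_*^-}\rho_{C_\infty}(D^+_t)$ — are then bookkeeping with $\psi$ and diagram~\eqref{eq: factorizationinfinite}.
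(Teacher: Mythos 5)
Your localization is exactly the paper's: reduce to $t_*=t_{i+1}$ via Lemma~\ref{l_compactness_argument}, identify $\lim_{t\to t_*^-}\rho_{C_\infty}(D^+_t)$ with $\psi(\hat{D})$ where $\hat{D}\subset A$ is the constant-direction limit of the $D^+_t$, and transport everything to the tangent twin building at $\fc(t_*)$ via the factorization~\eqref{eq: factorizationinfinite}. That part is fine. The gap is the step you yourself flag as "the main obstacle": Proposition~\ref{p_Cinfty_chains} cannot be configured to produce the required chain. That proposition takes two alcoves $C,C'$ of the \emph{same sign} and compares $\rho(C')$ with the alcove $\tilde{C}$ at distance $d^\varepsilon(C,C')$ from $\rho(C)$; with the only natural choice $C=g.\hat{D}$, $C'=g.D^+_{t_*}$, you get a chain from $\rho_{C_\infty}(D^+_{t_*})$ to the alcove at distance $d^-(\hat{D},D^+_{t_*})$ from $\psi(\hat{D})$, and that alcove equals $\psi(\hat{D})$ only if $d^-(\hat{D},D^+_{t_*})=1_{W^v}$ --- which fails precisely because the path folds at $t_*$, as you note. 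There is no same-sign pair with a controlled $d^\varepsilon$ here: what one actually knows about $\hat{D}$ and $D^+_{t_*}$ is a \emph{codistance} relation, namely that both are opposite to the alcove $C^-$ of $A$ based at $\fs(t_*)$ whose direction is opposite to that of $\hat{D}$ (for $D^+_{t_*}$ this follows from the fact that it dominates $\fs_+(t_*)$ while $C^-$ dominates $\fs_-(t_*)$ and $\fs$ is a straight segment). Turning "two alcoves opposite to a common alcove of $A$" into a $(C^\infty_{\fc(t_*)},W^v_{\fc(t_*)})$-chain between their retractions is the real content of the lemma, and Proposition~\ref{p_Cinfty_chains} does not say this.

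The paper closes exactly this gap by choosing a segment germ $\tau$ through $\fs(t_*)$ with $\tau_-(0)\subset C^-$ and $\tau_+(0)\subset D^+_{t_*}$ and invoking \cite[5.10 Proposition]{twinmasures}: the retraction by $\rho_{C_\infty}$ of a twinnable segment germ is a $C_\infty$-Hecke path germ, which supplies the chain of vectors (hence of alcoves) from $\rho_{C_\infty}(D^+_{t_*})$ to $\lim_{t\to t_*^-}\rho_{C_\infty}(D^+_t)$, the positivity condition included. Your sketched alternative ("compare $A$ with an apartment carrying $\fs|_{[t_*,\cdot]}$ and $C_\infty$ \ldots") would amount to reproving that external result or a codistance analogue of Proposition~\ref{p_Cinfty_chains}; as written it is not an argument, so the proof is incomplete at its decisive point.
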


\begin{proof}
Let $n\in \Z_{\geq 0}$, $t_1,\ldots, t_n$ and $A_1,\ldots,A_n$ be as in Lemma~\ref{l_compactness_argument}.  Let $i\in \llbracket 1,n+1\rrbracket$ be such that $t_*\in (t_i,t_{i+1}]$ and $A=A_i$. As $\fc([t_i,t_{i+1}])$ is a segment, we have $t_{i+1}=t_*$.

    Let $C^-$ be the alcove of $A_{\oplus}$ based at $\fs(t_*)$ and whose direction is opposite to the direction of $D_t^+$, for any $t\in [t_i,t_*)$. Let $\tilde{A}$ be a twin apartment such that $\tilde{A}_\oplus$ contains $C^-$ and $D_{t_*}^+$, which exists by \cite[Proposition 5.17]{hebert2020new}. Since $\overline{C^-}$ contains $\fs\left((t_*^-,t_*]\right)$, $\tilde{A}_{\oplus}$
    contains $D_{t}^+$ for $t\in (t_{i}^-,t_*^-)$, by \eqref{e_decoration}. Let $\tilde{t}\in (t_{i},t_*)$ be such that $D_{\tilde{t}}^+\subset \tilde{A}_{\oplus}$. Identify $\tilde{A}_\oplus$ and $\A$.  Let $C^v$ be the direction of $D_{t_*}^+$. Then we have $D_{t_*}^+=germ_{\fs(t_*)}(\fs(t_*)+C^v)$, $D_{\tilde{t}}^+=germ_{\fs(\tilde{t})}(\fs(\tilde{t})+C^v)$. By assumption, the directions of $D_{\tilde{t}}^+$ and $C^-$ are opposite and thus $C^-=germ_{\fs(t_*)}(\fs(t_*)-C^v)$: $C^-$ and $D_{t_*}^+$ are opposite.
    
    Let $\mu\in Y\cap C^v$. Let $\tau:(0^-,0^+)\rightarrow \tilde{A}_{\oplus}$ be defined by $\tau(t)=\fs(t_*)+t\mu$ for $t\in \R$ close enough to $0$. Then $\tau$ is a twinnable segment germ and by \cite[5.10 Proposition]{twinmasures}, $\gamma:=\rho_{C_\infty}\circ \tau$ is a $C_\infty$-Hecke path-germ. More precisely, let $p=\rho_{C_\infty}(\fs(t_*))$. 
    Then there exist $m\in \Z_{\geq 0}$, $\xi_0=\gamma'_+(0),\xi_1,\ldots,\xi_m=\gamma'_-(0)$, $\beta_1,\ldots,\beta_m\in \Phi$ such that for all $j\in \llbracket 1,m\rrbracket$, $s_{\beta_j}(\xi_{j-1})=\xi_j$, $\beta_j(\xi_{j-1})<0$, $\beta_j(p)\in \Z$ and $\beta_j(C^\infty_p)>\beta_j(p)$. For $j\in \llbracket 0,m\rrbracket$, denote by $C^{(j)}$ the alcove of $\A$ containing $p+(0,0^+)\xi_j$ and set $k_j=-\langle p,\beta_j\rangle$.  Then we have $C^{(j)}=s_{\beta_{j-1}[k_{j-1}]}(C^{(j-1)})$ for $j\in \llbracket 1,m\rrbracket$ and $C^{(0)}=\rho_{C_\infty}(D_{t_{i+1}}^+)$.
    Denote by $C_j^v$  the direction of the sector generated by $C^{(j)}$.  As the direction of $\overline{C^v_j}$ contains $\xi_j$, we have $\beta_j[k_j](C^{(j-1)})<0$. Therefore $C^{(0)}$, $C^{(1)}$, $\ldots$, $C^{(m)}$ is a $(C^\infty_p,W^v_p)$-chain from $C^{(0)}=\rho_{C_\infty}(D_{t_*}^+)$ to $C^{(m)}$.

Let us prove that $C^{(m)}=\lim_{t\rightarrow t_*^-} \rho_{C_\infty}(D_t^+)$. We now identify $A=A_i$ and $\A$, using an element $i_\infty$ of $I_\infty$ such that $i_\infty.A_{\oplus}=\A$. Let $\mu=\tau'_-(0)\in Y$. Let $\tilde{t}\in (t_i,t_*)$. We have $D_{\tilde{t}}^+\subset A_{\oplus}$. Let $\tau^{\tilde{t}}$ denote the segment germ $\tau^{\tilde{t}}=\fs(\tilde{t})+[0,0^+)\mu$. Then $(\tau^{\tilde{t}})'_+(0)=\mu=\tau'_-(0)$. We have $\rho_{C_\infty}(a)=i_\infty.a$ for every $a\in A_{\oplus}$. As $A_{\oplus}$ contains $\tau^{\tilde{t}}([0,0^+))$ and $\tau((0^-,0])$ we deduce that if $\gamma^{\tilde{t}}=\rho_{C_\infty}\circ \tau^{\tilde{t}}$, then we have $(\gamma^{\tilde{t}})'_+(0)=\gamma'_-(0)$. As the alcove containing $\rho_{C_\infty}\left(\tau^{\tilde{t}}\left(\left(0,0^+\right)\right)\right)=\fc(\tilde{t})+(0,0^+)(\gamma^{\tilde{t}})'_+(0)$ is $\rho_{C_\infty}(D_{\tilde{t}}^+)$ and as this is true for every $\tilde{t}\in [t_i,t_*)$, we deduce that $C^{(m)}=\lim_{t\rightarrow t_*^-} \rho_{C_\infty}(D_t^+)$, which concludes the proof of the lemma.
\end{proof}

\begin{Proposition}\label{Proposition: retraction is C infty}
    Let $\bx \in W^+_{sph}$ and let $\underline\fs$ be a $C_\infty$-Hecke friendly open segment of type $\bx$. Then $\rho_{C_\infty}(\underline\fs)$ is a $C_\infty$-Hecke open path of type $\bx$. In particular, it is admissible.
\end{Proposition}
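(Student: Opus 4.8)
The plan is to combine the local results already established (Lemmas~\ref{l_compactness_argument}, \ref{l_local_folding_retraction_segment}, and Proposition~\ref{p_Cinfty_chains}) with the local characterization of $C_\infty$-Hecke open paths given in Lemma~\ref{l_characterization_decorated_Hecke_paths}. First I would set $\underline{\fc}=(\fc,D_\fc,\underline{\fc}(1))=\rho_{C_\infty}(\underline{\fs})$, and by Lemma~\ref{l_compactness_argument} obtain times $0=t_0<t_1<\ldots<t_{n+1}=1$ and apartments $A_0,\ldots,A_n$ such that $A_i\ni C_\infty$, $\fs([t_i,t_{i+1}])$, and $D_t^\varepsilon$ for $(t,\varepsilon)\in[t_i,t_{i+1}]^\pm$; in particular $\fc$ is piece-wise affine and $\underline{\fc}$ is a genuine open path. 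I must first check that $\underline{\fc}$ is admissible: since $\rho_{C_\infty}$ restricts to a twin apartment isomorphism $A_i\to \A\sqcup\A_\ominus$ that preserves projections, and $\underline{\fs}$ is admissible (it is an open segment, hence admissible on all of $[0,1]$ by Definition~\ref{Def: open segments}), the identities \eqref{e_decoration} are transported by $\rho_{C_\infty}|_{A_i}$ on each $[t_i,t_{i+1}]$; to glue across the $t_i$ I invoke Lemma~\ref{Lemma: admissibility condition}, which reduces admissibility at an interior point $t_*$ where $\fc$ does \emph{not} fold to the identity $d^\ast(D^-_{\fc,t_*},D^+_{\fc,t_*})=w_{\lambda,0}$; but when $\fc'_-(t_*)=\fc'_+(t_*)$ there is locally an apartment $A_i$ containing $C_\infty$, $\fs$ near $t_*$ and the relevant decoration alcoves, and the corresponding identity holds in $\underline{\fs}$ (which is admissible across $t_*$), so it is preserved under the isomorphism $\rho_{C_\infty}|_{A_i}$.

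Next, I would verify the two chain conditions of Lemma~\ref{l_characterization_decorated_Hecke_paths}. Condition $1$, namely the existence of a $(C^\infty_{\fc(t_i)},W^v_{\fc(t_i)})$-chain from $D^+_{\fc,t_i}$ to $\lim_{t\to t_i^-}D^+_{\fc,t}$ for each folding time $t_i$, is exactly the content of Lemma~\ref{l_local_folding_retraction_segment}. Condition $2$, the existence of a $(C^\infty_{\fc(1)},W^v_{\fc(1)})$-chain from $\underline{\fc}(1)$ to $\tilde{\underline{\fc}}(1)$ (where $\tilde{\underline{\fc}}(1)$ is the alcove at $\fc(1)$ with $d^+(D^-_{\fc,1},\tilde{\underline{\fc}}(1))=d^+(D^-_{\bx,1},C_\bx)$), I would obtain from Proposition~\ref{p_Cinfty_chains} applied in the tangent building $\cT^\pm_{\fs(1)}(\I_\oplus)$: the retraction $\rho_{C_\infty}$ restricted to this tangent building factors (diagram~\eqref{eq: factorizationinfinite}) as $\rho_{C^\infty_{\fc(1)}}\circ g$ for some $g\in I_\infty$, so setting $C=g.C_0$-type data appropriately and using that $\underline{\fs}$ has type $\bx$ (so $\rho_{C_0}(\underline{\fs})=\underline{\fs}_\bx$ and the $d^+$-distance from $D^-_1$ to $C$ is the prescribed one), Proposition~\ref{p_Cinfty_chains} produces the required chain from $\rho_{C_\infty}(\underline{\fc}(1))$ to $\tilde{\underline{\fc}}(1)$. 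By Lemma~\ref{l_characterization_decorated_Hecke_paths}, $\underline{\fc}$ is then a $C_\infty$-Hecke open path of type $\bx$.

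The main obstacle I anticipate is condition $2$ at the endpoint $t=1$: one must be careful that the alcove $\tilde{\underline{\fc}}(1)$ in the statement of Lemma~\ref{l_characterization_decorated_Hecke_paths} is precisely the image under the factorization's intermediate retraction $\rho_{C^\infty_{\fc(1)}}$ of the alcove that ``should have been'' the end alcove of an unfolded lift, and that the $d^+$-bookkeeping matches — i.e.\ that the $\rho_{C_0}$-type condition ($\rho_{C_0}(\underline{\fs})=\underline{\fs}_\bx$) translates into exactly the hypothesis $d^+(\rho(C),\tilde C)=d^+(C,C')$ needed to invoke Proposition~\ref{p_Cinfty_chains} in the tangent building. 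This requires carefully tracking how $d^+$ inside $\cT^+_{\fs(1)}(\I_\oplus)$ relates to the global combinatorics of $\underline{\fs}$ via the last-segment statement (Proposition~\ref{Proposition: open paths give chains}(3)) and the explicit description of the decoration of an open segment (Definition~\ref{Def: open segments}). Everything else is routine transport of structure through apartment isomorphisms. The final sentence of the proposition, that $\underline{\fc}$ is admissible, is subsumed in the first paragraph since $C_\infty$-Hecke open paths are admissible by Definition~\ref{Definiton: Hecke open path}, but proving admissibility directly as above also gives it unconditionally.
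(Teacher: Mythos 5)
Your overall strategy is the same as the paper's: Lemma~\ref{l_compactness_argument} to cut $[0,1]$ into pieces lying in twin apartments containing $C_\infty$, transport of admissibility on each piece, Lemma~\ref{Lemma: admissibility condition} to reduce the gluing to a codistance identity, Lemma~\ref{l_local_folding_retraction_segment} for condition~1 of Lemma~\ref{l_characterization_decorated_Hecke_paths}, and Proposition~\ref{p_Cinfty_chains} (via the factorization~\eqref{eq: factorizationinfinite}) for condition~2 at $t=1$.

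There is, however, one genuine gap, and it sits exactly at the seam points $t_i$ where the codistance identity must be checked. You assert that when $\fc'_-(t_*)=\fc'_+(t_*)$ ``there is locally an apartment $A_i$ containing $C_\infty$, $\fs$ near $t_*$ and the relevant decoration alcoves,'' and then transport $d^\ast(D^-_{t_*},D^+_{t_*})=w_{\lambda,0}$ through $\rho_{C_\infty}|_{A_i}$. But by construction $A_i$ only contains $D^\varepsilon_t$ for $(t,\varepsilon)\in[t_i,t_{i+1}]^\pm$, and this set excludes $(t_i,-)$: the alcove $D^-_{t_i}$ lives in $A_{i-1}$, not in $A_i$. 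So at precisely the times where Lemma~\ref{Lemma: admissibility condition} requires a check, there is no apartment handed to you by the compactness lemma that contains both $D^-_{t_i}$ and $D^+_{t_i}$ together with a chamber centering the retraction, and without such an apartment the codistance $d^\ast(D^-_{\fc,t_i},D^+_{\fc,t_i})$ computed after retraction need not a priori equal the one computed upstairs. The paper fills this hole with Lemma~\ref{l_segment_germ_retraction}: under the hypothesis that $\rho_{C_\infty}\circ\fs$ is still a segment germ at $t_i$ (i.e.\ $\fc$ does not fold there), one produces an apartment containing the two-sided germ $g_{i-1}\fs((t_i^-,t_i^+))$, the alcove $C^\infty_{\fc(t_i)}$ and $g_{i-1}D^+_{t_i}$ (hence also $g_{i-1}D^-_{t_i}$ by projection), and only then does $\rho_{C^\infty_{\fc(t_i)}}$ restrict to an isomorphism preserving $d^\ast$. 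Note also that the relevant centering chamber is $C^\infty_{\fc(t_i)}$ in the tangent building at $\fc(t_i)$, not $C_\infty$ itself. Your plan should cite and use Lemma~\ref{l_segment_germ_retraction} at this step; as written, the assertion is exactly the nontrivial geometric input, not a consequence of Lemma~\ref{l_compactness_argument}.
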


\begin{proof}
Let $n\in \Z_{\geq 0}$, $t_0,t_1,\ldots, t_n,t_{n+1}$ and $A_1,\ldots,A_n$ be as in Lemma~\ref{l_compactness_argument}.  

Write $\underline{\fs}=(\fs,D,\underline{\fs}(1))$ and set $\underline{\fc}=(\fc,D_{\fc},\underline{\fc}(1))=\rho_{C_\infty}(\underline{\fs}):=(\rho_{C_\infty}(\fs),\rho_{C_\infty}(D),\rho_{C_\infty}(\underline{\fc}(1)))$. 

For each $A_i$ there is $g_i\in G_{twin}$ sending $A_i$ to $(\A,\A_\ominus)$ and fixing the intersection; in particular $g_i\in I_\infty$. By the admissibility condition for $D$, we have that $A_i$ contains $D^\varepsilon_{t}=\pr_{\fs_\varepsilon(t)}(D^+_{t_i})$ and $\rho_{C_\infty}(D^\varepsilon_{t})=g_iD^\varepsilon_{t}$  for all $t\in [t_i,t_{i+1}]^{\pm}$.  Therefore $D_{\fc}$ satisfies  the admissibility condition \eqref{e_admissibility}  on $[t_i,t_{i+1}]$. By Lemma~\ref{Lemma: admissibility condition} it remains to check that for all $i\in \llbracket 1,n\rrbracket$, $$\fc'_+(t_i)\neq \fc'_-(t_i) \text{ or } d^\ast(D^-_{\fc,t_i},D^+_{\fc,t_i})=w_{\lambda,0}.$$

Let $i\in \llbracket 1,n\rrbracket$. Assume that $\fc_+'(t_i)=\fc_-'(t_i)$, then $\fc((t_i^-,t_i^+))$ is a segment germ based at $\fc(t_i)$. Note that, since $\fs$ is a segment and  since $g_{i-1}(\fs(t_i))=\fc(t_i)$, the filter $g_{i-1} \fs((t_i^-,t_i^+))$ is also a segment germ based at $\fc(t_i)$.
Since $\rho_{C_\infty}((\fs,D_\fs))=(\fc,D_\fc)$, the local factorization of $\rho_{C_\infty}$ implies: \begin{equation}
         \rho_{C^\infty_{\fc(t_i)}}(g_{i-1}\fs((t_i^-,t_i^+)))=\fc((t_i^-,t_i^+)), \; D^\pm_{\fc,t_i}=\rho_{C^\infty_{\fc(t_i)}}(g_{i-1}D^\pm_{t_i}).
     \end{equation}

     By Lemma~\ref{l_segment_germ_retraction} since $g_{i-1}D^+_{t_i}$ contains $g_{i-1}\fs_+(t_i)$, there is an apartment $A_\oplus$ containing  the segment germ $g_{i-1}\fs((t_i^-,t_i^+))$ and the alcoves $C^\infty_{\fc(t_i)}, g_{i-1}D^+_{t_i}$. Since $g_{i-1}D^-_{t_i}=\pr_{g_{i-1}\fs_-(t_i)}(g_{i-1}D^+_{t_i})$, this apartment also contains $g_{i-1}D^-_{t_i}$. Since $C^\infty_{\fc(t_i)}\subset A_\oplus \cap \A_\oplus$, the retraction $\rho_{C^\infty_{\fc(t_i)}}$ restricts to an isomorphism $\cT^{\pm}_{\fc(t_i)}(A_\oplus)\rightarrow \cT^{\pm}_{\fc(t_i)}(\A)$. Therefore, we have:
\begin{equation}\label{eq: dist_equality_admissibility}
    d^\ast(D^-_{t_i},D^+_{t_i})=d^\ast(g_{i-1}D^-_{t_i},g_{i-1}D^+_{t_i})=d^\ast(\rho_{C^\infty_{\fc(t_i)}}(g_{i-1}D^-_{t_i}),\rho_{C^\infty_{\fc(t_i)}}(g_{i-1}D^+_{t_i}))=d^\ast(D^-_{\fc,t_i},D^+_{\fc,t_i}).
\end{equation} Since $\underline\fs$ is an open segment and $\fs$ is a segment of type $-\lambda$, we have $d^\ast(D^-_t,D^+_t)=w_{\lambda,0}$ for all $t$, and in particular for $t=t_i$. We deduce from Equation~\eqref{eq: dist_equality_admissibility} that $d^\ast(D^-_{\fc,t_i},D^+_{\fc,t_i})=w_{\lambda,0}$. Therefore by Lemma~\ref{Lemma: admissibility condition}, $\underline{\fc}$ is admissible.

 Let $E=\{t\in (0,1)\mid \fc'_+(t)\neq \fc'_-(t)\}$ and $n'=|E|$ (we have $n'\leq n$). Write $E=\{t'_1,\ldots,t'_{n'}\}$, with $t_1'<\ldots<t_{n'}'$. Set $t_0'=0$ and $t_{n'+1}'=1$. We proved that for all $i\in \llbracket 1,n'+1\rrbracket$, the restriction of  $(\fc,D)$ to $(t_{i-1}',t_i')$ is a decorated path satisfying  \eqref{e_admissibility}.

Let now $i\in \llbracket 1,n'\rrbracket$. Then by Lemma~\ref{l_local_folding_retraction_segment}, there exists a $(C^\infty_{\fc(t'_i)},W^v_{\fc(t_i')})$-chain from $D_{\fc,t_i'}^+$ to $\lim_{t\rightarrow t_i'^-} D_{\fc,t}^+$. It remains to prove that $\underline{\fc}$ satisfies the second condition of Lemma~\ref{l_characterization_decorated_Hecke_paths}. By assumption, $D_1^-=\pr_{\fs_-(1)}(\underline{\fs}(1))$. Let $\underline{\tilde{\fc}}(1)$ be the alcove of $\cT^+_{\fc(1)}(\A)$ such that \[d^+(\rho_{C_\infty}(D_1^-),\underline{\tilde{\fc}}(1))=d^+(D_{\fc,1}^-,\underline{\tilde{\fc}}(1))=d^+(D^-_{\bx,1},C_\bx).\] Then as $d^+(D^-_{\bx,1},C_\bx)=d^+(D_1^-,\underline{\fs}(1))$, there exists a $(C^\infty_{\fc(1)},W^v_{\fc(1)})$-chain from $\rho_{C_\infty}(\underline{\fs}(1))=\underline{\fc}(1)$ to $\underline{\tilde{\fc}}(1)$, according to Proposition~\ref{p_Cinfty_chains}, applied with $C_\A=C^\infty_{\fc(1)}$ (using the local factorization of $\rho_{C_\infty})$. Therefore by Lemma~\ref{l_characterization_decorated_Hecke_paths}, $\underline{\fc}$ is a $C_\infty$-Hecke open path of type $\bx$. 

\end{proof}

\subsection{Counting  the lifts of a $C_\infty$-Hecke open path}\label{ss_counting_lifts}
We have proved that any open segment of type $\bx\in W^+_{sph}$ retracts to a $C_\infty$-Hecke open path of type $\bx$. We now aim to compute, for a fixed open path $\underline{\fc}$, the number of open segments of type $\bx$ retracting to $\underline{\fc}$. In order to do this, we give in Subsection~\ref{subsubsection: local description Cinfty hecke} a description of this set of open segments in term of local tangent buildings (see Proposition~\ref{Prop: local description bijection}). In Subsection~\ref{subsubsection: local computation} we introduce building-theoretic methods in order to obtain numerical results from this local description. This leads to Theorem~\ref{Theorem : number of lifts spherical}, in which we explicit the polynomials mentioned in Theorem~\ref{Prop : lifts are finite}.

 \subsubsection{Local description of the lifts of a $C_\infty$-Hecke open path}\label{subsubsection: local description Cinfty hecke}
For any $\bx\in W^+_{sph}$ and any $C_\infty$-Hecke open path $\underline{\fc}$ of type $\bx$, we denote $\rho_{C_0}^{-1}(\underline\fs_\bx)\cap \rho_{C_\infty}^{-1}(\underline{\fc})$ the set of open segments of type $\bx$ which retract to $\underline{\fc}$. In this section, we give an explicit local description of this set.
\begin{Definition}\label{Notations: localsets} Let $\underline{\fc}=(\fc,D,\underline{\fc}(1))$ be an admissible $C_\infty$-Hecke open path of type $\bx=\qp^\lambda w\in W^+_{sph}$. Recall that $w_{\lambda,0}$ denotes the maximal element of the spherical parabolic subgroup $W_{\lambda^{++}}$, and let $w^\bx$ be such that $D^+_{\bx,0}=a(0,-w^\bx)$. We introduce the following notation:
\begin{itemize}
    \item \index{T@$T(\underline{\fc})$} The ordered set $T(\underline{\fc})$ of times $t\in[0,1]$ such that, either $t\in\{0,1\}$, either there is an affine wall separating $C^\infty_{\fc(t)}$ and $\fc_+(t)$. By \cite[Lemma 5.4.2)]{twinmasures}, this is a finite set.
    \item The set of folding times: $T^f(\fc):=\{t\in [0,1)\mid \fc'_+(t)\neq \fc'_-(t)\}$ (by convention, $\fc'_-(0)=-\lambda^{++}$ and $D^-_0=C_0$). This set is contained in $T(\underline{\fc})$ by \cite[Lemma 5.4.4)]{twinmasures}.
    \item For $t\in [0,1]$, $w^\infty_{\underline{\fc},t}:=\begin{cases}
       d^-(C^\infty_{\fc(t)},D^+_t)&\ \text{if} \ t<1 \\
       d^\ast(C^\infty_{\fc(1)},\underline{\fc}(1)) &\ \text{if} \ t=1
    \end{cases}$\index{w@$w^\infty_{\underline{\fc},t}$,$w^-_{\bx,t}$} and $w^-_{\bx,t}:=\begin{cases*}
        d^\ast(C_0,D^+_{\bx,0}) &\  \text{if} \ t=0 \\
        d^+(D^-_{\bx,1},C_\bx) &\  \text{if} \ t=1 \\
        w_{\lambda,0} &\ \text{otherwise}
    \end{cases*}$. 
    \item For $t\in [0,1)$: $$\cC(\underline{\fc},\bx,t):=\{C\in \cT^-_{\fc(t)}(\I_\oplus)\mid d^-(C^\infty_{\fc(t)},C)=w^\infty_{\underline{\fc},t}, \; d^\ast(D^-_t,C)=w^-_{\bx,t}\},$$
    and $$\cC(\underline{\fc},\bx,1):=\{C\in \cT^+_{\fc(1)}(\I_\oplus)\mid d^+(D^-_1,C)=w^-_{\bx,1}, \; d^\ast(C^\infty_{\fc(1)},C)=w^\infty_{\underline{\fc},1}\}.$$
\end{itemize}

\end{Definition}

\begin{Lemma}\label{Lemma: local element lift}
    Let $\underline{\fc}$ be an admissible $C_\infty$-Hecke open path of type $\bx\in W^+_{sph}$. For each $t\in T(\underline{\fc})\setminus \{1\}$ and every $a_t\in \cC(\underline{\fc},\bx,t)$, there exists $i_{a_t}\in I_\infty$ such that $$i_{a_t}.C^\infty_{\fc(t)}=C^\infty_{\fc(t)} \ \text{and} \ i_{a_t}.D^+_t=a_t.$$
    Similarly, for every $a_1\in \cC(\underline{\fc},\bx,1)$, there exists $i_{a_1}\in I_\infty$ such that $i_{a_1}.C^\infty_{\fc(1)}=C^\infty_{\fc(1)} \ \text{and} \ i_{a_1}.\underline{\fc}(1)=a_1.$

\end{Lemma}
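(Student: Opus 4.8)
We treat the case $t<1$; the case $t=1$ is obtained verbatim, reading everywhere $\cT^+$ for $\cT^-$, the codistance $d^\ast(C^\infty_{\fc(1)},\cdot)$ for the $W^v$-distance $d^-(C^\infty_{\fc(t)},\cdot)$, and $\underline{\fc}(1)$ for $D^+_t$. First I would fix the local picture at the point $x:=\fc(t)$. Since $\underline{\fc}$ is a $C_\infty$-Hecke open path, $\fc$ takes values in $\A$, and since $C^\infty_{x}$ is defined we have $x\in-\sT$; moreover $x$ is $C_\infty$-friendly, the twin apartment $(\A,\A_\ominus)$ containing both $x$ and $C_\infty\subset\A_\ominus$. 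The alcoves $C^\infty_{x}$ and $D^+_t$ lie in $\cT^-_{x}(\A)$, while $a_t\in\cT^-_{x}(\I_\oplus)$. Applying the factorization \eqref{eq: factorizationinfinite} with $\tilde x=x$ and $g=\mathrm{Id}$, the restriction of $\rho_{C_\infty}$ to $\cT^\pm_{x}(\I_\oplus)$ is the building-theoretic retraction $\rho_{C^\infty_{x}}$ onto the thin twin building $\cT^\pm_{x}(\A)$ centred at $C^\infty_{x}$. As an alcove of $\cT^-_{x}(\A)$ is determined by its $W^v$-distance to $C^\infty_{x}$, and as $d^-(C^\infty_{x},a_t)=w^\infty_{\underline{\fc},t}=d^-(C^\infty_{x},D^+_t)$ by the definition of $\cC(\underline{\fc},\bx,t)$, we get $\rho_{C_\infty}(a_t)=\rho_{C^\infty_{x}}(a_t)=D^+_t$.

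The one point requiring genuine masure input is that $\rho_{C_\infty}$ is defined on $a_t$, i.e. that there is a twin apartment $(A_\oplus,A_\ominus)$ with $C_\infty\subset A_\ominus$, $x\in A_\oplus$ and $a_t\in\cT^-_{x}(A_\oplus)$. This holds because $x$ is $C_\infty$-friendly and $a_t$ is an alcove based at $x$ lying at finite $W^v$-distance $w^\infty_{\underline{\fc},t}$ from $C^\infty_{x}$, which is the germ at $x$ of the projection of $C_\infty$: one starts from a twin apartment through $x$ and $C_\infty$ (it contains $C^\infty_{x}$ by Proposition~\ref{p_def_retraction_C_infty}) and extends it along a minimal gallery from $C^\infty_{x}$ to $a_t$ in $\cT^-_{x}(\I_\oplus)$, every elementary step of which, going away from $C^\infty_{x}$, being realised by an affine root group fixing $C_\infty$; this is the kind of statement established in \cite[Proposition 4.2]{twinmasures} and \cite[\S 5]{twinmasures}. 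I expect this verification, rather than the surrounding argument, to be the real work. Alternatively one argues directly, by induction on $\ell(w^\infty_{\underline{\fc},t})$, that the group of elements of $I_\infty$ fixing $x$ acts transitively on $\{C\in\cT^-_{x}(\I_\oplus)\mid d^-(C^\infty_{x},C)=w^\infty_{\underline{\fc},t}\}$, peeling off one panel at a time and using the simple transitivity of the corresponding affine root group on the chambers of that panel other than the one closest to $C^\infty_{x}$, together with the fact that this root group fixes $C_\infty$.

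Granting this, I would conclude by unwinding the definition of $\rho_{C_\infty}$. By \cite[Theorem 4.7]{twinmasures} there is $g\in G_{twin}$ with $g.(A_\oplus,A_\ominus)=(\A,\A_\ominus)$, fixing $(A_\oplus\cap\A)\sqcup(A_\ominus\cap\A_\ominus)$, and such that $\rho_{C_\infty}(a_t)=g.a_t$. Since $C_\infty\subset A_\ominus\cap\A_\ominus$ we have $g\in I_\infty$, and since $x\in A_\oplus\cap\A$ the element $g$ fixes $x$, whence $g.C^\infty_{x}=C^\infty_{g.x}=C^\infty_{x}$ by the $I_\infty$-equivariance of $y\mapsto C^\infty_{y}$. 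Set $i_{a_t}:=g^{-1}\in I_\infty$. Then $i_{a_t}.C^\infty_{x}=C^\infty_{x}$, and, using the first paragraph, $i_{a_t}.D^+_t=g^{-1}.D^+_t=g^{-1}.\rho_{C_\infty}(a_t)=g^{-1}.g.a_t=a_t$, which is what we wanted. (If one wishes, the extra condition $d^\ast(D^-_t,a_t)=w^-_{\bx,t}=d^\ast(D^-_t,D^+_t)$ built into $\cC(\underline{\fc},\bx,t)$ lets one choose $(A_\oplus,A_\ominus)$ so as to also contain $D^-_t$, in which case $i_{a_t}$ additionally fixes $D^-_t$.) For $t=1$ the same construction, applied to a twin apartment containing $a_1$, $C^\infty_{\fc(1)}$ and $C_\infty$ — which exists since $d^\ast(C^\infty_{\fc(1)},a_1)=d^\ast(C^\infty_{\fc(1)},\underline{\fc}(1))=w^\infty_{\underline{\fc},1}$ — yields $i_{a_1}:=g^{-1}$ with $i_{a_1}.C^\infty_{\fc(1)}=C^\infty_{\fc(1)}$ and $i_{a_1}.\underline{\fc}(1)=a_1$.
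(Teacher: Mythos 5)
Your argument is correct and follows essentially the same route as the paper's proof: cite \cite[Proposition 4.2]{twinmasures} for a twin apartment containing $C_\infty$ and $a_t$, obtain $g\in G_{twin}$ fixing the intersection with the standard twin apartment (hence $g\in I_\infty$ fixing $C^\infty_{\fc(t)}$), and use the (co)distance condition defining $\cC(\underline{\fc},\bx,t)$ to identify $a_t$ with the image of $D^+_t$ (resp. $\underline{\fc}(1)$). The only differences are presentational — you route the identification through the local factorization of $\rho_{C_\infty}$ and take $i_{a_t}=g^{-1}$ rather than $g$, and your worry about the existence of the twin apartment is exactly what \cite[Proposition 4.2]{twinmasures} supplies, so no extra inductive argument is needed.
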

\begin{proof}
  Let $t\in T(\underline{\fc})$. By \cite[Proposition 4.2]{twinmasures}, for any $a_t\in \cT^-_{\fc(t)}(\I_\oplus)$ there is a pair of twin apartments $(A_\oplus,A_\ominus)$ such that $C_\infty\subset A_\ominus$ and $a_t\subset A_\oplus$. By Proposition~\ref{p_def_retraction_C_infty},  $C^\infty_{\fc(t)}\subset A_\oplus\cap \A$ and there is an element $g\in G_{twin}$ fixing $(A_\oplus \cap \A) \sqcup (A_\ominus \cap \A_\ominus)$ such that $g.(\A,\A_\ominus)=(A_\oplus,A_\ominus)$. In particular, such $g$ fixes $C_\infty$, so belongs to $I_\infty$, and also fixes $C^\infty_{\fc(t)}$. 
  Moreover, if $t<1$ (resp. $t=1$) and $a_t \in \cC(\underline{\fc},\bx,t)$  then $d^-(C^\infty_{\fc(t)},a_t)=d^-(C^\infty_{\fc(t)},D^+_t)$ (resp. $d^\ast(C^\infty_{\fc(1)},a_1)=d^\ast(C^\infty_{\fc(1)},\underline{\fc}(1))$) so, since $D^+_t\subset\A$ (resp. $\underline{\fc}(1) \subset \A$) and $a_t \subset A_\oplus$, we have $g.D^+_t=a_t$ (resp. $g.\underline{\fc}(1)=a_1$), and we can set $i_{a_t}=g$. 
\end{proof}

\begin{Lemma}\label{Lemma: global lift}
   Let $\underline{\fc}$ be an admissible $C_\infty$-Hecke path of type $\bx \in W^+_{sph}$, and, for each time $t\in T(\underline{\fc})$ and alcove $a_t\in \cC(\underline{\fc},\bx,t)$, fix $i_{a_t}$ as in Lemma~\ref{Lemma: local element lift}. Set by convention that, for $t\notin T(\underline{\fc})$, $i_{a_t}=1_G$. 

    For any $a\in \prod\limits_{t\in T(\underline{\fc})} \cC(\underline{\fc},\bx,t)$, let us define $g_a.\underline{\fc}:=(g_a.\fc,g_a.D,g_a.\underline{\fc}(1))$:
    \begin{enumerate}

        \item the path $g_a.\fc$ is defined by $t\mapsto g_a(t).\fc(t)$, with $g_a(t)=\prod\limits_{t'\in T(\underline{\fc})\cap [0,t)}i_{a_{t'}}$,
        \item the decoration $g_a.D$ is given by $(g_a.D)^-_t=g_a(t)D^-_t$ and $(g_a.D)^+_t=g_a(t)i_{a_t}D^+_t$,
        \item the end alcove is given by $g_a.\underline{\fc}(1)=g_a(1)i_{a_1}\underline{\fc}(1)$.
    \end{enumerate}

    Then, $g_a.\underline{\fc}$ is an open segment such that $\rho_{C_0}(g_a.\underline{\fc})=\underline\fs_\bx$ and $\rho_{C_\infty}(g_a.\underline{\fc})=\underline{\fc}$.
\end{Lemma}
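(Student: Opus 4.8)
The plan is to establish, in increasing order of difficulty, that $g_a.\underline{\fc}$ is a well-defined open path, that $\rho_{C_\infty}(g_a.\underline{\fc})=\underline{\fc}$, and that $g_a.\underline{\fc}$ is in fact an open segment with $\rho_{C_0}(g_a.\underline{\fc})=\underline{\fs}_\bx$ (from which it is an open segment \emph{of type $\bx$} by Lemma~\ref{l_lift_open_segment}). For well-definedness, the crucial point is that for $t\in T(\underline{\fc})$ the element $i_{a_t}\in I_\infty$ fixes $C^\infty_{\fc(t)}$ by Lemma~\ref{Lemma: local element lift}, hence fixes the base point $\fc(t)$ of that alcove. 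As $t\mapsto g_a(t)$ is left-continuous and jumps only at the finitely many times of $T(\underline{\fc})$, with $g_a(t^+)=g_a(t)i_{a_t}$, this yields continuity of $t\mapsto g_a(t).\fc(t)$ at each such $t$; on every maximal subinterval of $[0,1]\setminus T(\underline{\fc})$ the construction is just the image of $\underline{\fc}$ restricted to that subinterval under a fixed element of $G$, so piecewise affinity of $g_a.\fc$ and the requirement that $(g_a.D)^\varepsilon_t$ be an alcove based at $g_a.\fc(t)$ dominating $(g_a.\fc)_\varepsilon(t)$ follow from the corresponding properties of $\underline{\fc}$, together with the fact that $i_{a_t}.D^+_t=a_t$ is based at $\fc(t)$.

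The identity $\rho_{C_\infty}(g_a.\underline{\fc})=\underline{\fc}$ is then formal: each $g_a(t)$ and each $g_a(t)i_{a_t}$ is a product of elements $i_{a_{t'}}\in I_\infty$, hence lies in $I_\infty$; since $\rho_{C_\infty}$ is $I_\infty$-invariant and restricts to the identity on $\A$, and since $\fc$, $D$ and $\underline{\fc}(1)$ all lie in $\A$, applying $\rho_{C_\infty}$ componentwise to $g_a.\underline{\fc}$ gives back $\underline{\fc}$.

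The substantial part is the computation of $\rho_{C_0}$. Enumerate $T(\underline{\fc})=\{0=t_0<t_1<\dots<t_{n+1}=1\}$ and set $h_i:=\prod_{t'\in T(\underline{\fc}),\,t'\le t_i}i_{a_{t'}}\in I_\infty$. Since $i_{a_{t_i}}$ fixes $\fc(t_i)$, one checks that $g_a.\fc=h_i.(\fc|_{[t_i,t_{i+1}]})$ on $[t_i,t_{i+1}]$; as $T^f(\fc)\subseteq T(\underline{\fc})$, the path $\fc$ is affine on $(t_i,t_{i+1})$, so each such piece of $g_a.\fc$ is a genuine segment in $\I_\oplus$. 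I would then prove by induction on $i$ that there exists an apartment $A^{(i)}\ni C_0$ of $\I_\oplus$ containing the restriction of $g_a.\underline{\fc}$ (path and decoration) to $[0,t_i]$, and such that the apartment isomorphism $A^{(i)}\to\A$ defining $\rho_{C_0}$ on $A^{(i)}$ carries that restriction onto $\underline{\fs}_\bx|_{[0,t_i]}$; the case $i=0$ is immediate, since $g_a.\fc(0)=\fc(0)=0_\A$. The inductive step would combine the structure of $C_\infty$-Hecke open paths — each fold of $\fc$ at a time $t_i$ is \emph{positive}, and Lemma~\ref{l_characterization_decorated_Hecke_paths} supplies a $(C^\infty_{\fc(t_i)},W^v_{\fc(t_i)})$-chain from $D^+_{t_i}$ to $\lim_{t\to t_i^-}D^+_t$ (and, at $t_i=1$, the corresponding chain to the end alcove) — with the admissibility/projection Lemmas~\ref{Lemma: admissibility condition}, \ref{Lemma: projection codistance}, \ref{Lemma: projection segments}, the local factorization \eqref{eq: factorizationzero} of $\rho_{C_0}$ through the tangent-building retraction $\rho_{C^{++}_{x}}$, and the existence of an apartment containing a given alcove and a $\leq_\oplus$-comparable point (available because $\lambda\in\mathring{\sT}$, so that along a path of type $\lambda$ all points are $\leq_\oplus$-comparable), via axiom (MA II). Taking $i=n+1$ then shows at once that $g_a.\fc$ is a segment, that $\rho_{C_0}(g_a.\underline{\fc})=\underline{\fs}_\bx$, and hence, by Lemma~\ref{l_lift_open_segment}, that $g_a.\underline{\fc}$ is an open segment of type $\bx$.

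The main obstacle is exactly this last step — making precise that the successive multiplications by the $i_{a_{t_i}}$ \emph{straighten} the folds of $\fc$. The geometric mechanism is that $i_{a_{t_i}}$ fixes $C^\infty_{\fc(t_i)}$ together with the incoming segment-germ datum at $\fc(t_i)$ while moving the outgoing one, and the positivity of the fold (equivalently, the chain data produced by Lemma~\ref{l_characterization_decorated_Hecke_paths}) forces the moved outgoing germ to line up with the incoming germ; one must then verify that this straightening is compatible with enclosing all of $g_a.\underline{\fc}$, together with $C_0$, in a single apartment, so that $\rho_{C_0}$ becomes computable there. All the delicate bookkeeping with the apartment axioms and the projection lemmas is concentrated in this induction; the first two parts are essentially formal consequences of $I_\infty$-invariance of $\rho_{C_\infty}$ and of each $i_{a_t}$ fixing $C^\infty_{\fc(t)}$.
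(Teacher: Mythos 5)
Your structure — first well-definedness, then $\rho_{C_\infty}$, then $\rho_{C_0}$ — matches the paper, and the first two parts are handled identically (the $I_\infty$-invariance argument for $\rho_{C_\infty}$ is exactly the paper's). For the third part you propose a different route from the paper's. The paper does not stitch apartments inductively along the folding times: it first verifies admissibility of $g_a.\underline{\fc}$ on each subinterval $[t_i,t_{i+1}]$ by the constancy of $g_a$ there, then at each interior $t\in T(\underline{\fc})$ computes $d^\ast\bigl((g_a.D)^-_t,(g_a.D)^+_t\bigr)=d^\ast(D^-_t,a_t)=w^-_{\bx,t}=w_{\lambda,0}$, which by Lemma~\ref{Lemma: admissibility condition} gives global admissibility and, because the two germs $(g_a.\fc)_\pm(t)$ are then opposite, implies via \cite[Lemma 4.9]{gaussent2014spherical} that $g_a.\fc$ is a \emph{single} segment. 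With segmenthood in hand it invokes \cite[Proposition 5.17 (ii)]{hebert2020new} once, producing a single apartment $A_\oplus$ containing $C_0$ and $g_a.\underline{\fc}(1)$, shows by convexity and the projection identities that all of $g_a.\underline{\fc}$ lies in $A_\oplus$, and then $\rho_{C_0}$ is computable in one stroke. Your induction-over-folding-times scheme would presumably work, but it interleaves segmenthood and apartment-containment at each step, re-proving locally what the paper obtains globally; the paper's route is therefore both shorter and cleaner.

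There is also a substantive misidentification in your explanation of the ``straightening'' mechanism. You attribute it to the positivity of the folds and to the chain data from Lemma~\ref{l_characterization_decorated_Hecke_paths}; neither is used, nor is either sufficient. The trivial choice $a_t=D^+_t$ (so $i_{a_t}=1$) leaves the fold in place no matter how positive it is, yet it is still fixes $C^\infty_{\fc(t)}$ and the incoming germ. What actually forces $(g_a.\fc)_+(t)$ to become opposite to $(g_a.\fc)_-(t)$ is the \emph{codistance constraint} $d^\ast(D^-_t,a_t)=w^-_{\bx,t}=w_{\lambda,0}$ built into the definition of $\cC(\underline{\fc},\bx,t)$: by Lemma~\ref{Lemma: admissibility condition} this codistance is exactly the condition for the two segment germs of $g_a.\fc$ at $t$ to be opposite, hence for the path to continue affinely. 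Positivity and the $(C^\infty,W^v)$-chains of Lemma~\ref{l_characterization_decorated_Hecke_paths} are relevant to the non-emptiness and enumeration of $\cC(\underline{\fc},\bx,t)$, but they play no role in this lemma. If you run your induction, the inductive step must cite this codistance condition, not the Hecke-path structure.
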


\begin{proof}

 By construction, since $g_a(t)\in I_\infty$ for all $t\in [0,1]$, it is clear that $\rho_{C_\infty}(g_a.\underline{\fc})=\underline{\fc}$. 
    
Let us first prove that $g_a.\fc$ is a segment on $[0,1]$, and that the decoration $g_a.\underline{\fc}$ satisfies the admissibility condition. Let $t_1,t_2$ be two consecutive times in $T(\underline{\fc})$. Then $g_a$ is constant on $(t_1,t_2]$, $g_a(t_1)=g_a(t_2)i_{a_{t_1}}^{-1}$ and $i_{a_{t_1}}$ fixes $\fc(t_1)$, since it fixes  $C^\infty_{\fc{t_1}}$. Moreover $\fc|_{[t_1,t_2]}$ is a segment and thus $(g_a.\fc)|_{[t_1,t_2]}=g_a(t_2).\fc|_{[t_1,t_2]}$  is a segment. By the admissibility condition for $\underline{\fc}$, $g_a.\underline{\fc}$ satisfies the admissibility condition on $[t_1,t_2]$.
    
    Therefore in order to prove admissibility of $g_a.\underline{\fc}$ on $[0,1]$, by Lemma~\ref{Lemma: admissibility condition} it remains to check that condition \eqref{eq: admissibility condition} holds at every time $t\in T(\underline{\fc})\setminus\{0,1\}$. However, by construction, for any such time $t$, $(g_a.D)^-_t=g_a(t).D^-_t$ and $(g_a.D)^+_t=g_a(t).i_{a_t}D^+_t$, hence:
    $$ d^\ast((g_a.D)^-_t,(g_a.D)^+_t)=d^\ast(D^-_t,i_{a_t}D^+_t).$$ Moreover, by definition of $i_{a_t}$, $i_{a_t}D^+_t=a_t\in \cC(\underline{\fc},\bx,t)$, and in particular $d^\ast(D^-_t,i_{a_t}D^+_t)=w^-_{\bx,t}=w_{\lambda,0}$ (because $t\notin \{0,1\}$). Hence condition \eqref{eq: admissibility condition} is verified, and $g_a.\underline{\fc}$ is an admissible open path. This also proves that the segment germs $(g_a.\fc)_-(t)$ and $(g_a.\fc)_+(t)$ are opposite  (because $d^\ast((g_a.D)^-_t,(g_a.D)^+_t)\in W_{\lambda^{++}}$) for all $t\in [0,1]$, and thus by \cite[Lemma 4.9]{gaussent2014spherical} it proves that $(g_a.\fc)$ is a segment, starting at $0_\A$.
    
    Let us now prove that $\rho_{C_0}(g_a.\underline\fc)=\underline \fs_\bx$. By \cite[Proposition 5.17 (ii)]{hebert2020new} there exists an apartment $A_\oplus$ containing $C_0$ and $g_a.\underline{\fc}(1)$. Note that the distance $d^+((g_a.D)^-_1,g_a.\underline{\fc}(1))=w^-_{\bx,1}=d^-(D^-_{\bx,1},C_\bx)$ is minimal in its $W_{\lambda^{++}}$-coset $w^-_{\bx,1}W_{\lambda^{++}}$ (because $D^-_{\bx,1}=\pr_{\fs_{\lambda,-}(1)}(C_\bx)$), therefore $(g_a.D)^-_1=\pr_{(g_a.\fc)_-(1)}(g_a.\underline{\fc}(1))$. Thus by convexity, $A_\oplus$ also contains  $(g_a.D)^-_1$. Since $g_a.\fc$ is a segment and since, by admissibility, $(g_a.D)^\varepsilon_t=\pr_{(g_a.\fc)_\varepsilon(t)}((g_a.D)^-_1)$, it also contains $(g_a.D)^\varepsilon_t$ for all $(t,\varepsilon)\in[0,1]^\pm$. By (MA II), there exists $h\in G_{twin}$ such that $h.A_{\oplus}= \A$  fixing $\A_\oplus\cap A_\oplus$, in particular fixing $C_0$. Therefore $hg_a.\underline \fc=\rho_{C_0}(g_a\underline\fc)$ is an open segment starting at $0_\A$ (in particular it is admissible).
    Since $\fc'_+(t)\in -W^v\lambda$ for all $t$, the derivative $(hg_a.\fc)'_+(t)$ lies in $-W^v\lambda$ for all $t$. Moreover, since $(g_a.D)^+_0\in \cC(\underline{\fc},\bx,0)$, $d^\ast(C_0,(g_a.D)^+_0)=w^-_{\bx,0}=d^\ast(C_0,D^+_{\bx,0})$, so $\rho_{C_0}((g_a.D)^+_0)=D^+_{\bx,0}$. This implies that $(hg_a.\fc)_+(0)$ is the segment germ $\fs_{\bx,+}(0)$, so for any $t\in [0,1)$, $(hg_a.\fc)'_+(t)=(hg_a.\fc)'_+(0)=-\lambda$, thus $hg_a\fc$ is the segment $\fs_\lambda$. Since $hg_a.D=\rho_{C_0}(g_a.D)$ satisfies the admissibility condition, the equality $\rho_{C_0}((g_a.D)^+_0)=D^+_{\bx,0}$ extends to any $(t,\varepsilon)\in[0,1]^\pm$. Finally $d^+((g_a.D)^-_1,g_a.\underline{\fc}(1))=d^+(h(g_a.D)^-_1,hg_a.\underline{\fc}(1))=d^+(D^-_{\bx,1},C_\bx)$, hence $\rho_{C_0}(g_a.\underline{\fc}(1))=hg_a.\underline{\fc}(1)=C_\bx$, so $\rho_{C_0}(g_a.\underline{\fc})=hg_a.\underline{\fc}=\underline{\fs}_\bx$. We thus have proved that $g_a.\underline{\fc}$ is an open segment of type $\bx$.
\end{proof}

\begin{Lemma}\label{l_continuity}
Let $f_1,f_2:[0,1]\rightarrow \A$ be two continuous maps and $g_1,g_2\in G$ be such that $g_1.f_1(r)=g_2.f_2(r)$, for every $r\in [0,1)$. Then $g_1.f_1(1)=g_2.f_2(1)$. 
\end{Lemma}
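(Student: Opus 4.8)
The plan is to reduce to the case of a single group element and then exploit the fact that such an element acts \emph{affinely} on the overlap of two apartments, where everything is continuous. First I would set $h := g_1^{-1}g_2 \in G$; the hypothesis $g_1.f_1(r) = g_2.f_2(r)$ for $r \in [0,1)$ becomes $h.f_2(r) = f_1(r)$ for $r \in [0,1)$, and the desired conclusion becomes $h.f_2(1) = f_1(1)$. Since both $f_1(r)$ and $f_2(r)$ lie in $\A$, the relation $h.f_2(r) = f_1(r)$ shows that $f_2(r) \in \Omega := \A \cap h^{-1}.\A$ for every $r < 1$; in particular $\Omega \neq \varnothing$.

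Next I would invoke Proposition~\ref{p_BT_7.4.8} applied to $h$: the set $\Omega$ is enclosed, hence a finite intersection of affine half-apartments and walls, hence a closed subset of $\A$; moreover there exists $n \in N$ such that $h.x = n.x$ for every $x \in \Omega$. Because $f_2 \colon [0,1] \to \A$ is continuous and $f_2([0,1)) \subseteq \Omega$ with $\Omega$ closed in $\A$, the endpoint $f_2(1) = \lim_{r \to 1^-} f_2(r)$ also lies in $\Omega$. Consequently $h.f_2(1) = n.f_2(1)$.

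Finally I would replace the a priori badly-behaved action of $h$ on $\I$ by the genuinely continuous affine action of $n$ on $\A$: the element $n \in N$ acts on $\A$ through $\nu(n) \in W^v \ltimes Y$, which is an affine bijection of $\A$, so $r \mapsto n.f_2(r)$ is a continuous map $[0,1] \to \A$. It coincides with the continuous map $f_1$ on the dense subset $[0,1)$ (there $n.f_2(r) = h.f_2(r) = f_1(r)$), and $\A$ is Hausdorff, so the two maps agree at $r = 1$ as well: $n.f_2(1) = f_1(1)$. Combining the two displays gives $h.f_2(1) = f_1(1)$, and applying $g_1$ yields $g_2.f_2(1) = g_1 h.f_2(1) = g_1.f_1(1)$, as claimed.

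I do not expect a serious obstacle; the only points requiring a moment's care are that ``enclosed'' implies ``closed in $\A$'' (so the limit point $f_2(1)$ does not escape $\Omega$) and that $N$ acts continuously on $\A$ via $\nu$ — both immediate from the definitions recalled in the preliminaries. The conceptual content is simply that continuity is available after transporting the problem into a single apartment, where $h$ acts as the affine map $\nu(n)$.
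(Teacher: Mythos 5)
Your proof is correct and follows essentially the same route as the paper: reduce to $h=g_1^{-1}g_2$, apply Proposition~\ref{p_BT_7.4.8} to replace $h$ by an element $n\in N$ acting affinely (hence continuously) on $\A$, and pass to the limit. You are in fact slightly more explicit than the paper on one point it leaves implicit, namely that $f_2(1)$ still lies in the enclosed (hence closed) overlap $\A\cap h^{-1}.\A$, so that $h.f_2(1)=n.f_2(1)$.
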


\begin{proof}
 By Proposition~\ref{p_BT_7.4.8}, there exists $n\in N$   such that for every $x\in g_2^{-1}g_1.\A\cap \A$, we have $g_1^{-1}g_2.x=n.x$. The element $n$ induces an affine automorphism of $\A$ and thus its restriction to $\A$ is continuous. Therefore $f_1(1)=n.f_2(1)$ and hence $g_1.f_1(1)=g_2.f_2(1)$.  
\end{proof}

\begin{Proposition}\label{Prop: local description bijection}
     Let $\underline{\fc}$ be an admissible $C_\infty$-Hecke open path of type $\bx\in W^+_{sph}$. Then the map $a\mapsto g_a.\underline{\fc}$ defined in Lemma~\ref{Lemma: global lift} defines a bijection

    \begin{equation}\label{eq: decoratedlifts}
        \prod\limits_{t\in T(\underline{\fc})}\cC(\underline{\fc},\bx,t)\cong\rho_{C_0}^{-1}(\underline\fs_\bx)\cap \rho_{C_\infty}^{-1}(\underline{\fc}).
    \end{equation}
\end{Proposition}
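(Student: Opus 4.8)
The statement asserts that the map $a \mapsto g_a.\underline{\fc}$ from Lemma~\ref{Lemma: global lift} is a bijection between $\prod_{t\in T(\underline{\fc})}\cC(\underline{\fc},\bx,t)$ and $\rho_{C_0}^{-1}(\underline\fs_\bx)\cap \rho_{C_\infty}^{-1}(\underline{\fc})$. Lemma~\ref{Lemma: global lift} already shows the map is well-defined, i.e.\ that $g_a.\underline{\fc}$ is indeed an open segment of type $\bx$ retracting to $\underline{\fc}$ by $\rho_{C_\infty}$ and to $\underline{\fs}_\bx$ by $\rho_{C_0}$. So the work is to prove injectivity and surjectivity.

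\emph{Injectivity.} Suppose $a = (a_t)_{t\in T(\underline{\fc})}$ and $b = (b_t)_{t\in T(\underline{\fc})}$ give $g_a.\underline{\fc} = g_b.\underline{\fc}$ as open segments. I would recover each $a_t$ from the open segment $g_a.\underline{\fc}$ directly: for $t\in T(\underline{\fc})\setminus\{1\}$, the alcove $a_t$ is (by construction $(g_a.D)^+_t = g_a(t)i_{a_t}D^+_t = g_a(t).a_t$) determined by $(g_a.D)^+_t$ up to the ambient translation $g_a(t)$; and $g_a(t)$ itself is determined inductively: $g_a(0) = 1_G$, and passing from one time in $T(\underline{\fc})$ to the next multiplies by $i_{a_{t'}}$, which in turn is recovered once $a_{t'}$ is known via the defining property in Lemma~\ref{Lemma: local element lift} (the element $i_{a_{t'}}\in I_\infty$ fixing $C^\infty_{\fc(t')}$ and sending $D^+_{t'}$ to $a_{t'}$ — note here one must check that although $i_{a_{t'}}$ is not unique, the relevant product $g_a(t)$ acting on $\fc(t)$ and on the decoration alcoves \emph{is} determined, since two such choices differ by an element fixing $C^\infty_{\fc(t')}$ and $D^+_{t'}$, hence fixing the whole local configuration downstream — this is essentially the content of Proposition~\ref{p_def_retraction_C_infty}(2) transported to tangent buildings). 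Matching the two open segments $g_a.\underline{\fc}$ and $g_b.\underline{\fc}$ at each time in $T(\underline{\fc})$, starting from $t=0$ and proceeding in increasing order, forces $a_t = b_t$ for all $t$.

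\emph{Surjectivity.} Let $\underline{\fs} = (\fs, D_{\fs}, \underline{\fs}(1))$ be an open segment of type $\bx$ with $\rho_{C_\infty}(\underline{\fs}) = \underline{\fc}$ and $\rho_{C_0}(\underline{\fs}) = \underline{\fs}_\bx$. I want to produce $a\in\prod_{t}\cC(\underline{\fc},\bx,t)$ with $g_a.\underline{\fc} = \underline{\fs}$. First, since $\rho_{C_\infty}(\underline{\fs}) = \underline{\fc}$ and $\rho_{C_\infty}$ is $I_\infty$-equivariant, for each $t\in T(\underline{\fc})$ there is (by the local factorization \eqref{eq: factorizationinfinite} of $\rho_{C_\infty}$ and Lemma~\ref{l_compactness_argument} producing a twin apartment around the relevant segment piece together with $C_\infty$) an element $h_t\in I_\infty$ with $h_t.\fc = \fs$ locally around $t$; I then set $a_t := h_t^{-1}.D^+_{\fs,t}$ for $t<1$ and $a_1 := h_1^{-1}.\underline{\fs}(1)$, and check $a_t\in\cC(\underline{\fc},\bx,t)$ using that $d^-(C^\infty_{\fc(t)},\cdot)$ and $d^\ast(D^-_t,\cdot)$ are $I_\infty$-invariant and that $\underline{\fs}$ is an open segment of type $\bx$ (so $d^\ast(D^-_{\fs,t},D^+_{\fs,t}) = w_{\lambda,0}$ for $t\notin\{0,1\}$, etc.). Then $g_a.\underline{\fc}$ and $\underline{\fs}$ are two open segments, both of type $\bx$, starting at $0_\A$, which agree on each interval $[t_i, t_{i+1}]$ between consecutive times of $T(\underline{\fc})$: on such an interval, both are determined by a single alcove at an interior or endpoint together with the fact that the underlying path is the straight segment $\fs_\lambda$, and this data matches by the choice of $a$. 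They agree in particular for all $r\in[0,1)$; Lemma~\ref{l_continuity} then forces agreement of the end alcoves (applied to the continuous maps $g_a.\fc$ and $\fs$ and suitable group elements witnessing them locally near $1$), giving $g_a.\underline{\fc} = \underline{\fs}$.

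\emph{Main obstacle.} The delicate point is the non-uniqueness of the elements $i_{a_t}$ (and, on the surjectivity side, of the $h_t$): one must argue that the open segment $g_a.\underline{\fc}$ does not depend on these choices, and conversely that the combinatorial data $a$ extracted from a given $\underline{\fs}$ is well-defined. Both reduce to the statement that an element of $I_\infty$ which fixes $C^\infty_{\fc(t)}$ and $D^+_t$ fixes enough of the local picture — concretely, the segment germ $\fc_\pm(t)$ and the opposite decoration alcove $D^-_t$ — which follows from $D^-_t = \pr_{\fc_-(t)}(D^+_t)$ (admissibility), from uniqueness of projections, and from Proposition~\ref{p_def_retraction_C_infty}(2). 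Gluing these local well-definedness statements across the finitely many times of $T(\underline{\fc})$, and propagating along the segment using the admissibility condition (Lemma~\ref{Lemma: admissibility condition}) and Lemma~\ref{l_continuity} at the final point, is the technical heart of the proof.
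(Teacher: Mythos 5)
Your overall strategy --- extract local data at each time of $T(\underline{\fc})$, propagate between consecutive times via the enclosure property and admissibility, and invoke Lemma~\ref{l_continuity} at limit times --- is the same as the paper's, and your injectivity argument is essentially the paper's: one takes the first time $t$ at which $a$ and $a'$ differ and observes that $(g_a.D)^+_t=g_a(t).a_t\neq g_a(t).a'_t=(g_{a'}.D)^+_t$ (or that the end alcoves differ if $t=1$). No well-definedness issue arises there, because the elements $i_{a_t}$ are fixed once and for all in Lemma~\ref{Lemma: global lift}.

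Your surjectivity argument, however, has a genuine gap. You set $a_t:=h_t^{-1}.D^+_{\fs,t}$ using local identifications $h_t\in I_\infty$ chosen independently at each $t\in T(\underline{\fc})$. But for $g_a.\underline{\fc}=\underline{\fs}$ one needs $(g_a.D)^+_t=g_a(t).a_t=D^+_{\fs,t}$, i.e. $a_t=g_a(t)^{-1}.D^+_{\fs,t}$, where $g_a(t)$ is the product of the already fixed $i_{a_{t'}}$ for $t'<t$. The element $u:=g_a(t)^{-1}h_t$ fixes only the ``past'' data at $t$ (namely $C^\infty_{\fc(t)}$, $D^-_t$ and $\fc_-(t)$, assuming agreement up to $t$), and the fixator of these alcoves in a thick tangent building does not fix an alcove such as $a_t$ lying beyond the separating walls: different choices of $h_t$ genuinely produce different elements of $\cC(\underline{\fc},\bx,t)$, a set which has more than one element in general (its cardinality is the local $R$-polynomial). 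Your proposed repair (``an element fixing $C^\infty_{\fc(t)}$ and $D^+_t$ fixes the local configuration downstream'') addresses the wrong element: $u$ fixes $D^-_t$, not $D^+_t$, and what must be fixed is the future alcove. The paper circumvents this by constructing $a$ inductively rather than all at once: it takes the supremum $t$ of the times up to which $\underline{\fs}$ agrees with $g_a.\underline{\fc}$ for some partial $a$, uses Lemma~\ref{l_continuity} to show the supremum is attained and that $D^-_{\fs,t}=g_a(t).D^-_t$, then extends $a$ by setting $a_t:=g_a(t)^{-1}.D^+_{\fs,t}$ --- so the required identity holds by construction --- and uses the enclosure of $C^\infty_{\fc(t)}$ at times $t\notin T(\underline{\fc})$ to push the agreement strictly past $t$, forcing $t=1$.
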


\begin{proof}
 
  By Lemma~\ref{Lemma: global lift}, the map $a\mapsto g_a.\underline{\fc}$ is a well defined map associating, to a sequence of alcoves $a\in \prod\limits_{t\in T(\underline{\fc})}\cC(\underline{\fc},\bx,t)$, an open segment in $\rho_{C_0}^{-1}(\underline\fs_\bx)\cap \rho_{C_\infty}^{-1}(\underline{\fc})$. It remains to prove that it is injective and surjective.

    \begin{enumerate}

    \item \underline{Injectivity:} Suppose that $a,a'$ are two distinct elements of $\prod\limits_{t\in T(\underline{\fc})} \cC(\underline{\fc},\bx,t)$. Let $t$ be the first time such that $a_t\neq a'_t$. If $t<1$, then $g_a(t)=g_{a'}(t)$ and $i_{a_t}D^+_t = a_t \neq  a'_t=i_{a'_t}D^+_t$, so $(g_a.D)^+_t\neq (g_{a'}.D)^+_t$. If $t=1$, then $g_a.\underline{\fc}(1)=a_1\neq a'_1=g_{a'}.\underline{\fc}(1)$. 
    Either way, $g_a.\underline{\fc} \neq g_{a'}.\underline{\fc}$, hence the map is injective (and this also shows injectivity of $(a_s)_{s\in [0,t)\cap T(\underline{\fc})}\mapsto g_a.\underline\fc|_{[0,t)}$ for all $t\in(0,1]$).
    
    \item \underline{Surjectivity:} Let $\underline\fs=(\fs,D_\fs,\underline\fs(1)) \in \rho_{C_0}^{-1}(\underline\fs_\bx)\cap \rho_{C_\infty}^{-1}(\underline{\fc})$. Since $C_0$ is based at $0_\A$, $\rho_{C_0}^{-1}(0_\A)=\{0_\A\}$ and thus $\fs(0)=\fs_\lambda(0)=0_\A$. Let $t$ be the upper bound of the set $\mathcal U$ of times $s$ such that $\underline\fs|_{[0,s]}=g_a.\underline\fc|_{[0,s]}$ for some $a\in \prod\limits_{u\in T(\underline{\fc})\cap [0,s)}\cC(\underline{\fc},\bx,u)$. We aim to show that $t$ is a maximum and that $t=1$.
    
    By assumption, $D^+_{\fs,0} \in \cC(\underline{\fc},\bx,0)$ so for $r>0$ small enough (in particular such that $T(\fc)\cap[0,r)=\{0\}$), $\underline{\fs}|_{[0,r)}=g_{(D^+_{\fs,0})}.\underline{\fc}|_{[0,r)}$, which implies that $t>0$. Let $s\in[0,t)$ be such that $s>\max\{u\in T(\underline{\fc})\mid u<t\}$, and let $a=(a_u)_{u\in T(\underline{\fc})\cap [0,s)}$ be such that $\fs|_{[0,s]}=g_a.\fc_{[0,s]}$. For any $r\in (s,t)$ we know that $\underline\fs|_{[0,r]}=g_a.\underline\fc|_{[0,r]}$. Indeed, if $s\leq r<t$ then $r\in\mathcal U$ so there exists $a'\in \prod\limits_{u\in T(\underline{\fc})\cap [0,r)}\cC(\underline{\fc},\bx,u)$ such that $\underline\fs|_{[0,r]}=g_{a'}.\underline\fc|_{[0,r]}$. By injectivity of the map $(a_u)_{u\in [0,s)}\mapsto g_a.\underline\fc|_{[0,s)}$, $a_u=a'_u$ for all $u<s$ and thus, since $T(\fc)\cap [s,t)=\emptyset$, this is also true for all $u\leq r$, therefore $\underline\fs|_{[0,r]}=g_a.\underline\fc|_{[0,r]}$.
    By Lemma~\ref{l_continuity},
    $g_a.\fc(t)=\fs(t)$. Moreover since $D^+_{\fs,s}=g_a(s)D^+_s$ and $g_a(s)=g_a(t)$ we have: $$D^-_{\fs,t}=\pr_{\fs_-(t)}(D^+_{\fs,s})=\pr_{g_a(t)\fc_-(t)}(g_a(t)D^+_s)=g_a(t)\pr_{\fc_-(t)}(D^+_s)=g_a(t)D^-_t.$$ Therefore $D^-_{\fs,t}=g_a(t)D^-_t$  and $t=\max \cU$. 
    
    Assume by contradiction that $t<1$. Note that $D^+_{\fs,t}=\pr_{\fs_+(t)}(D^-_t)$ is the unique alcove containing $\fs_+(t)$ at codistance $w^-_{\bx,t}$ from $D^-_{\fs,t}$, and since $\rho_{C_\infty}(D^+_{\fs,t})=D^+_t$ we have $d^-(C^\infty_{\fs(t)},D^+_{\fs,t})=w^\infty_{\underline{\fc},t}$. Applying $g_a(t)^{-1}$, which preserves distance, codistance and sends $D^-_{\fs,t}$ to $D^-_t$, $C^\infty_{\fs(t)}$ to $C^\infty_{\fc(t)}$, we deduce that $g_a(t)^{-1}D^+_{\fs,t}$ is an element of $\cC(\underline{\fc},\bx,t)$. 
        
    If $t\in T(\underline{\fc})$, we extend $a=(a_u)_{u\in T(\underline{\fc})\cap [0,t)}$ setting $a_t=g_a(t)^{-1}D^+_{\fs,t}$ and then by construction $(g_a.\fc)_+(t)=\fs_+(t)$. If $t\notin T(\underline{\fc})$, then since there is no wall between $C^\infty_{\fc(t)}$ and $\fc_+(t)$, the fact that $C^\infty_{\fs(t)}=g_a(t).C^\infty_{\fc(t)}$ already implies that $g_a(t).\fc_+(t)=\fs_+(t)$ (because $\fc_+(t)$ is in the enclosure of $C^\infty_{\fc(t)}$, see \cite[Definition 2.1.2.4)]{twinmasures}). Either way there is $g_a$ such that $\fs_+(t)=(g_a.\fc)_+(t)$ and in particular there exists $r>t$ such that $\fs|_{[0,r]}=g_a.\fc|_{[0,r]}$, contradicting the definition of $t$. Thus $t=1$, and there is a sequence $a\in \prod\limits_{u\in T(\underline{\fc})\setminus \{1\}}\cC(\underline{\fc},\bx,u)$ such that $(\fs,D_\fs)=(g_a.\fc,g_a.D)$. To prove surjectivity, it remains to check that, for such a sequence, the end alcove $g_a(1)^{-1}\underline\fs(1)$ belongs to $\cC(\underline{\fc},\bx,1)$. The element $g_a(1)$ defines an isomorphism of local tangent buildings $\cT^\pm_{\fc(1)}(\I_\oplus)\cong \cT^\pm_{\fs(1)}(\I_\oplus)$, which sends the alcoves $C^\infty_{\fc(t)}$ and $D^-_1$ to $C^\infty_{\fs(1)}$ and $D^-_{\fs,1}$ respectively. Therefore $a_1:=g_a(1)^{-1}\underline\fs(1)$ satisfies $d^\ast(C^\infty_{\fc(1)},a_1)=d^\ast(C^\infty_{\fs(1)},\underline\fs(1))$ and $d^+(D^-_1,a_1)=d^+(D^-_{\fs,1},\underline\fs(1))$. Since $d^\ast(C^\infty_{\fs(1)},\underline\fs(1))=d^\ast(C^\infty_{\fc(1)},\underline{\fc}(1))$ (because $\rho_{C_\infty}(\underline\fs(1))=\underline{\fc}(1)$) and $d^+(D^-_{\fs,1},\underline\fs(1))=d^+(D^-_{\bx,1},C_\bx)$ (because $\rho_{C_0}(\underline\fs)=\underline\fs_\bx$), we deduce that $a_1$ belongs to $\cC(\underline{\fc},\bx,1)$, and we conclude that the map $C\mapsto g_a.\underline{\fc}$ is surjective. 
\end{enumerate}
\end{proof}

 In order to deduce numerical expressions from Formula \eqref{eq: decoratedlifts}, it therefore remains to compute, for a given $C_\infty$-Hecke open path $\underline{\fc}$ the cardinals of the sets $\cC(\underline{\fc},\bx,t)$ for all $t\in T(\underline{\fc})$. This is doable at the level of local tangent buildings using the techniques below.

\subsubsection{Computation at the level of local tangent buildings}\label{subsubsection: local computation}
Let us start by recalling results which hold for general twin-buildings, they will be used in order to compute the cardinility of the sets $\cC(\underline{\fc},\bx,t)$ appearing in the preceding subsection. 

\begin{Lemma}\label{Lemma: twinbuildings1}Let $(\cT^\pm,d^\pm,d^\ast)$ be a twin building and let $\A^\pm$ be a twin apartment of $\cT^\pm$.
Let $C_- \in \A^-$ and $C_+ \in \A^+$ be two chambers of opposite signs and let $w,v$ be two elements of the Weyl group $W$ of $\cT^\pm$. Let $\mathbf i_w$ be a reduced expression of $w$ and let $C_v$ be the unique chamber of $\A^+$ at codistance $v$ from $C_-$. Let $\Gamma_{C_-}(C_+,C_v,\mathbf i_w,\A^\pm)$ be the set of $C_-$-centrifugally folded galleries of type $\mathbf i_w$ from $C_+$ to $C_v$ in $\A^\pm$ . For any $\mathbf g \in \Gamma_{C_-}(C_+,C_v,\mathbf i_w,\A^\pm)$, let $\mathcal C^m_{C_-}(\mathbf g,C_+,\cT^+)$ denote the set of minimal galleries, in $\cT^+$, which retract to $\mathbf g$ by $\rho_{\A^\pm,C_-}$.

Then, the map which sends a gallery to its last chamber defines a bijection:

\begin{align*}
    \bigsqcup\limits_{\mathbf g \in  \Gamma_{C_-}(C_+,C_v,\mathbf i_w,\A^\pm)}\mathcal C^m_{C_-}(\mathbf g,C_+,\cT^+)\cong \{C\in \cT^+ \mid d^+(C_+,C)=w,\; d^\ast(C_-,C)=v\}.
\end{align*}

\end{Lemma}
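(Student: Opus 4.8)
The plan is to prove the bijection by analysing how a minimal gallery in $\cT^+$ retracts, and I would break it into three steps: well-definedness of the "last chamber" map, injectivity, and surjectivity. First I would recall the basic structure: given a $C_-$-centrifugally folded gallery $\mathbf g = (C_+ = D_0, D_1, \ldots, D_n = C_v)$ of type $\mathbf i_w = (s_1,\ldots,s_n)$ in $\A^\pm$, and a $\rho_{\A^\pm, C_-}$-lift $(\tilde D_0 = C_+, \tilde D_1, \ldots, \tilde D_n)$ which is a minimal gallery, I want to show that the endpoint $\tilde D_n =: C$ satisfies $d^+(C_+, C) = w$ and $d^\ast(C_-, C) = v$. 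The first equality is automatic from minimality of the lifted gallery of type $\mathbf i_w$ (a reduced expression), as recalled in Definition~\ref{Def galleries}. The second equality is the crux of well-definedness: at each step $i$, either $\tilde D_{i-1} \neq \tilde D_i$ (a genuine crossing) or a fold; since $\rho_{\A^\pm, C_-}$ preserves the codistance to $C_-$, one tracks $d^\ast(C_-, \tilde D_i)$ and checks it equals $d^\ast(C_-, D_i)$ for all $i$; the key point is that the "centrifugally folded" condition on $\mathbf g$ (folds happen only away from $C_-$, i.e. $d^\ast(C_-, D_i) < d^\ast(C_-, D_i)s_i$ at fold times) combined with minimality of $\tilde{\mathbf g}$ forces, at a fold time, that $\tilde D_{i-1} = \tilde D_i$ as well — because if $\tilde D_{i-1} \neq \tilde D_i$, then $d^\ast(C_-, \tilde D_i) = d^\ast(C_-, \tilde D_{i-1})s_i$, contradicting that the gallery still reads off the reduced word $w$ and retracts onto a fold. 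I expect this step — reconciling the local fold/cross dichotomy downstairs with the forced fold/cross dichotomy upstairs, using that $\mathbf i_w$ is reduced and $\rho$ preserves $d^\ast(C_-, -)$ — to be the main obstacle, though it is largely a bookkeeping argument over the gallery.

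For injectivity, suppose two minimal galleries $\tilde{\mathbf g}, \tilde{\mathbf g}'$ (retracting to possibly different folded galleries $\mathbf g, \mathbf g'$) share the same last chamber $C$. Since both are minimal galleries of type $\mathbf i_w$ from $C_+$ to $C$, and $d^+(C_+, C) = w$ with $\mathbf i_w$ reduced, the minimal gallery of type $\mathbf i_w$ from $C_+$ to $C$ is \emph{unique} (this is a standard fact about galleries of reduced type in a building: the gallery is determined by its endpoints and its reduced type). Hence $\tilde{\mathbf g} = \tilde{\mathbf g}'$, and applying $\rho_{\A^\pm, C_-}$ gives $\mathbf g = \mathbf g'$ too; so the disjoint union is genuinely disjoint and the map is injective.

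For surjectivity, let $C \in \cT^+$ with $d^+(C_+, C) = w$ and $d^\ast(C_-, C) = v$. Choose any minimal gallery $\tilde{\mathbf g} = (C_+ = \tilde D_0, \ldots, \tilde D_n = C)$ of type $\mathbf i_w$ from $C_+$ to $C$; such a gallery exists since $d^+(C_+, C) = w$ and $\mathbf i_w$ is reduced. Set $\mathbf g = \rho_{\A^\pm, C_-}(\tilde{\mathbf g})$, a gallery of type $\mathbf i_w$ in $\A^+$ starting at $C_+$ (since $C_+ \in \A^+$ is fixed by the retraction) and ending at $\rho_{\A^\pm,C_-}(C)$, which has codistance $v$ from $C_-$ by definition of $\rho_{\A^\pm,C_-}$, hence ending at $C_v$. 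It remains to verify that $\mathbf g$ is $C_-$-centrifugally folded: at any step where $\mathbf g$ folds (i.e. $D_{i-1} = D_i$) while $\tilde{\mathbf g}$ is minimal hence $\tilde D_{i-1} \neq \tilde D_i$, the retraction collapses a genuine crossing, which by the defining property of $\rho_{\A^\pm,C_-}$ (it retracts "towards" $C_-$, preserving $d^\ast(C_-,-)$) can only happen if the wall supporting the panel separates $C_-$ from $\tilde D_{i-1}$ on the side not containing $C_-$, i.e. $d^\ast(C_-, D_i) < d^\ast(C_-, D_i)s_i$; this is exactly the centrifugal condition. Thus $\mathbf g \in \Gamma_{C_-}(C_+, C_v, \mathbf i_w, \A^\pm)$ and $\tilde{\mathbf g} \in \mathcal C^m_{C_-}(\mathbf g, C_+, \cT^+)$ maps to $C$, proving surjectivity. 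I would close by remarking that this is the twin-building analogue of the classical gallery-counting lemma (as in \cite{gaussent2008kac} or \cite{twinmasures}), and the only new ingredient over the one-sided case is systematically replacing $d^+$-distance-to-$C_-$ arguments by codistance $d^\ast$ arguments, which is legitimate since $(\cT^\pm, d^\pm, d^\ast)$ is a twin building and $C_-, C_+$ have opposite signs.
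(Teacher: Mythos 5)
Your overall route is the same as the paper's: use the uniqueness of the minimal gallery of reduced type $\mathbf i_w$ from $C_+$ to a given chamber $C$ with $d^+(C_+,C)=w$, the fact that $\rho_{\A^\pm,C_-}$ preserves codistance to $C_-$, and (for surjectivity) the converse of axiom (TW2) to see that a collapsed crossing forces $d^\ast(C_-,C_i)s_i>d^\ast(C_-,C_i)$, i.e.\ the centrifugal condition. The injectivity and surjectivity paragraphs are correct and match the paper's argument.

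However, the middle of your well-definedness paragraph asserts something false. You claim that at a fold time of $\mathbf g$ the lift must satisfy $\tilde D_{i-1}=\tilde D_i$, ``because if $\tilde D_{i-1}\neq\tilde D_i$ then $d^\ast(C_-,\tilde D_i)=d^\ast(C_-,\tilde D_{i-1})s_i$.'' Both halves are wrong: a minimal gallery by definition never repeats a chamber, and on a thick panel two distinct $s_i$-adjacent chambers can very well have the same codistance to $C_-$ --- indeed that is exactly the situation in which $\rho_{\A^\pm,C_-}$ identifies them and the retracted gallery folds (by (TW2), a panel carries one distinguished chamber at the larger codistance and all the others at the smaller one). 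Fortunately this entire passage is unnecessary: well-definedness needs only the two facts you state at the outset, namely that a minimal gallery of reduced type $\mathbf i_w$ from $C_+$ ends at a chamber at distance $w$, and that the last chamber retracts to $C_v$, hence has codistance $v$ to $C_-$ because $\rho_{\A^\pm,C_-}$ preserves $d^\ast(C_-,\cdot)$ by definition. Delete the ``reconciling the dichotomies'' discussion and the proof is sound and coincides with the paper's.
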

\begin{proof}
    Suppose that $C\in \cT^+$ verifies $d^+(C_+,C)=w$, then there exists a unique minimal gallery $\tilde{\mathbf g}=(\tilde C_0,\dots,\tilde C_n)$ of type $\mathbf i_w=(s_1,\dots,s_n)$ from $C_+$ to $C$ in $\cT^+$. Let $\mathbf g=(C_0,\ldots,C_n)$ be the image of $\tilde{\mathbf g}$ by $\rho_{\A^\pm,C_-}$, we need to verify that it is centrifugally folded. Suppose that $C_i=C_{i-1}$. By definition, $\rho_{\A^\pm,C_-}$ preserves the codistance to $C_-$, so $d^\ast(C_-,\tilde C_i)=d^\ast(C_-,\tilde C_{i-1})=d^\ast(C_-,C_i)$. Moreover  and $d(\tilde C_{i-1},\tilde C_i)=s_i$. Therefore, by the converse of axiom $(TW2)$ of \cite[Definition 5.133]{abramenko2008buldings}, the fact that $d^\ast(C_-,\tilde{C}_i)=d^\ast(C_-,\tilde{C}_{i-1})$ implies that $d^\ast(C_-,\tilde{C}_{i-1})s_i>d^\ast(C_-,\tilde{C}_{i-1})$ and since $d^\ast(C_-,C_{i-1})=d^\ast(C_-,C_i)=d^\ast(C_-,\tilde C_{i-1})$ this implies that $\mathbf g$ is $C_-$-centrifugally folded at $i$. Hence we have an injection: $$\{C\in \cT^+ \mid d^+(C_+,C)=w,\; d^\ast(C_-,C)=v\}\hookrightarrow \bigsqcup\limits_{\mathbf g \in  \Gamma_{C_-}(C_+,C_v,\mathbf i_w,\A^\pm)}\mathcal C^m_{C_-}(\mathbf g,C_+,\cT^+).$$  
    Conversely, suppose that $\tilde g$ lies in $\mathcal C^m_{C_-}(\mathbf g,C_+,\cT^+)$ for some $\mathbf g \in  \Gamma_{C_-}(C_+,C_v,\mathbf i_w,\A^\pm)$, then since $\tilde g$ is minimal of type $\mathbf i_w$ and starts at $C_+$, its last chamber $C$ is at distance $w$ of $C_+$. Moreover, since $\rho_{\A^\pm,C_-}(C)=C_v$, we have $d^\ast(C_-,C)=d^\ast(C_-,C_v)=v$. Hence we obtain an inverse map of the injection above, and thus a bijection: 
    $$\bigsqcup\limits_{\mathbf g \in  \Gamma_{C_-}(C_+,C_v,\mathbf i_w,\A^\pm)}\mathcal C^m_{C_-}(\mathbf g,C_+,\cT^+) \hookrightarrow \{C\in \cT^+ \mid d^+(C_+,C)=w,\; d^\ast(C_-,C)=v\}.$$
    
\end{proof}
The next lemma is proved in more generality in \cite[Corollary 4.5]{gaussent2014spherical}, it is also detailed in \cite[\S 2.3]{bardy2021structure}.
\begin{Lemma}\label{Lemma: twinbuildings2} Let $\A^\pm$ be a twin apartment of type $(W,S)$, let $C_- \in \A^-$ and $C_+ \in \A^+$ be two chambers of opposite signs. Let $\mathcal P$ be a set of panels in $\A^+$ (which should be thought of as thick panels). Let $\mathbf g=(C_1,\dots,C_n)$ be a $C_-$-centrifugally folded gallery of type  $\mathbf i =(\mathbf i_1,\dots,\mathbf i_n)$ in $\A^+$ from $C_0=C_+$ to $C_n=C_v$ and suppose that the panels along which $\mathbf g$ folds belong to $\mathcal P$.

Let $l(\mathbf g)= |\{j \in \llbracket 1,n\rrbracket \mid C_{j-1} \neq C_j,\; d^\ast(C_-,C_j)<d^\ast(C_-,C_{j-1}),\; p(C_j,s_j)\in\mathcal P \}|$\index{l@$l(\mathbf{g})$}  and $r(\mathbf g)=|\{j\in \llbracket1,n\rrbracket \mid C_{j-1}=C_j\}|$\index{r@$r(\mathbf{g})$}. Let $R^{C_-}_{\mathbf g,\cP}(X)$ denote the polynomial with integral coefficients defined by: \index{r@$R^{C_-}_{\mathbf g,\mathcal P}(X)$, $R^{C_-,C_+}_{v,w,\mathcal P}(X)$} $$R^{C_-}_{\mathbf g,\mathcal P}(X)=X^{l(\mathbf g)}(X-1)^{r(\mathbf g)}.$$

Then for any $(q,1)$-semi-homogeneous twin building $\cT^\pm$ with twin apartment $\A^\pm$ and such that $\mathcal P$ is the set of thick panels of $\A^+$ in $\cT^+$, we have:

    $$|\mathcal C^m_{C_-}(\mathbf g,C_+,\cT^+)|=R^{C_-}_{\mathbf g,\mathcal P}(q).$$

 In particular, it is non-zero for any gallery $\mathbf g$ which is $C_-$-centrifugally folded along panels of $\mathcal P$. If $\mathbf g$ folds along a panel which is not thick (not in $\mathcal P$), then it does not lift to any minimal gallery, so $|\mathcal C^m_{C_-}(\mathbf g,C_+,\cT^+)|=0$ and we set $R^{C_-}_{\mathbf g,\mathcal P} =0$. 

\end{Lemma}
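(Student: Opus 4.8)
The plan is to prove this by induction on the length $n$ of the gallery $\mathbf g=(C_0,\dots,C_n)$, peeling off the last chamber. This is essentially \cite[Corollary 4.5]{gaussent2014spherical} (see also \cite[\S 2.3]{bardy2021structure}) transposed to the twin setting, and the argument below records it for completeness. For $n=0$ the statement is trivial: $\mathbf g$ is a single chamber, $R^{C_-}_{\mathbf g,\cP}=1$, and the unique minimal lift is $\mathbf g$ itself. For the inductive step, write $\mathbf g'=(C_0,\dots,C_{n-1})$, which is again a $C_-$-centrifugally folded gallery of the reduced type $(s_1,\dots,s_{n-1})$ folding only along panels of $\cP$, so the inductive hypothesis gives $|\mathcal C^m_{C_-}(\mathbf g',C_+,\cT^+)|=R^{C_-}_{\mathbf g',\cP}(q)$. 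The point will be to count, for a fixed minimal lift $\tilde{\mathbf g}'=(\tilde C_0,\dots,\tilde C_{n-1})$ of $\mathbf g'$, the number of chambers $\tilde C_n\in p(\tilde C_{n-1},s_n)$ of $\cT^+$ such that $(\tilde C_0,\dots,\tilde C_n)$ is a minimal lift of $\mathbf g$, i.e. $\rho_{\A^\pm,C_-}(\tilde C_n)=C_n$ and $d^+(C_+,\tilde C_n)=s_1\cdots s_n$.

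The heart of the proof is then a local analysis inside the panel $\tilde p:=p(\tilde C_{n-1},s_n)$. Since $(s_1,\dots,s_n)$ is reduced, $\ell(s_1\cdots s_{n-1})<\ell(s_1\cdots s_n)$, so $\tilde C_{n-1}$ is the projection of $C_+$ onto $\tilde p$ and every chamber of $\tilde p\setminus\{\tilde C_{n-1}\}$ lies at $d^+$-distance $s_1\cdots s_n$ from $C_+$; hence the minimality constraint is equivalent to $\tilde C_n\neq\tilde C_{n-1}$. Next, since $\rho_{\A^\pm,C_-}$ preserves codistance to $C_-$ and restricts to a surjection $\tilde p\twoheadrightarrow p(C_{n-1},s_n)=\{C_{n-1},C_n\}$, it sends the twin-projection $\mathrm{proj}_{\tilde p}(C_-)$ (the unique chamber of $\tilde p$ of maximal codistance to $C_-$) to the chamber of $p(C_{n-1},s_n)$ of maximal codistance to $C_-$ — namely the one lying on the side of the supporting wall containing $C_-$ — and it sends each of the remaining $q$ (resp. $1$) chambers of a thick (resp. thin) $\tilde p$ to the other chamber of $p(C_{n-1},s_n)$; this is read off from axioms (Tw1)--(Tw2) of \cite[Definition 5.133]{abramenko2008buldings}, together with the fact that the thickness of $\tilde p$ in $\cT^+$ coincides with membership of $p(C_{n-1},s_n)$ in $\cP$.

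Combining this with the definition of centrifugal folding (Definition~\ref{Def galleries}) yields four cases at step $n$: (a) $\mathbf g$ folds ($C_n=C_{n-1}$, necessarily along a thick wall, with $C_{n-1}$ on the side of the wall opposite to $C_-$): then $C_{n-1}$ has $q$ preimages in $\tilde p$, one being $\tilde C_{n-1}$, so there are $q-1$ admissible $\tilde C_n$; (b) $\mathbf g$ crosses towards $C_-$ ($d^\ast(C_-,C_n)<d^\ast(C_-,C_{n-1})$) along a thick panel: $C_n$ is the non-projection chamber, with $q$ preimages in $\tilde p$, all distinct from $\tilde C_{n-1}$, giving $q$ admissible $\tilde C_n$; (c) the same but along a thin panel: exactly $1$ admissible $\tilde C_n$; (d) $\mathbf g$ crosses away from $C_-$: $C_n$ is the projection chamber, with a single preimage in $\tilde p$, giving $1$ admissible $\tilde C_n$. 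In every case the multiplicative factor is $q^{\varepsilon_l}(q-1)^{\varepsilon_r}$, where $\varepsilon_l\in\{0,1\}$ records whether step $n$ contributes to $l(\mathbf g)$ and $\varepsilon_r\in\{0,1\}$ whether it contributes to $r(\mathbf g)$; since $l(\mathbf g)=l(\mathbf g')+\varepsilon_l$ and $r(\mathbf g)=r(\mathbf g')+\varepsilon_r$, summing over the $R^{C_-}_{\mathbf g',\cP}(q)$ lifts of $\mathbf g'$ gives $|\mathcal C^m_{C_-}(\mathbf g,C_+,\cT^+)|=q^{l(\mathbf g)}(q-1)^{r(\mathbf g)}$. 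Finally, if $\mathbf g$ folds along a thin panel at some step $j$, then in the corresponding instance of case (a) with $\tilde p$ thin there is no chamber $\tilde C_j\neq\tilde C_{j-1}$ of $\tilde p$ with $\rho_{\A^\pm,C_-}(\tilde C_j)=C_j$, so the count collapses to $0$, matching the convention $R^{C_-}_{\mathbf g,\cP}=0$; and since each factor in cases (a)--(b) is positive, the number of lifts is nonzero whenever $\mathbf g$ folds only along panels of $\cP$.

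The main obstacle is precisely the panel-level description of $\rho_{\A^\pm,C_-}$ in the middle paragraph: one must verify carefully that the fibres of the retraction over a $\{C_{n-1},C_n\}$-panel split as ``one chamber over the side containing $C_-$, the remaining $q$ (or $1$) over the other side'', identify that $C_-$-side chamber with the codistance-maximal twin-projection, and reconcile this with the sign conventions used in the definitions of $l(\mathbf g)$, $r(\mathbf g)$ and of a $C_-$-centrifugally folded gallery. Everything else is routine bookkeeping; alternatively one could deduce the statement by invoking the non-twin result \cite[Corollary 4.5]{gaussent2014spherical} inside suitable residues, but the direct induction above is the cleanest route.
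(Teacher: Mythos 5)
The paper does not actually prove this lemma; it defers to \cite[Corollary 4.5]{gaussent2014spherical} and \cite[\S 2.3]{bardy2021structure}. Your inductive proof — peeling off the last chamber and analysing the fibres of $\rho_{\A^\pm,C_-}$ over a single panel via the twin-building axioms — is precisely the standard argument behind those references, and the count $q^{l(\mathbf g)}(q-1)^{r(\mathbf g)}$ comes out correctly, including the vanishing when $\mathbf g$ folds along a thin panel. One small caveat: your ``towards''/``away'' labels in cases (b) and (d) are swapped relative to the convention in Definition~\ref{Def galleries}. With $C_n\neq C_{n-1}$, the inequality $d^\ast(C_-,C_n)<d^\ast(C_-,C_{n-1})$ says that $C_{n-1}$ lies on the $C_-$-side of the supporting wall, so this step crosses \emph{away} from $C_-$ (and is the one contributing to $l(\mathbf g)$ when the panel is thick); the remaining case $d^\ast(C_-,C_n)>d^\ast(C_-,C_{n-1})$ is a crossing towards $C_-$. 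Since your arithmetic is driven by the inequalities and by the ``one preimage at long codistance, $q$ (or $1$) at short codistance'' description rather than by the labels, the conclusion is unaffected. Two further minor points worth tightening: the panel picture uses (Tw3) as well as (Tw1)--(Tw2) to produce the unique chamber of $\tilde p$ at maximal codistance; and the identification of the thickness of $\tilde p$ with that of its retraction $p(C_{n-1},s_n)\in\A^+$ relies on retractions in the relevant $(q,1)$-semi-homogeneous twin buildings preserving thickness — automatic for tangent buildings of masures, but worth a sentence in a stand-alone write-up.
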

This motivates the following definition, in order to explicit Formula \eqref{eq: decoratedlifts}.
\begin{Definition} \label{Def: localRpolynomial}
    Let $\A^\pm$ be a twin apartment and let $\mathcal P$  be a set of panels in $\A^+$. Let $C_-\in \A^-$ and $C_+\in \A^+$, let $v,w\in W$. As in Lemma \ref{Lemma: twinbuildings1}, let $\mathbf i_w$ be a reduced expression of $w$, and let $C_v$ be the unique chamber of $\A^+$ at codistance $v$ from $C_-$. Then we define the polynomial $R^{C_-,C_+}_{v,w,\mathcal P}(X)\in \mathbb Z[X]$ by:
    \begin{equation}
        R^{C_-,C_+}_{v,w,\mathcal P}(X)=\sum\limits_{\mathbf g \in  \Gamma_{C_-}(C_+,C_v,\mathbf i_w,\A^\pm)} R^{C_-}_{\mathbf g,\mathcal P}(X).
    \end{equation}
    By Lemmas \ref{Lemma: twinbuildings1} and \ref{Lemma: twinbuildings2}, it is the unique polynomial such that, if $\cT^\pm$ is a $(q,1)$-semi-homogeneous twin building with twin apartment $\A^\pm$ and $\mathcal P$ is its set of $q$-thick panels in $\A^+$  we have:
    \begin{equation*}
        |\{C\in \cT^+ \mid d^+(C_+,C)=w,\; d^\ast(C_-,C)=v\}|= R^{C_-,C_+}_{v,w,\mathcal P}(q).
    \end{equation*}
In particular, this polynomial does not depend on the choice of $\mathbf i_w$.

\end{Definition}
\begin{Remark}\label{r_usual_KL_polynomials}
    These polynomials should be thought of as twisted versions, with unequal parameters, of the R-polynomials associated to a twin building, or to a Hecke algebra. Indeed when $\mathcal P$ is the set of all panels of $\A^+$, then $R^{C_-,C_+}_{v,w,\mathcal P}$ only depends on $v,w,d^\ast(C_-,C_+)\in W$, and if $d^\ast(C_-,C_+)=1$ then it is equal to the classical polynomial $R_{v,w}(X)$ defined by Kazhdan and Lusztig \cite{kazhdan1979representations}.

\end{Remark}

\subsubsection{Polynomial expression for the lifts of $C_\infty$-Hecke open paths}\label{subsubsection: Polynomial expression for the lifts}
Combining the results of Subsections~\ref{subsubsection: local description Cinfty hecke} and~\ref{subsubsection: local computation}, we obtain the following result.

\begin{Theorem}\label{Theorem : number of lifts spherical} 
    Let $\mathcal D$ be any Kac-Moody root datum, and let $\A$ be the associated affine apartment. Let $\underline{\fc}$ be a $C_\infty$-Hecke open path of type $\bx\in W^+_{sph}$ in $\A$. Then, there exists a polynomial \index{r@$R_{\bx,\underline{\fc}}(X)$, $R^{loc,t}_{\bx,\underline{\fc}}(X)$} $R_{\bx,\underline{\fc}}(X)\in \mathbb Z[X]$ such that, for any twin masure $(\I_\oplus,\I_\ominus)$ with standard apartment $\A$ and constant finite thickness $q$, we have:
   
    \begin{equation}\label{eq : R_counts_retractions}
        |\rho_{C_0}^{-1}(\underline\fs_{\bx})\cap \rho_{C_\infty}^{-1}(\underline{\fc})|=R_{\bx,\underline{\fc}}(q)
    \end{equation}

    Moreover, this polynomial is explicitly given by: 
    \begin{equation}\label{eq:openpaths}
        R_{\bx,\underline{\fc}}(X)=\prod\limits_{t \in T(\underline{\fc})} R^{loc,t}_{\bx,\underline{\fc}}(X)
    \end{equation}
    where \begin{equation}R^{loc,t}_{\bx,\underline{\fc}}(X)=\begin{cases} R^{D^-_t,C^\infty_{\fc(t)}}_{w^-_{\bx,t},w^\infty_{\underline{\fc},t},\mathcal P_{\fc(t)}}(X) \text{ if } t<1 \\
        R^{C^\infty_{\fc(1)},D^-_1}_{w^\infty_{\underline \fc,1},w^-_{\bx,1},\mathcal P_{\fc(1)}}(X) \text{ if } t=1
    \end{cases}\end{equation} with the notation of Definition \ref{Def: localRpolynomial}, and $\mathcal P_{\fc(t)}$ denotes the set of panels of $\cT^\pm_{\fc(t)}(\A)$ supported on affine walls.
    
\end{Theorem}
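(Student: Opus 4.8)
The plan is to combine the local description of lifts from Proposition~\ref{Prop: local description bijection} with the building-theoretic counting lemmas of Subsection~\ref{subsubsection: local computation}. By Proposition~\ref{Prop: local description bijection}, the set $\rho_{C_0}^{-1}(\underline\fs_\bx)\cap \rho_{C_\infty}^{-1}(\underline{\fc})$ is in bijection with the finite product $\prod_{t\in T(\underline{\fc})}\cC(\underline{\fc},\bx,t)$, where $T(\underline{\fc})$ is a finite set by \cite[Lemma 5.4]{twinmasures}. Hence it suffices to show that each factor $\cC(\underline{\fc},\bx,t)$ has cardinality given by evaluating an explicit polynomial in $q$, and to take the product. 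First I would fix $t\in T(\underline{\fc})$ and unpack the definition of $\cC(\underline{\fc},\bx,t)$: for $t<1$ it is the set of alcoves $C\in\cT^-_{\fc(t)}(\I_\oplus)$ with $d^-(C^\infty_{\fc(t)},C)=w^\infty_{\underline{\fc},t}$ and $d^\ast(D^-_t,C)=w^-_{\bx,t}$, and for $t=1$ it is the analogous set in $\cT^+_{\fc(1)}(\I_\oplus)$ with the roles of the two distances swapped.

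Next I would observe that the local tangent building $\cT^\pm_{\fc(t)}(\I_\oplus)$ is a $(q,1)$-semi-homogeneous twin building of type $(W^v,S)$ with model twin apartment $\cT^\pm_{\fc(t)}(\A)$, as recalled in the ``Tangent buildings'' and ``Thickness of the tangent buildings'' paragraphs, and that its set of thick panels in $\cT^+_{\fc(t)}(\A)$ (resp. $\cT^-_{\fc(t)}(\A)$) is exactly $\mathcal P_{\fc(t)}$, the set of panels supported on affine walls (not ghost walls). The alcoves $C^\infty_{\fc(t)}$ and $D^-_t$ (resp. $C^\infty_{\fc(1)}$ and $D^-_1$) lie in the model apartment and are of opposite signs: indeed $D^-_t$ is positive and $C^\infty_{\fc(t)}$ is negative for $t<1$, while $\underline\fc(1)=D^-_1$ considerations give opposite signs at $t=1$. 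This is precisely the setup of Definition~\ref{Def: localRpolynomial}: applying it with $\A^\pm=\cT^\pm_{\fc(t)}(\A)$, $C_-=D^-_t$, $C_+=C^\infty_{\fc(t)}$, $v=w^-_{\bx,t}$, $w=w^\infty_{\underline{\fc},t}$ and $\mathcal P=\mathcal P_{\fc(t)}$ (and symmetrically for $t=1$), Lemmas~\ref{Lemma: twinbuildings1} and~\ref{Lemma: twinbuildings2} give
\[
|\cC(\underline{\fc},\bx,t)| = R^{loc,t}_{\bx,\underline{\fc}}(q).
\]
Then I would set $R_{\bx,\underline{\fc}}(X)=\prod_{t\in T(\underline{\fc})}R^{loc,t}_{\bx,\underline{\fc}}(X)$, which lies in $\Z[X]$ since each factor does and $T(\underline{\fc})$ is finite, and conclude via the bijection of Proposition~\ref{Prop: local description bijection} that $|\rho_{C_0}^{-1}(\underline\fs_\bx)\cap\rho_{C_\infty}^{-1}(\underline{\fc})| = \prod_t |\cC(\underline{\fc},\bx,t)| = R_{\bx,\underline{\fc}}(q)$, which is \eqref{eq : R_counts_retractions} and \eqref{eq:openpaths}. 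Independence of the chosen reduced expression $\mathbf i_w$ is already part of Definition~\ref{Def: localRpolynomial}, and polynomiality together with the fact that it depends only on $(\A,\bx,\underline{\fc})$ follows since the combinatorial data $T(\underline{\fc})$, the $w^\infty_{\underline{\fc},t}$, $w^-_{\bx,t}$, $C^\infty_{\fc(t)}$, $D^-_t$ and $\mathcal P_{\fc(t)}$ are all defined intrinsically in $\A$.

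The main obstacle I anticipate is bookkeeping rather than conceptual: one must carefully check the sign conventions so that at each $t$ the pair $(C^\infty_{\fc(t)},D^-_t)$ really is a pair of chambers of opposite sign in the twin building $\cT^\pm_{\fc(t)}(\I_\oplus)$, and that the distance/codistance roles match those in Lemma~\ref{Lemma: twinbuildings1} (in particular that at $t=1$ the formula uses $R^{C^\infty_{\fc(1)},D^-_1}_{w^\infty_{\underline\fc,1},w^-_{\bx,1},\mathcal P_{\fc(1)}}$, i.e. with $C^\infty_{\fc(1)}$ playing the role of $C_-$). A secondary point to verify is the identification of $\mathcal P_{\fc(t)}$ with the thick panels of the tangent building at $\fc(t)$, which relies on the support of a panel $p(a(x,\varepsilon w),s_i)$ being the wall $M_{\beta[-\langle x,\beta\rangle]}$ with $\beta=w(\alpha_i)$ and on this being an affine wall iff $\langle x,\beta\rangle\in\Z$; this is recorded in the ``Thickness of the tangent buildings'' paragraph and applies since $\fc(t)\in\A$. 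Once these compatibilities are nailed down, the proof is a direct assembly of Proposition~\ref{Prop: local description bijection}, Lemma~\ref{Lemma: twinbuildings1}, Lemma~\ref{Lemma: twinbuildings2} and Definition~\ref{Def: localRpolynomial}.
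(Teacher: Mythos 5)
Your proposal is correct and follows exactly the paper's argument: the paper's own proof is the one-line observation that the theorem is a direct application of the bijection of Proposition~\ref{Prop: local description bijection} (Formula~\eqref{eq: decoratedlifts}) together with Lemmas~\ref{Lemma: twinbuildings1} and~\ref{Lemma: twinbuildings2}, which is precisely your assembly. Your sign and distance/codistance bookkeeping (including the roles of $C_-$ and $C_+$ at $t<1$ versus $t=1$, and the identification of $\mathcal P_{\fc(t)}$ with the thick panels) is accurate and simply makes explicit what the paper leaves implicit.
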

\begin{proof}
    This is a direct application of Formula~\eqref{eq: decoratedlifts} and Lemmas~\ref{Lemma: twinbuildings1},~\ref{Lemma: twinbuildings2}.
\end{proof}
\begin{Remark}
Note that for any Kac-Moody root datum and associated apartment $\A$, there exists a twin masure of standard apartment $\A$ and of constant finite thickness $q$ for any prime power $q$. Therefore the polynomials $R_{\underline{\fc},\qp^\lambda w}$ are characterized by Formula \eqref{eq : R_counts_retractions}. 
\end{Remark}

\subsection{Definition and computation of $R$-Kazhdan-Lusztig polynomials}\label{subsection: Definition Kazhdan-Lusztig Polynomials}
We use the computations above to define affine $R$-Kazhdan-Lusztig polynomials for Kac-Moody groups. This follows the work of D. Muthiah in \cite{muthiah2019double}, where he defines spherical $R$-Kazhdan-Lusztig polynomials for affine simply laced Kac-Moody groups. Spherical and affine Kazhdan-Lusztig polynomials play a very similar role, but the former for spherical Hecke algebras, and the latter for Iwahori-Hecke algebras.
\subsubsection{Definition of affine $R$-Kazhdan-Lusztig polynomials for Kac-Moody groups}

\begin{Lemma}\label{l_bijection_paths_alcoves}
    Let $\bx,\by\in W^+$, with $\bx$ spherical. Then the map $\psi:\underline{s}\mapsto \underline{s}(1)$ is a bijection between $E_{\bx,\by}:=\bigsqcup_{\underline{\fc}\in \cC^\infty_{\bx}(\by)}\rho_{C_\infty}^{-1}(\underline{\fc})\cap \rho_{C_0}^{-1}(\underline{\fs}_\bx)$ and the set $E'_{\bx,\by}$ of alcoves  $C$ of $\I_{\oplus}$ such that: \begin{enumerate}
        \item $C$ and $0_\A$ belong to an apartment of $\I_{\oplus}$,
        
        \item every element of $[0_\A,\ve(C)]$ is $C_\infty$-friendly, where $\ve(C)$ is the vertex of $C$,

        \item $\rho_{C_0}(C)=C_\bx$ and $\rho_{C_\infty}(C)=C_\by$. 
        
    \end{enumerate}  Its reciprocal is the map $\psi':E'_{\bx,\by}\rightarrow E_{\bx,\by}$ which associates to every $C$ the open segment starting at $0_\A$ and with end alcove $C$ (which is well-defined by Lemma~\ref{l_lift_open_segment}).
\end{Lemma}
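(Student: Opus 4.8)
The plan is to show $\psi$ and $\psi'$ are mutually inverse bijections between $E_{\bx,\by}$ and $E'_{\bx,\by}$. First I would check that $\psi'$ is well-defined: given $C\in E'_{\bx,\by}$, by condition $1$ there is an apartment containing $C$ and $0_\A$, and since $\rho_{C_0}(C)=C_\bx$, condition $1$ actually gives an apartment containing $C_0$ and $C$; by Lemma~\ref{l_lift_open_segment}(1) there is a unique open segment $\underline{\fs}$ starting at $0_\A$ with end alcove $C$, and it has type $\bx$, so $\rho_{C_0}(\underline{\fs})=\underline{\fs}_\bx$, i.e. $\underline{\fs}\in\rho_{C_0}^{-1}(\underline{\fs}_\bx)$. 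Next I would check this $\underline{\fs}$ is $C_\infty$-friendly: its underlying path is the segment $[0_\A,\ve(C)]$ (up to parametrisation), and condition $2$ says every point of this segment is $C_\infty$-friendly, which is exactly the definition; then by \cite[Proposition 4.2]{twinmasures} $\rho_{C_\infty}$ is defined on the whole open segment (decoration and end alcove included). So $\rho_{C_\infty}(\underline{\fs})$ is defined, and by Proposition~\ref{Proposition: retraction is C infty} it is a $C_\infty$-Hecke open path of type $\bx$; its end alcove is $\rho_{C_\infty}(\underline{\fs}(1))=\rho_{C_\infty}(C)=C_\by$ by condition $3$, so $\rho_{C_\infty}(\underline{\fs})\in\cC^\infty_\bx(\by)$ and hence $\underline{\fs}\in E_{\bx,\by}$. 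This shows $\psi'(E'_{\bx,\by})\subset E_{\bx,\by}$.

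Then I would check $\psi(E_{\bx,\by})\subset E'_{\bx,\by}$. Let $\underline{\fc}\in\cC^\infty_\bx(\by)$ and $\underline{\fs}\in\rho_{C_\infty}^{-1}(\underline{\fc})\cap\rho_{C_0}^{-1}(\underline{\fs}_\bx)$; set $C=\underline{\fs}(1)$. Since $\underline{\fs}$ is an open segment of type $\bx$, by Lemma~\ref{l_lift_open_segment} it has the form $g.\underline{\fs}_\bx$ for $g\in\mathrm{Fix}_G(C_0)$, so $\underline{\fs}$ starts at $0_\A$ and its underlying path is $g.\fs_\lambda$, a segment from $0_\A$ to $\ve(C)$; hence $0_\A$ and $C$ lie in the common apartment $g.\A$, giving condition $1$. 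Condition $3$ is immediate: $\rho_{C_0}(C)=\rho_{C_0}(\underline{\fs}(1))=\underline{\fs}_\bx(1)=C_\bx$ and $\rho_{C_\infty}(C)=\rho_{C_\infty}(\underline{\fs}(1))=\underline{\fc}(1)=C_\by$. For condition $2$, since $\rho_{C_\infty}$ is defined on all of $\underline{\fc}=\rho_{C_\infty}(\underline{\fs})$, in particular $\rho_{C_\infty}$ is defined on every point $\fs(t)$ of the underlying segment, which means each such point is $C_\infty$-friendly; as the underlying segment is $[0_\A,\ve(C)]$, this is condition $2$. So $\psi$ maps into $E'_{\bx,\by}$.

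Finally I would verify the two compositions are identities. For $\psi'\circ\psi$: given $\underline{\fs}\in E_{\bx,\by}$ with $C=\underline{\fs}(1)$, the open segment $\psi'(C)$ is \emph{the} open segment starting at $0_\A$ with end alcove $C$; but $\underline{\fs}$ is itself such an open segment (it starts at $0_\A$ by the argument above and has end alcove $C$), and by the uniqueness in Lemma~\ref{l_lift_open_segment}(1) we get $\psi'(C)=\underline{\fs}$. For $\psi\circ\psi'$: given $C\in E'_{\bx,\by}$, $\psi'(C)$ is the open segment with end alcove $C$, so $\psi(\psi'(C))=\psi'(C)(1)=C$. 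Hence $\psi$ and $\psi'$ are mutually inverse. The main obstacle here is not any single hard argument but rather bookkeeping: carefully matching the various definitions (open segment of type $\bx$, $C_\infty$-friendliness of a segment versus of its points, the precise statement of Lemma~\ref{l_lift_open_segment}) and invoking Proposition~\ref{Proposition: retraction is C infty} and \cite[Proposition 4.2]{twinmasures} at the right places to ensure the retractions in condition $3$ are genuinely defined.
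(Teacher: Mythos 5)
Your proof is correct and follows the same strategy as the paper's: well-definedness of $\psi'$ via Lemma~\ref{l_lift_open_segment}(1), landing in $\cC^\infty_\bx(\by)$ via Proposition~\ref{Proposition: retraction is C infty}, and the two compositions being identities by uniqueness in Lemma~\ref{l_lift_open_segment}. You simply spell out the steps the paper labels "clear," which is a reasonable elaboration rather than a different argument.
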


\begin{proof}
  The fact that $\psi(E_{\bx,\by})\subset E'_{\bx,\by}$ is clear. Conversely, let $C\in E'_{\bx,\by}$ and let $\underline{\fs}$ be the open segment starting at $0_\A$ and with end alcove $C$. Then $\underline{\fs}$ is of type $\bx$ by Lemma~\ref{l_lift_open_segment}. In other words, $\rho_{C_0}(\underline{\fs})=\underline{\fs}_\bx$. Moreover, by Proposition~\ref{Proposition: retraction is C infty}, $\rho_{C_\infty}(\underline{\fs})\in \cC^\infty_\bx(\by)$, so $\psi'(E'_{\bx,\by})\subset E_{\bx,\by}$. The fact that $\psi\circ \psi'=\Id$ is clear and the fact that $\psi'\circ \psi=\Id$ follows from the uniqueness in Lemma~\ref{l_lift_open_segment} 1).
\end{proof}

\begin{Definition}[Affine $R$-Kazhdan-Lusztig polynomials]

    Let $\bx,\by$ be elements of $W^+$, suppose that $\bx$ is spherical. Let \index{c@$\cC^\infty_\bx(\by)$} $\cC^\infty_\bx(\by)$ be the finite set of $C_\infty$-Hecke open paths of type $\bx$ starting at $0_\A$ and with end alcove $C_{\by}$. 
    Then define the $R$-Kazhdan-Lusztig polynomial \index{r@$R_{\bx,\by}$}$R_{\bx,\by}$ by:
    \begin{equation}\label{eq: DefRPolynomials}
        R_{\bx,\by}(X)=\sum\limits_{\underline{\fc}\in \cC^\infty_\bx(\by)}R_{\bx,\underline{\fc}}(X)=\sum\limits_{\underline{\fc} \in \cC^\infty_\bx(\by)}\prod\limits_{t \in T(\underline{\fc})} R^{loc,t}_{\bx,\underline{\fc}}(X).
    \end{equation}
    It is the polynomial such that: \begin{equation}R_{\bx,\by}(q)=\sum\limits_{\underline{\fc} \in \cC^\infty_\bx(\by)}|\rho_{C_\infty}^{-1}(\underline{\fc})\cap \rho_{C_0}^{-1}(\underline\fs_{\bx})|.\end{equation}

\end{Definition}
Upon Conjecture \cite[4.4.1]{twinmasures} and according to Lemma~\ref{l_bijection_paths_alcoves}, we have:
$R_{\bx,\by}(q)=|I_0 \bx I_0 \cap I_\infty \by I_0/I_0|$. This motivates their study, we expect them to be the $R$-polynomials for a, yet to be defined, Kazhdan-Lusztig theory for Kac-Moody groups over discretely valued fields. This was schemed by Muthiah in \cite{muthiah2019bruhat}.

\subsubsection{Comparison with Spherical $R$-polynomials}
Let $K\supset I$\index{k@$K$} denote the stabiliser of $0_\A$ in $G$. In \cite{muthiah2019double}, Muthiah defines spherical $R$-polynomials, which conjecturally compute the cardinal of the sets $(K \qp^\lambda K \cap I_\infty \qp^\mu K)/K$ for $\lambda \in Y^{++}_{sph}$ and $\mu \in Y^+$ (see \cite[\S 5]{muthiah2019double}). In order to do this, he computes certain sets of segments in the masure $\I_\oplus$, and his computation were made more precise by Bardy-Panse, the first author and Rousseau in \cite{twinmasures}. Let us briefly explain how to relate the affine $R$-polynomials defined here with their results.

Since $K$ is the stabilizer of the special vertex $0_\A$, the set $(K \qp^\lambda K \cap I_\infty \qp^\mu K)/K$ bijects with $\rho_{0_\A}^{-1}(-\lambda)\cap \rho_{C_\infty}^{-1}(-\mu)$, where $\rho_{0_\A}$ is the masure retraction centered at $0_\A$. More precisely $\rho_{0_\A}(x)$ is the unique element of $K.x\cap \pm \overline{C^v_f}$ for all $x\in \{x\in \I_{\oplus}\mid x\leq_{\oplus}0_\A\text{ or }x\geq_{\oplus} 0_\A\}$. Since the fundamental alcove is based at $0_\A$, we have $\rho_{0_\A}=\rho_{0_\A} \circ \rho_{C_0}$ and, since $\rho_{0_\A}$ restricts to $x\mapsto x^{++}$ on $\pm \sT$, we have $\rho_{0_\A}(x)=(\rho_{C_0}(x))^{++}$ for every $x\leq_\oplus 0_\A\in \I_\oplus$. Therefore $K \qp^\lambda K=\bigsqcup\limits_{w\in W^\lambda} I \qp^{w\lambda} K$ for any $\lambda \in Y^{++}$ (recall that $W^\lambda$ is the set of minimal coset representatives of $W^v/W_\lambda$). In term of masure retractions, we have:
$$\rho_{0_\A}^{-1}(-\lambda)\cap \rho_{C_\infty}^{-1}(-\mu)=\bigsqcup\limits_{w\in W^\lambda}\rho_{C_0}^{-1}(-w\lambda)\cap \rho_{C_\infty}^{-1}(-\mu).$$

Since the alcove $C_{\qp^{w\lambda}w}$ is the projection $C^{++}_{-w\lambda}=\pr_{-w\lambda}(C_0)$ of the fundamental alcove at $-w\lambda$, the map $x\mapsto \pr_x(C_0)$ defines a bijection $\rho_{C_0}^{-1}(-w\lambda)\cong \rho_{C_0}^{-1}(C^{++}_{-w\lambda})$. 

For any $C_\infty$-Hecke non-open path of type $\lambda$ ending at $-\mu$ (defined as $I_\infty$-Hecke paths in \cite[\S 5.3.1]{muthiah2019double}), and for any $w\in W^\lambda$ let $\mathcal H_{\fc,w}$ denote  the set of $C_\infty$-Hecke open paths of type $\qp^{w\lambda}w$ with underlying non-open path $\fc$.

Then there is a bijection between the set of $C_\infty$-friendly segments of type $-w\lambda$ retracting to $\fc$ and the set of $C_\infty$-Hecke open segments of type $\qp^{w\lambda}w$ retracting to some (necessarily unique) $\underline{\fc}\in \mathcal H_{\fc,w}$:
\begin{equation}\label{eq: bijection non open open}
    \rho_{C_0}^{-1}(\fs_{w\lambda})\cap \rho_{C_\infty}^{-1}(\fc) \cong \bigsqcup\limits_{\underline{\fc}\in \mathcal H_{w,\fc}}(\rho_{C_0}^{-1}(\underline\fs_{\qp^{w\lambda}w})\cap \rho_{C_\infty}^{-1}(\underline\fc)).
\end{equation}
In \cite[Theorem 5.1]{twinmasures}, the authors compute $\rho_{0_\A}^{-1}(\fs_{\lambda})\cap \rho_{C_\infty}^{-1}(\fc)=\bigsqcup\limits_{w\in W^{\lambda}}\rho_{C_0}^{-1}(\fs_{w\lambda})\cap \rho_{C_\infty}^{-1}(\fc)$.

We recover a polynomial expression for the cardinal of this set (and in particular its finiteness) from the definition of the polynomials $R_{\qp^{w\lambda}w,\underline{\fc}}$, Formula~\eqref{eq: bijection non open open} and the first two points of the following Lemma.
\begin{Lemma}\label{Lemma: finiteness nonopen} The following sets are finite:
\begin{enumerate}
    \item for a fixed pair $\fc,w$, the set $\mathcal H_{\fc,w}$, 
    \item for a fixed $C_\infty$-Hecke path $\fc$, the set  $\{w\in W^\lambda \mid \mathcal H_{\fc,w}\neq \emptyset\}$,
    \item for a fixed pair $\lambda\in Y^{+}_{sph}$, $\mu \in Y^+$, the set $\{(w,v) \in W^\lambda \times W^v \mid R_{\qp^{w\lambda}w,\qp^\mu v}\neq 0\}$
    
\end{enumerate}
\end{Lemma}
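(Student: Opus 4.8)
### Proof plan for Lemma~\ref{Lemma: finiteness nonopen}

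\textbf{Plan.} The three finiteness statements are layered, and I would prove them in the given order, each reducing to the previous one together with the finiteness results already established for $C_\infty$-Hecke open paths (Corollary~\ref{Corollary: finiteness open hecke paths}) and for intervals in the affine Bruhat order (\cite[Corollary 3.4]{hebert2024quantum}). The key translation to keep in mind throughout is Proposition~\ref{Proposition: open paths give chains}: any $C_\infty$-Hecke open path of type $\bx$ with folding data of length $r$ produces a strictly decreasing chain of length $r+1$ below $\bx$ in the affine Bruhat order, ending at the element $\by$ such that $\underline{\fc}(1)=C_\by$.

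\textbf{Step 1 (finiteness of $\mathcal H_{\fc,w}$).} Fix a $C_\infty$-Hecke non-open path $\fc$ of type $\lambda$ ending at $-\mu$, and fix $w\in W^\lambda$. An element of $\mathcal H_{\fc,w}$ is a $C_\infty$-Hecke open path $\underline{\fc}=(\fc,D,\underline{\fc}(1))$ of type $\qp^{w\lambda}w$ whose underlying non-open path is exactly $\fc$. I would argue that such an open path is determined by a finite amount of data. First, the set $E=\{t\in(0,1)\mid \fc'_+(t)\neq \fc'_-(t)\}$ is finite and depends only on $\fc$. By Remark~\ref{r_characterization_decorations}, the decoration $D$ is completely determined by a choice, for each of the finitely many $t_i\in E\cup\{1\}$, of a vectorial chamber dominating $\fc'_-(t_i)$; since $\lambda$ is spherical, $W_{\lambda^{++}}$ is finite, so there are only finitely many such chambers (they form a $W_{\lambda^{++}}$-coset of chambers). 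Hence there are finitely many admissible decorations $D$ compatible with $\fc$. Finally the end alcove $\underline{\fc}(1)$ is constrained: it is a special alcove $C_\by$ by Proposition~\ref{Proposition: open paths give chains}, and the pair $(\fc(1),\underline{\fc}(1))=(-\mu,C_\by)$ forces the direction of $C_\by$ to lie in a fixed $W_\mu$-coset, again finite since $\mu\in Y^+$ but more to the point $\underline{\fc}(1)$ must also be reachable by positive folding. Combining: $\mathcal H_{\fc,w}$ injects into a finite product of finite sets, so it is finite. (Alternatively, one may invoke Corollary~\ref{Corollary: finiteness open hecke paths} directly: each $\underline{\fc}\in\mathcal H_{\fc,w}$ lies in $\cC^\infty_{\qp^{w\lambda}w}(\by)$ for some $\by$, and the constraint on $\underline{\fc}(1)$ bounds $\by$ to finitely many values since $\fc(1)=-\mu$ is fixed.)

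\textbf{Step 2 (finiteness of $\{w\in W^\lambda\mid \mathcal H_{\fc,w}\neq\emptyset\}$).} Here I would use the affine Bruhat order. If $\mathcal H_{\fc,w}\neq\emptyset$, pick $\underline{\fc}\in\mathcal H_{\fc,w}$; then by Proposition~\ref{Proposition: open paths give chains} there is a chain $\by = s_{\beta_r[n_r]}\cdots s_{\beta_0[n_0]}\qp^{w\lambda}w < \cdots < \qp^{w\lambda}w$, and in particular $\by \leq \qp^{w\lambda}w$. But the end alcove $\underline{\fc}(1)=C_\by$ has base point $\fc(1)=-\mu$, which is fixed, so $\by=\qp^\mu v$ for some $v\in W^v$ with $\qp^\mu v\in W^+$; the possible $v$ are constrained to a single $W_\mu$-coset by the direction. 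More usefully, $\qp^{w\lambda}w$ lies \emph{above} $\by$ in the affine Bruhat order. The plan is then: the set of $w\in W^\lambda$ with $\mathcal H_{\fc,w}\neq\emptyset$ injects into $\bigcup_{v}\{\bx\in W^+_{sph}\mid \bx \geq \qp^\mu v,\ \pr^{Y^{++}}(\bx)=\lambda^{++}\}$, a finite union (over the finitely many relevant $v$) of sets each contained in an upper interval with fixed dominant coweight $\lambda^{++}$. Since $\pr^{Y^+}(\bx)$ ranges over the finite $W^v$-orbit-intersection... no: one must be careful, as upper sets in the affine Bruhat order need not be finite. The correct input is that the \emph{interval} $[\qp^\mu v,\ \qp^{w\lambda}w]$ is finite by \cite[Corollary 3.4]{hebert2024quantum}, but we also need that only finitely many $\qp^{w\lambda}w$ can appear. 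I would secure this by noting that each such $\qp^{w\lambda}w$ is obtained from $\qp^\mu v$ by a bounded-length chain whose reflections $s_{\beta_i[n_i]}$ are themselves constrained: the folding data $(t_i,\beta_i[n_i])$ has $\fc(t_i)$ on the wall $M_{\beta_i[n_i]}$, and $\im(\fc)$ is a fixed compact set, so only finitely many affine roots $\beta_i[n_i]$ occur; then the number of chains of bounded length from a fixed element using a fixed finite set of reflections is finite. So $\{w\mid\mathcal H_{\fc,w}\neq\emptyset\}$ is finite.

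\textbf{Step 3 (finiteness of $\{(w,v)\mid R_{\qp^{w\lambda}w,\qp^\mu v}\neq 0\}$).} For fixed $\lambda\in Y^+_{sph}$ and $\mu\in Y^+$, $R_{\qp^{w\lambda}w,\qp^\mu v}=\sum_{\underline{\fc}\in\cC^\infty_{\qp^{w\lambda}w}(\qp^\mu v)}R_{\qp^{w\lambda}w,\underline{\fc}}$, which is nonzero only if $\cC^\infty_{\qp^{w\lambda}w}(\qp^\mu v)\neq\emptyset$, hence by Corollary~\ref{Corollary: finiteness open hecke paths} only if $\qp^\mu v \leq \qp^{w\lambda}w$. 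Now $\qp^{w\lambda}w$ ranges over elements with fixed dominant part $\lambda^{++}$, so $w$ ranges over (a subset of) $W^\lambda$, and for each such $w$ there are finitely many $v$ with $\qp^\mu v\leq\qp^{w\lambda}w$ because the interval $[\,\cdot\,,\qp^{w\lambda}w]$ is finite (\cite[Corollary 3.4]{hebert2024quantum}) — this bounds the $v$'s once $w$ is fixed. It remains to bound the $w$'s: by the same compactness/bounded-chain argument as in Step 2, or more simply because Formula~\eqref{eq: bijection non open open} identifies $\bigsqcup_{\underline{\fc}\in\mathcal H_{w,\fc}}(\cdots)$ with liftings of a $C_\infty$-friendly segment of type $-w\lambda$ retracting to some $C_\infty$-Hecke non-open path ending at $-\mu$, and by \cite[Theorem 5.1]{twinmasures} the set $\rho_{0_\A}^{-1}(\fs_\lambda)\cap\rho_{C_\infty}^{-1}(\fc) = \bigsqcup_{w\in W^\lambda}\rho_{C_0}^{-1}(\fs_{w\lambda})\cap\rho_{C_\infty}^{-1}(\fc)$ is finite with only finitely many $w$ contributing; combined with Step 1 and Step 2 this caps both $w$ and $v$, giving the claim.

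\textbf{Main obstacle.} The delicate point is Step 2 (and its reuse in Step 3): one must rule out that infinitely many $w\in W^\lambda$ give $\qp^{w\lambda}w$ above a fixed $\qp^\mu v$. Upper sets in the affine Bruhat order are genuinely infinite in general, so finiteness of intervals alone is not enough — the extra ingredient is that the chains realizing $\qp^\mu v\leq\qp^{w\lambda}w$ here arise from folding data of an open path whose underlying path has \emph{fixed} image $\im(\fc)$, forcing the walls $M_{\beta_i[n_i]}$, hence the reflections $s_{\beta_i[n_i]}$, to lie in a finite set, and simultaneously bounding the chain length (it is at most $|T(\underline{\fc})|$ times a constant, controlled by $\fc$). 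This is where one should be careful to phrase the argument in terms of \cite[Theorem 5.1]{twinmasures} — which already asserts the relevant finiteness on the segment side — rather than reproving a Bruhat-order finiteness statement from scratch. With that reduction in hand, Steps 1 and 3 are bookkeeping.
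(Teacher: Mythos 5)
Your Step 1 is essentially sound once it is phrased correctly, but note that fixing the base point $\fc(1)=-\mu$ does \emph{not} by itself bound the end alcove to finitely many values: the special alcoves based at $-\mu$ are the $C_{\qp^\mu v}$ with $v$ ranging over $W^v$, which is infinite. The correct bound is the one the paper uses: by Proposition~\ref{Proposition: open paths give chains} one has $\underline{\fc}(1)=C_\by$ with $\by\leq \qp^{w\lambda}w$, and by \cite[Corollary 2.12]{philippe2023grading} also $\qp^\mu v^\mu\leq\by$, so $\by$ lies in the finite interval $[\qp^\mu v^\mu,\qp^{w\lambda}w]$.

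The genuine gap is in Step 2, precisely at the point you flag as the main obstacle. Your proposed resolution — that $\im(\fc)$ is compact, hence only finitely many affine roots $\beta_i[n_i]$ can occur in a folding datum — is false in the infinite Kac--Moody setting: infinitely many affine walls can pass through a single point (all $M_{\beta[0]}$, $\beta\in\Phi_+$, contain $0_\A$), so compactness of the image gives no control on the set of reflections, and consequently no bound on the set of elements $\qp^{w\lambda}w$ reachable above $\qp^\mu v^\mu$. The paper's mechanism is entirely different: it invokes the $\ZZ$-valued length $\ell^a$ of Muthiah (strictly compatible with the affine Bruhat order by \cite[Theorem 1.1]{muthiah2019bruhat}); the inequality $\qp^\mu v^\mu\leq\qp^{w\lambda}w$ then forces $\ell^a(\qp^\mu v^\mu)\leq\ell^a(\qp^{w\lambda}w)$, which by the explicit formula \cite[(2.8)]{philippe2023grading} is equivalent to $\ell(w)\leq\ell^a(\qp^\lambda)+\ell^a(v^\mu)-\ell^a(\qp^\mu)$, and a finitely generated Coxeter group has finitely many elements of bounded length. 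This is the missing ingredient in your plan. For Step 3 the paper also avoids your difficulty by a trick you do not use: $\qp^{w\lambda}w\leq\qp^\lambda$ for all $w\in W^\lambda$, so every pair contributing a nonzero $R_{\qp^{w\lambda}w,\qp^\mu v}$ lies inside the single finite interval $[\qp^\mu v^\mu,\qp^\lambda]$. Finally, your fallback on \cite[Theorem 5.1]{twinmasures} inverts the paper's logic — the lemma is used there precisely to \emph{recover} that finiteness on the masure side — and would in any case require an additional argument (that every open path in $\cH_{\fc,w}$ admits at least one lift) which you do not supply.
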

\begin{proof}

In this proof, we use the length $\ell^a:W^+\rightarrow \Z$ introduced by Muthiah in \cite[5]{muthiah2018iwahori}. It is also defined in \cite[(1.19)]{philippe2023grading}.
    
    1.,2. Let $\fc$ be a  $C^\infty$ non-open path of type $\lambda$ and ending at $-\mu$. Let $w\in W^\lambda$ and assume that $\cH_{\fc,w}$ is non-empty. Then $\underline{\fc}(1)$ is based at $\mu$. Combining Proposition~\ref{Proposition: open paths give chains} and \cite[Corollary 2.12]{philippe2023grading}, we get: \[ \qp^\mu v^\mu\leq \underline{\fc}(1)\leq \qp^{w\lambda}w,\]
where $v^\mu$ is the minimal element such that $\mu = v^\mu \mu^{++}$. Using the finiteness of the intervals for the Bruhat order ( \cite[Corollary 3.4]{hebert2024quantum}), we deduce 1. Moreover by \cite[Theorem 1.1]{muthiah2019bruhat}, $\ell^a$ is compatible with $\leq$ and thus  we have $\ell^a(\qp^\mu v^\mu)\leq \ell^a(\qp^{w\lambda}w)$. Using \cite[(2.8)]{philippe2023grading} this is equivalent to:  \[\ell(w)\leq \ell^a(\qp^\lambda)+\ell^a(v^\mu)-\ell^a(\qp^\mu).\] Since $W^v$ admits finitely many simple reflections, the set $\{x\in W^\lambda \mid \ell(x)\leq \ell^a(\qp^\lambda)+\ell^a(v^\mu)-\ell^a(\qp^\mu)\}$ is finite, from which we deduce 2.

    \item The polynomial $R_{\qp^{w\lambda}w,\qp^\mu v}$ is non zero if and only if there is a $C_\infty$-Hecke open path of type $\qp^{w\lambda}w$ ending at $C_{\qp^\mu v}$, which is not possible if $\qp^\mu v \not\leq \qp^{w\lambda}w$. Since $\qp^\mu W^v$ is bounded below by $\qp^\mu v^\mu$, and $\qp^{w\lambda}w$ is bounded above by $\qp^\lambda$, by finiteness of the interval between these two elements for the affine Bruhat order again, we deduce this third finiteness result.

\end{proof}
From the third point of this lemma, we also obtain an alternative description of the spherical $R$-polynomial $R^K_{\mu,\lambda}$ introduced by Muthiah (see \cite[Definition 5.55]{muthiah2019double}). It is defined as $$R^K_{\mu,\lambda}(q)=\sum\limits_{\fc \in \cC^\infty_\lambda(\mu)} |\rho_{0_\A}^{-1}(\fs_{\lambda})\cap \rho_{C_\infty}^{-1}(\fc)|,$$ for any twin masures $(\I_\oplus,\I_\ominus)$ of constant thickness $q$ (where $\cC^\infty_\lambda(\mu)$ denotes the set of $C_\infty$-Hecke (non-open) paths of type $\lambda$ ending at $-\mu$). Recollecting the previous arguments we deduce:

\begin{equation}\label{e_RK}
    R^K_{\mu,\lambda}(X)=\sum\limits_{w\in W^\lambda}\sum\limits_{v\in W^v} R_{\qp^{w\lambda}w,\qp^\mu v}(X).
\end{equation}
\color{black}
\subsubsection{Elementary properties of $R$-Kazhdan-Lusztig polynomials}

Our $R$-polynomials have properties quite similar to the defining properties of the classical $R$-polynomials for Coxeter groups. In the next Proposition, we state analogs of points $i)$ and $ii)$ of \cite[Theorem 5.1.1]{bjorner2005combinatorics}. In Proposition~\ref{Proposition Rpolynomialscoversregular} we compute $R$-polynomials for covers, it is the analog of a particular case of \cite[Theorem 5.1.1.iii)]{bjorner2005combinatorics}.

\begin{Proposition}\label{Prop: RPolynomials Properties}

    Let $\bx,\by \in W^+$ and suppose that $\bx$ is spherical. Then:
    \begin{enumerate}
        
        \item If $R_{\bx,\by}\neq 0$, then $\bx \geq \by$.
        \item We have $R_{\bx,\by}=1 \iff \bx=\by$.
    \end{enumerate}
\end{Proposition}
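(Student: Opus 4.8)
The two statements follow from assembling earlier results, primarily Corollary~\ref{Corollary: finiteness open hecke paths} (finiteness and the implication $\cC^\infty_\bx(\by)\neq\emptyset\Rightarrow \by\leq\bx$), Proposition~\ref{Proposition: open paths give chains} (a $C_\infty$-Hecke open path of type $\bx$ with end alcove $C_\by$ yields a Bruhat chain $\by\leq\bx$), and the explicit product formula \eqref{eq:openpaths}--\eqref{eq: DefRPolynomials} for $R_{\bx,\by}$. I would first record the positivity of the local polynomials: each factor $R^{loc,t}_{\bx,\underline{\fc}}(X)$ is, by Definition~\ref{Def: localRpolynomial} and Lemma~\ref{Lemma: twinbuildings2}, a sum of monomials $X^{l(\mathbf g)}(X-1)^{r(\mathbf g)}$ over centrifugally folded galleries, hence takes strictly positive integer values at every prime power $q$ (it counts a set of alcoves, and this set is nonempty: the trivial gallery always lifts). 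Consequently $R_{\bx,\underline{\fc}}(q)\geq 1$ for every $C_\infty$-Hecke open path $\underline{\fc}$ and every prime power $q$, and $R_{\bx,\by}(q)=\sum_{\underline{\fc}\in\cC^\infty_\bx(\by)}R_{\bx,\underline{\fc}}(q)$ is a sum of positive integers indexed by the finite set $\cC^\infty_\bx(\by)$.

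For point 1, if $R_{\bx,\by}\neq 0$ then, choosing any prime power $q$, we get $R_{\bx,\by}(q)>0$ (a polynomial with a nonzero value at some such $q$ is nonzero, but more directly: the sum defining it is a sum of nonnegative-valued polynomials, so if it is not the zero polynomial then some summand $R_{\bx,\underline{\fc}}$ is nonzero, i.e.\ $\cC^\infty_\bx(\by)\neq\emptyset$). By Corollary~\ref{Corollary: finiteness open hecke paths}, nonemptiness of $\cC^\infty_\bx(\by)$ forces $\by\leq\bx$. For point 2, the implication $\bx=\by\Rightarrow R_{\bx,\by}=1$ is the easy direction: when $\by=\bx$, the open segment $\underline{\fs}_\bx$ itself lies in $\cC^\infty_\bx(\bx)$ (with trivial folding data), and I must check it is the \emph{only} element, and that $R_{\bx,\underline{\fs}_\bx}=1$. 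The latter holds because for the unfolded open segment, at each $t\in T(\underline{\fc})$ the relevant galleries have $r(\mathbf g)=l(\mathbf g)=0$ (no foldings), so each local factor is $1$; alternatively, $|\rho_{C_0}^{-1}(\underline\fs_\bx)\cap\rho_{C_\infty}^{-1}(\underline\fs_\bx)|=1$ since the only lift of $\underline{\fs}_\bx$ compatible with both retractions is $\underline{\fs}_\bx$ itself (apply Proposition~\ref{Prop: local description bijection}: each $\cC(\underline{\fc},\bx,t)$ is a singleton because $w^\infty_{\underline\fs_\bx,t}$ and $w^-_{\bx,t}$ are forced). Uniqueness of $\underline{\fs}_\bx$ in $\cC^\infty_\bx(\bx)$: any $\underline{\fc}\in\cC^\infty_\bx(\bx)$ has a folding data $((t_i,\beta_i[n_i]))_{i\in\llbracket0,r\rrbracket}$ producing by \eqref{eq: openchain1} a chain $\bx=s_{\beta_r[n_r]}\cdots s_{\beta_0[n_0]}\bx<\cdots<\bx$; since the affine Bruhat order is antisymmetric (Muthiah, cited after the definition of the affine Bruhat order), a strictly decreasing chain from $\bx$ to $\bx$ must be trivial, so $r=-1$ and $\underline{\fc}=\underline{\fs}_\bx$.

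For the converse in point 2, suppose $R_{\bx,\by}=1$. Then $\cC^\infty_\bx(\by)$ is nonempty (else $R_{\bx,\by}=0$), so $\by\leq\bx$ by point 1. Since $R_{\bx,\by}(X)=\sum_{\underline{\fc}\in\cC^\infty_\bx(\by)}R_{\bx,\underline{\fc}}(X)$ is a sum of polynomials each of which is $\geq 1$ at every prime power, and the total equals the constant $1$, there must be exactly one open path $\underline{\fc}$ in $\cC^\infty_\bx(\by)$ and $R_{\bx,\underline{\fc}}=1$; in particular every local factor $R^{loc,t}_{\bx,\underline{\fc}}$ equals $1$, so by Lemma~\ref{Lemma: twinbuildings2} no gallery involved in any $\cC(\underline{\fc},\bx,t)$ has a fold ($r(\mathbf g)=0$) and $l(\mathbf g)=0$, which by the structure of the folding data means $\underline{\fc}$ is obtained from $\underline{\fs}_\bx$ with empty folding data, i.e.\ $\underline{\fc}=\underline{\fs}_\bx$ and $\by=\bx$. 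The point I expect to require the most care is this last step, namely squeezing out from "$R_{\bx,\underline{\fc}}=1$" the conclusion that the folding data is trivial — one needs that a nontrivial positive folding strictly enlarges at least one local set $\cC(\underline{\fc},\bx,t)$ or forces a fold with $r(\mathbf g)\geq 1$, giving a factor that is $\geq 2$ at large $q$ or divisible by $(X-1)$ and hence not identically $1$. If a fully clean argument at the level of galleries is awkward, an acceptable alternative is to argue via the chain \eqref{eq: openchain1}: $R_{\bx,\by}=1$ with a unique path whose polynomial is $1$ means there are no "extra" lifts, and combined with the fact (from Proposition~\ref{Proposition: open paths give chains}) that a nontrivial folding data produces a strictly shorter element $s_{\beta_0[n_0]}\bx<\bx$, forcing $\by<\bx$ strictly, which contradicts the need for $\underline{\fc}$ to have end alcove $C_\by$ with $\by=\bx$ unless $r=-1$.
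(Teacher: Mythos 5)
Your handling of point 1 and of the converse in point 2 is essentially the paper's argument, and your observation that antisymmetry of the affine Bruhat order forces $\cC^\infty_\bx(\bx)=\{\underline{\fs}_\bx\}$ is a useful explicit detail. However, there is a real gap in the forward direction of point 2 — you call it "the easy direction," but it is in fact the part that requires care.

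You claim that for the unfolded open segment $\underline{\fs}_\bx$ one has $r(\mathbf g)=l(\mathbf g)=0$ "because there are no foldings." The absence of folds gives $r(\mathbf g)=0$, but it says nothing about $l(\mathbf g)$: in Lemma~\ref{Lemma: twinbuildings2}, $l(\mathbf g)$ counts thick panels that the gallery \emph{crosses} (not folds along) in the direction towards $C_-$, and this is generically nonzero even for a minimal gallery. The same objection kills your alternative argument that each $\cC(\underline{\fs}_\bx,\bx,t)$ is a singleton because the parameters $w^\infty_{\underline{\fs}_\bx,t}$ and $w^-_{\bx,t}$ "are forced": those parameters are always determined by the data, so this does not constrain the cardinality; in a thick building the set $\cC(\underline{\fs}_\bx,\bx,1)$ has cardinality $q^{l(\mathbf g)}$, and one must actually prove $l(\mathbf g)=0$. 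At $t=0$ the computation is easy because $C_0$ and $C^\infty_{0_\A}$ are opposite, so one lands in the classical $R_{w,w}=1$ case. At $t=1$ the alcoves $D^-_{\bx,1}$ and $C^\infty_{-\lambda}$ are not opposite, and the paper must argue geometrically: since $\mathbf g$ is minimal, the walls it crosses are crossed away from its starting alcove $D^-_{\bx,1}=\pr_{\fs_{\lambda,-}(1)}(C_\bx)$, and none of these walls contains $\fs_{\lambda,-}(1)$; since $C^{++}_{-\lambda}$ also dominates $\fs_{\lambda,-}(1)$, any half-apartment containing $D^-_{\bx,1}$ with bounding wall not containing $\fs_{\lambda,-}(1)$ also contains $C^{++}_{-\lambda}$, so all walls are crossed away from $C^{++}_{-\lambda}$, giving $l(\mathbf g)=0$. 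This step has no substitute in your proposal and is needed to conclude $R^{loc,1}_{\bx,\underline{\fs}_\bx}=1$. A minor further point: in the converse direction, your "acceptable alternative" via chains is circular as written (concluding $\by<\bx$ does not contradict anything by itself); your primary argument via divisibility by $(X-1)$ is the correct one and is what the paper uses.
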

\begin{proof} If $R_{\bx,\by}$ is non-zero, then there is a $C_\infty$-Hecke open path of type $\bx$ with end alcove $C_\by$ which implies, by Proposition~\ref{Proposition: open paths give chains} 2., that $\by\leq\bx$.
   
    Let $\bx=\qp^\lambda w\in W^+_{sph}$. The unique $C_\infty$-Hecke open path of type $\bx$ ending at $C_\bx$ is the open segment $\underline\fs_\bx$, and $T(\underline\fs_\bx)=\{0,1\}$. Since $C^\infty_{0_\A}$ and $C_0$ are opposite in $\cT^\pm_{0_\A}(\I_\oplus)$, we have $\cC(\underline{\fs_\bx},\bx,0)=\{D^+_{\bx,0}\}$, hence $R^{loc,0}_{\bx,\underline{\fs}_\bx}=1$. It remains to check that $R^{loc,1}_{\bx,\underline{\fs}_\bx}=1$. Let $\mathbf i$ be the type of a reduced expression for $w^-_{\bx,1}=d^+(D^-_{\bx,1},C_\bx)$. There is a unique gallery $\mathbf g$ from $D^-_{\bx,1}$ to $C_\bx=\underline{\fs}_\bx(1)$ of type $\mathbf i$, and it is a minimal gallery. Therefore $R^{loc,1}_{\bx,\underline{\fs}_\bx}(X)=X^{l(\mathbf g)}$ where $l(\mathbf g)$ is the number of affine walls crossed by $\mathbf g$ away from $C^\infty_{-\lambda}$ or, equivalently, towards $C^{++}_{-\lambda}$, its opposite in $\cT^\pm(\A)$. 
    
    Let us check that $l(\mathbf g)=0$. On the one hand since $\mathbf g$ is minimal, the walls it crosses are crossed away from its initial alcove $D^-_{\bx,1}$ and since $D^-_{\bx,1}=\pr_{\fs_{\lambda,-}(1)}(C_\bx)$, it does not cross any wall containing the segment germ $\fs_{\lambda,-}(1)$. Since the non-open component of $\underline{\fs}_\bx$ is the segment $\fs_\lambda$ from $0_\A$ to $-\lambda$, we have $C^{++}_{-\lambda}=\pr_{\fs_{\lambda,-}(1)}(C_0)$ which dominates the segment germ $\fs_{\lambda,-}(1)$. Therefore a half-apartment containing $D^-_{\bx,1}$ also contains $C^{++}_{-\lambda}$, unless the affine wall delimiting it contains $\fs_{\lambda,-}(1)$. Since none of the walls crossed by $\mathbf g$ contain $\fs_{\lambda,-}(1)$ and since they are all crossed away from $D^-_{\bx,1}$, they are also crossed away from $C^{++}_{-\lambda}$, so $l(\mathbf g)=0$ and $R^{loc,1}_{\bx,\underline{\fs}_\bx}=1$.

       Conversely, suppose that $R_{\bx,\by}=1$. Then there is a unique $C_\infty$-Hecke open path of type $\by$ with end alcove $C_\bx$. Since any folding induces a factor $X-1$, it can not be folded. Therefore this Hecke open path is just the segment of type $\by$, and $\by=\bx$.

    \end{proof}

We will now compute $R$-polynomials associated to covers, the result is the same as for classical $R$-Kazhdan-Lusztig polynomials (see~\ref{Proposition Rpolynomialscoversregular}). The computation relies on several technical lemma, and on the classification of covers given in \cite[Proposition 3.20]{philippe2023grading}.
    \begin{Lemma}\label{Lemma: separating wall condition}
        Let $\fc_+(t)=\fc(t)+[0,0^+)\fc'_+(t)$ be a segment germ such that $\fc(t),\fc'_+(t)\in -\cT$ and let $\gamma \in \Phi$. \begin{enumerate}
            \item If $\langle \fc(t),\gamma\rangle\neq 0$, then the (affine or ghost) wall of direction $\gamma$ going through $\fc(t)$ separates $C^\infty_{\fc(t)}$ from $\fc_+(t)$ if and only if $\langle \fc(t),\gamma\rangle \langle \fc'_+(t),\gamma\rangle <0$.
            \item If $\langle \fc(t),\gamma\rangle =0$, then the affine wall of direction $\gamma$ going through $\fc(t)$ separates $C^\infty_{\fc(t)}$ from $\fc_+(t)$ if and only if $\sgn(\gamma)\langle \fc'_+(t),\gamma\rangle >0$.
            
        \end{enumerate}
    
    \end{Lemma}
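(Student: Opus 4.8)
The statement is purely local at the point $a=\fc(t)$, so the plan is to work inside the tangent twin apartment $\cT^\pm_a(\A)$ and to recall the explicit description of $C^\infty_a$ given by Lemma~\ref{l_explicit_C++}. Write $a=-v^a a^{++}$ with $v^a\in W^v$ of minimal length and $a^{++}\in\overline{C^v_f}$; then $C^\infty_a=a(a,-v^a)$. A segment germ $\fc_+(t)=a+[0,0^+)\xi$ with $\xi:=\fc'_+(t)\in -\cT$ is dominated by an alcove whose vectorial direction contains $\xi$; the (affine or ghost) wall $M$ of direction $\gamma$ through $a$ separates $C^\infty_a$ from $\fc_+(t)$ exactly when the open half-spaces $\{x\mid \langle x-a,\gamma\rangle>0\}$ and $\{x\mid \langle x-a,\gamma\rangle<0\}$ contain, respectively, one of $-v^a.C^v_f$ (the direction of $C^\infty_a$) and $\xi$, on opposite sides. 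So the computation reduces to comparing the sign of $\langle \xi,\gamma\rangle$ with the side of $M$ on which the local fundamental-opposite chamber $-C^v_{v^a}$ sits, i.e.\ with the sign of $\langle z,\gamma\rangle$ for $z$ in $-C^v_{v^a}$.

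First I would treat case (1), where $\langle a,\gamma\rangle\neq 0$, so $M$ is an honest wall (affine or ghost) not through $0_\A$ and in particular $-v^a\notin$ (fixator of $\gamma$-data is irrelevant). The key observation is that $-\langle a,\gamma\rangle$ and the "side of $0_\A$'' coincide: since $0_\A=a+v^a a^{++}$ and $M$ does not pass through $0_\A$, the point $0_\A$ lies on the side where $\langle x-a,\gamma\rangle$ has the sign of $\langle v^a a^{++},\gamma\rangle$. I would then show $\langle v^a a^{++},\gamma\rangle$ has the same sign as the value of $\gamma$ on the direction $-C^v_{v^a}$ of $C^\infty_a$ is the \emph{opposite}, because $C^\infty_a$ is the alcove opposite to $C^{++}_a=\pr_a(C_0)$, and $C^{++}_a$ is the one on the side of $0_\A$ (this is exactly the content of Lemma~\ref{l_explicit_C++} and the paragraph following it: $C^{++}_x$ is on the positive side of every wall through $x$, where "positive'' means the side of $C_0$, hence of $0_\A$). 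Thus $M$ separates $C^\infty_a$ from $\fc_+(t)$ iff $\langle\xi,\gamma\rangle$ has the sign of $-\langle a,\gamma\rangle$, i.e.\ iff $\langle a,\gamma\rangle\langle\xi,\gamma\rangle<0$, which is the assertion. One has to be a little careful that $\langle\xi,\gamma\rangle\neq0$ is not assumed — but if $\langle\xi,\gamma\rangle=0$ then $\fc_+(t)\subset M$ and $M$ does not \emph{separate} anything from it, consistent with the strict inequality failing; I would note this.

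For case (2), $\langle a,\gamma\rangle=0$, so $a\in M$ and $M$ is an affine wall through $a$ and through $0_\A$; here the side of $0_\A$ is no longer the deciding data and one must instead use the sign convention on $\gamma$ built into $\beta[n]$. Now $C^\infty_a=a(a,-v^a)$ with $s_\gamma a=a$, so $v^a$ can be taken inside a coset where the relevant reflection behaves predictably; by the geometric interpretation of the affine Bruhat order recalled in §\ref{subsection: affine bruhat order apartment} (the bullet "if $n=0$ and $-\lambda\in M_{\beta[0]}$''), the side of $M=M_{\gamma[0]}$ containing $C^{++}_a$ is the side of $C_0$, determined by $(v^a)^{-1}(\gamma)>0$ when $\gamma\in\Phi_+$; so $C^\infty_a$, being opposite, is on the side where $\gamma$ evaluated on its direction $-C^v_{v^a}$ has sign $-\sgn(\gamma)$ in the obvious bookkeeping. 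Matching this against $\langle\xi,\gamma\rangle$ gives: $M$ separates $C^\infty_a$ from $\fc_+(t)$ iff $\sgn(\gamma)\langle\xi,\gamma\rangle>0$. I expect the main obstacle to be precisely this sign bookkeeping in case (2): keeping straight the interaction between the convention $\sgn(0)=+1$, the definition of positivity for the affine root $\gamma[0]$, the "opposite alcove'' flip between $C^{++}_a$ and $C^\infty_a$, and the left-vs-right action subtlety flagged in the Remark after the vectorial Bruhat interpretation. The safe route is to reduce to $\gamma\in\Phi_+$ (replacing $\gamma$ by $-\gamma$ only flips both sides of the claimed equivalence, using $\sgn(-\gamma)\langle\xi,-\gamma\rangle=\sgn(\gamma)\langle\xi,\gamma\rangle$ once one fixes the convention), and then invoke the already-established bullet-point description of the positive side of $M_{\beta[0]}$ verbatim.
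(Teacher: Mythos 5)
Your plan is correct and takes essentially the same route as the paper. Both cases reduce, via Lemma~\ref{l_explicit_C++}, to locating which side of the wall $C^\infty_{\fc(t)}$ sits on and comparing with the sign of $\langle\fc'_+(t),\gamma\rangle$; your reduction to $\gamma\in\Phi_+$ in case (2), and your remark that $\langle\fc'_+(t),\gamma\rangle=0$ is the degenerate situation where $\fc_+(t)\subset M$, both appear in the paper. The only cosmetic difference is in case (1): you argue via $0_\A$ lying in the sector of $C^{++}_a$ so that $C^\infty_a$ is on the opposite side, while the paper observes directly that $C^\infty_{\fc(t)}$ dominates the germ $\fc(t)+[0,0^+)\fc(t)$, whose direction $\fc(t)$ pairs with $\gamma$ with the sign of $\langle\fc(t),\gamma\rangle$ -- these are the same fact phrased two ways.
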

\begin{proof}
\begin{enumerate}
    \item Let $n=-\langle \fc(t),\gamma\rangle$, so that $\fc(t)\in M_{\gamma[n]}$, and suppose $n\neq 0$. If $\langle \fc'_+(t),\gamma\rangle =0$ then the wall $M_{\gamma[n]}$ contains the segment germ $\fc_+(t)$, hence this segment germ is automatically on both sides of $M_{\gamma[n]}$, so there is nothing to prove. Let us suppose that it is not the case. By definition, $C^\infty_{\fc(t)}$ dominates the segment germ $\fc(t)+[0,0^+)\fc(t)$. Therefore the side of $M_{\gamma[n]}$ containing $C^\infty_{\fc(t)}$ contains this segment germ, and since $\langle\fc(t),\gamma\rangle \neq 0$ it is the only side of $M_{\gamma[n]}$ containing this segment germ. Therefore $C^\infty_{\fc(t)}$ and the segment germ $\fc_+(t)$ are separated by $M_{\gamma[n]}$ if and only if the respective directions $\fc(t)$ and $\fc'_+(t)$ of $\fc(t)+[0,0^+)\fc(t)$ and $\fc_+(t)$ are separated by the vectorial wall $M_\gamma$, if and only if $\langle \fc(t),\gamma\rangle$ and $\langle\fc'_+(t),\gamma\rangle $ have opposite sign, hence we get the first point.

    \item Suppose now that $\langle \fc(t),\gamma\rangle =0$. Up to replacing $\gamma$ by $-\gamma$, we suppose that $\gamma$ is positive. By definition $C^\infty_{\fc(t)}$ and the fundamental alcove $C_0$ are on opposite sides of $M_{\gamma[0]}$ (because this wall goes through $\fc(t)$), so $\fc_+(t)$ and $C^\infty_{\fc(t)}$ are on opposite sides if and only if $\fc_+(t)$ is on the same side of $M_{\gamma[0]}$ as $C_0$).  Since $\gamma$ is positive, $\langle C_0,\gamma\rangle >0$, so $C_0$ and $\fc_+(t)$ are on the same side of $M_{\gamma[0]}$ if and only if $\langle \fc'_+(t),\gamma\rangle >0$, which concludes point 2.
    
\end{enumerate}\end{proof}

    \begin{Lemma}\label{Lemma: length additivity inversion sets}
    Let $v_1,v_2\in W^v$ be such that $\ell(v_1v_2)=\ell(v_1)+\ell(v_2)$. Then $$\Inv(v_2^{-1})\cap \Inv(v_1)=\emptyset.$$ 
        
    \end{Lemma}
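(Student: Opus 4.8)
The statement is a basic fact about Coxeter groups: if $\ell(v_1v_2)=\ell(v_1)+\ell(v_2)$ then no positive root $\alpha$ can lie simultaneously in $\Inv(v_2^{-1})$ and $\Inv(v_1)$. Recall that $\Inv(w)=\{\alpha\in\Phi_+\mid w(\alpha)\in\Phi_-\}$. The cleanest route is through the standard formula relating the length of a product to inversion sets, namely $\Inv(v_1v_2)=\Inv(v_2^{-1})\,\triangle\, v_2^{-1}\Inv(v_1)$ interpreted appropriately, or more concretely: $\ell(v_1v_2)=\ell(v_1)+\ell(v_2)$ holds if and only if $v_2^{-1}(\Inv(v_1))\subset \Phi_+$, equivalently $\Inv(v_1)\cap \Inv(v_2^{-1})=\emptyset$ once one tracks signs carefully. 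So the plan is essentially to unwind the definitions and invoke the length-additivity criterion.

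\textbf{Key steps.} First I would recall the characterization: for $u,w\in W^v$, one has $\ell(uw)=\ell(u)+\ell(w)$ if and only if $w^{-1}(\alpha)\in\Phi_+$ for every $\alpha\in \Inv(u)$ (this is, for instance, the content of the exchange/deletion machinery; it can also be phrased via $\Inv(uw)\supseteq \Inv(w)$ and $\ell(uw)=|\Inv(uw)|$). Applying this with $u=v_1$, $w=v_2$: length additivity gives $v_2^{-1}(\alpha)\in\Phi_+$ for all $\alpha\in\Inv(v_1)$. Now suppose, for contradiction, that some $\alpha\in \Inv(v_1)\cap\Inv(v_2^{-1})$. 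Then $\alpha\in\Inv(v_2^{-1})$ means by definition $v_2^{-1}(\alpha)\in\Phi_-$, directly contradicting the previous sentence. Hence the intersection is empty. That is the whole argument; the only care needed is to make sure the sign conventions in the definition $\Inv(w)=\{\alpha\in\Phi_+\mid ws_\alpha<w\}=\{\alpha\in\Phi_+\mid w(\alpha)\in\Phi_-\}$ match the convention used for the contragredient (right) action of $W^v$ on $\Phi$, which was fixed earlier in the excerpt.

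\textbf{Main obstacle.} There is no serious obstacle — the result is standard Coxeter combinatorics and the proof is two lines once the correct form of the length-additivity criterion is quoted. The only thing to be slightly attentive to is whether the paper wants a self-contained derivation of that criterion or is happy to cite it; given the level of the surrounding material I would simply cite a standard reference (e.g. Björner--Brenti) or derive it in one line from $\ell(w)=|\Inv(w)|$ together with $\Inv(uw)=\{\alpha\in\Phi_+\mid w^{-1}(\alpha)\in\Phi_-\}\cup\{\alpha\in\Phi_+\mid w^{-1}(\alpha)\in\Phi_+,\ uw^{-1}\!... \}$ — actually cleaner is to use that $\Inv(v_1v_2)$ has size $\ell(v_1)+\ell(v_2)$ and contains $\Inv(v_2^{-1})$ would be the wrong inclusion, so I would instead phrase it via $v_2\cdot\Inv((v_1v_2)^{-1})$; in any case I would present the short contradiction argument above after quoting the criterion in the form ``$\ell(v_1v_2)=\ell(v_1)+\ell(v_2)\iff v_2^{-1}\Inv(v_1)\subseteq\Phi_+$''.

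\begin{proof}
Recall that for $w\in W^v$ we have $\Inv(w)=\{\alpha\in\Phi_+\mid w(\alpha)\in\Phi_-\}$ and $\ell(w)=|\Inv(w)|$. It is a standard fact (see e.g. \cite[Chapter 1]{bjorner2005combinatorics}) that for $v_1,v_2\in W^v$,
\[
\ell(v_1v_2)=\ell(v_1)+\ell(v_2)\iff v_2^{-1}\big(\Inv(v_1)\big)\subseteq\Phi_+.
\]
Assume $\ell(v_1v_2)=\ell(v_1)+\ell(v_2)$ and suppose, for contradiction, that there exists $\alpha\in\Inv(v_2^{-1})\cap\Inv(v_1)$. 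Since $\alpha\in\Inv(v_2^{-1})$, by definition $v_2^{-1}(\alpha)\in\Phi_-$. But $\alpha\in\Inv(v_1)$ together with the criterion above forces $v_2^{-1}(\alpha)\in\Phi_+$, a contradiction. Hence $\Inv(v_2^{-1})\cap\Inv(v_1)=\emptyset$.
\end{proof}
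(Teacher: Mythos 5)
Your argument is correct, but it takes a different route from the paper, and the difference is worth noting. The paper gives a genuinely self-contained two-line proof: from $\gamma\in\Inv(v_2^{-1})\cap\Inv(v_1)$ it reads off, via the characterization $\Inv(w)=\{\alpha\in\Phi_+\mid ws_\alpha<w\}$, that $\ell(v_1s_\gamma)<\ell(v_1)$ and $\ell(s_\gamma v_2)<\ell(v_2)$, and then the triangle inequality applied to the factorization $v_1v_2=(v_1s_\gamma)(s_\gamma v_2)$ gives $\ell(v_1v_2)<\ell(v_1)+\ell(v_2)$, a contradiction. You instead quote the criterion ``$\ell(v_1v_2)=\ell(v_1)+\ell(v_2)\iff v_2^{-1}\big(\Inv(v_1)\big)\subseteq\Phi_+$'' and then unwind the definition of $\Inv(v_2^{-1})$. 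Be aware that, once definitions are unwound, the right-hand side of that criterion \emph{is} the statement $\Inv(v_1)\cap\Inv(v_2^{-1})=\emptyset$, so your ``proof'' is really a citation of an equivalent reformulation rather than a derivation; the contradiction step adds nothing beyond the translation. That is perfectly acceptable if the goal is to record that the lemma is standard, but the paper's version actually \emph{proves} it from the subadditivity of $\ell$ and the reflection characterization of inversion sets, which is what you would want if challenged to derive the criterion you cite. If you prefer the root-theoretic language, the honest self-contained route is the cocycle computation $\ell(v_1v_2)=\ell(v_1)+\ell(v_2)-2\,|\Inv(v_1)\cap\Inv(v_2^{-1})|$, from which the lemma is immediate; the paper's triangle-inequality argument is a shortcut avoiding even that computation.
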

\begin{proof}
    Suppose that there exists $\gamma\in \Inv(v_2^{-1})\cap\Inv(v_1)$. Then $v_1s_\gamma <v_1$ and $v_2^{-1}s_\gamma<v_2^{-1}$ (so $s_\gamma v_2< v_2$).
     Therefore, by the triangular inequality, $$\ell(v_1v_2)=\ell( (v_1 s_\gamma)(s_\gamma v_2))\leq \ell(v_1s_\gamma)+\ell(s_\gamma v_2)<\ell(v_1)+\ell(v_2).$$
\end{proof}

\begin{Lemma}\label{Lemma: Rpol distinct coweight case}
    Let $\bx,\by\in W^+$ be such that $\by$ is covered by $\bx$. We assume that $\by=\qp^{v\lambda}w$ and $\bx=s_{v(\beta)[n]}\by=\qp^{v(s_\beta \lambda+n\beta^\vee)}s_{v(\beta)}w$, with $\lambda\in Y^{++}$, $v\in W^\lambda$, $w\in W^v$,  $\beta[n]\in \Phi_+^a$ and $n\in \{-1,\langle \lambda,\beta\rangle +1\}$.  Let $t_0=\frac{|n|}{\langle \lambda,\beta\rangle+2}$. Let $\underline{\fc}=\phi_{t_0,v(\beta)[n]}(\underline{\fs}_\bx)$. Then $T(\underline{\fc})=\{0,t_0,1\}$ and $R^{loc,t_0}_{\bx,\underline{\fc}}=X-1$. 
\end{Lemma}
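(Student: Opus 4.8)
The plan is to unwind the definitions, locate the single bend, then treat $T(\underline{\fc})$ and $R^{loc,t_0}_{\bx,\underline{\fc}}$ in turn. First I would set $\mu:=\pr^{Y^+}(\bx)=v(s_\beta\lambda+n\beta^\vee)=vs_\beta(\lambda-n\beta^\vee)$, so that the underlying path of $\underline{\fs}_\bx$ is $\fs_\mu\colon t\mapsto -t\mu$. The value $t_0=|n|/(\langle\lambda,\beta\rangle+2)$ is exactly the time at which $\fs_\mu(t_0)=-t_0\mu$ lies on $M_{v(\beta)[n]}$, and since $|n|\in\{1,\langle\lambda,\beta\rangle+1\}$ and $\langle\lambda,\beta\rangle\geq 0$ (as $\lambda\in Y^{++}$, $\beta\in\Phi_+$) we get $0<t_0<1$. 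As $\fc(t_0)\in M_{v(\beta)[n]}$, the affine reflection $s_{v(\beta)[n]}=\qp^{nv(\beta^\vee)}s_{v(\beta)}\in W^v\ltimes Y$ fixes $\fc(t_0)$, and I would then compute $\fc(1)=s_{v(\beta)[n]}(-\mu)=-v\lambda$ and $\underline{\fc}(1)=s_{v(\beta)[n]}.C_\bx=C_{\qp^{v\lambda}w}=C_\by$, recovering (via Proposition~\ref{Proposition: open paths give chains}) that the end alcove is special. I would also record that $\langle\mu,v(\beta)\rangle=2n-\langle\lambda,\beta\rangle$ is $<0$ if $n=-1$ and $>0$ if $n=\langle\lambda,\beta\rangle+1$, so that $\fc'_-(t_0)=-\mu\neq -s_{v(\beta)}\mu=\fc'_+(t_0)$: the path genuinely bends at $t_0$, whence $t_0\in T^f(\fc)\subseteq T(\underline{\fc})$; and since $t_0\notin T(\underline{\fs}_\bx)$ while $\langle-\mu,v(\beta)\rangle\neq0$, the germ $\fs_{\mu,+}(t_0)$ lies strictly on the side of $M_{v(\beta)[n]}$ not containing $C_0$, so $D^+_{\bx,t_0}$ does too and the fold $\phi_{t_0,v(\beta)[n]}$ is positive, making $\underline{\fc}$ a $C_\infty$-Hecke open path of type $\bx$. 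Together with $0,1\in T(\underline{\fc})$ this gives $\{0,t_0,1\}\subseteq T(\underline{\fc})$.

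For the reverse inclusion I would use that $C^{++}_{\fc(t)}$ lies on the positive side of every (affine or ghost) wall through $\fc(t)$, hence $C^\infty_{\fc(t)}$ on the negative side of all of them; so an affine wall through $\fc(t)$ separates $C^\infty_{\fc(t)}$ from $\fc_+(t)$ precisely when $\fc_+(t)$ lies strictly on its positive side. On $(0,t_0)$, where $\fc$ agrees with $\fs_\mu$, Lemma~\ref{Lemma: separating wall condition} gives separation only if $\langle\fc(t),\gamma\rangle\langle\fc'_+(t),\gamma\rangle=t\langle\mu,\gamma\rangle^2<0$, which never occurs (the computation already made for $\underline{\fs}_\bx$ in the proof of Proposition~\ref{Prop: RPolynomials Properties}). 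On $(t_0,1)$, $\fc|_{[t_0,1]}$ is the segment from $-t_0\mu\in-\sT$ to $-v\lambda\in-\sT$, so $\fc(t)\in-\sT$ and $\fc'_+(t)=-s_{v(\beta)}\mu\in-\sT$ (the Tits cone being $W^v$-stable), and Lemma~\ref{Lemma: separating wall condition} applies again; writing $h_\gamma(t)=\langle\fc(t),\gamma\rangle=-n\langle v(\beta^\vee),\gamma\rangle-t\langle s_{v(\beta)}\mu,\gamma\rangle$, separation at a time with $h_\gamma(t)\in\Z\setminus\{0\}$ would force $h_\gamma(t)$ and $\langle s_{v(\beta)}\mu,\gamma\rangle$ to have the same sign, while $h_\gamma(t_0)=-t_0\langle\mu,\gamma\rangle$ and $h_\gamma(1)=-\langle v\lambda,\gamma\rangle$, and since $h_\gamma$ is affine I would exclude this by a sign comparison of $\langle\mu,\gamma\rangle$, $\langle v\lambda,\gamma\rangle$ and $\langle s_{v(\beta)}\mu,\gamma\rangle$ for $\gamma\in\Phi$. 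Here the extremality of $n$ is essential (for an intermediate value the bend enlarges $T(\underline{\fc})$), and I expect the sign analysis to be cleanest when organized through the classification of covers with differing coweights, \cite[Proposition 3.20]{philippe2023grading}, which pins down $\beta$, $v$ and $w$. This finishes $T(\underline{\fc})=\{0,t_0,1\}$.

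To compute $R^{loc,t_0}_{\bx,\underline{\fc}}$ I would invoke Theorem~\ref{Theorem : number of lifts spherical}: $R^{loc,t_0}_{\bx,\underline{\fc}}=R^{D^-_{t_0},\,C^\infty_{\fc(t_0)}}_{w^-_{\bx,t_0},\,w^\infty_{\underline{\fc},t_0},\,\mathcal P_{\fc(t_0)}}$ with $w^-_{\bx,t_0}=w_{\mu,0}$. Working in the model twin apartment $\cT^\pm_{\fc(t_0)}(\A)$, write $D^-_{\bx,t}=a(\fc(t),+u)$ for the (constant on $(0,1]$) decoration of $\underline{\fs}_\bx$ and $C^\infty_{\fc(t_0)}=a(\fc(t_0),-v^\mu)$; then $D^+_{\bx,t_0}=a(\fc(t_0),-uw_{\mu,0})$, $D^-_{t_0}=D^-_{\bx,t_0}$, $D^+_{t_0}=s_{v(\beta)[n]}.D^+_{\bx,t_0}=a(\fc(t_0),-s_{v(\beta)}uw_{\mu,0})$, so that $w^\infty_{\underline{\fc},t_0}=s_\delta\,w^\infty_{\underline{\fs}_\bx,t_0}$ with $\delta=(v^\mu)^{-1}v(\beta)$, whereas the chamber at codistance $w_{\mu,0}$ from $D^-_{t_0}$ is $D^+_{\bx,t_0}$, lying at $d^-$-distance $w^\infty_{\underline{\fs}_\bx,t_0}$ from $C^\infty_{\fc(t_0)}$. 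Using $t_0\notin T(\underline{\fs}_\bx)$ and positivity of the fold I would check $\ell(w^\infty_{\underline{\fc},t_0})=\ell(w^\infty_{\underline{\fs}_\bx,t_0})+1$, the reflection $s_\delta$ accounting exactly for the one new separating wall $M_{v(\beta)[n]}$ (a step of the kind in Lemma~\ref{Lemma: length additivity inversion sets}). Then by Lemmas~\ref{Lemma: twinbuildings1} and~\ref{Lemma: twinbuildings2}, $R^{loc,t_0}_{\bx,\underline{\fc}}(X)=\sum_{\mathbf g}X^{l(\mathbf g)}(X-1)^{r(\mathbf g)}$, over the $D^-_{t_0}$-centrifugally folded galleries of type $\mathbf i_{w^\infty_{\underline{\fc},t_0}}$ from $C^\infty_{\fc(t_0)}$ to $D^+_{\bx,t_0}$, each folding exactly once: the fold is along the panel supported by $M_{v(\beta)[n]}$, which is thick since $n\in\Z$, it occurs away from $D^-_{t_0}$ by positivity, and no thick wall is crossed away from $D^-_{t_0}$, so the unique contribution is $X^0(X-1)^1$. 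Hence $R^{loc,t_0}_{\bx,\underline{\fc}}(X)=X-1$.

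The main obstacle is the $(t_0,1)$ half of the second step together with its mirror in the third: certifying that bending once along the extremal wall $M_{v(\beta)[n]}$ produces no further separating affine wall, that $\ell(w^\infty_{\underline{\fc},t_0})=\ell(w^\infty_{\underline{\fs}_\bx,t_0})+1$, and that the single fold is thick with $l(\mathbf g)=0$. All of these boil down to a sign analysis of the linear forms $\langle\fc(t),\gamma\rangle$ and $\langle s_{v(\beta)}\mu,\gamma\rangle$, $\gamma\in\Phi$, which I would push through using the explicit shape of covers with differing coweights from \cite[Proposition 3.20]{philippe2023grading}.
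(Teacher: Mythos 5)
Your overall route is the paper's: identify $t_0$ as the unique bend, show $\{0,t_0,1\}\subset T(\underline{\fc})$ by direct inspection of the fold, rule out other separating walls on $(0,1)$ using Lemma~\ref{Lemma: separating wall condition}, and then reduce $R^{loc,t_0}_{\bx,\underline{\fc}}$ to a gallery count via Lemmas~\ref{Lemma: twinbuildings1} and~\ref{Lemma: twinbuildings2}. The part on $(0,t_0)$ is fine, as is the computation $\langle\mu,v(\beta)\rangle=2n-\langle\lambda,\beta\rangle\neq 0$ and the positivity of the fold. However, two of the steps you defer with ``I would'' are exactly where the content of the lemma lies, and your sketch of them is not yet tight enough to compile into a proof.

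For $T(\underline{\fc})\cap(t_0,1)=\emptyset$: referencing \cite[Proposition 3.20]{philippe2023grading} is the right instinct, but the sign comparison you envisage does not go through from the classification statement alone. What makes it work are the two specific consequences the paper extracts from the cover hypothesis with $\pr^{Y^{++}}(\bx)\neq\pr^{Y^{++}}(\by)$: $\beta$ is a \emph{quantum root} ($\langle\beta^\vee,\gamma\rangle=1$ for every $\gamma\in\Inv(s_\beta)\setminus\{\beta\}$) and $\lambda+\beta^\vee$ is \emph{almost dominant} ($\langle\lambda+\beta^\vee,\tau\rangle\geq-1$ for all $\tau\in\Phi_+$). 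These feed into the one key numerical estimate $g_\gamma(t_0)\leq 1$ with equality iff $n=-1$ and $\gamma=\beta$ (where $g_\gamma(t)=\langle\fc(t),v(\gamma)\rangle$), together with $g_\gamma(1)\leq 0$; without them, the affine interpolation argument you describe does not exclude integer crossings of $g_\gamma$ on $(t_0,1)$. The paper further splits on the sign of $\langle\fc'_+,v(\gamma)\rangle$ and uses, via Lemma~\ref{Lemma: length additivity inversion sets}, that $\Inv(v)$ is disjoint from $\Inv(u^{-1})$ (resp. $\Inv(u^{-1}s_\beta)$), $u$ being the $W^{\mu^{++}}$-representative for $\mu$; this disjointness is another piece you would need to supply.

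For $R^{loc,t_0}_{\bx,\underline{\fc}}=X-1$: your length relation $\ell(w^\infty_{\underline{\fc},t_0})=\ell(w^\infty_{\underline{\fs}_\bx,t_0})+1$, if established, controls the \emph{number} of folds in each contributing gallery, but neither forces the fold to be along the panel supported by $M_{v(\beta)[n]}$ nor gives $l(\tilde{\mathbf g})=0$. The mechanism in the paper is the split $\mathbf g=\mathbf g_0\mathbf g_1$ at $\pr_{\fc_+(t_0)}(C^\infty_{\fc(t_0)})$: the walls crossed by $\mathbf g_0$ are exactly those separating $C^\infty_{\fc(t_0)}$ from $\fc_+(t_0)$, and after the first part of the lemma only $M_{v(\beta)[n]}$ among these is affine, so $\mathbf g_0$ has a unique thick fold; and $\mathbf g_1$, lying among alcoves dominating $\fc_+(t_0)$, cannot be $D^-_{t_0}$-centrifugally folded at all, because $D^-_{\bx,t_0}$ and $D^+_{\bx,t_0}$ agree on every wall through $\fs_{\mu,+}(t_0)$, so a fold there would go \emph{towards} $D^-_{t_0}$. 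Without this decomposition your uniqueness and $l=0$ assertions remain unjustified.
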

\begin{proof}
  With the notation of the Lemma, since $\bx$ covers $\by$ with $\pr^{Y^{++}}(\bx)\neq \pr^{Y^{++}}(\by)$, by \cite[Proposition 3.21]{philippe2023grading}, $\beta$ needs to be a quantum root (see \cite[Definition 2.4]{hebert2024quantum}): \begin{equation}\label{eq: quantum_root}\forall \gamma \in \Inv(s_\beta)\setminus\{\beta\}, \, \langle \beta^\vee,\gamma\rangle =1.\end{equation}
        Moreover, $\lambda+\beta^\vee$ is almost-dominant, that is to say: 
    \begin{align}
    \langle \lambda+\beta^\vee,\tau\rangle \geq -1 \, \text{for all} \, \tau \in \Phi_+. \label{eq: almostdominance}
    \end{align}

        It is clear that $\{0,t_0,1\}\subset T(\underline{\fc})$ since there is a folding at $t_0$. Moreover since $\underline{\fc}$ is obtained from $\underline{\fs}_\bx$ by a folding at time $t_0$, both open paths agree on $[0,t_0)$ and thus we have that $T(\underline{\fc})\cap [0,t_0)=T(\underline{\fs}_\bx)\cap[0,t_0)=\{0\}$. Let us now prove that, on $[t_0,1)$, there is no other affine wall separating $C^\infty_{\fc(t)}$ and $\fc_+(t)$.

For any $\gamma \in \Phi_+$ and $t\in [t_0,1)$, let $M_{\gamma,t}$ denote the (affine or ghost) wall of direction $v(\gamma)$ going through $\fc(t)$, we say that it is separating if it separates $C^\infty_{\fc(t)}$ from $\fc_+(t)$. We want to prove that, if $M_{\gamma,t}$ is separating, then it is a ghost wall, unless $t=t_0$ and $\gamma=\beta$. We use Lemma~\ref{Lemma: separating wall condition} in order to verify this. Let $g_\gamma$ denote the affine function $t\mapsto \langle \fc(t),v(\gamma)\rangle$ defined on $[t_0,1]$. Then $M_{\gamma,t}$ is an affine (non-ghost) wall if and only if $g_\gamma(t)\in \Z$, and by Lemma~\ref{Lemma: separating wall condition} $M_{\gamma,t}$ separates $C^\infty_{\fc(t)}$ from $\fc_+(t)$ if and only if either $g_\gamma(t)\langle \fc'_+(t),v(\gamma)\rangle<0$ or $g_\gamma(t)=0$ and $\sgn(v(\gamma))\langle \fc'_+(t),v(\gamma)\rangle >0$. We will separate cases according to the value of $n$ and to the sign of $\langle \fc'_+(t),v(\gamma)\rangle$. This value does not depend on $t\in [t_0,1)$ as $\fc$ is affine on $[t_0,1]$.

Note that $g_\gamma(1)=\langle -\lambda,\gamma\rangle \leq 0$ and\begin{equation}\label{e_g_gamma_1} g_\gamma(t_0)=\begin{cases*}
        -t_0\langle \lambda+\beta^\vee,\gamma\rangle &\text{ if $ n=\langle \lambda,\beta\rangle+1$} \\
        -t_0 \langle \lambda+\beta^\vee,s_\beta(\gamma)\rangle &\text{ if $ n=-1$.}
    \end{cases*}
\end{equation}

In particular, since $\lambda+\beta^\vee$ is almost dominant and $\gamma \in \Phi_+$, $g_\gamma(t_0)\leq t_0<1$ unless $n=-1$ and $\gamma \in \Inv(s_\beta)$. Moreover if $n=-1$, clearly $g_\beta(t_0)=1$ and if $\gamma \in \Inv(s_\beta)\setminus\{\beta\}$, since $\beta$ is a quantum root , $\langle \beta^\vee,\gamma\rangle =1$ and $s_\beta(\gamma)=\gamma-\beta$, thus $g_\gamma(t_0)=\frac{1}{\langle \lambda+\beta^\vee,\beta\rangle}\langle \lambda+\beta^\vee,\beta-\gamma\rangle <1$ in this case aswell. Therefore we have
\begin{equation}
    g_\gamma(t_0)\leq 1,
\end{equation}with equality if and only if $n=-1$ and $\gamma=\beta$.
In particular outside of this case, since $g_\gamma$ is affine on $[t_0,1]$, $g_\gamma([t_0,1])\subset (-\infty,1)$: there is no time for which $g_\gamma(t)$ is integral and positive.
\begin{enumerate}
    \item Suppose first that $\langle \fc'_+(t),v(\gamma)\rangle<0$, and $\gamma\neq\beta$. Then since $g_\gamma(t)$ cannot be integral and positive, by Lemma~\ref{Lemma: separating wall condition} the wall $M_{\gamma,t}$ is separating and non-ghost if and only if $g_\gamma(t)=0$ and $v(\gamma)\in \Phi_-$ (that is to say $\gamma \in \Inv(v)$). Note that the existence of such a time $t$ implies that $g_\gamma(t_0)> 0$
    \begin{enumerate}
        \item If $n=\langle \lambda,\beta\rangle +1$ then $g_\gamma(t_0)=-t_0\langle \lambda+\beta^\vee,\gamma\rangle $ is positive if and only if $\langle \lambda+\beta^\vee,\gamma\rangle < 0$, if and only if $\gamma \in \Inv(u^{-1})$ where $u\in W^v$ is the minimal element such that $u^{-1}(\lambda+\beta^\vee)\in Y^{++}$.  However, by \cite[Lemma 3.7]{philippe2023grading} we have $\ell(vu)=\ell(v)+\ell(u)$, so by Lemma~\ref{Lemma: length additivity inversion sets}, $\Inv(v) \cap \Inv(u^{-1})=\emptyset$ and $M_{\gamma,t}$ can not be separating.

        \item If $n=-1$ then $g_\gamma(t_0)=-t_0\langle s_\beta(\lambda+\beta^\vee),\gamma\rangle$ is positive if and only if $\gamma \in \Inv(u^{-1}s_\beta)$ (with $u$ defined as above). By \cite[Lemma 3.7]{philippe2023grading} and Lemma~\ref{Lemma: length additivity inversion sets}, if $\gamma \in \Inv(u^{-1}s_\beta)$ then $\gamma \not \in \Inv(v)$, so $v(\gamma)$ is a positive root. Therefore by Lemma~\ref{Lemma: separating wall condition} the wall $M_{\gamma,t}$ is not separating.
    \end{enumerate}

    \item Suppose that $\langle \fc'_+(t),v(\gamma)\rangle>0$. Let us then show that, for any such $\gamma$ and $t\in [t_0,1)$, $g_\gamma(t)$ is not integral, unless $t=t_0$ and $\gamma=\beta$. Since $g_\gamma(1)$ is integral, it suffices to prove that 
    \begin{equation}
        |g_\gamma(t_0)-g_\gamma(1)|\leq 1,
    \end{equation}with equality if and only if $\gamma=\beta$.
    We have $|g_\gamma(t_0)-g_\gamma(1)|=|(1-t_0) \langle \fc'_+(t_0),v(\gamma)\rangle|$, and $t_0=\frac{|n|}{\langle \lambda+\beta^\vee\beta\rangle }$
    \begin{enumerate}
        \item If $n=-1$ then $\fc'_+(t_0)=-v(\lambda+\beta^\vee)$ so $\langle \fc'_+(t),v(\gamma)\rangle=-\langle \lambda+\beta^\vee,\gamma\rangle$ is positive if and only if $\langle \lambda+\beta^\vee,\gamma\rangle<0$. Since $\lambda+\beta^\vee$ is almost dominant and $\gamma\in \Phi_+$, this is equivalent to $\langle \lambda+\beta^\vee,\gamma\rangle =-1$, in which case $$|(1-t_0) \langle \fc'_+(t_0),v(\gamma)\rangle|=1-t_0<1.$$

        \item If $n=\langle \lambda,\beta\rangle +1$ then $\fc'_+(t_0)=-vs_\beta(\lambda+\beta^\vee)$ so if $\langle \fc'_+(t),v(\gamma)\rangle=-\langle \lambda+\beta^\vee,s_\beta(\gamma)\rangle>0$ then it is equal to $1$ unless $\gamma\in \Inv(s_\beta)$. If it is equal to $1$, then $|g_\gamma(t_0)-g_\gamma(1)|=1-t_0<1$. If $\gamma \in \Inv(s_\beta)\setminus \{\beta\}$, and since $(1-t_0)=\frac{1}{\langle \lambda+\beta^\vee,\beta\rangle}$, we have $|g_\gamma(t_0)-g_\gamma(1)|=\frac{|\langle \lambda+\beta^\vee,s_\beta(\gamma)\rangle|}{\langle \lambda+\beta^\vee,\beta\rangle}$. This is equal to $1$ if $\gamma=\beta$. Else, since $\beta$ is quantum, we have $|\langle \lambda+\beta^\vee,s_\beta(\gamma)\rangle| =\langle \lambda+\beta^\vee,\beta\rangle -\langle \lambda+\beta^\vee,\gamma\rangle <\langle \lambda+\beta^\vee,\beta\rangle$ so $|g_\gamma(t_0)-g_\gamma(1)|<1$.
    \end{enumerate}
\end{enumerate}
We have therefore proven that there is no non-ghost separating wall on $[t_0,1)$, except $M_{\beta,t_0}$. Therefore $T(\underline{\fc})=\{0,t_0,1\}$. It remains to prove that $R^{loc,t_0}_{\bx,\underline\fc}=X-1$. 

Let $\mathbf g$ be a minimal gallery, in $\cT^-_{\fc(t_0)}(\A)$, from $C^\infty_{\fc(t_0)}$ to the decorating alcove $D^+_{t_0}$ of $\underline{\fc}$ at time $t_0$, and let $\mathbf i$ be its type. In order to compute $R^{loc,t_0}_{\bx,\underline{\fc}}$ we will prove that there is a unique $D^-_{t_0}$-centrifugally folded gallery $\tilde{\mathbf g}$ of type $\mathbf i$ from $C^\infty_{\fc(t_0)}$ to $D^+_{\bx,1}$ (which is the unique alcove at codistance $w^-_{\bx,t_0}$ from $D^-_{\bx,t_0}=D^-_{t_0}$), and that it satisfies $r(\tilde{\mathbf g})=1$, $l(\tilde{\mathbf g})=0$. Since $\mathbf g$ is minimal, it is of the form $\mathbf g_0 \mathbf g_1$ where $\mathbf g_0$ goes from $C^\infty_{\fc(t_0)}$ to $\pr_{\fc_+(t_0)}(C^\infty_{\fc(t_0)})$ and where every alcove of $\mathbf g_1$ dominates $\fc_+(t_0)$. Since $M_{v(\beta),t_0}$ is the only affine wall separating $C^\infty_{\fc(t_0)}$ from $\fc_+(t_0)$, it is the unique affine wall along which $\mathbf g_0$ can be folded, let $\tilde{\mathbf g}_0$ denote the associated folded gallery. Then, since $d^+(C^\infty_{\fc(t_0)},D^+_{\bx,t_0})\in W_{(\lambda+\beta^\vee)^{++}}$, any gallery of type $\mathbf i$ from $C^\infty_{\fc(t_0)}$ to $D^+_{\bx,t_0}$ is of the form $\tilde{\mathbf g}_0 \tilde{\mathbf g}_1$ with $\tilde{\mathbf g}_1$ of type $\mathbf i_1$. Moreover, if we suppose that $\tilde{\mathbf g}$ is $D^-_{\bx,t_0}$-centrifugally folded, since $D^-_{\bx,t_0}$ and $D^+_{\bx,t_0}$ are on the same side of every wall dominating $\fs_{\pr^{Y^+}(\bx),+}(t_0)$, the gallery $ \tilde{\mathbf g}_1$ can not be folded (else the last folding would be done towards $D^+_{\bx,t_0}$, hence towards $D^-_{\bx,t_0}$). We deduce that there is a unique possible choice of such $\tilde{\mathbf g}$, and since $\tilde{\mathbf g}_1$ is minimal it crosses every wall towards its last alcove, hence also towards $D^-_{\bx,t_0}$. Therefore it satisfies $r(\tilde{\mathbf g})=1$, $l(\tilde{\mathbf g})=0$. Thus, $R^{loc,t_0}_{\bx,\underline{\fc}}=X-1$, which concludes the proof of the lemma.
\end{proof}

We now compute the $R$-polynomials associated to covers. We will use the classification of covers which was obtained by Welch in the affine ADE case (see~\cite[Theorem 2]{welch2022classification}), and extended to the general Kac-Moody case by the second author (see \cite[Proposition 3.20]{philippe2023grading}).

    \begin{Proposition}\label{Proposition Rpolynomialscoversregular}
        Let $\bx,\by \in W^+$ and suppose that $\bx\in W^+_{sph}$. Then
        \begin{equation}
            \by \lhd \bx \implies R_{\bx,\by}=X-1.
        \end{equation}
        
    \end{Proposition}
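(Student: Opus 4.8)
The plan is to combine the classification of covers from \cite[Proposition 3.20]{philippe2023grading} with the computations from Lemma~\ref{Lemma: Rpol distinct coweight case} and Proposition~\ref{Prop: RPolynomials Properties}. According to that classification, a cover $\by\lhd \bx$ with $\bx$ spherical falls into one of two cases: either $\pr^{Y^{++}}(\by)=\pr^{Y^{++}}(\bx)$ (the ``same coweight'' case, where $\bx=s_{\beta[0]}\by$ for a suitable vectorial reflection, essentially a cover in the vectorial Weyl group twisted by the coweight), or $\pr^{Y^{++}}(\by)\neq \pr^{Y^{++}}(\bx)$ (the ``distinct coweight'' case, which is exactly the situation treated in Lemma~\ref{Lemma: Rpol distinct coweight case}, with $n\in\{-1,\langle\lambda,\beta\rangle+1\}$). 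In either case I want to exhibit the unique $C_\infty$-Hecke open path of type $\bx$ with end alcove $C_\by$, check that it is obtained by a single positive folding of $\underline{\fs}_\bx$, and verify that $T(\underline{\fc})$ has exactly three elements $\{0,t_0,1\}$ so that Formula~\eqref{eq:openpaths} gives $R_{\bx,\by}=R^{loc,0}_{\bx,\underline{\fc}}\cdot R^{loc,t_0}_{\bx,\underline{\fc}}\cdot R^{loc,1}_{\bx,\underline{\fc}}$.

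First I would handle the local factors at $t=0$ and $t=1$. For $t=0$: since $C^\infty_{0_\A}$ and $C_0$ are opposite in $\cT^\pm_{0_\A}(\I_\oplus)$ and $D^+_{\bx,0}$ has codistance $w^-_{\bx,0}=d^\ast(C_0,D^+_{\bx,0})$ from $C_0$, the set $\cC(\underline{\fc},\bx,0)$ is a singleton, so $R^{loc,0}_{\bx,\underline{\fc}}=1$; this is the same argument as in the proof of Proposition~\ref{Prop: RPolynomials Properties}. For $t=1$: the argument from that same proof shows that the unique gallery of type $\mathbf i_{w^-_{\bx,1}}$ from $D^-_{\bx,1}$ to the end alcove is minimal and crosses no affine wall away from $C^\infty_{\fc(1)}$ — here I would use that $D^-_{\bx,1}=\pr_{\fs_{\lambda,-}(1)}(C_\bx)$ and that $C^{++}_{-\lambda}=\pr_{\fs_{\lambda,-}(1)}(C_0)$ dominate the same segment germ, so any wall separating the end alcove from $C^{++}_{-\lambda}$ must contain $\fs_{\lambda,-}(1)$, hence is not crossed — giving $R^{loc,1}_{\bx,\underline{\fc}}=1$. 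This reasoning is essentially coweight-independent and applies in both cases of the classification, since $\underline{\fc}$ agrees with $\underline{\fs}_\bx$ on a neighborhood of $1$ once the single fold happens at $t_0<1$ (in the case $t_0=1$ the path is $\underline{\fs}_\bx$ with only the end alcove changed, and one argues directly).

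Next, for the middle factor, the distinct-coweight case is already done: Lemma~\ref{Lemma: Rpol distinct coweight case} gives $T(\underline{\fc})=\{0,t_0,1\}$ and $R^{loc,t_0}_{\bx,\underline{\fc}}=X-1$. For the same-coweight case, I would write $\by=\qp^{v\lambda}w$ and $\bx=s_{v(\beta)[n]}\by$ with $\langle\lambda,\beta\rangle$ controlling where the fold occurs; the relevant $n$ is $0$ (the wall passes through a special point) or is forced by the cover condition, and one takes $\underline{\fc}=\phi_{t_0,v(\beta)[n]}(\underline{\fs}_\bx)$ with $t_0$ the time the straight segment $\fs_{v(s_\beta\lambda)}$ meets $M_{v(\beta)[n]}$. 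The task is then to show no other non-ghost wall separates $C^\infty_{\fc(t)}$ from $\fc_+(t)$ for $t\in(t_0,1]$, which I would do with Lemma~\ref{Lemma: separating wall condition} exactly as in the proof of Lemma~\ref{Lemma: Rpol distinct coweight case}, tracking the affine functions $g_\gamma(t)=\langle\fc(t),v(\gamma)\rangle$ and using that $\by\lhd\bx$ is a cover (so the ``jump'' is minimal). Then one reruns the gallery-counting argument: a minimal gallery from $C^\infty_{\fc(t_0)}$ to $D^+_{t_0}$ has a unique $D^-_{\bx,t_0}$-centrifugally folded companion of the same type ending at $D^+_{\bx,t_0}$, with exactly one fold ($r=1$) along the affine wall $M_{v(\beta),t_0}$ and no affine wall crossed away from $D^-_{\bx,t_0}$ in the tail ($l=0$), so $R^{loc,t_0}_{\bx,\underline{\fc}}=X-1$. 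The main obstacle I anticipate is the bookkeeping in this same-coweight case — verifying the inversion-set/length-additivity conditions (via \cite[Lemma 3.7]{philippe2023grading} and Lemma~\ref{Lemma: length additivity inversion sets}) that guarantee no spurious separating affine wall appears — together with confirming from the classification that these two cases are genuinely exhaustive and that the cover hypothesis pins down $n$ and $t_0$; once $T(\underline{\fc})=\{0,t_0,1\}$ is established, multiplying the three local factors $1\cdot(X-1)\cdot 1$ finishes the proof.
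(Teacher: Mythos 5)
Your split into ``same coweight'' and ``distinct coweight'' following \cite[Proposition 3.20]{philippe2023grading} is the right high-level decomposition, and your treatment of the distinct coweight case via Lemma~\ref{Lemma: Rpol distinct coweight case} is correct and matches the paper. The gap is in the same-coweight case, where the template you announce — $T(\underline{\fc})=\{0,t_0,1\}$, $R^{loc,0}=1$, $R^{loc,1}=1$, and the factor $X-1$ coming from the middle time — does not hold. When $\pr^{Y^{++}}(\bx)=\pr^{Y^{++}}(\by)$, the cover is $\bx=s_{v(\beta)[n]}\by$ with $n\in\{0,\langle\lambda,\beta\rangle\}$, and the fold occurs at the boundary: at $t_0=0$ when $n=0$ and at $t_0=1$ when $n=\langle\lambda,\beta\rangle$. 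In both subcases $T(\underline{\fc})=\{0,1\}$ (two elements, not three) and the factor $X-1$ must be produced by $R^{loc,0}_{\bx,\underline{\fc}}$ or $R^{loc,1}_{\bx,\underline{\fc}}$ respectively.

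Your argument that $R^{loc,0}_{\bx,\underline{\fc}}=1$ quietly assumes $D^+_0=D^+_{\bx,0}$, i.e.\ that $\underline{\fc}$ and $\underline{\fs}_\bx$ agree near $0$; this fails precisely when the fold is at $t_0=0$. There the decoration $D^+_0$ is $s_{v(\beta)[0]}D^+_{\bx,0}$, so $w^\infty_{\underline{\fc},0}\neq w^-_{\bx,0}$, the set $\cC(\underline{\fc},\bx,0)$ is no longer the singleton $\{D^+_{\bx,0}\}$, and one has to compute $R^{loc,0}$ as a genuine classical $R$-polynomial $R_{w^-_{\bx,0},w^\infty_{\underline{\fc},0}}$; to get $X-1$ one must then check that $w^\infty_{\underline{\fc},0}$ \emph{covers} $w^-_{\bx,0}$ in $W^v$, which requires a length bookkeeping (via \cite[Lemma 3.7]{philippe2023grading} and Lemma~\ref{Lemma: length additivity inversion sets}) you do not carry out. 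Symmetrically, your $R^{loc,1}=1$ argument relies on $D^-_{\bx,1}$ being the projection of the end alcove on $\fs_{\lambda,-}(1)$ and breaks when the fold is at $t_0=1$: there the end alcove is $s_{v(\beta)[n]}C_\bx$, the unique gallery of type $\mathbf i_{w^-_{\bx,1}}$ from $D^-_{\bx,1}$ to it has one fold, and $R^{loc,1}=X-1$. You also miss the degenerate subcase $n=\langle\lambda,\beta\rangle=0$, where two folding data produce a $C_\infty$-Hecke open path ending at $C_\by$ (fold at $0$ or at $1$) and one must show the second contributes $0$. So the proposal is correct in the distinct-coweight branch, but the same-coweight branch as written would not go through without a separate, case-by-case computation of the boundary local polynomials.
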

  \begin{proof}
     
We use the classification of covers for the affine Bruhat order given in \cite[Lemma 3.1]{hebert2024quantum} (note that, in this proposition $\bx$ covers $\by$, whereas it is the opposite in \cite{hebert2024quantum}).
    We separate the computation in 5 cases, according to the form of the cover:  \begin{equation}
        \by=\qp^{v\lambda}w \text{ and }\bx=s_{v(\beta)[n]}\by=\qp^{v(s_\beta \lambda+n\beta^\vee)}s_{v(\beta)}w,
    \end{equation}  with $\lambda\in Y^{++}$, $v\in W^\lambda$, $w\in W^v$ and $\beta[n]\in \Phi_+^a$. 
    
    The integer $n$ may take four values, namely $\{0,\langle \lambda,\beta\rangle, -1,\langle\lambda,\beta\rangle +1\}$. Moreover, an element $\underline{\fc}\in \cC^\infty_\bx(\by)$  is obtained from the open segment $\underline\fs_\bx$ by folding along the affine root $\beta[n]$. Indeed, if $\underline{\fc}\in \cC^\infty_{\bx}(\by)$, then $\underline{\fc}$ is obtained from $\underline{\fs}_\bx$ via some folding data  $(t_i,\beta_i[n_i])_{i\in \llbracket 0,r\rrbracket}$. By Proposition~\ref{Proposition: open paths give chains}, we have $\by=s_{\beta_r[n_r]}\ldots s_{\beta_0[n_0]}\bx$. By definition of a cover, we thus have $r=0$ and $\beta_0[n_0]=v(\beta)[n]$. Therefore it is unique, unless $\langle \lambda,\beta\rangle=n=0$ (in which case we can fold either at time $0$ or $1$, but we will see that only one choice contributes to $R_{\bx,\by}$).
    \begin{enumerate}
        
        \item Suppose that $n=\langle\lambda,\beta\rangle\neq0$, so that $\bx=\qp^{v\lambda}s_{v(\beta)}w$ and $\ell(v^{-1}s_\beta w)=\ell(v^{-1}w)+1$. In this case the only difference between $\underline{\fc}$ and $\underline{\fs}_\bx$ is the end alcove. Therefore $R^{loc,t}_{\bx,\underline{\fc}}=R^{loc,t}_{\bx,\underline{\fs}_\bx}=1$ for all $t<1$. We need to check that $R^{loc,1}_{\bx,\underline{\fc}}=X-1$. Let $\mathbf i$ be the type of a minimal gallery from $C^{++}_{-v\lambda}$ to $C_\bx$. Since $C^{++}_{-v\lambda}$ dominates the segment germ $\fs_{v\lambda,-}(1)$ and by definition of $D^-_{\bx,1}$, we can write $\mathbf i=\mathbf i_1 \mathbf i_2$ where $\mathbf i_1$ is the type of a minimal gallery (let us denote it $\mathbf g_1$) from $C^{++}_{-v\lambda}$ to $D^-_{\bx,1}$ and $\mathbf i_2$ is the type of a minimal gallery from $D^-_{\bx,1}$ to $C_\bx$. Therefore if $\mathbf g_2$ is a gallery of type $\mathbf i_2$ from $D^-_{\bx,1}$ to $C_\by$  then concatenating it with $\mathbf g_1$ we obtain a gallery from $C^{++}_{-v\lambda}$ to $C_\by$ of type $\mathbf i$. 
        Since (by Lemma~\ref{l_explicit_C++}) \[\ell(v^{-1}w)=\ell(d^+(C^{++}_{-v\lambda},C_\by))=\ell(v^{-1}s_\beta w)-1=\ell(d^+(C^{++}_{-v\lambda},C_\bx))-1,\] this gallery necessarily folds exactly once, and does not cross any wall towards $C^{++}_{-v\lambda}$, so $\ell(\mathbf g_2)=0$. Moreover the folding is $C^\infty_{-v\lambda}$-centrifuged as it folds towards $C^{++}_{-v\lambda}$ , so $r(\mathbf g_2)=1$. Hence $R_{\bx,\by}=R^{loc,1}_{\bx,\underline{\fc}}=X-1$.

        \item Suppose that $n=0\neq \langle \lambda,\beta\rangle$, then the unique element of $\cC^\infty_\bx(\by)$ is obtained from folding $\underline{\fs}_\bx$ at time $0$ along $M_{v(\beta)[0]}$. Therefore it is just the open segment $\underline\fs_\by$, and $T(\underline{\fs}_\by)=\{0,1\}$. By \cite[Lemma 3.1]{hebert2024quantum}, $\bx=\qp^{vs_\beta(\lambda)}s_{v(\beta)}w$ and $\ell(vs_\beta)=\ell(v)-1$.  
        \begin{itemize}
            \item To compute $R^{loc,1}_D$, let us fix a reduced expression $\mathbf i$ of $w^-_{\bx,1}=d^-(D^-_{\bx,1},C_\bx)$. Since there is no folding at time $1$, we have $d^-(D^-_{\by,1},C_\by)=w^-_{\by,1}=w^-_{\bx,1}$,so there is a unique gallery $\mathbf g$ of type $\mathbf i$ from $D^-_{\by,1}$ to $C_\by$, it is minimal, and does not cross any wall containing $\fc_-(1)$ since $D^-_{\by,1}=\pr_{\fc_-(1)}(C_\by)$. Moreover $C^{++}_{-v\lambda}$ and $D^-_{\by,1}$ are separated only by walls containing $\fc_-(1)$ since they both dominate this segment germ, so any wall crossed by $\mathbf g$ is crossed away from $C^{++}_{-v\lambda}$. Hence $l(\mathbf g)=0$ and $R^{loc,1}_{\bx,\underline{\fc}}=1$.

             \item At $t=0$, we have that $D^-_0=C_0$ is opposite (in $\cT^\pm_{0_\A}(\A)$) to $C^\infty_{0_\A}$. Therefore by Remark~\ref{r_usual_KL_polynomials}, the polynomial $R^{loc,0}_{\bx,\underline{\fc}}$ is a classical $R$-polynomial, and to prove that it is equal to $X-1$, it suffices to check that $w^\infty_{\underline{\fc},0}$ is a cover of $w^-_{\bx,0}$. We thus need to express these two elements.  Since $C^{++}_{-v\lambda}$ (which is the opposite of $C^\infty_{-v\lambda}$) and $D^-_{\by,1}$ both dominate the segment germ $\fs_{v\lambda,-}(1)$ the codistance $d^\ast(C^\infty_{-v\lambda},D^-_{\by,1})=:\tilde w$ lies in $W_\lambda$. Explicitly, we therefore have  $C^\infty_{-v\lambda}=a(-v\lambda,-v)$, $D^-_{\by,1}=a(-v\lambda,+v\tilde w)$ and $D^-_{\bx,1}=s_{v(\beta)[0]}D^-_{\by,1}=a(-vs_\beta\lambda,+vs_\beta \tilde w)$. Since there is no folding after time $t=0$, by the admissibility condition we deduce that $D^+_{\by,0}=a(0_\A,-v\tilde w w_{\lambda,0})$ and $D^+_{\bx,0}=a(0_\A,-vs_\beta \tilde ww_{\lambda,0})$, where $w_{\lambda,0}$ is the maximal element of the standard parabolic subgroup $W_\lambda$. Therefore: \begin{equation}
             w^-_{\bx,0}=vs_\beta \tilde w w_{\lambda,0} \ \text{and} \ w^\infty_{\underline{\fc},0}=v\tilde w w_{\lambda,0}.\end{equation}
             
             In particular, $w^\infty_{\underline{\fc},0}=s_{v(\beta)}w^-_{\bx,0}$, so the two elements are comparable in $W^v$. Let us write $vs_\beta= u^J u_J$ with $u^J\in W^\lambda$. By \cite[Proposition 3.20.2c]{philippe2023grading}, $a(-v\lambda,vu_J^{-1})$ is on a minimal gallery from $C^{++}_{-v\lambda}=a(-v\lambda,v)$ to $C_\by$ so, since $a(-v\lambda,v\tilde w)=D^-_{\by,1}$ is the projection of $C_\by$ on $\fs_{v\lambda,-}(1)$ and $u_J\in W_\lambda$, the alcove $a(-v\lambda,vu_J^{-1})$ is on a minimal gallery from $C^{++}_{-v\lambda}$ to $D^-_{\by,1}$. In term of distance, this implies that $\ell(\tilde w)=\ell(u_J)+\ell(u_J\tilde w)$. Recall that for any element $r\in W^\lambda$, $t\in W_\lambda$, we have $\ell(rtw_{\lambda,0})=\ell(r)+\ell(tw_{\lambda,0})= \ell(r)+\ell(w_{\lambda,0})-\ell(t)$. We deduce:
             \begin{align*}
                 \ell(w^\infty_{\underline{\fc},0})=\ell(v)+\ell(w_{\lambda,0})-\ell(\tilde w), \\
                 \ell(w^-_{\bx,0})=\ell(u^J)+\ell(w_{\lambda,0})-\ell(u_J\tilde w)=\ell(u^J)+\ell(w_{\lambda,0})-\ell(\tilde w)+\ell(u_J).
             \end{align*} Since $\ell(u^J)+\ell(u_J)=\ell(vs_\beta)=\ell(v)-1$, we deduce that $\ell(w^\infty_{\underline{\fc},0})=\ell(w^-_{\bx,0})+1$, so $w^\infty_{\underline{\fc},0}$ covers $w^-_{\bx,0}$ and,  by Remark~\ref{r_usual_KL_polynomials}, $R^{loc,0}_{\bx,\underline{\fc}}=X-1$. We conclude that $R_{\bx,\by}=X-1$ in this case.

        \end{itemize} 
         \item Suppose that $n=0=\langle \lambda,\beta\rangle$, then there are two open paths of type $\bx$ ending at $C_\by$, obtained folding along $M_{v(\beta)[0]}$ at time either $0$ or $1$. Let $\underline{\fc}$ denote the open path obtained by folding at time $0$, and $\tilde{\underline{\fc}}$ the one obtained by folding at time $1$. Note that $D^-_{\tilde{\underline{\fc}},1}=D^-_{\bx,1}$. However, $M_{\beta[0]}$ contains the segment germ $\fs_{v\lambda,-}(1)$ and $D^-_{\bx,1}$ is the projection of $C_\bx$ on this segment germ, so $d^+(D^-_{\bx,1},C_\by)=d^+(s_{\beta[0]}D^-_{\bx,1},C_\bx)>d^+(D^-_{\bx,1},C_\bx)$. Therefore there is no gallery with type a reduced expression of $w^-_{\bx,1}$ from $D^-_{\bx,1}$ to $C_\by$. Hence $R^{loc,1}_{\bx,\tilde{\underline{\fc}}}=0$ and only the first open path $\underline{\fc}$ may have a non-zero contribution to $R_{\bx,\by}$.
         
         Note that $\underline{\fc}=\underline{\fs}_\by$. The computation of $R_{\bx,\underline{\fc}}$ is similar to the previous case: $T(\underline{\fc})=\{0,1\}$ and we compute that $R^{loc,1}_{\bx,\underline{\fc}}=1$. At time $t=0$, the computation of $w^-_{\bx,0}$ and $w^\infty_{\underline{\fc},0}$ is slightly different: we are in a specific case of \cite[Proposition 3.20.1]{philippe2023grading}, so we have $\ell(s_\beta v^{-1}w)=\ell(v^{-1}w)+1$ (where, recall, $\by=\qp^{v\lambda}w$ and $\bx=s_{v(\beta)[0]}\by$). The decorating alcove $D^-_{\by,1}$ dominates $\fs_{v\lambda,-}(1)$ hence is of the form $a(-v\lambda,+v\tilde w)$ for some $\tilde{w}\in W_\lambda$, and since it is the projection of $C_\by=a(-v\lambda,w)$ on this segment germ, any minimal gallery from an alcove dominating it to $C_\by$ goes through $D^-_{\by,1}$. Therefore \begin{align}\ell(v^{-1}w)=\ell(d^+(C^{++}_{-v\lambda},C_\by))&=\ell(d^+(C^{++}_{-v\lambda},D^-_{\by,1}))+\ell(d^+(D^-_{\by,1},C_\by)) \\
         &=\ell(\tilde w)+\ell(d^+(D^-_{\by,1},C_\by)).\end{align}

    Similarly for $\underline{\fs}_\bx$, since $D^-_{\bx,1}=s_{v(\beta)[0]}D^-_{\by,1}=a(-v\lambda,vs_\beta\tilde w)$ and by $W^+$ invariance of $d^+$, we have
    \begin{align}\ell(s_\beta v^{-1}w)=\ell(d^+(C^{++}_{-v\lambda},C_\bx))&=\ell(d^+(C^{++}_{-v\lambda},D^-_{\bx,1}))+\ell(d^+(D^-_{\bx,1},C_\bx)) \\
         &=\ell(s_\beta\tilde w)+\ell(d^+(D^-_{\by,1},C_\by)).\end{align}
         Hence from $\ell(s_\beta v^{-1}w)=\ell(v^{-1}\beta)+1$ we deduce that $\ell(s_\beta\tilde w)=\ell(\tilde w)+1$, or equivalently that $\ell(s_\beta \tilde w w_{\lambda,0})=\ell(w_{\lambda,0})-\ell(s_\beta \tilde w)=\ell(w_{\lambda,0})-\ell(\tilde{w})-1=\ell(\tilde w w_{\lambda,0})-1$.
         Moreover $D^+_{\by,0}=a(0_\A,-v\tilde ww_{\lambda,0})$ and $D^+_{\bx,0}=a(0_\A,-vs_\beta \tilde w w_{\lambda,0})$, so $w^\infty_{\underline{\fc},0}=v\tilde w w_{\lambda,0}$ covers $w^-_{\bx,0}=vs_\beta \tilde w w_{\lambda,0}$ and we conclude that that $R_{\bx,\by}=R^{loc,0}_{\bx,\underline{\fc}}=X-1$ exactly as in case 2. 
         
    \item If $n\in \{-1,\langle \lambda,\beta\rangle +1\}$, then by Lemma~\ref{Lemma: Rpol distinct coweight case} the unique $C_\infty$-Hecke $\underline{\fc}$ open path of type $\bx$ with end alcove $C_\by$ verifies $T(\underline{\fc})=\{0,t_0,1\}$ with $t_0=\frac{|n|}{\langle \lambda+\beta^\vee,\beta\rangle}$ and $R^{loc,t_0}_{\bx,\fc}=X-1$. Moreover since $\underline{\fc}$ and $\underline{\fs}_\bx$ agree on $[0,t_0)$ and $R^{loc,0}_{\bx,\underline{\fs}_\bx}=1$, we have $R^{loc,0}_{\bx,\underline{\fc}}=1$. It remains to verify that $R^{loc,1}_{\bx,\underline{\fc}}=1$. Since there is no folding at time $1$, $R^{loc,1}_{\bx,\underline{\fc}}=X^l$  where $l$ is the number of walls crossed away from $C^\infty_{-v\lambda}=a(-v\lambda,-v)$ (or equivalently, towards $C^{++}_{-v\lambda}=a(-v\lambda,+v)$) in a given minimal gallery from $s_{v(\beta)[n]}D^-_{\bx,1}$ to $C_\by$. Let $\mu=\pr^{Y^+}(\bx)\in \{v(\lambda+\beta^\vee),vs_\beta(\lambda+\beta^\vee)\}$ and let $\tilde v$ denote the minimal element such that $\mu=\tilde v (\mu^{++})$, so $\tilde v\in W^{\mu^{++}}$. Then $C^{++}_{-\mu}=a(-\mu,+\tilde v)$ contains the segment germ $\fs_{\mu,-}(1)$, so a minimal gallery from this alcove to $C_\bx$ goes through $D^-_{\bx,1}=\pr_{\fs_{\mu,-}(1)}(C_\bx)$. Therefore, a minimal gallery from $s_{v(\beta)[n]}C^{++}_{-\mu}$ to $C_\by=s_{v(\beta)[n]}C_\bx$ goes through $D^-_{\underline{\fc},1}=s_{v(\beta)[n]}D^-_{\bx,1}$. Moreover, by \cite[Proposition 3.20.3.d]{philippe2023grading}, a minimal gallery from $C^{++}_{-v\lambda}$ to $C_\by$ goes through $s_{v(\beta)[n]}C^{++}_{-\mu}$, hence also through $s_{v(\beta)[n]}D^-_{\bx,1}$. We deduce that a minimal gallery from $s_{v(\beta)[n]}D^-_{\bx,1}$ to $C_\by$ only crosses walls away from $C^{++}_{-v\lambda}$, hence $l=0$ and $R^{loc,1}_{\bx,\underline{\fc}}=1$.

    Therefore, in these cases aswell $R_{\bx,\by}=X-1$, which concludes the proof.
         \end{enumerate}
         \end{proof}

\printindex

 \bibliography{bibliographie.bib}

\end{document}